\DeclareSymbolFont{cmarrows}{OMS}{cmsy}{m}{n}
\DeclareMathSymbol{\mapstochar}{\mathrel}{cmarrows}{"37}
\newcommand*{\cat}[1]{\ensuremath{\mathcal{#1}}} % A category
\newcommand*{\id}{\mathrm{id}}
\renewcommand{\colon}{\!\nobreak\mskip2mu\mathpunct{}\nonscript%
\mkern-\thinmuskip{:}\mskip6muplus1mu\relax%
}
\newcommand{\from}{\colon}
\newcommand*{\defeq}{\mathrel{\vcenter{\baselineskip0.5ex \lineskiplimit0pt
    \hbox{\scriptsize.}\hbox{\scriptsize.}}}%
=}
\NewDocumentCommand{\tetramod}{O{} O{} O{} O{}}{%
  \leftidx{^{#1}_{#2}}{\mathbf{Vec}}{_{#3}^{#4}}%
}
\NewDocumentCommand{\tetramodfd}{O{} O{} O{} O{}}{%
  \leftidx{^{#1}_{#2}}{\mathbf{vecfd}}{_{#3}^{#4}}%
}
\newcommand*{\blank}{{-}}
\newcommand*{\kVect}{\tetramod{}}
\newcommand*{\op}{\ensuremath{\mathrm{op}}}
\newcommand*{\ld}[1]{\leftidx{^\vee}{\!#1}{}}     % normal left  dual
\newcommand*{\rd}[1]{{#1}^\vee}                   % normal right dual
\newcommand*{\ev}{\mathrm{ev}}
\newcommand*{\coev}{\mathrm{coev}}
\renewcommand*{\to}{\longrightarrow}
\renewcommand*{\mapsto}{\longmapsto}
\renewcommand*{\hom}[1]{\ensuremath{\lfloor#1\rfloor}}
\let\@tl\triangleleft% Save these because we will overwrite them later.
\let\@tr\triangleright% ^
\newcommand*{\@smallTriangle}[2]{\vcenter{\hbox{\scalebox{0.75}{\ensuremath{#1#2}}}}}
\newcommand*{\@medTriangle}[2]{\vcenter{\hbox{\scalebox{0.90}{\ensuremath{#1#2}}}}}
\newcommand*{\lact}{\mathbin{\mathpalette\@smallTriangle\@tr}}
\newcommand*{\ract}{\mathbin{\mathpalette\@smallTriangle\@tl}}
\let\@btl\blacktriangleleft%
\let\@btr\blacktriangleright%
\newcommand*{\blact}{\mathbin{\mathpalette\@medTriangle\@btr}}
\newcommand*{\bract}{\mathbin{\mathpalette\@medTriangle\@btl}}
\renewcommand*{\triangleright}{\lact}
\renewcommand*{\triangleleft}{\ract}
\renewcommand*{\blacktriangleright}{\blact}
\renewcommand*{\blacktriangleleft}{\bract}
\DeclareRobustCommand{\SkipTocEntry}[5]{}
\numberwithin{equation}{section}
\numberwithin{figure}{section}
\definecolor{blue}{rgb}{0.38, 0.51, 0.71}
\definecolor{red}{RGB}{175, 49, 39}
\definecolor{green}{RGB}{146, 227, 95}
\newcommand{\xtwoheadrightarrow}[2][]{%
  \xrightarrow[#1]{#2}\mathrel{\mkern-14mu}\rightarrow%
}
\def\slashedarrowfill@#1#2#3#4#5{%
$\m@th\thickmuskip0mu\medmuskip\thickmuskip\thinmuskip\thickmuskip
  \relax#5#1\mkern-7mu%
  \cleaders\hbox{$#5\mkern-2mu#2\mkern-2mu$}\hfill
  \mathclap{#3}\mathclap{#2}%
  \cleaders\hbox{$#5\mkern-2mu#2\mkern-2mu$}\hfill
  \mkern-7mu#4$%
}
\def\rightslashedarrowfill@{%
\slashedarrowfill@\relbar\relbar\mapstochar\rightarrow}
\newcommand\xslashedrightarrow[2][]{%
\ext@arrow 0055{\rightslashedarrowfill@}{#1}{#2}}
\newcommand{\ostar}{\mathbin{\mathpalette\make@circled\star}}
\newcommand{\make@circled}[2]{%
\ooalign{$\m@th#1\smallbigcirc{#1}$\cr\hidewidth$\m@th#1#2$\hidewidth\cr}%
}
\newcommand{\smallbigcirc}[1]{%
\vcenter{\hbox{\scalebox{0.77778}{$\m@th#1\bigcirc$}}}%
}
\newcommand*{\yo}{\text{\begin{CJK}{UTF8}{min}よ\end{CJK}}}
\tikzset{curve/.style={settings={#1},to path={(\tikztostart)
  .. controls ($(\tikztostart)!\pv{pos}!(\tikztotarget)!\pv{height}!270:(\tikztotarget)$)
  and ($(\tikztostart)!1-\pv{pos}!(\tikztotarget)!\pv{height}!270:(\tikztotarget)$)
  .. (\tikztotarget)\tikztonodes}},
  settings/.code={\tikzset{quiver/.cd,#1}
      \def\pv##1{\pgfkeysvalueof{/tikz/quiver/##1}}},
  quiver/.cd,pos/.initial=0.35,height/.initial=0}
\tikzset{tail reversed/.code={\pgfsetarrowsstart{tikzcd to}}}
\tikzset{2tail/.code={\pgfsetarrowsstart{Implies[reversed]}}}
\tikzset{2tail reversed/.code={\pgfsetarrowsstart{Implies}}}
\tikzset{no body/.style={/tikz/dash pattern=on 0 off 1mm}}
\definecolor{blue(pigment)}{rgb}{0.2, 0.2, 0.6}
\definecolor{americanrose}{rgb}{1.0, 0.01, 0.24}
\definecolor{nicegreen}{rgb}{0.0, 0.5, 0.0}
\definecolor{deepmagenta}{rgb}{0.8, 0.0, 0.8}
\definecolor{deepcarrotorange}{rgb}{0.91, 0.41, 0.17}
\definecolor{cadetgrey}{rgb}{0.57, 0.64, 0.69}
\newtheorem{theoremm}{Theorem}[section]
\declaretheorem[style=plain,name=Theorem,numberlike=theoremm]{theorem}
\declaretheorem[style=plain,name=Theorem,numbered=no]{theorem*}
\declaretheorem[style=plain,name=Lemma,numberlike=theoremm]{lemma}
\declaretheorem[style=plain,name=Proposition,numberlike=theoremm]{proposition}
\declaretheorem[style=plain,name=Corollary,numberlike=theoremm]{corollary}
\declaretheorem[style=plain,name=Conjecture,numberlike=theoremm]{conjecture}
\declaretheorem[style=definition,name=Definition,numberlike=theorem]{definition}
\declaretheorem[style=remark,name=Example,numberlike=theorem]{example}
\declaretheorem[style=remark,name=Remark,numberlike=theorem]{remark}
\declaretheorem[style=remark,name=Notation,numberlike=theorem]{notation}
\newcommand{\csym}[1]{\mathcal{#1}}
\newcommand{\on}[1]{\operatorname{#1}}
\newcommand{\setj}[1]{\left\{ #1 \right\}}
\DeclareFontFamily{U}{DSSerif}{\skewchar \font =45}% openface
\DeclareFontShape{U}{DSSerif}{m}{n}{<-> s*[1]  DSSerif}{}
\DeclareMathAlphabet{\mathbbbb}{U}{DSSerif}{m}{n}
\DeclareMathAlphabet\EuRoman{U}{eur}{m}{n}
\SetMathAlphabet\EuRoman{bold}{U}{eur}{b}{n}
\newcommand{\euler}{\EuRoman}
\newcommand{\kotimes}{\otimes_{\Bbbk}}
\newcommand{\projca}{\on{proj}_{\mathcal{C}}(A)}
\newcommand{\modca}{\on{mod}_{\mathcal{C}}(A)}
\newcommand{\modw}[1]{\on{mod}_{\mathcal{C}}(#1)}
\newcommand{\radca}{\on{Rad}_{A}^{\mathcal{C}}}
\newcommand{\pfa}{\on{Kl}_{\cp\otimes A}}
\newcommand{\apf}{\on{Kl}_{A\otimes \cp}}
\newcommand{\cp}{\mathcal{C}_{p}}
\newcommand{\presh}{[\mathcal{C}^{\on{op}},\mathbf{Vec}]}
\newcommand{\preshcp}{[\cp^{\on{op}},\mathbf{vec}]}
\newcommand{\homa}{\on{Hom}_{\blank A}}
\newcommand{\ahom}{\on{Hom}_{A\blank}}
\DeclareMathOperator{\topp}{top}
\newcommand{\cC}{\mathcal{C}}
\newcommand{\cD}{\mathcal{D}}
\newcommand{\cF}{\mathcal{F}}
\newcommand{\maP}{\mathfrak{P}}
\newcommand{\HHom}{\on{Hom}}
\newcommand{\Vecc}{\mathbf{vecfd}}
\newcommand{\Ivecc}{\mathbf{Vec}}
\newcommand{\End}{\on{End}}
\newcommand{\bk}{\Bbbk}
\newcommand{\unit}{\mathbb{1}}
\newcommand{\ot}{\otimes}
\newcommand{\FPdim}{\on{FPdim}}
\newcommand{\Sym}{\on{Sym}}
\newcommand{\iradc}{\mathtt{Rad}^{\mathcal{C}}}
\tikzstyle{tikzfig}=[baseline=-0.25em,scale=0.5]
\tikzstyle{none}=[inner sep=0mm]
\tikzstyle{every loop}=[]
\tikzstyle{whitedot}=[fill=white, draw, shape=circle, scale=0.3, tikzit draw=black, tikzit shape=circle, tikzit fill=white]
\tikzstyle{blackdot}=[fill=black, draw, shape=circle, scale=0.3, tikzit draw=black, tikzit shape=circle, tikzit fill=black]
\tikzstyle{box}=[fill=white, draw=black, shape=rectangle, tikzit fill=white]
\tikzstyle{BL}=[draw=black, shape=circle, fill=black, scale=0.3]
\tikzstyle{PP}=[draw={rgb,255:red,102;green,41;blue,163}, shape=circle, fill={rgb,255:red,102;green,41;blue,163}, scale=0.3]
\tikzstyle{morphism-edge}=[-, draw=black, thick]
\tikzstyle{cotensor}=[-, draw=gray]
\tikzstyle{braid-over}=[-, draw=white, thick, double=black, double distance=0.8pt, tikzit draw={rgb,255: red,128; green,0; blue,128}]
\tikzstyle{purple-over}=[-, draw=white, thick, double={rgb,255:red,102;green,41;blue,163}, double distance=0.8pt, tikzit draw={rgb,255:red,102;green,41;blue,163}]
\tikzstyle{purple}=[-, draw={rgb,255:red,102;green,41;blue,163}, thick]
\tikzstyle{blue-under}=[-, draw={rgb,255:red,0;green,128;blue,128}, thick]
\tikzstyle{ddd}=[-, draw=black, dash dot dot]
\tikzstyle{unit}=[-, draw=black, densely dotted]
\tikzstyle{Front}=[-, draw=black, fill={rgb,255 :red,255; green,255; blue,255}, opacity=0.8]
\tikzstyle{Hidden}=[-, draw=black, fill={rgb,255 :red,255; green,255; blue,255}, opacity=0.2]
\tikzstyle{directed}=[-, thick, black, decoration={markings, mark=at position 0.5 with {\arrow{>}}}, postaction=decorate]
\newcommand{\tikzfig}[1]{{%
    \tikzstyle{every picture}=[tikzfig]
    \IfFileExists{#1.tikz}
    {\input{#1.tikz}}
    {%
      \IfFileExists{figures/#1.tikz}
      {\input{figures/#1.tikz}}
      {\tikz[baseline=-0.5em]{\node[draw=red,font=\color{red},fill=red!10!white] {\textit{#1}};}}%
    }%
}}
\begin{document}

\title{Simple algebras and exact module categories}

\author{Kevin Coulembier}
\address{K.C., School of Mathematics and Statistics, University of Sydney, NSW 2006, Australia}
\email{kevin.coulembier@sydney.edu.au}

\author{Mateusz Stroiński}
\address{M.S., Department of Mathematics, Uppsala University, Box. 480, SE-75106, Uppsala, Sweden}
\email{mateusz.stroinski@math.uu.se}

\author{Tony Zorman}
\address{T.Z., TU Dresden, Institut für Geometrie, Zellescher Weg 12--14, 01062 Dresden, Germany}
\email{tony.zorman@tu-dresden.de}

\subjclass[2020]{18D25, 18M05}

\begin{abstract}
 We verify a conjecture of Etingof and Ostrik, stating that an algebra object in a finite tensor category is exact if and only if it is a finite direct product of simple algebras. Towards that end, we introduce an analogue of the Jacobson radical of an algebra object, similar to the Jacobson radical of a finite-dimensional algebra. We give applications of our main results in the context of incompressible finite symmetric tensor categories.
\end{abstract}

\maketitle

\vspace{-0.5cm}
\tableofcontents

\setlength{\parskip}{0.25em}

\addtocontents{toc}{\SkipTocEntry}
\subsection*{Acknowledgements}\label{sec:acknowledgements}
K.C.~is supported by
ARC grants DP210100251 and FT220100125.
T.Z.~is supported by \textsc{dfg} grant \textsc{kr} \oldstylenums{5036/2--1}.
M.S.~would like to thank Victor Ostrik for explaining~\cite[Conjecture~B.6]{EOf} to him. The authors would like to also thank Johannes Flake and Edmund Heng for their valuable comments.

\section{Introduction}

Let $\bk$ be an arbitrary field.
Finite tensor categories are abelian $\bk$-linear rigid monoidal categories satisfying finiteness assumptions, generalising fusion categories, see~\cite{EO} for an overview. They arise naturally from representation theory and logarithmic conformal field theory.

In the theory of module categories, the notion of exact algebras in finite tensor categories is central. It is known that any exact algebra is a product of simple algebras. Here we prove the converse, as conjectured in~\cite[Conjecture~B.6]{EOf} by Etingof and Ostrik, that every product of simple algebras is exact.  In the special case that the finite tensor category is the module category of a finite dimensional Hopf algebra, the statement was known, see~\cite[Proposition~B.7]{EOf}, since it can be seen to be equivalent to a result of Skryabin in~\cite[Theorem~3.5]{Sk}.

\addtocontents{toc}{\SkipTocEntry}
\subsection*{Mixed subfunctors}

To state our results in more detail, let $\cC$ be a finite tensor category over $\bk$, with category of projective objects $\cp$. We fix an algebra object $A$ in $\cC$. We consider the categories $\apf$ and $\pfa$ of free projective left and right $A$-modules in $\cC$. By slight abuse we can say that these categories have the \emph{same} set of objects as $\cp$.
Proving~\cite[Conjecture~B.6]{EOf} can, via results in~\cite{EOf, St2}, be reduced to the following result, see \autoref{RSlattice}:
\begin{quote}
  There exists a canonical bijection between the set of ideal objects in $A$ and the set of ideals in the category $\pfa$ that are stable under the left action of $\cp$.
\end{quote}

A seemingly new notion we introduce to establish this bijection is that of a \emph{mixed subfunctor}
of the presheaf $\cC(\blank,A)$ on $\cp$, see \autoref{mixedsubfunctor}. These are subfunctors that are simultaneously subfunctors for the interpretation of
$$\cC(\blank,A)\cong \ahom(A\otimes \blank,A)\cong \homa(\blank\otimes A,A)$$ as a presheaf on $\apf$ and on $\pfa$, see \autoref{mixedsubfunctor} for details. By formal manipulations we can see that mixed subfunctors are in bijection with $\cp$-stable ideals in $\pfa$, see \autoref{sec:mixedsubfunctorscpideals}.
We then give two proofs for the fact that mixed subfunctors are in bijection with ideal objects in $A$. One proof uses the idea from~\cite{EOf} to realise a finite tensor category via a pseudo-Hopf algebra. The other takes a more categorical approach, using the monoidal structure on the presheaf category $\presh$ on $\mathcal{C}$ given by \emph{Day convolution}, and establishing a correspondence between mixed subfunctors and representable ideal objects in the algebra object $\mathcal{C}(\blank,A)$, which in turn correspond to ideals in $A$.

Using mixed subfunctors as a bridge between ideals in $A$ and $\cp$-stable ideals in $\pfa$, we connect the latter kind of ideals with exactness of $\modca$ as a $\mathcal{C}$-module category, by expanding on~\cite[Lemma~12]{MM}, and its generalisations~\cite[Theorem~11]{KMMZ},~\cite[Theorem~4.17]{St2}. Namely, we find that the minimal projective presentation of an object of the form $Q \triangleright M$, for $M \in \modca$ and $Q \in \cp$, generates a $\cp$-stable ideal in $\pfa$. Minimality implies that this generator, and hence also the generated ideal, are contained in the Jacobson radical of $\pfa$. It also implies that this generator is zero if and only if $Q \triangleright M$ is projective.
This allows us to develop a theory of Jacobson radicals for algebra objects in finite tensor categories analogous to that for finite-dimensional algebras.

In particular, to an algebra object $A$ we associate an ideal $\iradc(A)$ in $A$, which, under \autoref{RSlattice} corresponds to the greatest nilpotent $\cp$-stable ideal in $\pfa$, and hence $\iradc(A)$ itself is the greatest nilpotent ideal object in $A$. We can now formulate our main result:

\begin{theorem*}[\autoref{thm:mainresult}]
 Let $A$ be an algebra object in a finite tensor category $\mathcal{C}$. The following are equivalent:
\begin{enumerate}[label=(\roman*)]
 \item $A$ is exact.
 \item $\iradc(A) = 0$.
 \item $A$ has no non-zero nilpotent ideals.
 \item $A$ is a finite direct product of simple algebras.
\end{enumerate}
\end{theorem*}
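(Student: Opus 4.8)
The plan is to prove the implications (i) $\Rightarrow$ (iv) $\Rightarrow$ (iii), the equivalence (ii) $\Leftrightarrow$ (iii), and (ii) $\Rightarrow$ (i); together these yield all four equivalences, and the only genuinely new input is (ii) $\Rightarrow$ (i). For (i) $\Rightarrow$ (iv) we invoke the already known fact that an exact algebra object in a finite tensor category is a finite direct product of simple algebras.

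The equivalence (ii) $\Leftrightarrow$ (iii) is immediate from the construction of $\iradc(A)$: under the lattice isomorphism of \autoref{RSlattice} it corresponds to the greatest nilpotent $\cp$-stable ideal of $\pfa$, hence $\iradc(A)$ is itself the greatest nilpotent ideal object of $A$ and so vanishes precisely when $A$ has no non-zero nilpotent ideal object. For (iv) $\Rightarrow$ (iii), write $A \cong A_1 \times \dots \times A_n$ with each $A_j$ simple. Just as for ordinary rings, the central idempotents of the factors force every ideal object of $A$ to be a product $I_1 \times \dots \times I_n$ with each $I_j$ an ideal object of $A_j$, and simplicity gives $I_j \in \{0, A_j\}$; since $A_j$ is unital, the image of its multiplication is all of $A_j$, so $A_j$, regarded as an ideal object of itself, is idempotent and hence --- being non-zero --- not nilpotent. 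Therefore $0$ is the only nilpotent ideal object of $A$.

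The substance of the theorem is (ii) $\Rightarrow$ (i). Assume $\iradc(A) = 0$; then by \autoref{RSlattice} the greatest nilpotent $\cp$-stable ideal of $\pfa$ is $0$. Since $\cC$ is finite, $\pfa$ is equivalent to a full additive subcategory of the category of projective $A$-modules in $\cC$, which is Krull--Schmidt with only finitely many indecomposables and hence has nilpotent Jacobson radical; as the radical of a full subcategory is the restriction of the ambient one, $\on{Rad}(\pfa)$ is nilpotent too. Consequently every $\cp$-stable ideal of $\pfa$ lying inside $\on{Rad}(\pfa)$ is nilpotent, hence contained in the greatest nilpotent $\cp$-stable ideal, which is $0$. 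Now fix $Q \in \cp$ and $M \in \modca$, and let $g$ be the generator of a minimal projective presentation of $Q \triangleright M$ in $\modca$. By the generalisation of \cite[Lemma~12]{MM} established earlier, $g$ generates a $\cp$-stable ideal of $\pfa$, and minimality places this ideal inside $\on{Rad}(\pfa)$; by the preceding sentence the ideal, and therefore $g$, is zero. As $g = 0$ holds exactly when $Q \triangleright M$ is projective, and $Q$ and $M$ were arbitrary, the $\cC$-module category $\modca$ is exact, i.e.\ $A$ is exact.

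Combining the above closes the cycle, so all four conditions are equivalent. All the difficulty sits in (ii) $\Rightarrow$ (i), and within it in the two structural inputs proved earlier in the paper: the lattice isomorphism of \autoref{RSlattice} between ideal objects of $A$ and $\cp$-stable ideals of $\pfa$, and the radical-theoretic behaviour of minimal projective presentations of objects $Q \triangleright M$ --- that they generate $\cp$-stable ideals of $\pfa$ that lie in $\on{Rad}(\pfa)$ and that vanish precisely when $Q \triangleright M$ is projective. The nilpotence of $\on{Rad}(\pfa)$, the product bookkeeping in (iv) $\Rightarrow$ (iii), and the unwinding of $\iradc(A)$ in (ii) $\Leftrightarrow$ (iii) are then routine.
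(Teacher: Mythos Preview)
Your proposal is correct and follows essentially the same route as the paper: the cycle (i)$\Rightarrow$(iv)$\Rightarrow$(iii)$\Leftrightarrow$(ii)$\Rightarrow$(i), with the same key inputs---\autoref{RSlattice}, \autoref{prop:greatestnilpotent}, and \autoref{laststep}---and the same argument for (iv)$\Rightarrow$(iii). One small technical slip: the morphism $g$ in a minimal projective presentation of $Q\triangleright M$ lives in $\projca$, not in $\pfa$ (its source and target need not be free), so strictly speaking it generates a $\cp$-stable ideal of $\projca$; you should either work in $\projca$ throughout or pass through the Cauchy-completion correspondence of \autoref{lem:CauchyCorr}, as the paper does via ${(\radca)}^{\mathsf{c}}$.
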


This is again analogous with the theory of finite-dimensional algebras, since $A$ is exact if and only if every $M \in \modca$ is \emph{$\mathcal{C}$-projective}, i.e.\ its internal hom functor $\hom{M,\blank}$ is exact, so exactness of $A$ is analogous, and, as we show, equivalent, to semisimplicity.
Finally, in \autoref{thm:semisimplequotient}, we show that the category of $A/\iradc(A)$-modules, viewed as a full subcategory of $\modca$, consists precisely of the subquotients of objects of the form $Q \triangleright L$, for $Q \in \cp$ and $L$ a semisimple object of $\modca$. In order to do this, we give a description of the submodule $\iradc(M) = M\iradc(A)$ of $M$ in \autoref{prop:DayRadicalModule}.

In the presence of a fibre functor, where $\modca$ is the category of modules over an $H$-module algebra, for a finite-dimensional Hopf algebra $H$, the properties of $\iradc(A)$, given by the largest $H$-stable ideal in the radical of $A$, were studied previously in~\cite{LMS}, and, in a more general Hopf-theoretic setting, in~\cite{Fi}.

\addtocontents{toc}{\SkipTocEntry}
\subsection*{Motivation and applications}

The base example of a finite tensor category is the category $\Vecc$ of finite dimensional vector spaces. In $\Vecc$ every simple algebra is a Frobenius algebra. It is known that this statement does not extend to general finite tensor categories, see~\cite[Example~5.24]{Sh}. In~\cite{Sh}, Shimizu introduced the notion of a quasi-Frobenius algebra (which are algebras Morita equivalent to Frobenius algebras), and conjectured that all simple algebras are quasi-Frobenius, see~\cite[Conjecture~5.25]{Sh}. As noted \textit{loc cit.}\ the conjecture is equivalent to~\cite[Conjecture~B.6]{EOf}, so we also prove Shimizu's conjecture.

If $\cC$ is braided and $A$ is an indecomposable commutative algebra in $A$, then the category of $A$-modules in $\cC$ is again a (finite) tensor category if and only if $A$ is exact. It thus follows from the result in this paper that any simple commutative algebra gives rise to another tensor category. This is of interest for instance in modular and symmetric cases:

In the theory of vertex operator algebras, for a vertex operator algebra extension $V\subset W$, it is so that $W$ becomes a commutative algebra in the braided monoidal representation category of $V$ (under suitable assumptions) such that the representation category of $W$ is obtained as the category of (local) modules over this algebra. Our result can thus be used to get broader results on when representation categories of vertex operator algebras are rigid. For instance, see~\cite[Theorem~5.21]{SY} and~\cite[Theorem~2.24]{CMSY} for results on these questions that can now be extended.

For symmetric (not necessarily finite) tensor categories, it is an interesting question in the ongoing classification of incompressible categories whether simple commutative (ind-)algebras are exact, see~\cite[Question~6.1.4]{CEO}. We thus answer this question in the affirmative for finite symmetric tensor categories. We give some more detailed applications of our results to finite symmetric tensor categories in \autoref{SecSymm}.

\addtocontents{toc}{\SkipTocEntry}
\subsection*{Structure} The remainder of the paper is organised as follows. In \autoref{sec:prelim}, we recall some preliminaries and introduce notation for finite tensor categories, their categories of projective objects, their presheaf categories, module categories and ideals therein. Further, we introduce the notion of a mixed subfunctor (\autoref{mixedsubfunctor}) and establish the notation for all the kinds of ideals we will consider. In \autoref{sec:1mix}, we establish a bijection (\autoref{mix1main}) between ideals in an algebra object $A$ in a finite tensor category $\mathcal{C}$, and mixed subfunctors of ${\mathcal{C}(\blank,A)}_{|\cp}$, using a realisation of $\mathcal{C}$ via a pseudo-Hopf algebra. We establish the same bijection using Day convolution and representable presheaves in \autoref{sec:2mix}. We then establish a bijection between mixed subfunctors of ${\mathcal{C}(\blank,A)}_{|\cp}$ and $\cp$-stable ideals in $\pfa$ in \autoref{sec:mixedsubfunctorscpideals}, resulting in a bijection between the latter kind of ideals, and ideals in $A$, \autoref{RSlattice}. In \autoref{sec:nilpotent}, we define and study products of various kinds of ideals, introduce the $\mathcal{C}$-module radical of an algebra object (\autoref{def:cmoduleradical}), and show that $A$ is exact if and only if $\iradc(A) = 0$ (\autoref{thm:semisimplecharacterization}), which implies~\cite[Conjecture~B.6]{EOf}.
In \autoref{sec:mainresult}, we first give a module-theoretic proof of the fact that an exact algebra is a finite direct product of simple algebras (\autoref{prop:splitalg}). We then use the results of preceding sections to state our main result, \autoref{thm:mainresult}, and give a short proof thereof.
In \autoref{sec:radmod2}, we define the radical of a module object (\autoref{def:MI}), using which we characterise $A/\iradc(A)$-modules (\autoref{thm:semisimplequotient}).
\autoref{sec:Skryabin} explains how our results specialize to Skryabin's result,~\cite[Theorem~3.5]{Sk}, in the presence of a fibre functor on $\mathcal{C}$.
The paper concludes with \autoref{SecSymm}, which contains the applications to finite symmetric tensor categories.

\section{Preliminaries}\label{sec:prelim}

\subsection{Finite tensor categories}

Let $\bk$ be a field. We refer to~\cite{EGNO} for the standard notions of (finite) tensor categories over $\bk$. However, contrary to~\cite{EGNO} we do not assume that the base field $\bk$ is algebraically closed. A $\bk$-linear monoidal category $(\cC,\otimes,\unit)$ (with $\bk$-bilinear tensor product) is a finite tensor category if
\begin{enumerate}
    \item $(\cC,\otimes,\unit)$ is rigid, meaning every $X\in \cC$ has a left dual $\ld{X}$ and a right dual $\rd{X}$ (denoted by $X^\ast$ and ${}^\ast X$ respectively in~\cite{EGNO}).
    Given an object $V \in \cC$, we denote the
    \emph{evaluation} and \emph{coevaluation} (sometimes referred to as \emph{counit} and \emph{unit}) morphisms as follows:
    \[
    \ev_{V}\colon \ld{V} \otimes V \to \unit, \; \coev_{V}\colon \unit \to V \otimes \ld{V}
    \]
    \item $\cC$ is equivalent to the category of finite dimensional modules of a finite dimensional associative $\bk$-algebra, meaning that $\cC$ has enough projective objects, finite dimensional morphism spaces, objects of finite length and only finitely many simple objects up to isomorphism.
    \item $\bk\to\End(\unit)$ is an isomorphism.
\end{enumerate}
Useful properties that follow from these assumptions is that the tensor product is exact and $\unit$ is a simple object. As a consequence of the latter, for any non-zero object $V\in \cC$, the coevaluation morphism $\coev_{{V}}$ is mono and the evaluation morphism $\ev_{{V}}$ is epic.

Throughout the paper, let $\mathcal{C}$ be a finite tensor category over $\bk$, and let $\cp$ denote the full subcategory of projective objects in $\mathcal{C}$. Further, throughout we let
$A=(A,\mu,\eta)$ be an algebra object in $\mathcal{C}$, let $\modca$ denote the category of right $A$-module objects, and let $\projca$ denote the full subcategory of projective objects in $\modca$. For $M,N \in \modca$, we write $\homa(M,N)$ for $\modca(M,N)$ and similarly the notation $\ahom$ refers to morphisms of left $A$-modules.

Recall that $\modca$ is a $\mathcal{C}$-module category, where, for $M \in \modca$ and $V \in \mathcal{C}$, the action of $V$ on $M$ is defined as the following $A$-module:
\begin{equation}
(V \triangleright M, \nabla_{V \triangleright M}) = (V \otimes M, V \otimes \nabla_{M}).
\end{equation}
We use the notation $V \triangleright M$, rather than the common alternative $V \otimes M$, both to make our choice of $A$-action unambiguous, but also to indicate that many of our definitions and results can be translated into the setting of abstract $\mathcal{C}$-module categories. For a left $A$-module $N$, we find a similarly defined left $A$-module $N \triangleleft V$.

\begin{definition}
 Let $\pfa$ denote the full subcategory of $\modca$ whose objects are those of the form $P \triangleright A$ for $P \in \cp$. The notation indicates that this is the Kleisli category for the monad $\blank \otimes A$ on $\cp$, viewed as a subcategory of $\modca$, which itself is the Eilenberg--Moore category for $\blank \otimes A$ viewed as a monad on $\mathcal{C}$. In particular, being a Kleisli category, the object set of $\pfa$ canonically identifies with the object set of $\cp$, on which the monad $\blank \otimes A$ operates.
\end{definition}

In \autoref{def:cpstableideal} below, we only consider the cases $\mathcal{M} = \projca$ and $\mathcal{M} = \pfa$.
\begin{definition}\label{def:cpstableideal}
    Let $\mathcal{M}$ be a $\mathcal{C}$-module subcategory of $\modca$. A \emph{$\cp$-stable ideal } $\csym{J}$ in $\mathcal{M}$ consists of a collection of subspaces
$
\csym{J}(M,N) \subseteq \homa(M,N)
$
such that for $f \in \homa(M,M')$ and $g\in \homa(M',M'')$, we have $g \circ f \in \csym{J}(M,M'')$ if $f \in \csym{J}(M,M')$ or $g \in \csym{J}(M',M'')$, and $f \in \csym{J}(M,M')$ implies $Q \triangleright f \in \csym{J}(Q \triangleright M, Q\triangleright M')$, for $Q \in \cp$.
\end{definition}

\begin{notation}
    In the sequel, we will use four different kinds of structure, between which we will establish a chain of bijections. In order to maintain clear distinction between these, we record the notation we will use for each here:
    \begin{enumerate}[label=(N\arabic*)]
        \item
        We write the \emph{mixed subfunctors} of $\mathcal{C}(\blank,A)\colon \cp^{\on{op}} \to \Ivecc$ (see \autoref{mixedsubfunctor}) in a monospaced font: $\mathtt{I,J}$, etc.
        \item
        We denote the \emph{ideals in the algebra $\mathcal{C}(\blank,A)$} (see \autoref{idealinDay}), in particular also the representable ideals in $\mathcal{C}(\blank,A)$, by upright Roman letters: $\euler{I,J}$, etc.
        \item\label{idealnotation}
        We denote the \emph{$\cp$-stable ideals in $\projca$}, as well as ideals in categories more generally, by calligraphic letters: $\csym{I,J}$, etc.
        \item
        As an exception to~\ref{idealnotation}, we write the \emph{$\cp$-stable ideals in $\pfa$} (see \autoref{def:cpstableideal}) in Fraktur: $\mathfrak{I,J}$, etc.
        For $P,P' \in \cp$, we write $\mathfrak{J}(P,Q)$ for $\mathfrak{J}(P\triangleright A, Q\triangleright A)$.
    \end{enumerate}
\end{notation}

\begin{lemma}
  Any object of $\,\projca$ is a direct summand of an object in $\pfa$. Thus, $\projca$ is the idempotent closure of $\,\pfa$ in $\modca$.
\end{lemma}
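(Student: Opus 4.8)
The plan is to exploit the free--forgetful adjunction. Write $F = \blank \otimes A \colon \cC \to \modca$ for the free right $A$-module functor and $U \colon \modca \to \cC$ for the forgetful functor, so that $F \dashv U$, with counit at $M \in \modca$ the action map $\nabla_M \colon U(M) \otimes A \to M$ and unit at $X \in \cC$ the map $X \otimes \eta \colon X \to X \otimes A$. Recall that $U$ is exact (kernels and cokernels of $A$-module maps are computed in $\cC$) and faithful, hence reflects epimorphisms.

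First I would record that $\pfa \subseteq \projca$: for $P \in \cp$ the functor $\modca(F(P), \blank) \cong \cC(P, U(\blank))$ is a composite of exact functors, so $P \triangleright A = F(P)$ is projective in $\modca$. Since a retract of a projective object is again projective (a routine lifting argument), it follows that every object of $\modca$ occurring as a direct summand of an object of $\pfa$ already lies in $\projca$.

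For the converse inclusion, let $M \in \projca$. Because $\cC$ has enough projectives, choose an epimorphism $p \colon P \twoheadrightarrow U(M)$ in $\cC$ with $P \in \cp$, and form $g \defeq \nabla_M \circ F(p) \colon F(P) \to M$ in $\modca$. Applying $U$ yields $U(g) = \nabla_M \circ (p \otimes A)$; here $p \otimes A$ is epic since $\otimes$ is exact, and $\nabla_M$ is a split epimorphism with section $U(M) \otimes \eta$ by the module unit axiom, so $U(g)$ --- and therefore, as $U$ reflects epimorphisms, also $g$ --- is epic. Since $M$ is projective, $g$ splits, exhibiting $M$ as a direct summand of $F(P) \in \pfa$. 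Together with the first step, this shows that $\projca$ is exactly the class of direct summands in $\modca$ of objects of $\pfa$, i.e.\ its idempotent closure therein.

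The only step that needs any care is verifying that $g$ is an epimorphism of $A$-modules; this is precisely where exactness of $\otimes$, the splitting of $\nabla_M$, and faithfulness of $U$ enter. The rest is formal.
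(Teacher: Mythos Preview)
Your proof is correct and is the standard argument; the paper itself states this lemma without proof, treating it as well known. Your use of the free--forgetful adjunction, exactness of $\otimes$, and faithfulness of $U$ to produce a split epimorphism $P \triangleright A \twoheadrightarrow M$ is exactly what one expects here.
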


\begin{notation}\label{projectivenotation}
   We use symbols $P,Q,P',Q'$ and the like to denote the objects of $\cp$. We use symbols $R,S,R',S'$ and the like to denote the objects of $\projca$. When associating a projective object to a not necessarily projective object, we use the latter as a superscript for the former. Thus, for example, using the standard subscript notation for projective resolutions, we may write the projective resolution of an object $V \in \mathcal{C}$ as
\[\begin{tikzcd}
	{\cdots } & {P_{2}^{V}} & {P_{1}^{V}} & {P_{0}^{V}} & V
	\arrow[from=1-1, to=1-2]
	\arrow["{p_{2}^{V}}", from=1-2, to=1-3]
	\arrow["{p_{1}^{V}}", from=1-3, to=1-4]
	\arrow["{p_{0}^{V}}", two heads, from=1-4, to=1-5]
\end{tikzcd}\]
\end{notation}

The following is a well-known property of idempotent closures, see e.g.\ \cite[Corollary~10.15]{St1}.
\begin{lemma}\label{lem:CauchyCorr}
 Let $\csym{J}$ be a $\cp$-stable ideal in $\projca$. Then $\csym{J}_{|\pfa}$ defines a $\cp$-stable ideal in $\pfa$. Further, a $\cp$-stable ideal $\mathfrak{J}$ in $\pfa$ admits a unique extension $\mathfrak{J}^{\mathsf{c}}$ to a $\cp$-stable ideal in $\projca$, establishing a bijection between $\cp$-stable ideals in $\pfa$ and in $\projca$.
\end{lemma}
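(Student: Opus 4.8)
The plan is to reduce everything to the standard correspondence between ideals in an additive category and ideals in its idempotent closure, recorded in \cite[Corollary~10.15]{St1}, and then to check that this correspondence is compatible with the $\cp$-action; recall from the preceding lemma that $\projca$ is precisely the idempotent closure of $\pfa$ inside $\modca$.

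First I would dispose of the restriction direction. Since $\cp$ is a (two-sided) tensor ideal of $\mathcal{C}$, for $Q\in\cp$ we have $Q\triangleright(P\triangleright A)=(Q\ot P)\triangleright A\in\pfa$, so $\pfa$ is a full $\cp$-module subcategory of $\projca$; hence the subspaces $\csym{J}(M,N)$ with $M,N\in\pfa$ form a $\cp$-stable ideal $\csym{J}_{|\pfa}$ of $\pfa$, all the defining conditions of \autoref{def:cpstableideal} being inherited verbatim.

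For the extension direction, given a $\cp$-stable ideal $\mathfrak{J}$ in $\pfa$ and objects $R,S\in\projca$, I would pick section/retraction pairs $\iota_R\colon R\to P\triangleright A$, $\rho_R\colon P\triangleright A\to R$ with $\rho_R\iota_R=\id_R$ (and similarly $\iota_S,\rho_S$ for $S$), which exist by the preceding lemma, and set
\[
  \mathfrak{J}^{\mathsf{c}}(R,S)\defeq\{\,g\in\homa(R,S)\mid \iota_S\circ g\circ\rho_R\in\mathfrak{J}(P\triangleright A,\,Q\triangleright A)\,\}.
\]
By \cite[Corollary~10.15]{St1} this is independent of the chosen pairs, is an ideal in $\projca$ restricting to $\mathfrak{J}$ on $\pfa$, is the \emph{unique} ideal of $\projca$ with that restriction, and the assignments $\csym{J}\mapsto\csym{J}_{|\pfa}$ and $\mathfrak{J}\mapsto\mathfrak{J}^{\mathsf{c}}$ are mutually inverse on ordinary ideals. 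The only remaining point is that $\mathfrak{J}^{\mathsf{c}}$ is again $\cp$-stable: the endofunctor $Q\triangleright(\blank)$ of $\modca$ is additive and maps $\pfa$ into $\pfa$, so it carries a summand datum $(\iota_R,\rho_R)$ to the summand datum $(Q\triangleright\iota_R,\,Q\triangleright\rho_R)$ exhibiting $Q\triangleright R$ as a summand of $(Q\ot P)\triangleright A$; applying $Q\triangleright(\blank)$ to $\iota_S\circ g\circ\rho_R\in\mathfrak{J}$ and invoking $\cp$-stability of $\mathfrak{J}$ then yields $Q\triangleright g\in\mathfrak{J}^{\mathsf{c}}(Q\triangleright R,Q\triangleright S)$. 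Hence the bijection of ordinary ideals restricts to the claimed bijection of $\cp$-stable ideals.

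I do not expect a genuine obstacle: the conceptual content — well-definedness and uniqueness of the extension — is exactly the cited property of idempotent closures, and the extra $\cp$-equivariance is the formal fact that an additive functor preserving $\pfa$ commutes with splitting off summands. If a self-contained account were preferred, the one mildly delicate verification is independence of $\mathfrak{J}^{\mathsf{c}}(R,S)$ from the choice of section/retraction data, which is handled by comparing two choices through the idempotents $\iota\rho$ and using that $\mathfrak{J}$ is a two-sided ideal.
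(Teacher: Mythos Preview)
Your proposal is correct and follows exactly the paper's approach: the paper does not give a proof but simply cites \cite[Corollary~10.15]{St1} for the idempotent-closure correspondence, leaving the compatibility with the $\cp$-action implicit. Your write-up supplies the (straightforward) verification of $\cp$-stability that the paper omits, so it is strictly more detailed but otherwise identical in strategy.
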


Let $\mathbf{Vec}$ denote the category of (not necessarily finite-dimensional) vector spaces over $\Bbbk$, and let $\mathbf{vecfd}$ denote its full subcategory of finite-dimensional vector spaces.
Recall that the category $\presh$ of $\Bbbk$-linear presheaves on $\mathcal{C}$ is monoidal under so-called \emph{Day convolution}. For $\Phi, \Psi \in \presh$, their tensor product is given by
\[
 (\Phi \oast \Psi)(X) = \int^{U,V \in \mathcal{C}} \mathcal{C}(X, U \otimes V) \kotimes \Phi(U) \kotimes \Psi(V).
\]
In particular, the Yoneda embedding $\yo\colon \mathcal{C} \to \presh$ is a fully faithful, left exact, strong monoidal $\Bbbk$-linear functor. We can characterise its image as follows:
\begin{lemma}
 A presheaf $\Phi \in \presh$ is representable if and only if it is left exact and $\Phi(V)$ is finite-dimensional for all $V \in \mathcal{C}$.
\end{lemma}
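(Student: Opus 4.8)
The plan is to prove the two implications separately; the forward one is formal and all the content is in the converse. If $\Phi\cong\mathcal{C}(\blank,X)$ then $\Phi$ sends colimits in $\mathcal{C}$ to limits and so is left exact, while $\Phi(V)=\mathcal{C}(V,X)$ is finite-dimensional by the defining finiteness axioms of a finite tensor category. For the converse, the strategy is to cut $\Phi$ down to a functor on $\cp$ and then recognise it by Morita theory. So suppose $\Phi$ is left exact with $\Phi(V)$ finite-dimensional for every $V$.

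First I would show that a left exact presheaf is rigidly controlled by its restriction to $\cp$. Since $\mathcal{C}$ has enough projectives, any $V$ has a projective presentation $P_{1}^{V}\xrightarrow{p}P_{0}^{V}\to V\to 0$, which exhibits $V$ as $\on{coker}p$; left exactness of the contravariant functor $\Phi$ then gives a natural isomorphism $\Phi(V)\cong\ker\bigl(\Phi(p)\colon\Phi(P_{0}^{V})\to\Phi(P_{1}^{V})\bigr)$, and the same kernel description holds for every representable $\yo X$, these being left exact as well. A diagram chase with these presentations shows that restriction along $\cp\hookrightarrow\mathcal{C}$ is fully faithful on the full subcategory of left exact presheaves; in particular every isomorphism $\Phi_{|\cp}\cong(\yo X)_{|\cp}$ of functors on $\cp^{\on{op}}$ lifts to an isomorphism $\Phi\cong\yo X$ in $\presh$. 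It therefore suffices to produce $X\in\mathcal{C}$ with $\Phi_{|\cp}\cong(\yo X)_{|\cp}$.

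This is the step where the finiteness of $\mathcal{C}$ is used in an essential way. Fix a projective generator $P$ of $\mathcal{C}$ (for instance the sum of the indecomposable projectives) and set $E\defeq\End_{\mathcal{C}}(P)$, a finite-dimensional algebra. Then $\mathcal{C}(P,\blank)$ identifies $\mathcal{C}$ with the category of finite-dimensional right $E$-modules and $\cp=\on{add}(P)$ with its projective objects. On the other hand, the additive Yoneda lemma applied to the Karoubian category $\cp=\on{add}(P)$ says that evaluation at $P$ is an equivalence from the category of finite-dimensional-valued additive functors $\cp^{\on{op}}\to\mathbf{vec}$ onto finite-dimensional right $E$-modules, carrying $(\yo X)_{|\cp}$ to $\mathcal{C}(P,X)$. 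Composing, $X\mapsto(\yo X)_{|\cp}$ is an equivalence of $\mathcal{C}$ with the category of finite-dimensional-valued additive functors on $\cp^{\on{op}}$. Since $\Phi$ is pointwise finite-dimensional, $\Phi_{|\cp}$ lies in this category, hence in the essential image, which yields the desired $X$; by the previous paragraph this gives $\Phi\cong\yo X$, finishing the proof.

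The main obstacle I anticipate is bookkeeping rather than ideas: checking that the kernel presentation of $\Phi(V)$ is natural in $V$ and independent of the chosen projective presentation, so that restriction to $\cp$ really is fully faithful on left exact presheaves, and keeping the variance and "op" conventions consistent when matching the Morita equivalence $\mathcal{C}\simeq$ (finite-dimensional right $E$-modules) against the additive Yoneda description of functors on $\cp^{\on{op}}$. Conceptually there is nothing deeper than the observation that "left exact and pointwise finite-dimensional" is precisely the condition singling out the coherent presheaves, and that the three finiteness hypotheses on $\mathcal{C}$ (enough projectives, finite-dimensional Hom-spaces, finitely many simple objects) force this category to be $\mathcal{C}$ itself.
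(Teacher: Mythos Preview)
Your proof is correct and takes a genuinely different route from the paper's. The paper argues via the Ind-completion: a left exact presheaf on $\mathcal{C}$ has a filtered category of elements and so corresponds to an object $W$ of $\mathbf{Ind}(\mathcal{C})$; then pointwise finite-dimensionality on projectives forces $W$ to have finite length, hence to lie in $\mathcal{C}$. You instead restrict to $\cp$, observe (via projective presentations) that this restriction is fully faithful on left exact presheaves, and then use Morita theory for the projective generator $P$ to identify finite-dimensional additive functors on $\cp^{\on{op}}$ with $\mathcal{C}$. Your approach is more elementary in that it avoids the language of ind-objects and locally finitely presentable categories, and it has the pleasant side effect of establishing the equivalence $\mathbf{Lex}(\mathcal{C}^{\on{op}},\mathbf{vec})\simeq\mathbf{Fun}_{\bk}(\cp^{\on{op}},\mathbf{vec})$ along the way; the paper records this separately as \autoref{lexinj}. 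The paper's argument, on the other hand, is shorter once one is willing to quote standard facts about $\mathbf{Ind}(\mathcal{C})$, and makes more transparent exactly where the finite-length hypothesis enters. The variance bookkeeping you flag is indeed the only delicate point in your argument, and you have correctly identified it.
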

\begin{proof}
 The Ind-completion $\mathbf{Ind}(\mathcal{C})$ is a locally finitely presentable category, whose category of compact objects is precisely $\mathcal{C}$.
 Since a presheaf $\Phi \in \presh$ is left exact if and only if its category of elements is filtered,
 in that case it is of the form $\Phi \cong \mathrm{fcolim}_i \cat{C}(\blank, V_{i})$.
 Thus, the functor
 $\mathbf{Ind}(\mathcal{C})(\blank, \Phi)\colon \mathbf{Ind}(\cat{C}) \to \kVect$ factors through the Yoneda embedding, and there is an isomorphism $\mathbf{Ind}(\cat{C})(\blank, \Phi)|_{\yo(\cat{C})} \cong \Phi$.

 If $W \in \mathcal{C}$ then $\mathbf{Ind}(\mathcal{C})(V,W) \cong \mathcal{C}(V,W)$ is finite-dimensional. On the other hand, if $\,\mathcal{C}(P,W)$ is finite-dimensional for all $P \in \cp$ then $W$ is of finite length, and so $W \in \mathcal{C}$.
\end{proof}

Since $\yo$ is a monoidal functor, it sends algebra and (bi)module objects in $\mathcal{C}$ to algebra and (bi)module objects in $\presh$ endowed with Day convolution. In particular, for an algebra object $A$ in $\mathcal{C}$, the multiplication on $\mathcal{C}(\blank,A)$ is given by the map
\begin{equation}\label{DayMultiplication}
 \mu_{\mathcal{C}(\blank,A)}\colon \int^{V,W \in \mathcal{C}} \mathcal{C}(\blank, V \otimes W) \kotimes \mathcal{C}(V,A) \kotimes \mathcal{C}(W,A) \to \mathcal{C}(\blank,A)
\end{equation}
corresponding to the collection
\begin{equation}\label{DayMultiply}
\begin{aligned}
 \mathcal{C}(X, V \otimes W) \kotimes \mathcal{C}(V,A) \kotimes \mathcal{C}(W,A) \to \mathcal{C}(X,A),\\
 \varphi \otimes g \otimes h \mapsto \mu_{A} \circ (g \otimes h) \circ \varphi
\end{aligned}
\end{equation}
dinatural in $V,W$, under the universal property of the coend.
Henceforth, $\mathcal{C}(\blank,A)$ will always be considered as an algebra object in the presheaf category $\presh$.

\begin{corollary}
 Let $I$ be an ideal in $A$, then the presheaf $\mathcal{C}(\blank,I)$ is an ideal in the algebra $\mathcal{C}(\blank,A)$.
\end{corollary}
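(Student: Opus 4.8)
The plan is to read the statement off the general principle, recalled just above, that the Yoneda embedding $\yo=\mathcal{C}(\blank,-)\colon\mathcal{C}\to\presh$ is a fully faithful, left exact, strong monoidal functor, and therefore sends (bi)module objects to (bi)module objects and, being left exact, preserves monomorphisms. First I would reformulate what it means for $I$ to be an ideal in $A$: equivalently, $I$ carries an $A$-bimodule structure with structure maps $\lambda\colon A\otimes I\to I$ and $\rho\colon I\otimes A\to I$, and comes equipped with a monomorphism $\iota_I\colon I\hookrightarrow A$ that is a morphism of $A$-bimodules into the regular bimodule $A$. Concretely, $\lambda$ and $\rho$ are the factorizations (unique, as $\iota_I$ is mono) determined by $\iota_I\circ\lambda=\mu_A\circ(\id_A\otimes\iota_I)$ and $\iota_I\circ\rho=\mu_A\circ(\iota_I\otimes\id_A)$, which say precisely that the image of $I$ is stable under left and right multiplication by $A$.

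Applying $\yo$ then does almost all of the work. Since $\yo$ is strong monoidal it carries the bimodule object $I$ to a bimodule object $\mathcal{C}(\blank,I)$ over the algebra object $\mathcal{C}(\blank,A)$ — this is the regular-bimodule instance of the already-recorded fact that $\yo$ preserves (bi)module objects, with $\mathcal{C}(\blank,A)$ having the multiplication described in \eqref{DayMultiplication}--\eqref{DayMultiply} — and it carries the bimodule morphism $\iota_I$ to a morphism of $\mathcal{C}(\blank,A)$-bimodules $\mathcal{C}(\blank,\iota_I)\colon\mathcal{C}(\blank,I)\to\mathcal{C}(\blank,A)$. As $\yo$ is left exact and $\iota_I$ is mono, $\mathcal{C}(\blank,\iota_I)$ is mono, so $\mathcal{C}(\blank,I)$ is a sub-bimodule of the regular bimodule $\mathcal{C}(\blank,A)$; unwinding \autoref{idealinDay}, this is exactly the assertion that $\mathcal{C}(\blank,I)$ is an ideal. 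Explicitly, the composite $\mu_{\mathcal{C}(\blank,A)}\circ(\id\oast\mathcal{C}(\blank,\iota_I))\colon\mathcal{C}(\blank,A)\oast\mathcal{C}(\blank,I)\to\mathcal{C}(\blank,A)$ factors through $\mathcal{C}(\blank,\iota_I)$ via $\mathcal{C}(\blank,\lambda)$ (transported along the monoidal constraint of $\yo$), and the left-hand analogue factors via $\mathcal{C}(\blank,\rho)$.

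If one prefers a hands-on verification in place of the abstract monoidality argument, the same computation can be carried out pointwise from \eqref{DayMultiply}: on a generator $\varphi\otimes g\otimes h$ of the coend with $h\in\mathcal{C}(W,I)$ one has $\mu_A\circ(g\otimes(\iota_I\circ h))\circ\varphi=\iota_I\circ\bigl(\lambda\circ(g\otimes h)\circ\varphi\bigr)$, which lies in the image of $\mathcal{C}(X,\iota_I)$; checking compatibility with the dinatural structure shows this descends to the desired factorization of $\mu_{\mathcal{C}(\blank,A)}$, and the left action is symmetric. I expect the only real obstacle to be bookkeeping — ensuring the monoidal structure constraints of $\yo$ are inserted so that $\yo(\lambda)$ and $\yo(\rho)$ really do give the restricted Day multiplications — and there is no finiteness subtlety: $\mathcal{C}(\blank,I)$ is moreover representable, so it is in fact a representable ideal in $\mathcal{C}(\blank,A)$.
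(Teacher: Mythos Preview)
Your proposal is correct and matches the paper's (implicit) reasoning: the corollary is stated without proof, immediately after the observation that $\yo$ is strong monoidal and hence sends (bi)module objects to (bi)module objects, together with the left exactness of $\yo$. Your only superfluous step is the appeal to \autoref{idealinDay}, which in the paper appears \emph{after} this corollary; but your argument does not actually need it, since having shown $\mathcal{C}(\blank,I)$ is a sub-bimodule of the regular bimodule already establishes the claim.
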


\begin{corollary}\label{idealinDay}
 A subpresheaf $\euler{I}$ of $\mathcal{C}(\blank,A)$ is an ideal in $\mathcal{C}(\blank,A)$ if for all $U,V,W \in \mathcal{C}$, all $\varphi \in \mathcal{C}(U,V\otimes W)$ and morphisms $f \in \mathcal{C}(V,A), g \in \mathcal{C}(W,A)$, we have $\mu_{A} \circ (f \otimes g) \circ \varphi \in \euler{I}(U)$ provided that either $f \in \euler{I}(V)$ or $g \in \euler{I}(W)$.
\end{corollary}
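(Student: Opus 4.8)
The plan is to unwind what it means for a subpresheaf to be an ideal in the algebra object $\mathcal{C}(\blank,A)$, and then to identify the relevant images objectwise. By definition, the subpresheaf $\euler{I}\hookrightarrow\mathcal{C}(\blank,A)$ is an ideal precisely when the composite
\[
\euler{I}\oast\mathcal{C}(\blank,A)\to\mathcal{C}(\blank,A)\oast\mathcal{C}(\blank,A)\xrightarrow{\mu_{\mathcal{C}(\blank,A)}}\mathcal{C}(\blank,A),
\]
whose first leg is induced by the inclusion, factors through $\euler{I}$, and symmetrically for the composite built from $\mathcal{C}(\blank,A)\oast\euler{I}$. Since $\presh$, being a category of $\mathbf{Vec}$-valued functors, is abelian with colimits, kernels and images computed objectwise, such a factorisation exists if and only if, for every $U\in\mathcal{C}$, the image of the corresponding $\bk$-linear map at $U$ is contained in $\euler{I}(U)\subseteq\mathcal{C}(U,A)$. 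So it suffices to compute those images.

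For the first composite, $(\euler{I}\oast\mathcal{C}(\blank,A))(U)=\int^{V,W\in\mathcal{C}}\mathcal{C}(U,V\otimes W)\kotimes\euler{I}(V)\kotimes\mathcal{C}(W,A)$ is, being a coend in $\mathbf{Vec}$, a quotient of the coproduct $\bigoplus_{V,W}\mathcal{C}(U,V\otimes W)\kotimes\euler{I}(V)\kotimes\mathcal{C}(W,A)$ by the dinaturality relations. The image of a linear map out of such a quotient coincides with the image of its precomposition with the quotient projection; by the formula \eqref{DayMultiply} for $\mu_{\mathcal{C}(\blank,A)}$, this image at $U$ is the subspace spanned by all elements $\mu_{A}\circ(f\otimes g)\circ\varphi$ with $\varphi\in\mathcal{C}(U,V\otimes W)$, $f\in\euler{I}(V)$ and $g\in\mathcal{C}(W,A)$, as $V,W$ range over $\mathcal{C}$. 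The hypothesis, applied with $f\in\euler{I}(V)$, says precisely that every such element lies in $\euler{I}(U)$; hence the first composite factors through $\euler{I}$, so $\euler{I}$ is a left ideal. The identical argument applied to $\mathcal{C}(\blank,A)\oast\euler{I}$, now using the clause $g\in\euler{I}(W)$, shows that $\euler{I}$ is a right ideal, hence a two-sided ideal, as claimed.

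The one step that deserves attention is the coend bookkeeping: one must use that a coend in $\presh$ is a colimit computed objectwise in $\mathbf{Vec}$ — so it is a quotient of the evident coproduct — and that the image of a map out of a colimit is already visible on the universal cocone, here the maps \eqref{DayMultiply} before passing to the quotient. Everything else is a direct transcription of \eqref{DayMultiply}. I would also note that the converse implication holds by the same computation: if $\euler{I}$ is an ideal then, applying the ideal map at $V\otimes W$ to the class of $\id_{V\otimes W}\otimes f\otimes g$ and using that $\euler{I}$ is a subpresheaf, one gets $\mu_{A}\circ(f\otimes g)\circ\varphi\in\euler{I}(U)$; but only the stated direction is needed in what follows.
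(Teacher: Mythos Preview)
Your proof is correct and follows essentially the same approach as the paper: both unwind the definition of a two-sided ideal in the Day-convolution algebra $\mathcal{C}(\blank,A)$ into objectwise image-containment conditions for the maps \eqref{leftsubmodule} and \eqref{rightsubmodule}, and then identify these via \eqref{DayMultiply} with the elementwise condition in the statement. One cosmetic slip: the factorisation of $\euler{I}\oast\mathcal{C}(\blank,A)\to\mathcal{C}(\blank,A)$ through $\euler{I}$ makes $\euler{I}$ a \emph{right} ideal (a right $\mathcal{C}(\blank,A)$-submodule), not a left one, but since you treat both sides this does not affect the conclusion.
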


\begin{proof}
    Being an ideal in the algebra object $\mathcal{C}(\blank,A)$ implies that $\euler{I}$ is a left $\mathcal{C}(\blank,A)$-submodule of $\mathcal{C}(\blank,A)$, i.e.\ that for every $U \in \mathcal{C}$, the image of the map
 \begin{equation}\label{leftsubmodule}
 \begin{aligned}
 &\int^{V,W \in \mathcal{C}} \mathcal{C}(U, V \otimes W) \kotimes \mathcal{C}(V,A) \kotimes \euler{I}(W) \\
 &\to \int^{V,W \in \mathcal{C}} \mathcal{C}(U, V \otimes W) \kotimes \mathcal{C}(V,A) \kotimes \mathcal{C}(W,A) \xrightarrow{{(\mu_{\mathcal{C}(\blank,A)})}_{U}} \mathcal{C}(U,A)
 \end{aligned}
 \end{equation}
 is contained in $\euler{I}(U)$. Secondly, $\euler{I}$ is a right $\mathcal{C}(\blank,A)$-submodule of $\mathcal{C}(\blank,A)$, so that also the image of the map
 \begin{equation}\label{rightsubmodule}
 \begin{aligned}
 &\int^{V,W \in \mathcal{C}} \mathcal{C}(U, V \otimes W) \kotimes  \euler{I}(V) \kotimes \mathcal{C}(W,A) \\
 &\to \int^{V,W \in \mathcal{C}} \mathcal{C}(U, V \otimes W) \kotimes \mathcal{C}(V,A) \kotimes \mathcal{C}(W,A) \xrightarrow{{(\mu_{\mathcal{C}(\blank,A)})}_{U}} \mathcal{C}(U,A)
 \end{aligned}
 \end{equation}
 is contained in $\euler{I}(U)$. Using the description of the multiplication on $\mathcal{C}(\blank,A)$ following \autoref{DayMultiplication}, we can reformulate these conditions as stating that
 \begin{equation}\label{idealcondition}
 \mu_{A} \circ (g \otimes h) \circ \varphi \in \euler{I}(U),
 \end{equation}
 for any $\varphi\colon U \to V \otimes W, g\colon V \to A$ and $h\colon W \to A$, whenever $g$ or $h$ lie in $\euler{I}(V)$ or $\euler{I}(W)$, respectively.
\end{proof}

We will denote the restriction of a presheaf $\Phi$ on a category $\mathcal{A}$ to a subcategory $\mathcal{B}$ of $\mathcal{A}$ by $\Phi_{|\mathcal{B}}$. Our cases of interest will be ${\mathcal{C}(\blank,A)}_{|\cp}$ in \autoref{mixedsubsection} below, and ${\homa(\blank,M)}_{|\projca}$, for $M \in \modca$, in \autoref{sec:radmod2}.

\subsection{Mixed subfunctors}\label{mixedsubsection}

Let $M \in \mathcal{C}$, let $N \in \modca$ and let $N'$ be a left $A$-module in $\cC$. Given $f \in \mathcal{C}(M,N)$ and $h \in \mathcal{C}(M,N')$, let $f^{A} = \nabla_{N} \circ (f\otimes A) \in \homa(M \triangleright A, N)$ and we define ${}^{A}h \in \ahom(A\triangleleft M,N')$ similarly.

Given $g \in \homa(M\triangleright A,N)$, let $g^{1} = g \circ (M \otimes \eta) \in \mathcal{C}(M,N)$ and given $h' \in \ahom(A \triangleleft M,N')$, define ${}^{1}h' \in \mathcal{C}(M,N')$ similarly.
In particular ${(f^{A})}^{1} = f$ and ${(g^{1})}^{A} = g$ and similarly for the left module structures, yielding familiar isomorphisms
\begin{equation}\label{indres}
\mathcal{C}(M,N) \cong \homa(M \triangleright A, N)\text{ and }\mathcal{C}(M,N') \cong \ahom(A \triangleleft M,N').
\end{equation}

\begin{definition}\label{mixedsubfunctor}
  A \emph{mixed subfunctor of $\,{\mathcal{C}(\blank,A)}_{|\cp}$} consists of, for $P \in \cp$, a collection of subspaces $\mathtt{J}(P) \subseteq \mathcal{C}(P,A)$, that satisfy the following conditions:
  \begin{enumerate}[label=(M\arabic*)]
  \item\label{mixed1} $\mathtt{J}$ is a subpresheaf of ${\mathcal{C}(\blank,A)}_{|\cp}$: for any $P,P' \in \cp$ and any $g \in \mathtt{J}(P)$ and $f \in \mathcal{C}(P', P)$, the morphism $g\circ f \in \mathcal{C}(P',A)$ lies in $\mathtt{J}(P')$.
  \item\label{mixed2} The collections $\{\,g^{A} \in \homa(P \triangleright A, A) \mid g \in \mathtt{J}(P)\,\}$ define a subpresheaf of the presheaf ${\homa(\blank,A)}_{|\pfa}$: for $g \in \mathtt{J}(P)$ and $f \in \homa(Q \triangleright A, P \triangleright A)$, the morphism $g^{A} \circ f$ is of the form $h^{A}$ for $h \in \mathtt{J}(Q)$. Equivalently, ${(g^{A} \circ f)}^{1} \in \mathtt{J}(Q)$.
  \item\label{mixed3} The collections $\{\,{}^{A}g \in \ahom(A \triangleleft P, A) \mid g \in \mathtt{J}(P)\,\}$ define a subpresheaf of the presheaf ${\ahom(\blank,A)}_{|\apf}$: for $g \in \mathtt{J}(P)$ and $f \in \ahom(A \triangleleft Q, A \triangleleft P)$, the morphism ${}^{A}g \circ f$ is of the form ${}^{A}h$ for $h \in \mathtt{J}(Q)$. Equivalently, ${}^{1}({}^{A}g \circ f) \in \mathtt{J}(Q)$.
  \end{enumerate}
\end{definition}

Note that that condition~\ref{mixed1} is implied by either~\ref{mixed2} or~\ref{mixed3}.

\section{Algebra ideals and mixed subfunctors, first approach}\label{sec:1mix}

Fix a projective generator $\maP$ of $\cC$.
Since the category of presheaves on $\pfa$ is equivalent to the category of right modules over $\End_{\blank A}(\maP\triangleright A)$, and similarly for right $\End_{A\blank}(A\triangleleft\maP)$-modules, we have an immediate bijection between mixed subfunctors and subspaces of
$$\HHom(\maP,A)\cong\homa(\maP\triangleright A,A)\cong \ahom(A\triangleleft \maP,A)$$ that are submodules for both of these algebras. We will derive explicit descriptions of the actions of both algebras in \autoref{SecSU} and then use them to link mixed subfunctors to algebra ideals.

\subsection{The set-up}\label{SecSU}

\subsubsection{The pseudo-Hopf algebra}
Consider the faithful exact functor
$$\omega \defeq \cC(\maP,-)\colon\cC\to\Vecc.$$ We thus obtain an equivalence between $\cC$ and the category of finite dimensional (over~$\bk$) left $H$-modules, for
$$H\defeq\End(\omega)={\End(\maP)}^{\op}.$$

Under this equivalence, the projective generator $\maP$ of $\cC$ is sent to the left regular $H$-module and $\unit$ is sent to some simple $H$-module $L$. For the rest of this section, we will \emph{redefine} $\cC$ as the category of $H$-modules, justifying notation as $\unit=L$. Furthermore, $\omega$ thus becomes the functor forgetting the $H$-action, which we often leave out of notation.

As explained in~\cite[\S~4]{EOf}, the tensor category structure on $\cC$ equips $H$ with the structure of a `pseudo-Hopf algebra'. We do not need to review all features, but the tensor product on $\cC$ is encoded in an $(H,H\otimes_{\bk}H)$-bimodule $T$, so that
$$X\ot Y\;=\; T\otimes_{H\otimes_{\bk} H}(X\otimes_{\bk}Y)$$
for objects ($H$-modules) $X,Y\in\cC$. Here, and for the rest of this section, in order to avoid confusion, the undecorated symbols $\otimes, \triangleright$ and $\triangleleft$
refers to the tensor product inside $\cC$ and the actions of $\cC$ on left and right $A$-modules, while for tensor products over $\bk$-algebras $R$ we will always use $\otimes_R$.
We also have natural isomorphisms
\begin{equation}\label{unittransfo}
T\otimes_{H\otimes_{\bk} H}(L\otimes_{\bk}-)\;\cong\;\id\;\cong\;T\otimes_{H\otimes_{\bk} H}(-\otimes_{\bk}L),
\end{equation}
where $\id$ is the identity functor on $\cC$.

\subsubsection{Action maps}
Regarding $A$ as an object in $\cC$ (\textit{i.e.\ }an $H$-module) gives an action map
$$\rho\colon H\otimes_{\bk} A\to A,$$
which is again a morphism of $H$-modules (for $A$ regarded as a vector space in the source).
We can also write the multiplication map as
$$\mu\colon T\otimes_{H\otimes_{\bk} H}(A\otimes_{\bk}A)\to A,$$
and the unit as $\eta\colon L\to A$.

We have isomorphisms of vector spaces
$$\End_{A\blank}(A\triangleleft \maP)\cong \cC(\maP,A\ot \maP)=\omega(A\ot\maP)\cong T\otimes_{H\otimes_{\bk} H}(A\otimes_{\bk}H)\cong T\otimes_{H\blank}A.$$
Here, we write subscripts like $H\blank$ to indicate that we take the tensor product over $H\otimes_{\bk}\bk\subset H\otimes_{\bk}H$.
This thus equips $T\otimes_{H\blank}A$ with the structure of a $\bk$-algebra, which we will not need to describe explicitly. By construction, it boasts a right module structure on
$$\ahom(A\triangleleft\maP,A)\cong\cC(\maP,A)=\omega(A).$$
It will be more convenient to write the algebra on the left in the corresponding action map, \textit{i.e.\ }as
$$(T\otimes_{H\blank}A)\otimes_{\bk}\omega(A)\;\to\;\omega(A).$$

By direct verification, up to composition with an isomorphism, this action map is
\begin{equation}\label{action1}
T\otimes_{H\otimes_{\bk} H}(A\otimes_{\bk} (H\otimes_{\bk}\omega(A)))\xrightarrow{T_{H\otimes_{\bk} H}(A\otimes_{\bk} \rho)} T\otimes_{H\otimes_{\bk} H}(A\otimes_{\bk} A)\xrightarrow{\mu}\omega(A).
\end{equation}
We included all $\omega$'s in the formula that make sense, to stress that $H$ does not act on the second copy of $A$ in the source, and to indicate that $\omega(A)$ is the module on which we act.

Similarly, the action of
$$\End_{\blank A}(\maP\triangleright A)\;\cong\; T\otimes_{H\otimes_{\bk} H}(H\otimes_{\bk}A)$$
on $\homa(\maP\ot A,A)\cong\omega(A)$
is captured by the action map
\begin{equation}\label{action2}T\otimes_{H\otimes_{\bk} H}( (H\otimes_{\bk}\omega(A))\otimes_{\bk}A)\xrightarrow{T_{H\otimes_{\bk} H}(\rho\otimes_{\bk} A)} T\otimes_{H\otimes_{\bk} H}(A\otimes_{\bk} A)\xrightarrow{\mu}\omega(A).\end{equation}

\subsection{The bijection}

\begin{proposition}\label{PropId}
A subspace
$$V\subset\omega(A)=\cC(\maP,A)$$
corresponds to a two-sided ideal in the algebra $A\in\cC$ if and only if it is a submodule for both the $\End_{\blank A}(\maP\triangleright A)$-module and the $\End_{A\blank}(A\triangleleft\maP)$-module structure on $\cC(\maP,A)$.
\end{proposition}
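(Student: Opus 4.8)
The plan is to translate each of the two submodule conditions into a purely $H$-module--theoretic statement about the subspace $V\subseteq\omega(A)$, using the explicit action maps \eqref{action1} and \eqref{action2}, and then to match the resulting conditions with the defining conditions of a two-sided ideal object in $A$. Recall that after the identifications of \autoref{SecSU}, $\omega$ is the forgetful functor of an equivalence $\cC\simeq H\text{-mod}$; hence it is exact and faithful, subobjects of $A$ in $\cC$ are exactly the sub-$H$-modules of the vector space $\omega(A)=A$, and the tensor product $X\otimes Y=T\otimes_{H\otimes_{\bk}H}(X\otimes_{\bk}Y)$ is exact in each variable. Under these identifications a two-sided ideal object in $A$ is precisely a sub-$H$-module $I\subseteq A$ such that $\mu$ carries both $I\otimes A=T\otimes_{H\otimes_{\bk}H}(I\otimes_{\bk}A)$ and $A\otimes I$ into $I$.

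First I would note that a submodule $V$ for either algebra is automatically $H$-stable. For $\End_{\blank A}(\maP\triangleright A)\cong T\otimes_{H\otimes_{\bk}H}(H\otimes_{\bk}A)$, plug the unit $\eta\colon L\to A$ into the ``$A$-variable'' of the algebra: by right unitality of $\mu$ and the unit isomorphism \eqref{unittransfo}, the composite \eqref{action2} collapses to the action map $\rho\colon H\otimes_{\bk}\omega(A)\to\omega(A)$, so closure of $V$ under the whole algebra forces $HV\subseteq V$; the case of $\End_{A\blank}(A\triangleleft\maP)$ via \eqref{action1} is symmetric. (Equivalently, $\phi\mapsto\phi\triangleright A$ embeds $\End(\maP)=H^{\op}$ into $\End_{\blank A}(\maP\triangleright A)$, and the induced action on $\omega(A)=\cC(\maP,A)$ is precomposition, which is the native $H$-action.) Now assume $V$ is $H$-stable. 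Then $\rho$ restricts to a surjection $H\otimes_{\bk}V\twoheadrightarrow V$, so $\rho\otimes_{\bk}A$ restricts to a surjection $(H\otimes_{\bk}V)\otimes_{\bk}A\twoheadrightarrow V\otimes_{\bk}A$; applying the right exact functor $T\otimes_{H\otimes_{\bk}H}(-)$ and using exactness of $\otimes$ to see that $V\otimes A\hookrightarrow A\otimes A$ is a genuine subobject, the image along the first arrow of \eqref{action2} of the part of the source with ``$\omega(A)$-slot'' restricted to $V$ is exactly $V\otimes A$. Hence ``$V$ is a submodule for $\End_{\blank A}(\maP\triangleright A)$'' is exactly the statement that $\mu$ maps $V\otimes A$ into $V$, i.e.\ that $V$ is a right ideal object of $A$; symmetrically, via \eqref{action1}, being a submodule for $\End_{A\blank}(A\triangleleft\maP)$ is exactly the statement that $\mu$ maps $A\otimes V$ into $V$, i.e.\ that $V$ is a left ideal object. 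Combining the two gives precisely the two-sided ideal conditions, so such a $V$ equals $\omega(I)$ for a unique ideal object $I\subseteq A$; conversely every ideal object gives such a $V$, reading the equivalences backwards.

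I expect the main obstacle to be bookkeeping rather than anything conceptual: one has to track the chain of canonical identifications --- in particular $\End_{\blank A}(\maP\triangleright A)\cong T\otimes_{H\otimes_{\bk}H}(H\otimes_{\bk}A)$, its action on $\homa(\maP\triangleright A,A)\cong\omega(A)$, and $\omega(A)\cong\cC(\maP,A)\cong A$ --- carefully enough that the orientation of each module structure really matches the claimed one-sided ideal condition (as a sanity check, in the classical case $\cC=\Vecc$, $H=\bk$ this recovers $\homa(\maP\triangleright A,A)$ as the regular right $A$-module over $\End_{\blank A}(\maP\triangleright A)\cong A$, whose submodules are the right ideals). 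The only other point needing care is the ``image of a coend/tensor map restricted to one slot'' manipulation, which relies on surjectivity of $\rho$, exactness of $\otimes$ in $\cC$, and the unit isomorphisms \eqref{unittransfo}; given $H$-stability of $V$ these also make ``the image of $\mu$ on $V\otimes A$ lies in $V$'' equivalent to ``$\mu|_{V\otimes A}$ factors through the subobject $V\hookrightarrow A$'', via faithfulness and exactness of $\omega$.
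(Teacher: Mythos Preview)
Your proposal is correct and follows essentially the same route as the paper's proof: both arguments first extract $H$-stability of $V$ from either submodule condition by plugging the unit $\eta$ into the algebra slot and invoking \eqref{unittransfo}, and then use surjectivity of $\rho|_{H\otimes_{\bk}V}$ together with exactness of the tensor product to identify the submodule condition for each action map \eqref{action1}, \eqref{action2} with the corresponding one-sided ideal condition on $\mu$. The paper presents the two directions separately, whereas you package each side as an ``if and only if'', but the content is the same.
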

\begin{proof}
Assume first that $V\subset\omega(A)$ represents a two-sided ideal in $A$. That means that it is an $H$-submodule, or in other words that $\rho$ restricts to
$$H\otimes_{\bk}V\to V$$
and similarly that $\mu$ restricts to
$$T\otimes_{H\otimes H}(A\otimes_{\bk}V)\to V\quad\mbox{and}\quad T\otimes_{H\otimes H}(V\otimes_{\bk}A)\to V.$$
Hence $V\subset \omega(A)$ is stable under actions~\eqref{action1} and~\eqref{action2}, making it a submodule for both actions.

Conversely, assume that a subspace $V\subset\omega(A)$ is stable for both action maps~\eqref{action1} and~\eqref{action2}. Using either action, the unit of $A$ and~\eqref{unittransfo}, shows that $V$ is an $H$-submodule. Since $\rho$ is an epimorphism of $H$-modules and the tensor product is exact, we find a commutative diagram
\[
  \begin{tikzcd}[ampersand replacement=\&]
    {T \otimes_{H \otimes H} (A \kotimes (H \kotimes V))} \&\& {T \otimes_{H \otimes H} (A \kotimes V)} \\
    \&\& {\omega(A)}
    \arrow["{T \otimes_{H \otimes H}(A \otimes \rho)}", two heads, from=1-1, to=1-3]
    \arrow[from=1-1, to=2-3]
    \arrow["\mu", from=1-3, to=2-3]
  \end{tikzcd}
\]
where the diagonal is the action~\eqref{action1} restricted to $V\subset\omega(A)$. By assumption this action takes values in $V\subset\omega(A)$. Since the horizontal arrow is surjective, also $\mu$ must take values in $V$ implying that $V$ is a left ideal. Using~\eqref{action2} shows that $V$ is a right ideal too.
\end{proof}

Passing throught the equivalence between mixed subfunctors and `double' submodules of $\cC(\maP,A)$ then yields the following consequence of \autoref{PropId}.

\begin{corollary}\label{mix1main}
    The assignment that sends an ideal $I<A$ to the presheaf
    $$\cp^{\op}\to\Vecc,\quad P\mapsto \cC(P,I),$$
    yields a bijection between ideals in $A$ and mixed subfunctors of $\,\cC(\blank,A)|_{\cp}$.
\end{corollary}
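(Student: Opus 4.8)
The plan is to compose \autoref{PropId} with the equivalence, recorded at the start of this section, between mixed subfunctors of ${\cC(\blank,A)}_{|\cp}$ and subspaces of $\omega(A)=\cC(\maP,A)$ that are simultaneously submodules for $\End_{\blank A}(\maP\triangleright A)$ and for $\End_{A\blank}(A\triangleleft\maP)$; the only remaining task is to check that the resulting composite bijection is the assignment described in the statement.

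First I would unwind that equivalence concretely. As $\maP$ is a projective generator of $\cC$, every $P\in\cp$ is a retract of some $\maP^{\oplus n}$, so $\maP\triangleright A$ is a generator of $\pfa$; the Morita-type equivalence used above then sends a presheaf on $\pfa$ to its value at $\maP\triangleright A$, with subpresheaves going to submodules. Via the isomorphisms~\eqref{indres}, the presheaf ${\cC(\blank,A)}_{|\cp}$, regarded through $f\mapsto f^{A}$ as a presheaf on $\pfa$, corresponds to $\omega(A)$ with the module structure~\eqref{action2}, and dually, regarded through the left-module version as a presheaf on $\apf$, to $\omega(A)$ with the structure~\eqref{action1}. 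Hence conditions~\ref{mixed2} and~\ref{mixed3} (the first subsuming~\ref{mixed1}) say precisely that the subspace $\mathtt{J}(\maP)\subseteq\omega(A)$ is a submodule for both actions. In addition, a subpresheaf of ${\cC(\blank,A)}_{|\cp}$ is recovered from its value at $\maP$: fixing $\iota\colon P\to\maP^{\oplus n}$ and $\pi\colon\maP^{\oplus n}\to P$ with $\pi\iota=\id_{P}$, condition~\ref{mixed1} forces $\mathtt{J}(P)=\{\,g\in\cC(P,A)\mid g\circ\pi\in\mathtt{J}(\maP)^{\oplus n}\,\}$.

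Next I would invoke \autoref{PropId}: a subspace of $\omega(A)$ is such a double submodule exactly when it represents a two-sided ideal of $A$. Under the identification of $\cC$ with $H$-modules and of $\omega$ with the forgetful functor, the subobjects $I\hookrightarrow A$ of $A$ biject with the $H$-submodules $\cC(\maP,I)\subseteq\omega(A)$, so \autoref{PropId} gives a bijection between ideals $I$ of $A$ and mixed subfunctors $\mathtt{J}$, characterised by $\mathtt{J}(\maP)=\cC(\maP,I)$. Plugging this into the formula for $\mathtt{J}(P)$ from the previous paragraph, and observing that $g\circ\pi\in\cC(\maP^{\oplus n},I)$ if and only if $g$ factors through $I\hookrightarrow A$, gives $\mathtt{J}(P)=\cC(P,I)$ for all $P\in\cp$, i.e.\ $\mathtt{J}={\cC(\blank,I)}_{|\cp}$. (Consistently, ${\cC(\blank,I)}_{|\cp}$ is directly a mixed subfunctor, since $g^{A}=\nabla_{A}\circ(g\otimes A)$ factors through $I$ whenever $g$ does, because $I$ is a right ideal, and symmetrically on the left because $I$ is a left ideal, so~\ref{mixed2} and~\ref{mixed3} hold.)

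None of these steps is genuinely difficult; the one point requiring care is the bookkeeping of \autoref{SecSU} — matching the three presheaf structures on ${\cC(\blank,A)}_{|\cp}$, over $\cp$, over $\pfa$ and over $\apf$, with the module structures~\eqref{action1} and~\eqref{action2} under the isomorphisms~\eqref{indres} — so that conditions~\ref{mixed2}--\ref{mixed3} really do translate into the hypotheses of \autoref{PropId}.
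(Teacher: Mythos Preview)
Your proposal is correct and follows the same approach as the paper, which simply states that the corollary is obtained by composing \autoref{PropId} with the equivalence between mixed subfunctors and double submodules of $\cC(\maP,A)$ recorded at the start of the section. You supply more detail than the paper does, in particular the explicit verification that the resulting bijection is the assignment $I\mapsto{\cC(\blank,I)}_{|\cp}$, which the paper leaves implicit.
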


\section{Algebra ideals and mixed subfunctors, second approach}\label{sec:2mix}
\subsection{Reduction of presheaves to projective objects}

\begin{lemma}\label{lexinj}
 A left exact presheaf $\Phi \in \presh$ is uniquely determined by its restriction to the subcategory of projective objects in $\mathcal{C}$, i.e.\ by
 \[
  \cp^{\on{op}} \hookrightarrow \mathcal{C}^{\on{op}} \xrightarrow{\Phi} \mathbf{Vec}_{\Bbbk}.
 \]
 Furthermore, this restriction yields an equivalence $\mathbf{Lex}(\mathcal{C}^{\on{op}},\mathbf{Vec}_{\Bbbk}) \simeq \mathbf{Fun}_{\Bbbk}(\cp^{\on{op}},\mathbf{Vec}_{\Bbbk})$, which also restricts to an equivalence $\mathbf{Lex}(\mathcal{C}^{\on{op}},\mathbf{vecfd}_{\Bbbk}) \simeq \mathbf{Fun}_{\Bbbk}(\cp^{\on{op}},\mathbf{vecfd}_{\Bbbk})$.
\end{lemma}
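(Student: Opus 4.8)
The statement is essentially that left exact presheaves on $\mathcal{C}$ are the same as arbitrary $\Bbbk$-linear presheaves on $\cp$, via restriction. Let me think about why this is true and how to organize the argument.

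The key point: $\mathcal{C}$ has enough projectives, so every object $V$ has a projective presentation $P_1^V \to P_0^V \to V \to 0$. A left exact presheaf $\Phi$ turns this into an exact sequence $0 \to \Phi(V) \to \Phi(P_0^V) \to \Phi(P_1^V)$, i.e. $\Phi(V) = \ker(\Phi(P_0^V) \to \Phi(P_1^V))$. So $\Phi$ on all of $\mathcal{C}$ is determined by $\Phi|_{\cp}$ together with this kernel formula. That gives uniqueness of the extension. For the equivalence, I'd construct the inverse: given $\Psi \colon \cp^{\op} \to \mathbf{Vec}$, define $\widehat\Psi(V) := \lim$ over the projective presentation (equivalently, the right Kan extension of $\Psi$ along $\cp^{\op} \hookrightarrow \mathcal{C}^{\op}$), check it is left exact, and check that restriction and this extension are mutually inverse.

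**Proof.** Restriction along $\cp^{\op}\hookrightarrow\mathcal{C}^{\op}$ gives a functor $\mathbf{Lex}(\mathcal{C}^{\op},\mathbf{Vec}_\Bbbk)\to\mathbf{Fun}_\Bbbk(\cp^{\op},\mathbf{Vec}_\Bbbk)$; we show it is an equivalence by exhibiting a quasi-inverse, namely right Kan extension $\mathrm{Ran}$ along $\cp^{\op}\hookrightarrow\mathcal{C}^{\op}$. Since $\cp$ is a generator of $\mathcal{C}$ consisting of projectives, every object $V$ admits a projective presentation; the comma category $(V\downarrow\cp)$ computing $\mathrm{Ran}$ at $V$ has a cofinal subcategory coming from such presentations, so for $\Psi\in\mathbf{Fun}_\Bbbk(\cp^{\op},\mathbf{Vec}_\Bbbk)$ one computes
\[
  (\mathrm{Ran}\,\Psi)(V)\;=\;\ker\!\bigl(\Psi(P_0^V)\xrightarrow{\Psi(p_1^V)}\Psi(P_1^V)\bigr),
\]
independently of the chosen presentation. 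First I verify $\mathrm{Ran}\,\Psi$ is left exact: given a short exact sequence $0\to V'\to V\to V''\to 0$ in $\mathcal{C}$, one builds compatible projective presentations (the horseshoe lemma, valid since $\mathcal{C}$ has enough projectives) and applies $\Psi$, then a diagram chase with the kernel formula yields left exactness of $\mathrm{Ran}\,\Psi$. Next, for $\Phi\in\mathbf{Lex}(\mathcal{C}^{\op},\mathbf{Vec}_\Bbbk)$, left exactness applied to the presentation $P_1^V\to P_0^V\to V\to 0$ gives $\Phi(V)\cong\ker(\Phi(p_1^V))=(\mathrm{Ran}(\Phi|_{\cp}))(V)$ naturally in $V$, so $\mathrm{Ran}\circ(\blank|_{\cp})\cong\id$; conversely $(\mathrm{Ran}\,\Psi)|_{\cp}\cong\Psi$ since for $P\in\cp$ we may take the presentation $0\to P\to P\to 0$. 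This establishes the first equivalence. For the finite-dimensional version, note that if $\Phi$ is valued in $\mathbf{vecfd}$ then so is its restriction, and conversely if $\Psi$ is valued in $\mathbf{vecfd}$ then $(\mathrm{Ran}\,\Psi)(V)$, being a subspace of the finite-dimensional space $\Psi(P_0^V)$, is finite-dimensional; so the equivalence restricts as claimed. \qed

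**Expected main obstacle.** The routine parts are the Yoneda/Kan-extension bookkeeping and the claim that the extension is valued in finite dimensions. The one step needing genuine care is proving that $\mathrm{Ran}\,\Psi$ is \emph{left exact} (not merely left exact on the level of the kernel formula for a single object): this requires choosing projective presentations of the three terms of a short exact sequence compatibly — i.e. invoking the horseshoe lemma in $\mathcal{C}$ — and then a somewhat delicate diagram chase, since $\Psi$ is only assumed additive and need not preserve any exactness. An alternative, slicker route that avoids the horseshoe lemma is to identify $\mathbf{Lex}(\mathcal{C}^{\op},\mathbf{Vec}_\Bbbk)$ with $\mathbf{Ind}(\mathcal{C})$ (as in the proof of the preceding lemma) and $\mathbf{Fun}_\Bbbk(\cp^{\op},\mathbf{Vec}_\Bbbk)$ with $\mathbf{Ind}$ of the additive category $\cp$, and then use that $\mathcal{C}$ is the abelian (in fact: finite) category freely generated under cokernels by $\cp$; I expect the authors may well take this more structural path rather than the explicit kernel-formula one.
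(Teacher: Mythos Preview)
Your argument is correct. The paper's own proof is a two-line appeal to a ``standard result about abelian categories with enough injectives'' (noting that $\cp^{\op}$ is the class of injectives in $\mathcal{C}^{\op}$), whereas you have essentially written out that standard result in full: you construct the quasi-inverse as right Kan extension, identify it with the kernel formula coming from a projective presentation, and verify left exactness via the horseshoe lemma. Your self-identified ``obstacle'' (left exactness of $\mathrm{Ran}\,\Psi$) is exactly the non-formal step hidden inside the word ``standard'', and your horseshoe-plus-diagram-chase outline is the right way to handle it. The alternative structural route you mention via $\mathbf{Ind}(\mathcal{C})$ is close in spirit to what the paper uses elsewhere (see the proof of the lemma just before this one), but for this particular lemma the paper does not elaborate at all. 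In short: same content, different level of detail; your version would serve well as the unpacking of the paper's citation.
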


\begin{proof}
 Since $\mathcal{C}$ has enough projectives, $\mathcal{C}^{\on{op}}$ has enough injectives, which are given precisely by $\cp^{\on{op}}$. The claim follows by a standard result about abelian categories with enough injectives.
\end{proof}

Before proving the result of this section, we need to recall a slight variant of a result of~\cite{KL}:
\begin{proposition}[{\cite[Proposition~5.1.7]{KL}}]\label{KerlerLyubashenko}
 Let $\mathcal{C}$ be an abelian $\Bbbk$-linear category and $\mathcal{P}$ a collection of objects in $\mathcal{C}$ such that for any $X \in \mathcal{C}$, there is an epimorphism $\bigoplus_{i=1}^{n} P_{i} \twoheadrightarrow X$, for some $n$ and $P_{i} \in \mathcal{P}$. Let $\Phi\colon \mathcal{C}^{\on{op}} \kotimes \mathcal{C} \to \mathbf{Vec}$ be a functor such that $\Phi(-,Y)$ is left exact and $\Phi(X,-)$ is exact, for all $X,Y \in \mathcal{C}$. The morphism
 \[
 \int^{P \in \mathcal{P}} \Phi(P,P) \to \int^{X \in \mathcal{C}} \Phi(X,X)
 \]
 induced by the inclusion $\setj{\Phi(P,P)}_{P \in \mathcal{P}} \subseteq \setj{\Phi(X,X)}_{X \in \mathcal{C}}$, is an isomorphism.
\end{proposition}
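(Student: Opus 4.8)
The plan is to reduce the two-variable coend to a coend over $\mathcal{P}$ by trading each of the two coend variables against the corresponding exactness hypothesis on $\Phi$. Recall that a coend over an essentially small category is a coequalizer, so $\int^{X \in \mathcal{C}}\Phi(X,X)$ is $\bigoplus_{X \in \mathcal{C}}\Phi(X,X)$ modulo the dinaturality relations indexed by the morphisms of $\mathcal{C}$, and similarly for $\mathcal{P}$; the map in the statement is the one induced by the inclusion of indexing data, and we must show it is bijective. We may assume $\mathcal{P}$ is closed under finite direct sums: this is automatic for $\mathcal{P} = \cp$, and in general passing to the closure under finite biproducts does not change the coend.

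In the case relevant to this paper, where $\mathcal{P} = \cp$ consists of \emph{projective} objects, there is a short proof. Since $\Phi(X,-)$ is right exact and every object of $\mathcal{C}$ is a quotient of an object of $\mathcal{P}$, the functor $\Phi(X,-)$ is the left Kan extension along $\mathcal{P} \hookrightarrow \mathcal{C}$ of its restriction to $\mathcal{P}$; concretely
\[
 \Phi(X,Y) \;\cong\; \int^{Q \in \mathcal{P}}\mathcal{C}(Q,Y) \kotimes \Phi(X,Q),
\]
naturally in $X$ and $Y$, since both sides are right exact in $Y$ and agree on the projective generators $\mathcal{P}$. This isomorphism is compatible with the diagonal bifunctor structure, so setting $Y = X$ and passing to coends, then using Fubini for iterated coends and the density formula $\int^{X \in \mathcal{C}}\mathcal{C}(Q,X)\kotimes\Phi(X,Q) \cong \Phi(Q,Q)$ on the inner coend, we get
\[
 \int^{X \in \mathcal{C}}\Phi(X,X) \;\cong\; \int^{X \in \mathcal{C}}\int^{Q \in \mathcal{P}}\mathcal{C}(Q,X)\kotimes\Phi(X,Q) \;\cong\; \int^{Q \in \mathcal{P}}\Phi(Q,Q),
\]
and one checks this composite is inverse to the canonical comparison map. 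Only right exactness of $\Phi(X,-)$ enters; the hypothesis that $\Phi(-,Y)$ is left exact is not needed here.

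For the general statement, where the objects of $\mathcal{P}$ need not be projective, one argues directly. \emph{Surjectivity}: given $a \in \Phi(X,X)$, choose an epimorphism $p\colon P \twoheadrightarrow X$ with $P \in \mathcal{P}$; exactness of $\Phi(X,-)$ makes $\Phi(X,p)$ surjective, so lift $a$ to $\tilde a \in \Phi(X,P)$, and the dinaturality relation for $p$ identifies $\Phi(p,P)(\tilde a) \in \Phi(P,P)$ with $a$ in the coend. \emph{Injectivity} is the main obstacle: unravel the vanishing of the class of $b \in \Phi(P,P)$ into a finite sum of $\mathcal{C}$-dinaturality relations, involving finitely many objects $Z$ and morphisms; cover each $Z$ by some $P_{Z} \in \mathcal{P}$ (with $P_{P} = P$); use exactness of $\Phi(Z,-)$ to lift the relevant elements along these covers in the covariant slot, and left exactness of $\Phi(-,Z)$ to transport them in the contravariant slot; then rewrite each $\mathcal{C}$-relation as a combination of $\mathcal{P}$-relations. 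The delicate point is the bookkeeping ensuring that the cancellations in the original expression for $b$ survive the rewriting --- equivalently, that the two-sided bar resolution of the coend, in simplicial degrees $\le 1$, may be replaced by its $\mathcal{P}$-version without altering $H_{0}$.
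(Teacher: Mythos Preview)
The paper does not prove this proposition; it defers entirely to \cite[Proposition~5.1.7]{KL} and only observes that the proof given there does not use right exactness of $\Phi(-,Y)$. So there is no ``paper's own proof'' to compare against, only a citation with a remark on hypotheses.

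Your special-case argument for $\mathcal{P}=\cp$ is correct and is in fact more informative than what the paper offers: identifying $\Phi(X,-)$ with the left Kan extension of its restriction to projectives (using right exactness of $\Phi(X,-)$ and the fact that the coend formula is right exact in $Y$ since $\mathcal{C}(Q,-)$ is exact for $Q$ projective), then applying Fubini and co-Yoneda, gives a clean self-contained proof of exactly the instance the paper needs (cf.\ \autoref{DayCollection} and the remark pointing to \cite[Remark~5.1.10]{KL}). You are also right that left exactness of $\Phi(-,Y)$ plays no role in this argument. One small point worth making explicit is that the natural isomorphism $\Phi(X,Y)\cong\int^{Q}\mathcal{C}(Q,Y)\kotimes\Phi(X,Q)$ is the counit of the Kan adjunction, which is what guarantees compatibility with the comparison map in the statement.

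Your general-case sketch, however, is not a proof. Surjectivity is fine, but for injectivity you only describe a strategy and explicitly flag the ``delicate bookkeeping'' without carrying it out. The difficulty is real: a relation in the $\mathcal{C}$-coend involves morphisms $Z\to Z'$ between arbitrary objects, and after covering $Z,Z'$ by objects of $\mathcal{P}$ there is no reason these covers are compatible via a morphism in $\mathcal{P}$ unless the $P_Z$ are projective. Left exactness of $\Phi(-,Y)$ is what one uses here (to control kernels when passing to covers), and your sketch does not show how. If you want a complete argument in the stated generality you should either reproduce the relevant steps of \cite{KL} or restrict the claim to the projective case, which is all that is used downstream.
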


\begin{proof}
 While the statement of~\cite[Proposition~5.1.7]{KL} assumes also $\Phi(-,Y)$ to be exact, the assumption about the right exactness of $\Phi(-,Y)$ is not used in the proof given therein.
\end{proof}

\begin{remark}
 In our application of \autoref{KerlerLyubashenko} in the proof of \autoref{DayCollection} below, we could instead also directly invoke~\cite[Remark~5.1.10]{KL}.
\end{remark}

\begin{proposition}\label{DayCollection}
 A representable ideal $\euler{I}$ in $\mathcal{C}(\blank,A)$ is determined by the collection of subspaces $\euler{I}(P) \subseteq \mathcal{C}(P,A)$, for $P \in \cp$.

 Further, a collection $\{\,\euler{I}(P) \subseteq \mathcal{C}(P,A) \mid P \in \cp\,\}$ is the restriction of a representable ideal in $\mathcal{C}(\blank,A)$ if and only if the following conditions are satisfied:
 \begin{enumerate}[label=(D\arabic*)]
 \item\label{Day1}
     $\euler{I}$ is a subpresheaf of $\,{\mathcal{C}(\blank,A)}_{|\cp}$: for any $P,P' \in \cp$ and any $g \in \euler{I}(P)$ and $f \in \mathcal{C}(P', P)$, the morphism $g\circ f \in \mathcal{C}(P',A)$ lies in $\euler{I}(P')$.
 \item\label{Day2}
    $\euler{I}$ is the restriction of a left $\mathcal{C}(\blank,A)$-submodule object of $\,\mathcal{C}(\blank,A)$: for any $Q,P,P' \in \cp$, any $\varphi \in \mathcal{C}(Q,P\otimes P')$, any $g \in \mathcal{C}(P,A)$ and $h \in \euler{I}(P')$, the morphism $\mu_{A} \circ (g \otimes h) \circ \varphi$ lies in $\euler{I}(Q)$.
 \item\label{Day3}
   $\euler{I}$ is the restriction a right $\mathcal{C}(\blank,A)$-submodule object of $\,\mathcal{C}(\blank,A)$: for any $Q,P,P' \in \cp$, any $\varphi \in \mathcal{C}(Q,P\otimes P')$, any $g \in \euler{I}(P)$ and $h \in \mathcal{C}(P',A)$, the morphism $\mu_{A} \circ (g \otimes h) \circ \varphi$ lies in $\euler{I}(Q)$.
 \end{enumerate}
\end{proposition}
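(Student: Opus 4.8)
The first assertion is immediate from \autoref{lexinj}: a representable presheaf is in particular left exact, hence determined by its restriction to $\cp$. The ``only if'' direction of the second assertion is equally quick. If the collection $\{\euler{I}(P)\}_{P\in\cp}$ is the restriction of a representable ideal $\euler{I}$ in $\mathcal{C}(\blank,A)$, then \ref{Day1} merely records that $\euler{I}$ is a subpresheaf, while \ref{Day2} and \ref{Day3} are the statements that $\euler{I}$ is a left, resp.\ right, $\mathcal{C}(\blank,A)$-submodule of $\mathcal{C}(\blank,A)$, read off from (the proof of) \autoref{idealinDay} for objects $U,V,W\in\cp$.

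The content is in the converse, and the plan is: first, promote a collection $\{\euler{I}(P)\}_{P\in\cp}$ satisfying \ref{Day1}, \ref{Day2}, \ref{Day3} to a \emph{representable} subpresheaf $\euler{I}$ of $\mathcal{C}(\blank,A)$; then check it is an ideal. For the first step, by \ref{Day1} the collection is a subfunctor of ${\mathcal{C}(\blank,A)}_{|\cp}$, which under the equivalence $\mathbf{Lex}(\mathcal{C}^{\on{op}},\mathbf{vecfd}_{\Bbbk})\simeq\mathbf{Fun}_{\Bbbk}(\cp^{\on{op}},\mathbf{vecfd}_{\Bbbk})$ of \autoref{lexinj} corresponds to a left exact presheaf $\euler{I}$ together with a natural transformation $\euler{I}\to\mathcal{C}(\blank,A)$. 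This transformation is a monomorphism: its kernel is a finite limit of left exact presheaves, hence left exact, and restricts to $0$ on $\cp$, so vanishes by \autoref{lexinj}. Since a subpresheaf of $\mathcal{C}(\blank,A)$ has finite-dimensional values, $\euler{I}$ is then representable by the characterisation of representable presheaves, say $\euler{I}\cong\mathcal{C}(\blank,I)$. (Concretely, one can take $I=\sum_{P\in\cp,\,g\in\euler{I}(P)}\on{im}(g\colon P\to A)\subseteq A$, which is well defined since $A$ has finite length, and then $\mathcal{C}(P,I)=\euler{I}(P)$ for $P\in\cp$ is a short consequence of \ref{Day1} and projectivity of $P$.)

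For the second step, $\euler{I}$ is an ideal precisely when the two restrictions of the Day multiplication, $\mathcal{C}(\blank,A)\oast\euler{I}\to\mathcal{C}(\blank,A)$ and $\euler{I}\oast\mathcal{C}(\blank,A)\to\mathcal{C}(\blank,A)$, factor through $\euler{I}$. Since $\euler{I}$ is now representable and $\yo$ is strong monoidal, both $\mathcal{C}(\blank,A)\oast\euler{I}$ and $\euler{I}\oast\mathcal{C}(\blank,A)$ are representable, in particular left exact, so these factorisations may be tested after restricting all presheaves involved to $\cp$ (again by \autoref{lexinj}). For $P\in\cp$ one has $(\mathcal{C}(\blank,A)\oast\euler{I})(P)=\int^{V,W\in\mathcal{C}}\mathcal{C}(P,V\otimes W)\kotimes\mathcal{C}(V,A)\kotimes\euler{I}(W)$, and here I would invoke \autoref{KerlerLyubashenko}, applied to one coend variable at a time --- using that $\mathcal{C}(P,\blank\otimes W)$ and $\mathcal{C}(P,Q\otimes\blank)$ are exact because $P$ is projective and $\otimes$ is exact, while $\mathcal{C}(\blank,A)$ and $\euler{I}$ are left exact --- to identify this coend with $\int^{Q,Q'\in\cp}\mathcal{C}(P,Q\otimes Q')\kotimes\mathcal{C}(Q,A)\kotimes\euler{I}(Q')$. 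The induced map to $\mathcal{C}(P,A)$ then has image spanned by the elements $\mu_A\circ(g\otimes h)\circ\varphi$ with $\varphi\colon P\to Q\otimes Q'$, $g\in\mathcal{C}(Q,A)$ and $h\in\euler{I}(Q')$, and these lie in $\euler{I}(P)$ by \ref{Day2}; symmetrically, \ref{Day3} handles the right convolution. Hence $\euler{I}$ is a representable ideal restricting to the prescribed collection. (Using the concrete $I$ above one can instead check directly, via \autoref{idealinDay}, that $I\subseteq A$ is an ideal object --- equivalently that $\mathcal{C}(\blank,I)$ is an ideal --- by lifting each $\varphi\colon U\to V\otimes W$ along an epimorphism $P_V\otimes P_W\twoheadrightarrow V\otimes W$ out of projectives and then applying \ref{Day2}, \ref{Day3}.)

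The step I expect to be the main obstacle is exactly this passage from projective objects to all of $\mathcal{C}$: the hypotheses \ref{Day2}, \ref{Day3} only constrain projectives, whereas the coend defining the Day multiplication runs over all of $\mathcal{C}$, so bridging the gap requires setting up \autoref{KerlerLyubashenko} so that the coend may be replaced by one over $\cp$ --- which forces one to verify its exactness hypotheses for each presheaf entering the coend --- and to confirm that $\mathbf{Lex}$ is closed under the finite limits and (filtered) colimits used, so that the factorisation questions really are detected on $\cp$ via \autoref{lexinj}. In the hands-on approach the same difficulty recurs as the need to lift morphisms into tensor products along epimorphisms from projectives, which is where exactness of $\otimes$ and projectivity do the work.
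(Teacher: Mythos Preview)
Your proposal is correct and follows essentially the same strategy as the paper: reduce to projectives via \autoref{lexinj} (using that the Day convolutions with a representable are again representable, hence left exact), then invoke \autoref{KerlerLyubashenko} to replace the coend over $\mathcal{C}$ by one over $\cp$. Your write-up is more explicit than the paper's---in particular you spell out the promotion of the collection to a representable subpresheaf and verify the monomorphism, and you offer a concrete alternative via $I=\sum\on{im}(g)$---but the underlying argument is the same.
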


\begin{proof}
 By definition, an ideal $\mathrm{I}$ in $\mathcal{C}(\blank,A)$ consists of subspaces $\mathrm{I}(V) \subseteq \mathcal{C}(V,A)$ which give a subpresheaf, thus in particular the collection $\setj{\mathcal{C}(P,A)}_{P \in \cp}$ satisfies condition~\ref{Day1}. Further, this collection determines $\mathrm{I}$, by \autoref{lexinj}.
 Since representable presheaves are closed under Day convolution, we find that $\mathrm{I} \oast \mathcal{C}(\blank,A)$ and $\mathcal{C}(\blank,A) \oast \mathrm{I}$ are both left exact, so we may assume $U$ to be projective in \autoref{idealcondition}. This implies that the profunctors whose coends we consider in \autoref{leftsubmodule} and \autoref{rightsubmodule} above are left exact in the contravariant variable and exact in the covariant variables, so by \autoref{KerlerLyubashenko}, we may also assume $V,W$ to be projective.
\end{proof}

\subsection{Mixed subfunctors and representable presheaf ideals}\label{mixidealcorr}

\begin{lemma}\label{idealtomixed}
 A collection of subspaces $\euler{J}(P) \subseteq \mathcal{C}(P,A)$, for $P \in \cp$, determining a representable ideal in $\mathcal{C}(\blank,A)$ is a mixed subfunctor.
\end{lemma}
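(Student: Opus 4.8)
The plan is to verify the three defining conditions \ref{mixed1}, \ref{mixed2}, \ref{mixed3} of \autoref{mixedsubfunctor} directly for the collection $\mathtt{J} \defeq \euler{J}_{|\cp}$, using that by hypothesis $\euler{J}$ is an honest ideal in the algebra $\mathcal{C}(\blank,A)$ and hence satisfies the ideal condition of \autoref{idealinDay} for \emph{all} $U,V,W \in \mathcal{C}$. Condition \ref{mixed1} is nothing but the statement that $\euler{J}$ is a subpresheaf, so there is nothing to prove there (and, as observed right after \autoref{mixedsubfunctor}, it would in any case follow from \ref{mixed2} or \ref{mixed3}). Conditions \ref{mixed2} and \ref{mixed3} are mirror images of one another, so I would write out only \ref{mixed2} in detail.

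For \ref{mixed2}, fix $P,Q \in \cp$, $g \in \mathtt{J}(P)$ and $f \in \homa(Q \triangleright A, P \triangleright A)$; under the isomorphism~\eqref{indres} the latter corresponds to $f^{1} = f \circ (Q \otimes \eta) \in \mathcal{C}(Q, P \otimes A)$. The key identity, immediate from $(\blank)^{1} = (\blank) \circ (\blank \otimes \eta)$ and $g^{A} = \mu_{A} \circ (g \otimes A)$, is
\[
 {(g^{A} \circ f)}^{1} \;=\; g^{A} \circ f^{1} \;=\; \mu_{A} \circ (g \otimes \id_{A}) \circ f^{1}.
\]
The right-hand side has exactly the shape $\mu_{A} \circ (g \otimes h) \circ \varphi$ appearing in \autoref{idealinDay}, with $\varphi = f^{1}\colon Q \to P \otimes A$, left-hand factor $g \in \euler{J}(P)$, and right-hand factor $h = \id_{A} \in \mathcal{C}(A,A)$. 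Since $\euler{J}$ is an ideal, \autoref{idealinDay} then gives $\mu_{A} \circ (g \otimes \id_{A}) \circ f^{1} \in \euler{J}(Q) = \mathtt{J}(Q)$, which is precisely the reformulation ${(g^{A} \circ f)}^{1} \in \mathtt{J}(Q)$ of \ref{mixed2}. For \ref{mixed3} one argues symmetrically: write ${}^{1}f = f \circ (\eta \otimes Q) \in \mathcal{C}(Q, A \otimes P)$ for $f \in \ahom(A \triangleleft Q, A \triangleleft P)$, note ${}^{1}({}^{A}g \circ f) = {}^{A}g \circ {}^{1}f = \mu_{A} \circ (\id_{A} \otimes g) \circ {}^{1}f$, and apply \autoref{idealinDay} with $\varphi = {}^{1}f$, left-hand factor $\id_{A}$, and right-hand factor $g \in \euler{J}(P)$.

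I do not expect any genuine obstacle here: the argument is a short bookkeeping exercise with the induction–restriction isomorphisms~\eqref{indres} and the notations $(\blank)^{A}$, $(\blank)^{1}$. The one point worth a word of care is that we feed the identity $\id_{A}\colon A \to A$ of the (in general non-projective) object $A$ into \autoref{idealinDay}; this is legitimate because the hypothesis supplies an actual ideal $\euler{J}$ in $\mathcal{C}(\blank,A)$, i.e.\ a subpresheaf of $\mathcal{C}(\blank,A)$ on all of $\mathcal{C}$ satisfying the ideal condition, and not merely the restricted datum on $\cp$ — so $\id_{A}$ is an admissible choice for the factor that is \emph{not} required to lie in $\euler{J}$.
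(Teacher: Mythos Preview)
Your proof is correct and follows essentially the same route as the paper's: both verify \ref{mixed2} by writing ${(g^{A}\circ f)}^{1}=\mu_{A}\circ(g\otimes \id_{A})\circ f^{1}$ and recognising this as an instance of the ideal condition with right-hand factor $\id_{A}$, and then handle \ref{mixed3} symmetrically. Your explicit remark that one must invoke the full ideal condition of \autoref{idealinDay} (valid for all $U,V,W\in\mathcal{C}$) rather than the projective-only conditions \ref{Day2}/\ref{Day3}, because the domain of $\id_{A}$ need not lie in $\cp$, is a point the paper glosses over; your version is the cleaner formulation of the same argument.
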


\begin{proof}
 By definition, $\euler{J}$ is a subpresheaf of ${\mathcal{C}(\blank,A)}_{|\cp}$, and so condition~\ref{Day1} and condition~\ref{mixed1} are identical.
 Hence, it suffices to verify conditions~\ref{mixed2} and~\ref{mixed3}.

 To verify condition~\ref{mixed2}, let $g \in \euler{J}(P)$ and $f \in \homa(Q \triangleright A, P \triangleright A)$. We have
 \[
 {(g^{A} \circ f)}^{1} = \mu_{A} \circ (g \otimes A) \circ (f \circ Q \otimes \eta_{A}),
 \]
 which lies in $\euler{J}(Q)$ by applying condition~\ref{Day2} to morphisms $g,h$, and $\varphi$, by setting $h = \on{id}_{A}$ and $\varphi = f\circ (Q \otimes \eta_{A})$.
 A string-diagrammatic reformulation of this is given in \autoref{fig:fig1}.
 \begin{figure}[htbp]
   \tikzfig{fig1}
   \caption{Verification of condition~\ref{mixed2}.}%
   \label{fig:fig1}
 \end{figure}

 To verify condition~\ref{mixed3}, let $g \in \euler{J}(P)$ and $f \in \ahom(A \triangleleft Q, A \triangleleft P)$. We have
 \[
 ({}^{A}g \circ f) = \mu_{A} \circ (\on{id}_{A} \otimes g) \circ (f \circ \eta_{A} \otimes Q),
 \]
 which lies in $\euler{J}(Q)$ by applying condition~\ref{Day3} to the morphisms $\varphi,g'$, and $h'$, by setting $h' = g$, $g' = \on{id}_{A}$, and $\varphi = f\circ (\eta_{A} \otimes Q)$.
\end{proof}

\begin{lemma}
  A mixed subfunctor $\mathtt{J}$ gives a collection of subspaces $\mathtt{J}(P) \subseteq \mathcal{C}(P,A)$, for $P \in \cp$ determining a representable ideal in $\mathcal{C}(\blank,A)$
\end{lemma}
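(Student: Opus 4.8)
The statement is essentially the converse of \autoref{idealtomixed}: I must show that the three mixed-subfunctor axioms \ref{mixed1}–\ref{mixed3} imply the three conditions \ref{Day1}–\ref{Day3} of \autoref{DayCollection}, since by \autoref{lexinj} a collection of subspaces $\mathtt{J}(P)\subseteq\mathcal{C}(P,A)$ satisfying \ref{Day1}–\ref{Day3} extends uniquely to a representable ideal in $\mathcal{C}(\blank,A)$. Condition \ref{Day1} is literally condition \ref{mixed1}, so nothing needs doing there. The work is in deriving \ref{Day2} (left-submodule condition) from \ref{mixed2} and \ref{Day3} (right-submodule condition) from \ref{mixed3}; I expect the two to be mirror images of one another, so I would prove \ref{Day2} in detail and state that \ref{Day3} follows symmetrically by passing to the opposite algebra.

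For \ref{Day2}, I am given $Q,P,P'\in\cp$, a morphism $\varphi\colon Q\to P\otimes P'$, an arbitrary $g\in\mathcal{C}(P,A)$, and $h\in\mathtt{J}(P')$, and I must show $\mu_{A}\circ(g\otimes h)\circ\varphi\in\mathtt{J}(Q)$. The idea is to factor this morphism through the presheaf ${\homa(\blank,A)}_{|\pfa}$ so that \ref{mixed2} applies. Concretely: start from $h^{A}=\nabla_{A}\circ(h\otimes A)\in\homa(P'\triangleright A,A)$, which represents $h\in\mathtt{J}(P')$ in the sense of \ref{mixed2}. Then I want to exhibit a morphism $f\in\homa(Q\triangleright A,P'\triangleright A)$ such that ${(h^{A}\circ f)}^{1}$ equals (up to the isomorphisms \eqref{indres}) the target morphism $\mu_{A}\circ(g\otimes h)\circ\varphi$. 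The natural candidate is $f\defeq(g\otimes P'\otimes A)\circ(\varphi\otimes A)\colon Q\otimes A\to P\otimes P'\otimes A\to A\otimes P'\otimes A$ followed by the left $A$-action turning this into a morphism $Q\triangleright A\to P'\triangleright A$ in $\modca$ — i.e.\ $f = ({}^{?})$ built from $g$ acting on the left; more precisely $f$ should be the right-$A$-module map whose underlying map is $(\mu_{A}\otimes P')$-free on the $P$-factor, so that postcomposing with $h^{A}$ and then $({-})^{1}$ recovers $\mu_{A}\circ(g\otimes h)\circ\varphi$. Once $f$ is written down, \ref{mixed2} gives ${(h^{A}\circ f)}^{1}\in\mathtt{J}(Q)$, and a short string-diagram check (of exactly the kind in \autoref{fig:fig1}) identifies this with $\mu_{A}\circ(g\otimes h)\circ\varphi$, using associativity and unitality of $\mu_{A}$.

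For \ref{Day3}, the same argument runs with left and right interchanged: given $g\in\mathtt{J}(P)$, $h\in\mathcal{C}(P',A)$, and $\varphi\colon Q\to P\otimes P'$, form ${}^{A}g\in\ahom(A\triangleleft P,A)$ and a suitable $f\in\ahom(A\triangleleft Q,A\triangleleft P)$ built from $h$ acting on the right, so that ${}^{1}({}^{A}g\circ f)=\mu_{A}\circ(g\otimes h)\circ\varphi$; then \ref{mixed3} yields membership in $\mathtt{J}(Q)$. Having verified \ref{Day1}–\ref{Day3}, \autoref{DayCollection} gives that $\mathtt{J}$ is the restriction of a representable ideal in $\mathcal{C}(\blank,A)$.

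**Main obstacle.** The only delicate point is bookkeeping: producing the right module map $f$ in each case and checking that the composite with the evaluation/unit maps $({-})^{1}$ (resp.\ ${}^{1}({-})$) genuinely reproduces $\mu_{A}\circ(g\otimes h)\circ\varphi$ and not some variant with a stray associator or a $\mu_{A}$ applied in the wrong order. This is precisely the computation encoded in \autoref{fig:fig1} for the $h=\on{id}_{A}$ case of \autoref{idealtomixed}; here one needs the slightly more general version where the "free" slot carries an arbitrary $g$ (resp.\ $h$) rather than an identity. I expect this to be a routine string-diagram manipulation, so I would present the two verifications diagrammatically, reusing the style of \autoref{fig:fig1}, rather than writing the coend/associator chases out in full.
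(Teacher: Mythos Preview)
Your overall strategy---reduce to \autoref{DayCollection} and derive \ref{Day2}, \ref{Day3} from the mixed axioms---is exactly right, and it is what the paper does. But your pairing of axioms is inverted, and as a result your candidate $f$ does not exist.

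Unwind what \ref{mixed2} actually produces. For $h\in\mathtt{J}(P')$ and $f\in\homa(Q\triangleright A,P'\triangleright A)$ you get ${(h^{A}\circ f)}^{1}=\mu_{A}\circ(h\otimes A)\circ f^{1}$, a morphism of the form $\mu_{A}\circ(h\otimes k)\circ\psi$ with the ideal element on the \emph{left} of the multiplication. That is the shape of \ref{Day3}, not \ref{Day2}. Symmetrically, \ref{mixed3} yields ${}^{1}({}^{A}h\circ f')=\mu_{A}\circ(A\otimes h)\circ {}^{1}f'$, i.e.\ the ideal element on the \emph{right}, which is what \ref{Day2} requires. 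So the correct matching is \ref{mixed3}$\Rightarrow$\ref{Day2} and \ref{mixed2}$\Rightarrow$\ref{Day3}.

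Your explicit $f$ reflects this confusion: you land in $A\otimes P'\otimes A$ and then invoke a ``left $A$-action turning this into a morphism $Q\triangleright A\to P'\triangleright A$'', but $P'\triangleright A$ carries no left $A$-action, and without a braiding there is no way to move the stray $A$ across $P'$. The map you are looking for simply does not exist as a right $A$-module morphism.

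The fix is immediate once you swap sides. For \ref{Day2}, take $f'={}^{A}g\otimes P'\in\ahom(A\triangleleft(P\otimes P'),A\triangleleft P')$; then \ref{mixed3} applied to $h\in\mathtt{J}(P')$ gives ${}^{1}({}^{A}h\circ({}^{A}g\otimes P'))=\mu_{A}\circ(g\otimes h)\in\mathtt{J}(P\otimes P')$, and \ref{mixed1} lets you precompose with $\varphi$ to land in $\mathtt{J}(Q)$. This is precisely the paper's argument (its reference to \ref{mixed2} in this step is a typo for \ref{mixed3}). Condition \ref{Day3} then follows by the mirror computation using \ref{mixed2}.
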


\begin{proof}
 Since condition~\ref{mixed1} and condition~\ref{Day1} are identical, it suffices to verify~\ref{Day2} and~\ref{Day3}.

 Let $\varphi \in \mathcal{C}(Q, P \otimes P'), \; g \in \mathcal{C}(P,A)$ and $h \in \mathtt{J}(P')$.
 Then condition~\ref{Day2} follows from \autoref{fig:fig2},
 where ${}^{A}h \circ ({}^{A}g \otimes P') \in \mathtt{J}$ by condition~\ref{mixed2} and further ${}^{A}h \circ ({}^{A}g \otimes P') \circ (A \otimes \varphi) \circ (\eta_{A} \otimes Q) \in \mathtt{J}$ by applying condition~\ref{mixed1} to this latter morphism.
 \begin{figure}[htbp]
   \tikzfig{fig2}
   \caption{Verification of condition~\ref{Day2}.}%
   \label{fig:fig2}
 \end{figure}
\end{proof}

\section{Mixed subfunctors and \texorpdfstring{$\cp$}{Cp}-stable ideals}\label{sec:mixedsubfunctorscpideals}

\subsection{From \texorpdfstring{$\cp$}{Cp}-stable ideals to mixed subfunctors}
Recall that $\cp$ is closed under taking duals.
Since the bijection we aim to establish is invariant under monoidal equivalence, we may assume $\mathcal{C}$ to be strict, in particular that $X \otimes \mathbb{1} = X = \mathbb{1} \otimes X$ for $X \in \mathcal{C}$, and $(P \otimes Q) \triangleright M = P \triangleright (Q \triangleright M)$ for $M \in \modca$.

\begin{definition}\label{def:Smap}
 Let $\mathfrak{J}$ be a $\cp$-stable ideal in $\pfa$. Let $Q \xrightarrow{q} \mathbb{1}$ be an epimorphism in $\mathcal{C}$, for $Q \in \cp$.
 We define a mixed subfunctor $\mathbb{S}(\mathfrak{J})$ of $\mathcal{C}(\blank,A)$ by
 \begin{equation}\label{SDefined}
 \mathbb{S}(\mathfrak{J})(P) = \setj{(q\otimes A) \circ g^{1} \; | \; g \in \mathfrak{J}(P,Q) \subseteq \homa(P\triangleright A,Q\triangleright A)}.
 \end{equation}
 In string diagrams, we shall write an element in $\mathbb{S}(\mathfrak{J})(P)$ as
 \[\begin{tikzpicture}[tikzfig]
	\begin{pgfonlayer}{nodelayer}
		\node [style=none] (0) at (-0.25, -2) {};
		\node [style=blackdot] (1) at (0.25, -1.25) {};
		\node [style=blackdot] (2) at (-0.25, 1.25) {};
		\node [style=none] (3) at (0.25, 2) {};
		\node [style=box] (4) at (0, 0) {\scriptsize$\hspace{0.3em}g\hspace{0.3em}$};
		\node [style=none] (5) at (-0.25, -0.25) {};
		\node [style=none] (6) at (0.25, -0.25) {};
		\node [style=none] (7) at (-0.25, 0.25) {};
		\node [style=none] (8) at (0.25, 0.25) {};
		\node [style=none] (9) at (-0.5, -1.75) {\tiny $P$};
		\node [style=none] (10) at (-0.5, 0.75) {\tiny$Q$};
	\end{pgfonlayer}
	\begin{pgfonlayer}{edgelayer}
		\draw [style=morphism-edge] (0.center) to (5.center);
		\draw [style=morphism-edge] (1) to (6.center);
		\draw [style=morphism-edge] (8.center) to (3.center);
		\draw [style=morphism-edge] (7.center) to (2);
	\end{pgfonlayer}
      \end{tikzpicture}
    \]
\end{definition}

\begin{lemma}
 The definition of $\mathbb{S}(\mathfrak{J})(P)$ is independent of the choice of epimorphism $Q \xtwoheadrightarrow{q} \mathbb{1}$.
\end{lemma}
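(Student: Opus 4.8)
The plan is to show that two choices of defining epimorphism yield the same subspaces by comparing them directly. Temporarily write $\mathbb{S}_q(\mathfrak{J})(P)$ for the subspace of $\mathcal{C}(P,A)$ produced by \eqref{SDefined} relative to a fixed epimorphism $q\colon Q\twoheadrightarrow\mathbb{1}$ with $Q\in\cp$. For a second such epimorphism $q'\colon Q'\twoheadrightarrow\mathbb{1}$, I would first prove the one-sided statement: \emph{if there is a morphism $\pi\colon Q'\to Q$ in $\mathcal{C}$ with $q\circ\pi = q'$, then $\mathbb{S}_{q'}(\mathfrak{J})(P)\subseteq\mathbb{S}_{q}(\mathfrak{J})(P)$.}

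For this, observe that $\pi$ induces a right $A$-module map $\pi\triangleright A = \pi\otimes A\colon Q'\triangleright A\to Q\triangleright A$ — immediate from functoriality of $\otimes$, as the $A$-actions on these objects live on the second tensor factor — and hence a morphism in $\pfa$. For $g\in\mathfrak{J}(P,Q')$, the composite $h\defeq(\pi\triangleright A)\circ g$ then lies in $\mathfrak{J}(P,Q)$, since $\mathfrak{J}$ is an ideal. Because $h^{1} = h\circ(P\otimes\eta) = (\pi\otimes A)\circ g^{1}$, bifunctoriality gives $(q\otimes A)\circ h^{1} = \bigl((q\circ\pi)\otimes A\bigr)\circ g^{1} = (q'\otimes A)\circ g^{1}$. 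Thus the arbitrary generator $(q'\otimes A)\circ g^{1}$ of $\mathbb{S}_{q'}(\mathfrak{J})(P)$ coincides with the generator $(q\otimes A)\circ h^{1}$ of $\mathbb{S}_{q}(\mathfrak{J})(P)$, which gives the claimed inclusion.

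It then remains to discharge the auxiliary hypothesis. Given any two epimorphisms $q\colon Q\twoheadrightarrow\mathbb{1}$ and $q'\colon Q'\twoheadrightarrow\mathbb{1}$ with $Q,Q'\in\cp$, projectivity of $Q'$ together with $q$ being an epimorphism provides a lift $\pi\colon Q'\to Q$ with $q\circ\pi = q'$; symmetrically, projectivity of $Q$ together with $q'$ being an epimorphism provides a lift in the opposite direction. Two applications of the one-sided statement yield $\mathbb{S}_q(\mathfrak{J})(P)\subseteq\mathbb{S}_{q'}(\mathfrak{J})(P)$ and $\mathbb{S}_{q'}(\mathfrak{J})(P)\subseteq\mathbb{S}_q(\mathfrak{J})(P)$, hence equality.

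I do not expect a genuine obstacle here. The only points needing care are that $\pi\otimes A$ is truly a morphism of right $A$-modules — which is precisely what places it in $\pfa$ and lets us invoke the ideal property of $\mathfrak{J}$ — together with routine bookkeeping for the $(\blank)^{1}$ construction; strictness, already in force, silently identifies $\mathbb{1}\otimes A$ with $A$ throughout.
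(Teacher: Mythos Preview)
Your proof is correct and essentially identical to the paper's: both use projectivity of $Q$ and $Q'$ to obtain lifts through the two epimorphisms, then use that $\mathfrak{J}$ is an ideal (closed under postcomposition with the induced morphism $\pi\triangleright A$ in $\pfa$) to translate a generator for one choice into a generator for the other. The only difference is presentational---you isolate the one-sided statement before discharging the lifting hypothesis, whereas the paper does both in one pass.
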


\begin{proof}
 Let $Q \xtwoheadrightarrow{q} \mathbb{1}$ and $Q' \xtwoheadrightarrow{q'} \mathbb{1}$ be two choices of epimorphism from a projective to $\mathbb{1}$, and let ${\mathbb{S}(\mathfrak{J})(P)}_{Q}$ and ${\mathbb{S}(\mathfrak{J})(P)}_{Q'}$ denote the resulting subspaces of $\mathcal{C}(P,A)$. We obtain a commutative diagram
\[\begin{tikzcd}
	& {Q'} \\
	Q & {\mathbb{1}}
	\arrow["{h'}"', shift right, curve={height=12pt}, dashed, from=1-2, to=2-1]
	\arrow["{q'}", two heads, from=1-2, to=2-2]
	\arrow["h"', shift right, curve={height=-12pt}, dashed, from=2-1, to=1-2]
	\arrow["q"', two heads, from=2-1, to=2-2]
\end{tikzcd}\]
Then
\[
{\mathbb{S}(\mathfrak{J})(P)}_{Q}
= \{\,(q\otimes A) \circ f^{1} \mid f \in \mathfrak{J}(P,Q)\,\}
= \{\,(q'\otimes A) \circ (h\otimes A) \circ f^{1} \mid f \in \mathfrak{J}(P,Q)\,\}
\subseteq {\mathbb{S}(\mathfrak{J})(P)}_{Q'},
\]
and similarly ${\mathbb{S}(\mathfrak{J})(P)}_{Q'} \subseteq {\mathbb{S}(\mathfrak{J})(P)}_{Q}$.
\end{proof}

\begin{lemma}
 The collection $\{\,\mathbb{S}(\mathfrak{J})(P) \mid P \in \cp\,\}$ defines a mixed subfunctor of $\,{\mathcal{C}(\blank,A)}_{|\cp}$.
\end{lemma}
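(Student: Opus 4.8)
The plan is to verify conditions~\ref{mixed1}--\ref{mixed3} of \autoref{mixedsubfunctor} for the collection $\{\mathbb{S}(\mathfrak{J})(P)\}_{P\in\cp}$ (each $\mathbb{S}(\mathfrak{J})(P)$ is already a subspace of $\mathcal{C}(P,A)$, being the linear image of $\mathfrak{J}(P,Q)\subseteq\homa(P\triangleright A,Q\triangleright A)$ under $g\mapsto(q\otimes A)\circ g^{1}$), and since~\ref{mixed1} is implied by~\ref{mixed2} it is enough to treat the last two. Throughout I would keep the epimorphism $q\colon Q\twoheadrightarrow\unit$ of \autoref{def:Smap} fixed, and record first the elementary fact that, under the isomorphism $\mathcal{C}(P,A)\cong\homa(P\triangleright A,A)$ of~\eqref{indres}, an element $g' = (q\otimes A)\circ g^{1}$ of $\mathbb{S}(\mathfrak{J})(P)$ satisfies ${g'}^{A} = (q\otimes A)\circ g$ — both sides are right $A$-module maps $P\triangleright A\to A$ restricting to $g'$ along $P\otimes\eta_{A}$. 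Thus $\mathbb{S}(\mathfrak{J})(P)$ is exactly the set of maps $\bigl((q\otimes A)\circ g\bigr)^{1}$ with $g\in\mathfrak{J}(P,Q)$, so to show that a given $k\in\mathcal{C}(R,A)$ lies in $\mathbb{S}(\mathfrak{J})(R)$ it suffices to exhibit $\ell\in\mathfrak{J}(R,Q)$ with $k^{A} = (q\otimes A)\circ\ell$. With this, \ref{mixed2} is immediate: for $g'\in\mathbb{S}(\mathfrak{J})(P)$ and $f\in\homa(R\triangleright A,P\triangleright A)$ — a morphism of $\pfa$, as $R,P\in\cp$ — we get ${g'}^{A}\circ f = (q\otimes A)\circ(g\circ f)$ with $g\circ f\in\mathfrak{J}(R,Q)$ since $\mathfrak{J}$ is an ideal.

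The substantive case is~\ref{mixed3}. Given $g' = (q\otimes A)\circ g^{1}\in\mathbb{S}(\mathfrak{J})(P)$ and $f\in\ahom(A\triangleleft R,A\triangleleft P)$, set $\tilde f := {}^{1}f\in\mathcal{C}(R,A\otimes P)$, so that $k := {}^{1}({}^{A}g'\circ f) = {}^{A}g'\circ\tilde f$. Expanding ${}^{A}g' = \mu_{A}\circ(A\otimes g')$ and using associativity of $\mu_{A}$ together with ${g'}^{A} = (q\otimes A)\circ g$, a direct computation — cleanest in the string calculus of \autoref{fig:fig1} — gives
\[
 k^{A} \;=\; \mu_{A}\circ(A\otimes q\otimes A)\circ(A\otimes g)\circ(\tilde f\otimes A)
\]
as a morphism $R\triangleright A\to A$. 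I would then choose a projective $P^{A}\in\cp$ together with an epimorphism $\pi\colon P^{A}\twoheadrightarrow A$; since $R$ is projective and $\pi\otimes P$ is epic, $\tilde f$ factors as $(\pi\otimes P)\circ\sigma$ for some $\sigma\colon R\to P^{A}\otimes P$, and then $\sigma\otimes A$ is a morphism of $\pfa$ (here $P^{A}\otimes P\in\cp$ since $\cp$ is a tensor ideal). Sliding $\pi$ past $g$ by bifunctoriality of $\otimes$ turns $A\otimes g$ into $P^{A}\triangleright g$, which lies in $\mathfrak{J}(P^{A}\otimes P,P^{A}\otimes Q)$ by $\cp$-stability of $\mathfrak{J}$; absorbing the maps $\pi$ and $q$ then rewrites the displayed formula as $k^{A} = e\circ h$, where $h := (P^{A}\triangleright g)\circ(\sigma\otimes A)\in\mathfrak{J}(R,P^{A}\otimes Q)$ and $e := \mu_{A}\circ(\pi\otimes q\otimes A)$ is a morphism of right $A$-modules $(P^{A}\otimes Q)\triangleright A\to A$. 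Finally, $(P^{A}\otimes Q)\triangleright A$ is projective in $\modca$ and $q\otimes A\colon Q\triangleright A\twoheadrightarrow A$ is an epimorphism there, so $e$ lifts along it to a morphism $\nu$ of $\pfa$; putting $\ell := \nu\circ h\in\mathfrak{J}(R,Q)$ yields $k^{A} = (q\otimes A)\circ\ell$, which is exactly what was needed.

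The one genuine obstacle — and the reason the verification of~\ref{mixed3} is not a one-liner — is that $A$ itself is not an object of $\pfa$, since the unit object of $\mathcal{C}$ need not be projective; hence one cannot simply postcompose a morphism of $\mathfrak{J}$ with a map into $A$ and stay inside $\mathfrak{J}$. This is precisely what forces the detour through a projective cover $\pi\colon P^{A}\twoheadrightarrow A$ on the source side and the lifting of $e$ through $q\otimes A$ on the target side; the role of $\cp$-stability is exactly to let this projective cover be transported across the algebra $A$. Beyond this, the argument is bookkeeping with the adjunction isomorphisms~\eqref{indres}, and I expect the tidiest write-up to be diagrammatic, continuing the style of \autoref{fig:fig1} and \autoref{fig:fig2}.
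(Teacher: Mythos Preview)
Your proof is correct. For~\ref{mixed2} it coincides with the paper's argument. For~\ref{mixed3} the two proofs diverge: the paper exploits rigidity, bending the problem into $\pfa$ via $\coev$ and using an evaluation-type map as the epimorphism onto~$\unit$, which produces an explicit witness $\xi\in\mathfrak{J}$ by a direct string-diagram computation. You instead replace $A$ by a projective cover $P^{A}\twoheadrightarrow A$ on the source side and then lift through the epimorphism $q\otimes A$ on the target side. Both manoeuvres address exactly the obstacle you identify---that $A\notin\pfa$---but yours uses only enough projectives and exactness of $\otimes$, not duality, at the price of a non-constructive lifting step; the paper's argument is fully explicit but leans on the rigid structure. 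Either approach is adequate here; the paper's choice is consistent with its systematic use of duality elsewhere (e.g.\ in defining $\mathbb{R}$).
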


\begin{proof}
  Throughout, let $g \in \mathfrak{J}(P,Q) \subseteq \homa(P \triangleright A, Q \triangleright A)$.
  For condition~\ref{mixed2},
  suppose that  $f\colon P' \triangleright A \to P\triangleright A$.
  We have to show that
  \[
    {((q \otimes A) \circ g^1)}^A \circ f \in {\mathbb{S}(\mathfrak{J})(P')}^A,
  \]
  which is the case if and only if
  \[
    {({((q \otimes A)\circ g^1)}^A \circ f)}^1 \in \mathbb{S}(\mathfrak{J})(P').
  \]
  This follows by
  \[\begin{tikzpicture}[tikzfig]
	\begin{pgfonlayer}{nodelayer}
		\node [style=none] (0) at (-0.25, -2.5) {};
		\node [style=blackdot] (1) at (1.25, -1.5) {};
		\node [style=none] (3) at (1.25, 2.5) {};
		\node [style=box] (4) at (0.5, -0.5) {\scriptsize$\hspace{1em}f\hspace{1em}$};
		\node [style=none] (5) at (-0.25, -0.75) {};
		\node [style=none] (6) at (1.25, -0.75) {};
		\node [style=none] (7) at (-0.25, -0.25) {};
		\node [style=none] (8) at (1.25, -0.25) {};
		\node [style=box] (9) at (0, 1.75) {\scriptsize$\hspace{.5em}g\hspace{.5em}$};
		\node [style=none] (10) at (-0.25, 1.5) {};
		\node [style=none] (11) at (0.25, 1.5) {};
		\node [style=blackdot] (12) at (0.25, 0.75) {};
		\node [style=none] (13) at (-0.25, 2) {};
		\node [style=none] (14) at (0.25, 2) {};
		\node [style=blackdot] (15) at (-0.25, 3) {};
		\node [style=none] (16) at (0.75, 3.25) {};
		\node [style=none] (17) at (0.75, 4) {};
		\node [style=none] (18) at (2.25, 0.75) {$=$};
		\node [style=none] (19) at (3.25, -2.5) {};
		\node [style=blackdot] (20) at (4.75, -1.5) {};
		\node [style=none] (21) at (4.75, 1.5) {};
		\node [style=box] (22) at (4, -0.5) {\scriptsize$\hspace{1em}f\hspace{1em}$};
		\node [style=none] (23) at (3.25, -0.75) {};
		\node [style=none] (24) at (4.75, -0.75) {};
		\node [style=none] (25) at (3.25, -0.25) {};
		\node [style=none] (26) at (4.75, -0.25) {};
		\node [style=box] (27) at (4, 1.75) {\scriptsize$\hspace{1em}g\hspace{1em}$};
		\node [style=none] (28) at (3.25, 1.5) {};
		\node [style=none] (31) at (3.25, 2) {};
		\node [style=blackdot] (33) at (3.25, 3) {};
		\node [style=none] (34) at (4.75, 3.75) {};
		\node [style=none] (35) at (4.75, 2) {};
		\node [style=none] (36) at (10.75, 0.75) {$= (q\otimes A)\circ {(g\circ f)}^1 \in \mathbb{S}(\mathfrak{J})(P')$};
	\end{pgfonlayer}
	\begin{pgfonlayer}{edgelayer}
		\draw [style=morphism-edge] (1) to (6.center);
		\draw [style=morphism-edge] (8.center) to (3.center);
		\draw [style=morphism-edge] (0.center) to (5.center);
		\draw [style=morphism-edge] (7.center) to (10.center);
		\draw [style=morphism-edge] (11.center) to (12);
		\draw [style=morphism-edge, in=180, out=90] (14.center) to (16.center);
		\draw [style=morphism-edge, in=90, out=0] (16.center) to (3.center);
		\draw [style=morphism-edge] (15) to (13.center);
		\draw [style=morphism-edge] (17.center) to (16.center);
		\draw [style=morphism-edge] (20) to (24.center);
		\draw [style=morphism-edge] (26.center) to (21.center);
		\draw [style=morphism-edge] (19.center) to (23.center);
		\draw [style=morphism-edge] (25.center) to (28.center);
		\draw [style=morphism-edge] (33) to (31.center);
		\draw [style=morphism-edge] (35.center) to (34.center);
	\end{pgfonlayer}
      \end{tikzpicture}
    \]

    It is left to prove~\ref{mixed3}.
    Suppose that $f \colon A \triangleleft P' \to A \triangleleft P$.
    We have to show that
    \[
      {}^A((q \otimes A) \circ g^1) \circ {}^1\!f \in \mathbb{S}(\mathfrak{J})(P'),
    \]
    which is the case if and only if
    \[
      {}^1({}^A((q \otimes A) \circ g^1) \circ f) \in \mathbb{S}(\mathfrak{J})(P').
    \]
    By \autoref{fig:fig6}, the morphism $\xi$ defined therein is a right $A$-module morphism,
    and so it is left to show that
    \[
      \xi \in \mathfrak{J}(P' \otimes Q, P' \otimes \ld{P'})
      \subseteq \homa(P' \otimes Q \triangleright A, P' \otimes \ld{P'} \triangleright A).
    \]
    This follows by
    \[\begin{tikzpicture}[tikzfig]
	\begin{pgfonlayer}{nodelayer}
		\node [style=none] (0) at (-4.75, -0.75) {};
		\node [style=none] (1) at (-4.25, -0.75) {};
		\node [style=box] (2) at (-4.5, 0.75) {$\hspace{0.5em}g\hspace{0.5em}$};
		\node [style=none] (3) at (-4.75, 2.25) {};
		\node [style=none] (4) at (-4.25, 2.25) {};
		\node [style=none] (5) at (-4.75, 0.5) {};
		\node [style=none] (6) at (-4.25, 0.5) {};
		\node [style=none] (7) at (-4.75, 1) {};
		\node [style=none] (8) at (-4.25, 1) {};
		\node [style=none] (9) at (-1.75, 0.75) {$\in \mathfrak{J} \implies$};
		\node [style=none] (10) at (0.5, -0.75) {};
		\node [style=none] (11) at (1, -0.75) {};
		\node [style=box] (12) at (0.75, 0.75) {$\hspace{0.5em}g\hspace{0.5em}$};
		\node [style=blackdot] (13) at (0.5, 1.75) {};
		\node [style=none] (14) at (1, 2.25) {};
		\node [style=none] (15) at (0.5, 0.5) {};
		\node [style=none] (16) at (1, 0.5) {};
		\node [style=none] (17) at (0.5, 1) {};
		\node [style=none] (18) at (1, 1) {};
		\node [style=none] (19) at (3.5, 0.75) {$\in \mathfrak{J} \implies$};
		\node [style=none] (20) at (7.25, -0.75) {};
		\node [style=none] (21) at (7.75, -0.75) {};
		\node [style=box] (22) at (7.5, 0.75) {$\hspace{0.5em}g\hspace{0.5em}$};
		\node [style=blackdot] (23) at (7.25, 1.75) {};
		\node [style=none] (24) at (7.75, 2.25) {};
		\node [style=none] (25) at (7.25, 0.5) {};
		\node [style=none] (26) at (7.75, 0.5) {};
		\node [style=none] (27) at (7.25, 1) {};
		\node [style=none] (28) at (7.75, 1) {};
		\node [style=none] (29) at (10.25, 0.75) {$\in \mathfrak{J} \implies$};
		\node [style=none] (30) at (5.25, -0.75) {};
		\node [style=none] (31) at (5.75, -0.75) {};
		\node [style=none] (32) at (5.25, 2.25) {};
		\node [style=none] (33) at (5.75, 2.25) {};
		\node [style=none] (34) at (6.25, -0.75) {};
		\node [style=none] (36) at (6.25, 2.25) {};
		\node [style=none] (37) at (14, -0.75) {};
		\node [style=none] (38) at (14.5, -0.75) {};
		\node [style=box] (39) at (14.25, 0.75) {$\hspace{0.5em}g\hspace{0.5em}$};
		\node [style=blackdot] (40) at (14, 1.75) {};
		\node [style=none] (41) at (14.5, 2.25) {};
		\node [style=none] (42) at (14, 0.5) {};
		\node [style=none] (43) at (14.5, 0.5) {};
		\node [style=none] (44) at (14, 1) {};
		\node [style=none] (45) at (14.5, 1) {};
		\node [style=none] (47) at (12, -0.75) {};
		\node [style=none] (48) at (12.5, -0.75) {};
		\node [style=none] (49) at (12, 3.5) {};
		\node [style=none] (50) at (12.5, 3.5) {};
		\node [style=none] (51) at (13, -0.75) {};
		\node [style=none] (52) at (13, 2.25) {};
		\node [style=none] (53) at (13.75, 3) {};
		\node [style=none] (54) at (13.75, 3.5) {};
	\end{pgfonlayer}
	\begin{pgfonlayer}{edgelayer}
		\draw [style=morphism-edge] (0.center) to (5.center);
		\draw [style=morphism-edge] (6.center) to (1.center);
		\draw [style=morphism-edge] (7.center) to (3.center);
		\draw [style=morphism-edge] (8.center) to (4.center);
		\draw [style=morphism-edge] (10.center) to (15.center);
		\draw [style=morphism-edge] (16.center) to (11.center);
		\draw [style=morphism-edge] (17.center) to (13);
		\draw [style=morphism-edge] (18.center) to (14.center);
		\draw [style=morphism-edge] (20.center) to (25.center);
		\draw [style=morphism-edge] (26.center) to (21.center);
		\draw [style=morphism-edge] (27.center) to (23);
		\draw [style=morphism-edge] (28.center) to (24.center);
		\draw [style=morphism-edge] (30.center) to (32.center);
		\draw [style=morphism-edge] (33.center) to (31.center);
		\draw [style=morphism-edge] (34.center) to (36.center);
		\draw [style=morphism-edge] (37.center) to (42.center);
		\draw [style=morphism-edge] (43.center) to (38.center);
		\draw [style=morphism-edge] (44.center) to (40);
		\draw [style=morphism-edge] (45.center) to (41.center);
		\draw [style=morphism-edge] (47.center) to (49.center);
		\draw [style=morphism-edge] (50.center) to (48.center);
		\draw [style=morphism-edge] (51.center) to (52.center);
		\draw [style=morphism-edge, in=90, out=-180] (53.center) to (52.center);
		\draw [style=morphism-edge, in=360, out=90] (41.center) to (53.center);
		\draw [style=morphism-edge] (54.center) to (53.center);
	\end{pgfonlayer}
      \end{tikzpicture}
    \]
    which is the top part of $\xi$.
    \begin{figure}[htbp]
      \tikzfig{fig6}
      \caption{Verification of the first part of condition~\ref{mixed3}.}%
      \label{fig:fig6}
    \end{figure}
\end{proof}

\subsection{From mixed subfunctors to \texorpdfstring{$\cp$}{Cp}-stable ideals}

\begin{definition}
 Let $\mathtt{I}$ be a mixed subfunctor of ${\mathcal{C}(\blank,A)}_{|\cp}$. We define a $\cp$-stable ideal $\mathbb{R}(\mathtt{I})$ in $\pfa$ by
 \begin{equation}\label{RDefined}
 \mathbb{R}(\mathtt{I})(P,Q) = \setj{{((Q\otimes f) \circ (\coev_{{Q}} \otimes P))}^{A} \; | \; f \in \mathtt{I}(\ld{Q}\otimes P)}.
 \end{equation}
\end{definition}

\begin{lemma}
 The assignment of \autoref{RDefined} defines a $\cp$-stable ideal $\mathbb{R}(\mathtt{I})$ in $\pfa$.
\end{lemma}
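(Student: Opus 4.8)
The plan is to recognise $\mathbb{R}(\mathtt{I})$ as the transport of $\mathtt{I}$ across a standard rigidity isomorphism, after which the requirements of \autoref{def:cpstableideal} follow from the mixed-subfunctor axioms of $\mathtt{I}$ together with the triangle identities for duals. Since $\cp$ is closed under tensoring and under duals, for $P,Q\in\cp$ we have $\ld{Q}\otimes P\in\cp$, and rigidity combined with~\eqref{indres} gives a $\bk$-linear isomorphism
\[
 \mathcal{C}(\ld{Q}\otimes P,A)\;\xiso\;\mathcal{C}(P,Q\otimes A)\;\xiso\;\homa(P\triangleright A,Q\triangleright A),\qquad
 f\longmapsto{\bigl((Q\otimes f)\circ(\coev_{Q}\otimes P)\bigr)}^{A},
\]
with inverse $g\mapsto(\ev_{Q}\otimes A)\circ(\ld{Q}\otimes g^{1})$. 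Comparing with~\eqref{RDefined}, the space $\mathbb{R}(\mathtt{I})(P,Q)$ is precisely the image of $\mathtt{I}(\ld{Q}\otimes P)$ under this isomorphism, so it is automatically a subspace of $\homa(P\triangleright A,Q\triangleright A)$; only the ideal condition and $\cp$-stability remain. For each of these I would apply the inverse isomorphism to the relevant morphism of $\pfa$ and read off which axiom of \autoref{mixedsubfunctor} keeps the resulting morphism inside $\mathtt{I}$.

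For $\cp$-stability, fix $R\in\cp$ and $g=\hat f^{A}\in\mathbb{R}(\mathtt{I})(P,Q)$ with $\hat f=(Q\otimes f)\circ(\coev_{Q}\otimes P)$ and $f\in\mathtt{I}(\ld{Q}\otimes P)$. By strictness $R\triangleright(P\triangleright A)=(R\otimes P)\triangleright A$, and a direct check gives $R\triangleright g=(R\otimes\hat f)^{A}$. Applying the inverse isomorphism for the pair $(R\otimes P,R\otimes Q)$ and using the triangle identity $(\ev_{Q}\otimes\ld{Q})\circ(\ld{Q}\otimes\coev_{Q})=\id_{\ld{Q}}$ (after writing out $\ev_{R\otimes Q}$), one finds that $R\triangleright g$ corresponds to $f\circ(\ld{Q}\otimes\ev_{R}\otimes P)$. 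Since $\mathtt{I}$ is a subpresheaf by condition~\ref{mixed1}, this lies in $\mathtt{I}(\ld{Q}\otimes\ld{R}\otimes R\otimes P)=\mathtt{I}\bigl(\ld{(R\otimes Q)}\otimes(R\otimes P)\bigr)$, hence $R\triangleright g\in\mathbb{R}(\mathtt{I})(R\otimes P,R\otimes Q)$.

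For the ideal condition the two cases are parallel. If $g\in\mathbb{R}(\mathtt{I})(P,Q)$ comes from $f\in\mathtt{I}(\ld{Q}\otimes P)$ and $h\in\homa(P'\triangleright A,P\triangleright A)$ is arbitrary, then applying the inverse isomorphism for $(P',Q)$ to $g\circ h$ produces $f^{A}\circ(\ld{Q}\otimes h^{1})={\bigl(f^{A}\circ(\ld{Q}\otimes h^{1})^{A}\bigr)}^{1}$, where $(\ld{Q}\otimes h^{1})^{A}\colon(\ld{Q}\otimes P')\triangleright A\to(\ld{Q}\otimes P)\triangleright A$ is a morphism of $\pfa$; condition~\ref{mixed2} applied to $f$ then places it in $\mathtt{I}(\ld{Q}\otimes P')$, so $g\circ h\in\mathbb{R}(\mathtt{I})(P',Q)$. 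Dually, if $h\in\homa(Q\triangleright A,Q'\triangleright A)$ is arbitrary and $\check h\defeq(\ev_{Q'}\otimes A)\circ(\ld{Q'}\otimes h^{1})\in\mathcal{C}(\ld{Q'}\otimes Q,A)$, then applying the inverse isomorphism for $(P,Q')$ to $h\circ g$ produces $\mu_{A}\circ(\check h\otimes f)\circ(\ld{Q'}\otimes\coev_{Q}\otimes P)$, which one rewrites as ${}^{1}\!\bigl({}^{A}f\circ\ell\bigr)$ for the left $A$-module morphism $\ell\colon A\triangleleft(\ld{Q'}\otimes P)\to A\triangleleft(\ld{Q}\otimes P)$ determined by ${}^{1}\ell=(\check h\otimes\ld{Q}\otimes P)\circ(\ld{Q'}\otimes\coev_{Q}\otimes P)$; condition~\ref{mixed3} applied to $f$ then places it in $\mathtt{I}(\ld{Q'}\otimes P)$, so $h\circ g\in\mathbb{R}(\mathtt{I})(P,Q')$. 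Alternatively, by \autoref{sec:2mix} a mixed subfunctor is the restriction of a representable ideal in $\mathcal{C}(\blank,A)$, and both closure statements then drop out of \autoref{idealinDay} on taking $\id_{A}$, resp.\ $\check h$, for one of its two slots.

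I expect the postcomposition step to require the most care: the other checks act on the first argument $P$, whereas postcomposition moves the second argument of $\mathbb{R}(\mathtt{I})$, which enters only through the dual $\ld{Q}$, so the natural adjunction to use is the one for left $A$-modules and the bookkeeping of mates is heavier. Everything else is a short diagram chase, and --- matching the style of the preceding lemma on $\mathbb{S}$ in this section --- I would present the identities in the third paragraph as string-diagram calculations rather than symbolic ones.
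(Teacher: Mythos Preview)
Your proposal is correct and follows essentially the same route as the paper: both arguments verify precomposition via~\ref{mixed2}, postcomposition via~\ref{mixed3}, and $\cp$-stability via~\ref{mixed1}, and both identify postcomposition as the most delicate step. The only real difference is presentational --- the paper carries out each check directly with string diagrams, whereas you first name the rigidity isomorphism $\mathcal{C}(\ld{Q}\otimes P,A)\cong\homa(P\triangleright A,Q\triangleright A)$ and then pull each composite back through its inverse before invoking the appropriate axiom; the resulting element of $\mathtt{I}$ is the same in both treatments (e.g.\ your $f\circ(\ld{Q}\otimes\ev_{R}\otimes P)$ for $\cp$-stability is exactly the paper's $\widehat{Q''\triangleright f}$). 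Your closing alternative via \autoref{idealinDay} is legitimate and not used in the paper's proof of this lemma, though it does appear in the parallel discussion of \autoref{sec:2mix}.
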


\begin{proof}
 Let $f \in \mathtt{I}(\ld{Q} \otimes P)$, and let $\mathbb{R}(f) \defeq {((Q\otimes f) \circ (\coev_{{Q}} \otimes P))}^{A}$.
 We first show that $\mathbb{R}(\mathtt{I})$ is stable under precomposition with morphisms of $\pfa$. Thus, let $g \in \homa(P' \triangleright A, P \triangleright A)$, and we will show that $\mathbb{R}(f) \circ g \in \mathbb{R}(\mathtt{I})(P',Q)$.
    In the following equation, we write $\mathbb{R}(f) \circ g$ on the left-hand side, and on the right-hand side we have the morphism ${\mathbb{R}(f^A \circ (Q \otimes g))}^1$:
    \[\begin{tikzpicture}[tikzfig]
	\begin{pgfonlayer}{nodelayer}
		\node [style=none] (0) at (-0.25, -2.5) {};
		\node [style=none] (1) at (0.75, -2.5) {};
		\node [style=box] (2) at (0.25, -1) {$\hspace{0.5em}g\hspace{0.5em}$};
		\node [style=none] (4) at (0.75, 0.5) {};
		\node [style=none] (5) at (-0.25, -1.25) {};
		\node [style=none] (6) at (0.75, -1.25) {};
		\node [style=none] (7) at (-0.25, -0.75) {};
		\node [style=none] (8) at (0.75, -0.75) {};
		\node [style=box] (9) at (-0.5, 0.25) {$\hspace{0.1em}f\hspace{0.1em}$};
		\node [style=none] (10) at (-0.25, 0) {};
		\node [style=none] (11) at (-0.75, 0) {};
		\node [style=none] (12) at (-1.25, -1) {};
		\node [style=none] (13) at (-1.75, 0) {};
		\node [style=none] (14) at (-0.5, 0.5) {};
		\node [style=none] (15) at (0.125, 1.5) {};
		\node [style=none] (16) at (0.125, 2.25) {};
		\node [style=none] (17) at (-1.75, 1.75) {};
		\node [style=none] (18) at (1.75, -0.25) {$=$};
		\node [style=none] (19) at (4, -2.5) {};
		\node [style=none] (20) at (5, -2.5) {};
		\node [style=box] (21) at (4.5, -1) {$\hspace{0.5em}g\hspace{0.5em}$};
		\node [style=none] (22) at (5, 0.5) {};
		\node [style=none] (23) at (4, -1.25) {};
		\node [style=none] (24) at (5, -1.25) {};
		\node [style=none] (25) at (4, -0.75) {};
		\node [style=none] (26) at (5, -0.75) {};
		\node [style=box] (27) at (3.75, 0.25) {$\hspace{0.1em}f\hspace{0.1em}$};
		\node [style=none] (28) at (4, 0) {};
		\node [style=none] (29) at (3.5, -1.25) {};
		\node [style=none] (30) at (3, -2.25) {};
		\node [style=none] (31) at (2.5, -1.25) {};
		\node [style=none] (32) at (3.75, 0.5) {};
		\node [style=none] (33) at (4.375, 1.5) {};
		\node [style=none] (34) at (4.375, 2.25) {};
		\node [style=none] (35) at (2.5, 1.75) {};
		\node [style=none] (36) at (6, -0.25) {$=$};
		\node [style=none] (37) at (3.5, 0) {};
		\node [style=none] (38) at (8.25, -2.5) {};
		\node [style=blackdot] (39) at (9.25, -2) {};
		\node [style=box] (40) at (8.75, -1) {$\hspace{0.5em}g\hspace{0.5em}$};
		\node [style=none] (41) at (9.25, 0.5) {};
		\node [style=none] (42) at (8.25, -1.25) {};
		\node [style=none] (43) at (9.25, -1.25) {};
		\node [style=none] (44) at (8.25, -0.75) {};
		\node [style=none] (45) at (9.25, -0.75) {};
		\node [style=box] (46) at (8, 0.25) {$\hspace{0.1em}f\hspace{0.1em}$};
		\node [style=none] (47) at (8.25, 0) {};
		\node [style=none] (48) at (7.75, -1.25) {};
		\node [style=none] (49) at (7.25, -2.25) {};
		\node [style=none] (50) at (6.75, -1.25) {};
		\node [style=none] (51) at (8.25, 1.25) {};
		\node [style=none] (52) at (9, 2) {};
		\node [style=none] (54) at (6.75, 1.75) {};
		\node [style=none] (55) at (7.75, 0) {};
		\node [style=none] (56) at (10.25, -2.5) {};
		\node [style=none] (59) at (9, 3) {};
		\node [style=none] (60) at (11, -0.25) {$=$};
		\node [style=none] (61) at (13.25, -2.5) {};
		\node [style=blackdot] (62) at (14.25, -2) {};
		\node [style=box] (63) at (13.75, -1) {$\hspace{0.5em}g\hspace{0.5em}$};
		\node [style=none] (64) at (14.25, 0.5) {};
		\node [style=none] (65) at (13.25, -1.25) {};
		\node [style=none] (66) at (14.25, -1.25) {};
		\node [style=none] (67) at (13.25, -0.75) {};
		\node [style=none] (68) at (14.25, -0.75) {};
		\node [style=box] (69) at (13, 0.25) {$\hspace{0.1em}f\hspace{0.1em}$};
		\node [style=none] (70) at (13.25, 0) {};
		\node [style=none] (71) at (12.75, -1.25) {};
		\node [style=none] (72) at (12.25, -2.25) {};
		\node [style=none] (73) at (11.75, -1.25) {};
		\node [style=none] (74) at (13, 0.5) {};
		\node [style=none] (75) at (13.625, 1.5) {};
		\node [style=none] (76) at (13.625, 1.5) {};
		\node [style=none] (77) at (11.75, 1.75) {};
		\node [style=none] (78) at (12.75, 0) {};
		\node [style=none] (79) at (14.75, -2.5) {};
		\node [style=none] (80) at (14.75, 1.5) {};
		\node [style=none] (81) at (14.25, 2.5) {};
		\node [style=none] (82) at (14.25, 3.25) {};
		\node [style=none] (83) at (10.25, 0.5) {};
		\node [style=none] (84) at (9.75, 1.25) {};
		\node [style=none] (85) at (8, 0.5) {};
		\node [style=none] (87) at (16.25, 0) {and};
		\node [style=blackdot] (89) at (19.5, -1.75) {};
		\node [style=box] (90) at (19, -0.75) {$\hspace{0.5em}g\hspace{0.5em}$};
		\node [style=none] (91) at (19.5, 0.75) {};
		\node [style=none] (92) at (18.5, -1) {};
		\node [style=none] (93) at (19.5, -1) {};
		\node [style=none] (94) at (18.5, -0.5) {};
		\node [style=none] (95) at (19.5, -0.5) {};
		\node [style=box] (96) at (18.25, 0.5) {$\hspace{0.1em}f\hspace{0.1em}$};
		\node [style=none] (97) at (18.5, 0.25) {};
		\node [style=none] (98) at (18, -2.25) {};
		\node [style=none] (102) at (18.875, 1.75) {};
		\node [style=none] (104) at (18, 0.25) {};
		\node [style=none] (105) at (18.875, 2.75) {};
		\node [style=none] (108) at (18.25, 0.75) {};
		\node [style=none] (109) at (18.5, -2.25) {};
		\node [style=none] (110) at (23.5, 0) {$= (f^A \circ (Q \otimes g))^1 \in \mathtt{I}$,};
	\end{pgfonlayer}
	\begin{pgfonlayer}{edgelayer}
		\draw [style=morphism-edge] (0.center) to (5.center);
		\draw [style=morphism-edge] (6.center) to (1.center);
		\draw [style=morphism-edge] (8.center) to (4.center);
		\draw [style=morphism-edge] (7.center) to (10.center);
		\draw [style=morphism-edge, in=180, out=90] (14.center) to (15.center);
		\draw [style=morphism-edge, in=90, out=0] (15.center) to (4.center);
		\draw [style=morphism-edge] (16.center) to (15.center);
		\draw [style=morphism-edge, in=0, out=-90] (11.center) to (12.center);
		\draw [style=morphism-edge, in=270, out=180] (12.center) to (13.center);
		\draw [style=morphism-edge] (17.center) to (13.center);
		\draw [style=morphism-edge] (19.center) to (23.center);
		\draw [style=morphism-edge] (24.center) to (20.center);
		\draw [style=morphism-edge] (26.center) to (22.center);
		\draw [style=morphism-edge] (25.center) to (28.center);
		\draw [style=morphism-edge, in=180, out=90] (32.center) to (33.center);
		\draw [style=morphism-edge, in=90, out=0] (33.center) to (22.center);
		\draw [style=morphism-edge] (34.center) to (33.center);
		\draw [style=morphism-edge, in=0, out=-90] (29.center) to (30.center);
		\draw [style=morphism-edge, in=270, out=180] (30.center) to (31.center);
		\draw [style=morphism-edge] (35.center) to (31.center);
		\draw [style=morphism-edge] (37.center) to (29.center);
		\draw [style=morphism-edge] (38.center) to (42.center);
		\draw [style=morphism-edge] (43.center) to (39);
		\draw [style=morphism-edge] (45.center) to (41.center);
		\draw [style=morphism-edge] (44.center) to (47.center);
		\draw [style=morphism-edge, in=180, out=90] (51.center) to (52.center);
		\draw [style=morphism-edge, in=0, out=-90] (48.center) to (49.center);
		\draw [style=morphism-edge, in=270, out=180] (49.center) to (50.center);
		\draw [style=morphism-edge] (54.center) to (50.center);
		\draw [style=morphism-edge] (55.center) to (48.center);
		\draw [style=morphism-edge] (61.center) to (65.center);
		\draw [style=morphism-edge] (66.center) to (62);
		\draw [style=morphism-edge] (68.center) to (64.center);
		\draw [style=morphism-edge] (67.center) to (70.center);
		\draw [style=morphism-edge, in=180, out=90] (74.center) to (75.center);
		\draw [style=morphism-edge, in=90, out=0] (75.center) to (64.center);
		\draw [style=morphism-edge] (76.center) to (75.center);
		\draw [style=morphism-edge, in=0, out=-90] (71.center) to (72.center);
		\draw [style=morphism-edge, in=270, out=180] (72.center) to (73.center);
		\draw [style=morphism-edge] (77.center) to (73.center);
		\draw [style=morphism-edge] (78.center) to (71.center);
		\draw [style=morphism-edge, in=90, out=-180] (81.center) to (76.center);
		\draw [style=morphism-edge, in=90, out=0] (81.center) to (80.center);
		\draw [style=morphism-edge] (80.center) to (79.center);
		\draw [style=morphism-edge] (82.center) to (81.center);
		\draw [style=morphism-edge, in=180, out=90] (41.center) to (84.center);
		\draw [style=morphism-edge, in=90, out=0] (84.center) to (83.center);
		\draw [style=morphism-edge, in=360, out=90] (84.center) to (52.center);
		\draw [style=morphism-edge] (83.center) to (56.center);
		\draw [style=morphism-edge] (52.center) to (59.center);
		\draw [style=morphism-edge, in=270, out=90] (85.center) to (51.center);
		\draw [style=morphism-edge] (93.center) to (89);
		\draw [style=morphism-edge] (95.center) to (91.center);
		\draw [style=morphism-edge] (94.center) to (97.center);
		\draw [style=morphism-edge] (104.center) to (98.center);
		\draw [style=morphism-edge] (102.center) to (105.center);
		\draw [style=morphism-edge] (109.center) to (92.center);
		\draw [style=morphism-edge, in=180, out=90] (108.center) to (102.center);
		\draw [style=morphism-edge, in=90, out=0] (102.center) to (91.center);
	\end{pgfonlayer}
      \end{tikzpicture}
    \]
    so $\mathbb{R}(f) \circ g \in \mathbb{R}(P',Q)$.

    Now, we show that $\mathbb{R}(\mathtt{I})$ is stable under postcomposition with morphisms in $\pfa$. Let $h \in \homa(Q \triangleright A, Q' \triangleright A)$. Similarly to the previous part, we first show that there is $\widehat{h\circ f}$ such that $h \circ \mathbb{R}(f) = \mathbb{R}(\widehat{h \circ f})$, and then that $\widehat{h \circ f}$ belongs to $\mathtt{I}$. The morphism $\mathbb{R}(\widehat{h \circ f})$ is displayed as right-hand side below:
    \[\begin{tikzpicture}[tikzfig]
	\begin{pgfonlayer}{nodelayer}
		\node [style=none] (0) at (-8.75, -2.5) {};
		\node [style=none] (1) at (-7.25, -3) {};
		\node [style=box] (2) at (-7.75, -1.25) {$\hspace{0.5em}f\hspace{0.5em}$};
		\node [style=none] (3) at (-8.25, -1.5) {};
		\node [style=none] (4) at (-7.25, -1.5) {};
		\node [style=none] (5) at (-9.25, -1.5) {};
		\node [style=none] (6) at (-6, -3) {};
		\node [style=none] (7) at (-6.875, 0.25) {};
		\node [style=none] (8) at (-7.75, -1) {};
		\node [style=none] (9) at (-6, -1) {};
		\node [style=none] (10) at (-6.875, 1) {};
		\node [style=box] (11) at (-7.375, 1.25) {$\hspace{0.5em}h\hspace{0.5em}$};
		\node [style=none] (12) at (-7.875, 1) {};
		\node [style=none] (13) at (-7.875, 1.5) {};
		\node [style=none] (14) at (-6.875, 1.5) {};
		\node [style=none] (15) at (-7.875, 2.5) {};
		\node [style=none] (16) at (-6.875, 2.5) {};
		\node [style=none] (17) at (-5, 0) {$=$};
		\node [style=none] (18) at (-3.75, -2.5) {};
		\node [style=none] (19) at (-2.25, -3) {};
		\node [style=box] (20) at (-2.75, -1.5) {$\hspace{0.5em}f\hspace{0.5em}$};
		\node [style=none] (21) at (-3.25, -1.75) {};
		\node [style=none] (22) at (-2.25, -1.75) {};
		\node [style=none] (23) at (-4.25, -1.5) {};
		\node [style=none] (24) at (-1, -3) {};
		\node [style=none] (26) at (-2.75, -1.25) {};
		\node [style=none] (27) at (-1, -1) {};
		\node [style=none] (28) at (-1.875, 1.5) {};
		\node [style=box] (29) at (-2.375, 1.75) {$\hspace{0.5em}h\hspace{0.5em}$};
		\node [style=none] (30) at (-2.875, 1.5) {};
		\node [style=none] (31) at (-2.875, 2) {};
		\node [style=none] (32) at (-1.875, 2) {};
		\node [style=none] (33) at (-2.875, 3.75) {};
		\node [style=none] (34) at (-1.875, 2.5) {};
		\node [style=none] (35) at (0, 0) {$=$};
		\node [style=blackdot] (36) at (-1.875, 1) {};
		\node [style=none] (37) at (-0.75, 2.5) {};
		\node [style=none] (38) at (-0.75, 1.25) {};
		\node [style=none] (39) at (-1.25, 3.25) {};
		\node [style=none] (40) at (-1.25, 3.75) {};
		\node [style=none] (41) at (-1.875, 0) {};
		\node [style=none] (42) at (1.25, -2.5) {};
		\node [style=none] (43) at (2.75, -3) {};
		\node [style=box] (44) at (2.25, -1.5) {$\hspace{0.5em}f\hspace{0.5em}$};
		\node [style=none] (45) at (1.75, -1.75) {};
		\node [style=none] (46) at (2.75, -1.75) {};
		\node [style=none] (47) at (0.75, -1.5) {};
		\node [style=none] (48) at (4.75, -3) {};
		\node [style=none] (49) at (2.25, -1.25) {};
		\node [style=none] (50) at (4.75, -1) {};
		\node [style=none] (51) at (3.125, 1) {};
		\node [style=box] (52) at (2.625, 1.25) {$\hspace{0.5em}h\hspace{0.5em}$};
		\node [style=none] (53) at (2.125, 1) {};
		\node [style=none] (54) at (2.125, 1.5) {};
		\node [style=none] (55) at (3.125, 1.5) {};
		\node [style=none] (56) at (2.125, 3.75) {};
		\node [style=none] (57) at (3.125, 1.5) {};
		\node [style=none] (58) at (5.75, 0) {$=$};
		\node [style=blackdot] (59) at (3.125, 0.5) {};
		\node [style=none] (60) at (4, 1.5) {};
		\node [style=none] (61) at (4, 0.75) {};
		\node [style=none] (63) at (3.575, 2.25) {};
		\node [style=none] (64) at (4.2, 3) {};
		\node [style=none] (65) at (4.75, 2.25) {};
		\node [style=none] (66) at (4.2, 3.75) {};
		\node [style=none] (67) at (9, -2.5) {};
		\node [style=none] (68) at (10.5, -3) {};
		\node [style=box] (69) at (10, -1.5) {$\hspace{0.5em}f\hspace{0.5em}$};
		\node [style=none] (70) at (9.5, -1.75) {};
		\node [style=none] (71) at (10.5, -1.75) {};
		\node [style=none] (72) at (8.5, -1.5) {};
		\node [style=none] (73) at (11, -3) {};
		\node [style=none] (74) at (10, -1.25) {};
		\node [style=none] (75) at (11, -1) {};
		\node [style=none] (76) at (9.375, 0.5) {};
		\node [style=box] (77) at (8.875, 0.75) {$\hspace{0.5em}h\hspace{0.5em}$};
		\node [style=none] (78) at (8.5, 0.5) {};
		\node [style=none] (79) at (8.5, 1) {};
		\node [style=none] (80) at (9.375, 1) {};
		\node [style=none] (81) at (8, 1.75) {};
		\node [style=none] (82) at (9.375, 1) {};
		\node [style=blackdot] (84) at (9.375, 0) {};
		\node [style=none] (85) at (10.25, 1) {};
		\node [style=none] (86) at (10.25, 0.25) {};
		\node [style=none] (87) at (9.825, 1.75) {};
		\node [style=none] (88) at (10.45, 3) {};
		\node [style=none] (89) at (11, 2.25) {};
		\node [style=none] (90) at (10.45, 3.75) {};
		\node [style=none] (91) at (7.5, 1) {};
		\node [style=none] (92) at (7.5, -1.25) {};
		\node [style=none] (93) at (7.125, -2) {};
		\node [style=none] (94) at (6.75, -1.25) {};
		\node [style=none] (95) at (6.7, 3.75) {};
	\end{pgfonlayer}
	\begin{pgfonlayer}{edgelayer}
		\draw [style=morphism-edge, in=270, out=0] (0.center) to (3.center);
		\draw [style=morphism-edge] (4.center) to (1.center);
		\draw [style=morphism-edge, in=180, out=-90] (5.center) to (0.center);
		\draw [style=morphism-edge, in=360, out=90] (9.center) to (7.center);
		\draw [style=morphism-edge, in=180, out=90] (8.center) to (7.center);
		\draw [style=morphism-edge] (9.center) to (6.center);
		\draw [style=morphism-edge, in=270, out=90] (7.center) to (10.center);
		\draw [style=morphism-edge, in=270, out=90] (5.center) to (12.center);
		\draw [style=morphism-edge] (16.center) to (14.center);
		\draw [style=morphism-edge] (13.center) to (15.center);
		\draw [style=morphism-edge, in=270, out=0] (18.center) to (21.center);
		\draw [style=morphism-edge] (22.center) to (19.center);
		\draw [style=morphism-edge, in=180, out=-90] (23.center) to (18.center);
		\draw [style=morphism-edge] (27.center) to (24.center);
		\draw [style=morphism-edge, in=270, out=90] (23.center) to (30.center);
		\draw [style=morphism-edge] (34.center) to (32.center);
		\draw [style=morphism-edge] (31.center) to (33.center);
		\draw [style=morphism-edge] (36) to (28.center);
		\draw [style=morphism-edge, in=180, out=90] (34.center) to (39.center);
		\draw [style=morphism-edge, in=90, out=0] (39.center) to (37.center);
		\draw [style=morphism-edge] (40.center) to (39.center);
		\draw [style=morphism-edge, in=90, out=-90] (37.center) to (38.center);
		\draw [style=morphism-edge, in=180, out=90] (26.center) to (41.center);
		\draw [style=morphism-edge, in=90, out=0] (41.center) to (27.center);
		\draw [style=morphism-edge, in=270, out=90] (41.center) to (38.center);
		\draw [style=morphism-edge, in=270, out=0] (42.center) to (45.center);
		\draw [style=morphism-edge] (46.center) to (43.center);
		\draw [style=morphism-edge, in=180, out=-90] (47.center) to (42.center);
		\draw [style=morphism-edge] (50.center) to (48.center);
		\draw [style=morphism-edge, in=270, out=90] (47.center) to (53.center);
		\draw [style=morphism-edge] (57.center) to (55.center);
		\draw [style=morphism-edge] (54.center) to (56.center);
		\draw [style=morphism-edge] (59) to (51.center);
		\draw [style=morphism-edge, in=90, out=-90] (60.center) to (61.center);
		\draw [style=morphism-edge, in=270, out=90] (49.center) to (61.center);
		\draw [style=morphism-edge, in=360, out=90] (60.center) to (63.center);
		\draw [style=morphism-edge, in=180, out=90] (57.center) to (63.center);
		\draw [style=morphism-edge, in=180, out=90] (63.center) to (64.center);
		\draw [style=morphism-edge, in=90, out=0] (64.center) to (65.center);
		\draw [style=morphism-edge] (65.center) to (50.center);
		\draw [style=morphism-edge] (66.center) to (64.center);
		\draw [style=morphism-edge, in=270, out=0] (67.center) to (70.center);
		\draw [style=morphism-edge] (71.center) to (68.center);
		\draw [style=morphism-edge, in=180, out=-90] (72.center) to (67.center);
		\draw [style=morphism-edge] (75.center) to (73.center);
		\draw [style=morphism-edge, in=270, out=90] (72.center) to (78.center);
		\draw [style=morphism-edge] (82.center) to (80.center);
		\draw [style=morphism-edge, in=360, out=90] (79.center) to (81.center);
		\draw [style=morphism-edge] (84) to (76.center);
		\draw [style=morphism-edge, in=90, out=-90] (85.center) to (86.center);
		\draw [style=morphism-edge, in=270, out=90] (74.center) to (86.center);
		\draw [style=morphism-edge, in=360, out=90] (85.center) to (87.center);
		\draw [style=morphism-edge, in=180, out=90] (82.center) to (87.center);
		\draw [style=morphism-edge, in=180, out=90] (87.center) to (88.center);
		\draw [style=morphism-edge, in=90, out=0] (88.center) to (89.center);
		\draw [style=morphism-edge] (89.center) to (75.center);
		\draw [style=morphism-edge] (90.center) to (88.center);
		\draw [style=morphism-edge, in=180, out=90] (91.center) to (81.center);
		\draw [style=morphism-edge] (91.center) to (92.center);
		\draw [style=morphism-edge] (95.center) to (94.center);
		\draw [style=morphism-edge, in=180, out=-90] (94.center) to (93.center);
		\draw [style=morphism-edge, in=270, out=0] (93.center) to (92.center);
	\end{pgfonlayer}
      \end{tikzpicture}
    \]
    In the next equation, we display $\widehat{h \circ f}$ as the left-hand side, and rewrite it to the right-hand side which is easily seen to belong to $\mathtt{I}$ using~\ref{mixed3}:
    \[\begin{tikzpicture}[tikzfig]
	\begin{pgfonlayer}{nodelayer}
		\node [style=none] (0) at (-2.875, -2.5) {};
		\node [style=none] (1) at (-1.25, -3) {};
		\node [style=box] (2) at (-1.75, -1.5) {$\hspace{0.5em}f\hspace{0.5em}$};
		\node [style=none] (3) at (-2.25, -1.75) {};
		\node [style=none] (4) at (-1.25, -1.75) {};
		\node [style=none] (5) at (-3.5, -1.5) {};
		\node [style=none] (7) at (-1.75, -1.25) {};
		\node [style=none] (9) at (-2.625, 1) {};
		\node [style=box] (10) at (-3.125, 1.25) {$\hspace{0.5em}h\hspace{0.5em}$};
		\node [style=none] (11) at (-3.5, 1) {};
		\node [style=none] (12) at (-3.5, 1.5) {};
		\node [style=none] (13) at (-2.625, 1.5) {};
		\node [style=none] (14) at (-4, 2.5) {};
		\node [style=none] (15) at (-2.625, 1.5) {};
		\node [style=blackdot] (16) at (-2.625, 0) {};
		\node [style=none] (17) at (-1.75, 1.5) {};
		\node [style=none] (18) at (-1.75, 0.75) {};
		\node [style=none] (19) at (-2.175, 2.5) {};
		\node [style=none] (23) at (-4.5, 1.5) {};
		\node [style=none] (24) at (-4.5, -3.25) {};
		\node [style=none] (25) at (-2.175, 3.5) {};
		\node [style=none] (26) at (-0.5, -0.25) {$=$};
		\node [style=none] (27) at (3.125, -2.5) {};
		\node [style=none] (28) at (4.75, -3) {};
		\node [style=box] (29) at (4.25, -1.5) {$\hspace{0.5em}f\hspace{0.5em}$};
		\node [style=none] (30) at (3.75, -1.75) {};
		\node [style=none] (31) at (4.75, -1.75) {};
		\node [style=none] (32) at (2.5, -1.5) {};
		\node [style=none] (33) at (4.25, -1.25) {};
		\node [style=none] (34) at (3.375, 0.25) {};
		\node [style=box] (35) at (2.875, 0.5) {$\hspace{0.5em}h\hspace{0.5em}$};
		\node [style=none] (36) at (2.5, 0.25) {};
		\node [style=none] (37) at (2.5, 0.75) {};
		\node [style=none] (38) at (3.375, 0.75) {};
		\node [style=none] (40) at (3.375, 0.75) {};
		\node [style=blackdot] (41) at (3.375, -0.5) {};
		\node [style=none] (42) at (4.25, 2) {};
		\node [style=none] (43) at (4.25, 0.25) {};
		\node [style=none] (44) at (3.25, 3) {};
		\node [style=none] (45) at (0.75, 0.75) {};
		\node [style=blackdot] (46) at (0.75, -2.5) {};
		\node [style=none] (47) at (3.25, 3.5) {};
		\node [style=none] (48) at (6, -0.25) {$\in \mathtt{I}$};
		\node [style=none] (49) at (2, 2) {};
		\node [style=none] (50) at (1.5, -3) {};
		\node [style=none] (51) at (2, 1.5) {};
		\node [style=none] (52) at (1.5, 0.75) {};
	\end{pgfonlayer}
	\begin{pgfonlayer}{edgelayer}
		\draw [style=morphism-edge, in=270, out=0] (0.center) to (3.center);
		\draw [style=morphism-edge] (4.center) to (1.center);
		\draw [style=morphism-edge, in=180, out=-90] (5.center) to (0.center);
		\draw [style=morphism-edge, in=270, out=90] (5.center) to (11.center);
		\draw [style=morphism-edge] (15.center) to (13.center);
		\draw [style=morphism-edge, in=360, out=90] (12.center) to (14.center);
		\draw [style=morphism-edge] (16) to (9.center);
		\draw [style=morphism-edge, in=90, out=-90] (17.center) to (18.center);
		\draw [style=morphism-edge, in=270, out=90] (7.center) to (18.center);
		\draw [style=morphism-edge, in=360, out=90] (17.center) to (19.center);
		\draw [style=morphism-edge, in=180, out=90] (15.center) to (19.center);
		\draw [style=morphism-edge, in=180, out=90] (23.center) to (14.center);
		\draw [style=morphism-edge] (23.center) to (24.center);
		\draw (19.center) to (25.center);
		\draw [style=morphism-edge, in=270, out=0] (27.center) to (30.center);
		\draw [style=morphism-edge] (31.center) to (28.center);
		\draw [style=morphism-edge, in=180, out=-90] (32.center) to (27.center);
		\draw [style=morphism-edge, in=270, out=90] (32.center) to (36.center);
		\draw [style=morphism-edge] (40.center) to (38.center);
		\draw [style=morphism-edge] (41) to (34.center);
		\draw [style=morphism-edge, in=90, out=-90] (42.center) to (43.center);
		\draw [style=morphism-edge, in=270, out=90] (33.center) to (43.center);
		\draw [style=morphism-edge, in=360, out=90] (42.center) to (44.center);
		\draw [style=morphism-edge] (45.center) to (46);
		\draw (44.center) to (47.center);
		\draw [style=morphism-edge, in=180, out=90] (45.center) to (49.center);
		\draw [style=morphism-edge, in=90, out=0] (49.center) to (40.center);
		\draw [style=morphism-edge, in=180, out=90] (49.center) to (44.center);
		\draw [style=morphism-edge, in=360, out=90] (37.center) to (51.center);
		\draw [style=morphism-edge, in=90, out=-180] (51.center) to (52.center);
		\draw [style=morphism-edge] (52.center) to (50.center);
	\end{pgfonlayer}
      \end{tikzpicture}
    \]

    Lastly, for $Q'' \in \cp$, denoting the morphism in the upper dotted box of the right-hand side by $\widehat{Q'' \triangleright f}$, the equation below shows that $Q'' \triangleright \mathbb{R}(f) = \mathbb{R}(Q''\triangleright f)$, so $Q'' \triangleright \mathbb{R}(f) \in \mathbb{R}(\mathtt{I})$.
    \[\begin{tikzpicture}[tikzfig]
	\begin{pgfonlayer}{nodelayer}
		\node [style=none] (0) at (-0.25, -3.25) {};
		\node [style=none] (1) at (0.75, -3.25) {};
		\node [style=none] (4) at (0.75, 0.5) {};
		\node [style=none] (7) at (-0.25, -1.25) {};
		\node [style=none] (8) at (0.75, -1.25) {};
		\node [style=box] (9) at (-0.5, 0.25) {$\hspace{0.1em}f\hspace{0.1em}$};
		\node [style=none] (10) at (-0.25, 0) {};
		\node [style=none] (11) at (-0.75, 0) {};
		\node [style=none] (12) at (-1.25, -1) {};
		\node [style=none] (13) at (-1.75, 0) {};
		\node [style=none] (14) at (-0.5, 0.5) {};
		\node [style=none] (15) at (0.125, 1.75) {};
		\node [style=none] (16) at (0.125, 3.25) {};
		\node [style=none] (17) at (-1.75, 3.25) {};
		\node [style=none] (18) at (-2.5, -3.25) {};
		\node [style=none] (19) at (-2.5, 3.25) {};
		\node [style=none] (20) at (-2.5, -3.5) {\tiny$Q''$};
		\node [style=none] (21) at (1.75, -0.75) {$=$};
		\node [style=none] (22) at (7, -3.25) {};
		\node [style=none] (23) at (8, -3.25) {};
		\node [style=none] (25) at (7, 1.75) {};
		\node [style=none] (26) at (7, -1.25) {};
		\node [style=none] (27) at (8, -1.25) {};
		\node [style=none] (29) at (8, -1.25) {};
		\node [style=box] (30) at (5.75, 1.25) {$\hspace{0.1em}f\hspace{0.1em}$};
		\node [style=none] (32) at (4.625, -0.5) {};
		\node [style=none] (33) at (4.125, -2.25) {};
		\node [style=none] (34) at (3.625, -0.5) {};
		\node [style=none] (35) at (5.75, 1.75) {};
		\node [style=none] (36) at (6.375, 2.5) {};
		\node [style=none] (37) at (6.375, 3.25) {};
		\node [style=none] (38) at (3.625, 2.25) {};
		\node [style=none] (39) at (3, -1.25) {};
		\node [style=none] (40) at (3, 2.25) {};
		\node [style=none] (41) at (4.125, -2.75) {};
		\node [style=none] (42) at (5.25, -1.25) {};
		\node [style=none] (43) at (5.75, 0) {};
		\node [style=none] (44) at (6.25, -1.25) {};
		\node [style=none] (45) at (6.25, -3.25) {};
		\node [style=none] (46) at (9.5, -0.75) {$\in \mathtt{I}$};
		\node [style=none] (47) at (5.5, 1) {};
		\node [style=none] (48) at (6, 1) {};
		\node [style=none] (49) at (2.5, -3) {};
		\node [style=none] (50) at (8.5, -3) {};
		\node [style=none] (51) at (2.5, -1.75) {};
		\node [style=none] (52) at (8.5, -1.75) {};
		\node [style=none] (53) at (4.25, -0.5) {};
		\node [style=none] (54) at (8.25, -0.5) {};
		\node [style=none] (55) at (8.25, 2.25) {};
		\node [style=none] (56) at (4.25, 2.25) {};
	\end{pgfonlayer}
	\begin{pgfonlayer}{edgelayer}
		\draw [style=morphism-edge] (8.center) to (4.center);
		\draw [style=morphism-edge] (7.center) to (10.center);
		\draw [style=morphism-edge, in=180, out=90] (14.center) to (15.center);
		\draw [style=morphism-edge, in=90, out=0] (15.center) to (4.center);
		\draw [style=morphism-edge] (16.center) to (15.center);
		\draw [style=morphism-edge, in=0, out=-90] (11.center) to (12.center);
		\draw [style=morphism-edge, in=270, out=180] (12.center) to (13.center);
		\draw [style=morphism-edge] (17.center) to (13.center);
		\draw [style=morphism-edge] (19.center) to (18.center);
		\draw [style=morphism-edge] (22.center) to (26.center);
		\draw [style=morphism-edge] (27.center) to (23.center);
		\draw [style=morphism-edge, in=270, out=90] (29.center) to (25.center);
		\draw [style=morphism-edge, in=180, out=90] (35.center) to (36.center);
		\draw [style=morphism-edge, in=90, out=0] (36.center) to (25.center);
		\draw [style=morphism-edge] (37.center) to (36.center);
		\draw [style=morphism-edge, in=0, out=-90] (32.center) to (33.center);
		\draw [style=morphism-edge, in=270, out=180] (33.center) to (34.center);
		\draw [style=morphism-edge] (38.center) to (34.center);
		\draw [style=morphism-edge] (40.center) to (39.center);
		\draw [style=morphism-edge, in=0, out=-90] (42.center) to (41.center);
		\draw [style=morphism-edge, in=270, out=180] (41.center) to (39.center);
		\draw [style=morphism-edge, in=180, out=90] (42.center) to (43.center);
		\draw [style=morphism-edge, in=90, out=0] (43.center) to (44.center);
		\draw [style=morphism-edge] (45.center) to (44.center);
		\draw [style=morphism-edge] (0.center) to (7.center);
		\draw [style=morphism-edge] (8.center) to (1.center);
		\draw [style=morphism-edge, in=270, out=90] (32.center) to (47.center);
		\draw [style=morphism-edge, in=90, out=-90] (48.center) to (26.center);
		\draw [style=ddd] (51.center) to (52.center);
		\draw [style=ddd] (52.center) to (50.center);
		\draw [style=ddd] (50.center) to (49.center);
		\draw [style=ddd] (49.center) to (51.center);
		\draw [style=ddd] (56.center) to (53.center);
		\draw [style=ddd] (56.center) to (55.center);
		\draw [style=ddd] (55.center) to (54.center);
		\draw [style=ddd] (54.center) to (53.center);
	\end{pgfonlayer}
      \end{tikzpicture}
    \]
\end{proof}

\subsection{The bijection}

\begin{proposition}
    For $\mathfrak{J}$ a $\cp$-stable ideal in $\pfa$, we have $\mathbb{RS}(\mathfrak{J}) = \mathfrak{J}$.
\end{proposition}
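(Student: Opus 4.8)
The plan is to fix $P,Q\in\cp$ and to transport both $\mathbb{RS}(\mathfrak{J})(P,Q)$ and $\mathfrak{J}(P,Q)$ to subspaces of $\mathcal{C}(\ld{Q}\otimes P,A)$ along one and the same ``currying'' isomorphism, and then to check that the two resulting subspaces coincide. Composing the isomorphism $\homa(P\triangleright A,Q\triangleright A)\cong\mathcal{C}(P,Q\triangleright A)$ of \eqref{indres} (sending $g$ to $g^{1}$ and $\psi$ to $\psi^{A}$) with the rigidity isomorphism $\mathcal{C}(P,Q\otimes A)\cong\mathcal{C}(\ld{Q}\otimes P,A)$ yields an isomorphism
\[
\beta_{P,Q}\colon \mathcal{C}(\ld{Q}\otimes P,A)\xrightarrow{\ \sim\ }\homa(P\triangleright A,Q\triangleright A),\qquad \phi\longmapsto\bigl((Q\otimes\phi)\circ(\coev_{Q}\otimes P)\bigr)^{A},
\]
whose inverse sends $g$ to $(\ev_{Q}\otimes A)\circ(\ld{Q}\otimes g^{1})$; these are mutually inverse by the rigidity zig-zag identities together with ${(f^{A})}^{1}=f$. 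Comparing with \eqref{RDefined} we see $\mathbb{R}(\mathtt{I})(P,Q)=\beta_{P,Q}\bigl(\mathtt{I}(\ld{Q}\otimes P)\bigr)$, so the proposition reduces to the single equality of subspaces of $\mathcal{C}(\ld{Q}\otimes P,A)$,
\[
\mathbb{S}(\mathfrak{J})(\ld{Q}\otimes P)\;=\;\beta_{P,Q}^{-1}\bigl(\mathfrak{J}(P,Q)\bigr),
\]
where, fixing an epimorphism $q_{0}\colon Q_{0}\twoheadrightarrow\unit$ with $Q_{0}\in\cp$ and using \eqref{SDefined}, the left-hand side is $\setj{(q_{0}\otimes A)\circ g^{1}\mid g\in\mathfrak{J}(\ld{Q}\otimes P,Q_{0})}$ (note $\ld{Q}\otimes P\in\cp$, since $\cp$ is closed under tensor products and duals).

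For the inclusion ``$\subseteq$'', take $g\in\mathfrak{J}(\ld{Q}\otimes P,Q_{0})$ and put $f=(q_{0}\otimes A)\circ g^{1}$. Unwinding $\beta_{P,Q}$ and using ${f}^{A}=(q_{0}\otimes A)\circ g$ — which is again \eqref{indres}, applied to the $A$-module map $(q_{0}\otimes A)\circ g$ — rewrites
\[
\beta_{P,Q}(f)\;=\;(Q\otimes q_{0}\otimes A)\circ(Q\triangleright g)\circ(\coev_{Q}\otimes P\otimes A).
\]
Here $Q\triangleright g\in\mathfrak{J}$ by $\cp$-stability of $\mathfrak{J}$, while $\coev_{Q}\otimes P\otimes A$ and $Q\otimes q_{0}\otimes A$ are morphisms in $\pfa$, their sources and targets being projective. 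Hence $\beta_{P,Q}(f)\in\mathfrak{J}(P,Q)$ by the two-sided ideal axiom of \autoref{def:cpstableideal}.

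For the reverse inclusion, take $g\in\mathfrak{J}(P,Q)$ and set $\phi:=\beta_{P,Q}^{-1}(g)=(\ev_{Q}\otimes A)\circ(\ld{Q}\otimes g^{1})$; we must place $\phi$ in $\mathbb{S}(\mathfrak{J})(\ld{Q}\otimes P)$. Since $\ld{Q}\otimes Q$ is projective and $q_{0}$ is epic, $\ev_{Q}\colon\ld{Q}\otimes Q\to\unit$ lifts through $q_{0}$ to a morphism $e\colon\ld{Q}\otimes Q\to Q_{0}$ with $q_{0}\circ e=\ev_{Q}$. Put $h:=(e\otimes A)\circ(\ld{Q}\triangleright g)$. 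Then $\ld{Q}\triangleright g\in\mathfrak{J}$ by $\cp$-stability and $e\otimes A$ is a morphism in $\pfa$, so $h\in\mathfrak{J}(\ld{Q}\otimes P,Q_{0})$ by the ideal axiom; and a direct computation gives $(q_{0}\otimes A)\circ h^{1}=(q_{0}\circ e\otimes A)\circ(\ld{Q}\otimes g^{1})=(\ev_{Q}\otimes A)\circ(\ld{Q}\otimes g^{1})=\phi$. Thus $\phi\in\mathbb{S}(\mathfrak{J})(\ld{Q}\otimes P)$ and $g=\beta_{P,Q}(\phi)\in\mathbb{RS}(\mathfrak{J})(P,Q)$. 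The two inclusions together give $\mathbb{RS}(\mathfrak{J})(P,Q)=\mathfrak{J}(P,Q)$ for all $P,Q$, i.e.\ $\mathbb{RS}(\mathfrak{J})=\mathfrak{J}$.

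The one genuinely non-formal step is the lifting of $\ev_{Q}$ through the chosen epimorphism $q_{0}$, which rests on projectivity of $\ld{Q}\otimes Q$ together with the independence of $\mathbb{S}(\mathfrak{J})$ on the choice of $q_{0}$ established above; everything else is bookkeeping with the adjunction operations $(\blank)^{A}$, $(\blank)^{1}$ and the rigidity zig-zags. The main risk is getting these shuffles wrong, so I would present the two displayed rewritings of $\beta_{P,Q}(f)$ and of $(q_{0}\otimes A)\circ h^{1}$ diagrammatically, exactly as in the proofs of the preceding lemmas in this section.
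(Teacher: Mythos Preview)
Your proof is correct and follows essentially the same strategy as the paper's: both arguments unwind $\mathbb{RS}(\mathfrak{J})(P,Q)$ via the currying isomorphism (which you name $\beta_{P,Q}$ and the paper leaves implicit in its string diagrams), then establish the two inclusions using $\cp$-stability, the ideal property, and the rigidity zig-zag. The only cosmetic difference is that for $\mathfrak{J}\subseteq\mathbb{RS}(\mathfrak{J})$ you lift $\ev_{Q}$ through the fixed epimorphism $q_{0}$, whereas the paper effectively exploits the independence of $\mathbb{S}$ from the choice of epimorphism to work with $\ev_{Q}\colon\ld{Q}\otimes Q\twoheadrightarrow\unit$ directly; both are equally valid.
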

\begin{proof}
  We have
  \begin{align*}
    &\mathbb{R}\mathbb{S}(\mathfrak{J})(P,Q)
    = \left\{
    \scalebox{0.8}{\,\begin{tikzpicture}[tikzfig]
	\begin{pgfonlayer}{nodelayer}
		\node [style=none] (4) at (-0.75, -1.25) {};
		\node [style=none] (5) at (0.25, -1.5) {};
		\node [style=box] (6) at (0, -0.25) {\scriptsize$\hspace{0.2em}f\hspace{0.2em}$};
		\node [style=none] (7) at (0.5, 1) {};
		\node [style=none] (9) at (-1.25, -0.5) {};
		\node [style=none] (10) at (1, -0.25) {};
		\node [style=none] (11) at (-0.25, -0.5) {};
		\node [style=none] (12) at (0.25, -0.5) {};
		\node [style=none] (13) at (1, -1.5) {};
		\node [style=none] (14) at (0.5, 1.75) {};
		\node [style=none] (15) at (-1.25, 1.75) {};
	\end{pgfonlayer}
	\begin{pgfonlayer}{edgelayer}
		\draw [style=morphism-edge, in=360, out=90] (10.center) to (7.center);
		\draw [style=morphism-edge, in=180, out=90] (6) to (7.center);
		\draw [style=morphism-edge] (12.center) to (5.center);
		\draw [style=morphism-edge, in=0, out=-90] (11.center) to (4.center);
		\draw [style=morphism-edge, in=270, out=180] (4.center) to (9.center);
		\draw [style=morphism-edge] (13.center) to (10.center);
		\draw [style=morphism-edge] (15.center) to (9.center);
		\draw [style=morphism-edge] (14.center) to (7.center);
	\end{pgfonlayer}
      \end{tikzpicture}%
      }
    \;\Bigg|\;
    f \in \mathbb{S}(\mathfrak{J})(\ld{Q} \otimes P)
    \right\} \\
    &= \left\{
    \scalebox{0.8}{\,\begin{tikzpicture}[tikzfig]
	\begin{pgfonlayer}{nodelayer}
		\node [style=none] (4) at (-0.75, -1.25) {};
		\node [style=none] (5) at (0.25, -1.5) {};
		\node [style=box] (6) at (0, -0.25) {\scriptsize$\hspace{0.2em}f\hspace{0.2em}$};
		\node [style=none] (7) at (0.5, 1) {};
		\node [style=none] (9) at (-1.25, -0.5) {};
		\node [style=none] (10) at (1, -0.25) {};
		\node [style=none] (11) at (-0.25, -0.5) {};
		\node [style=none] (12) at (0.25, -0.5) {};
		\node [style=none] (13) at (1, -1.5) {};
		\node [style=none] (14) at (0.5, 1.75) {};
		\node [style=none] (15) at (-1.25, 1.75) {};
	\end{pgfonlayer}
	\begin{pgfonlayer}{edgelayer}
		\draw [style=morphism-edge, in=360, out=90] (10.center) to (7.center);
		\draw [style=morphism-edge, in=180, out=90] (6) to (7.center);
		\draw [style=morphism-edge] (12.center) to (5.center);
		\draw [style=morphism-edge, in=0, out=-90] (11.center) to (4.center);
		\draw [style=morphism-edge, in=270, out=180] (4.center) to (9.center);
		\draw [style=morphism-edge] (13.center) to (10.center);
		\draw [style=morphism-edge] (15.center) to (9.center);
		\draw [style=morphism-edge] (14.center) to (7.center);
	\end{pgfonlayer}
      \end{tikzpicture}%
      }
    \;\Bigg|\;
    f=\scalebox{0.8}{\begin{tikzpicture}[tikzfig]
	\begin{pgfonlayer}{nodelayer}
		\node [style=none] (0) at (-0.5, -1.75) {};
		\node [style=box] (1) at (0, -0.25) {$\hspace{0.5em}g\hspace{0.5em}$};
		\node [style=none] (2) at (-0.5, -0.5) {};
		\node [style=none] (3) at (0, -0.5) {};
		\node [style=none] (4) at (0.5, -0.5) {};
		\node [style=none] (5) at (-0.5, 0) {};
		\node [style=none] (6) at (0.5, 0) {};
		\node [style=none] (7) at (0, -1.75) {};
		\node [style=blackdot] (8) at (0.5, -1.25) {};
		\node [style=blackdot] (9) at (-0.5, 0.75) {};
		\node [style=none] (10) at (0.5, 1.75) {};
	\end{pgfonlayer}
	\begin{pgfonlayer}{edgelayer}
		\draw [style=morphism-edge] (10.center) to (6.center);
		\draw [style=morphism-edge] (5.center) to (9);
		\draw [style=morphism-edge] (0.center) to (2.center);
		\draw [style=morphism-edge] (3.center) to (7.center);
		\draw [style=morphism-edge] (4.center) to (8);
	\end{pgfonlayer}
      \end{tikzpicture}%
      }
    \ ,\ g \in \mathbb{}\mathfrak{J}(\ld{Q} \otimes P, Q^{\mathbb{1}}),
    \ \scalebox{0.8}{\begin{tikzpicture}[tikzfig]
	\begin{pgfonlayer}{nodelayer}
		\node [style=none] (0) at (0, -0.5) {};
		\node [style=blackdot] (1) at (0, 0.5) {};
	\end{pgfonlayer}
	\begin{pgfonlayer}{edgelayer}
		\draw [style=morphism-edge] (0.center) to (1);
	\end{pgfonlayer}
      \end{tikzpicture}%
      }\colon Q^{\mathbb{1}} \to \mathbb{1}
    \ \text{is epic}
    \right\}\\
    &= \left\{
    \scalebox{0.8}{\tikzstyle{every picture}=[tikzfig]\begin{tikzpicture}
	\begin{pgfonlayer}{nodelayer}
		\node [style=box] (1) at (0, -0.25) {$\hspace{0.75em}g\hspace{0.75em}$};
		\node [style=none] (2) at (-0.5, -0.5) {};
		\node [style=none] (3) at (0, -0.5) {};
		\node [style=none] (4) at (0.5, -0.5) {};
		\node [style=none] (5) at (-0.5, 0) {};
		\node [style=none] (6) at (0.5, 0) {};
		\node [style=none] (7) at (0, -1.75) {};
		\node [style=blackdot] (8) at (0.5, -1.25) {};
		\node [style=blackdot] (9) at (-0.5, 0.75) {};
		\node [style=none] (10) at (1, 1.75) {};
		\node [style=none] (11) at (1.5, 0) {};
		\node [style=none] (12) at (1, 1) {};
		\node [style=none] (13) at (1.5, -1.75) {};
		\node [style=none] (14) at (-1, -1.25) {};
		\node [style=none] (15) at (-1.5, -0.5) {};
		\node [style=none] (16) at (-1.5, 1.75) {};
	\end{pgfonlayer}
	\begin{pgfonlayer}{edgelayer}
		\draw [style=morphism-edge] (5.center) to (9);
		\draw [style=morphism-edge] (3.center) to (7.center);
		\draw [style=morphism-edge] (4.center) to (8);
		\draw [style=morphism-edge, in=90, out=0] (12.center) to (11.center);
		\draw [style=morphism-edge, in=180, out=90] (6.center) to (12.center);
		\draw [style=morphism-edge] (12.center) to (10.center);
		\draw [style=morphism-edge] (13.center) to (11.center);
		\draw [style=morphism-edge, in=270, out=0] (14.center) to (2.center);
		\draw [style=morphism-edge, in=270, out=180] (14.center) to (15.center);
		\draw [style=morphism-edge] (16.center) to (15.center);
	\end{pgfonlayer}
      \end{tikzpicture}%
      }
    \;\Bigg|\;
    g \in \mathbb{}\mathfrak{J}(\ld{Q} \otimes P, Q^{\mathbb{1}})
    \subseteq \homa(\ld{Q} \otimes P \triangleright A, Q^{\mathbb{1}} \triangleright A)
    \right\}\\
    &= \left\{
    \scalebox{0.8}{\tikzstyle{every picture}=[tikzfig]\begin{tikzpicture}
	\begin{pgfonlayer}{nodelayer}
		\node [style=box] (1) at (0, -0.25) {$\hspace{0.75em}g\hspace{0.75em}$};
		\node [style=none] (2) at (-0.5, -0.5) {};
		\node [style=none] (3) at (0, -0.5) {};
		\node [style=none] (5) at (-0.5, 0) {};
		\node [style=none] (6) at (0.5, 0) {};
		\node [style=none] (7) at (0, -1.75) {};
		\node [style=blackdot] (9) at (-0.5, 0.75) {};
		\node [style=none] (10) at (0.5, 1.75) {};
		\node [style=none] (11) at (0.5, -0.5) {};
		\node [style=none] (13) at (0.5, -1.75) {};
		\node [style=none] (14) at (-1, -1.25) {};
		\node [style=none] (15) at (-1.5, -0.5) {};
		\node [style=none] (16) at (-1.5, 1.75) {};
	\end{pgfonlayer}
	\begin{pgfonlayer}{edgelayer}
		\draw [style=morphism-edge] (5.center) to (9);
		\draw [style=morphism-edge] (3.center) to (7.center);
		\draw [style=morphism-edge] (13.center) to (11.center);
		\draw [style=morphism-edge, in=270, out=0] (14.center) to (2.center);
		\draw [style=morphism-edge, in=270, out=180] (14.center) to (15.center);
		\draw [style=morphism-edge] (16.center) to (15.center);
		\draw [style=morphism-edge] (10.center) to (6.center);
	\end{pgfonlayer}
      \end{tikzpicture}%
      }
    \;\Bigg|\;
    g \in \mathbb{}\mathfrak{J}(\ld{Q} \otimes P, Q^{\mathbb{1}})
    \right\}.
  \end{align*}
  Since $\mathfrak{J}$ is a $\cp$-stable ideal, one obtains that
  \(\ \begin{tikzpicture}[tikzfig]
	\begin{pgfonlayer}{nodelayer}
		\node [style=box] (1) at (2, -0.25) {$\hspace{0.75em}\hspace{0.75em}$};
		\node [style=none] (2) at (1.5, -0.5) {};
		\node [style=none] (3) at (2, -0.5) {};
		\node [style=none] (5) at (1.5, 0) {};
		\node [style=none] (6) at (2.5, 0) {};
		\node [style=none] (7) at (2, -1.75) {};
		\node [style=blackdot] (9) at (1.5, 0.75) {};
		\node [style=none] (10) at (2.5, 1.75) {};
		\node [style=none] (11) at (2.5, -0.5) {};
		\node [style=none] (13) at (2.5, -1.75) {};
		\node [style=none] (14) at (1, -1.25) {};
		\node [style=none] (15) at (0.5, -0.5) {};
		\node [style=none] (16) at (0.5, 1.75) {};
		\node [style=box] (17) at (-3.25, 0) {$\hspace{0.75em}\hspace{0.75em}$};
		\node [style=none] (19) at (-3.5, -0.25) {};
		\node [style=none] (20) at (-3.5, 0.25) {};
		\node [style=none] (21) at (-3, 0.25) {};
		\node [style=none] (22) at (-3.5, -1.5) {};
		\node [style=none] (24) at (-3, 1.5) {};
		\node [style=none] (25) at (-3, -0.25) {};
		\node [style=none] (26) at (-3, -1.5) {};
		\node [style=none] (27) at (-3.5, 1.5) {};
		\node [style=none] (28) at (-1.75, 0) {\scriptsize$\in \mathfrak{J}$};
		\node [style=none] (29) at (-0.5, 0) {\scriptsize$\implies$};
		\node [style=none] (30) at (3.75, 0) {\scriptsize$\in \mathfrak{J}$};
	\end{pgfonlayer}
	\begin{pgfonlayer}{edgelayer}
		\draw [style=morphism-edge] (5.center) to (9);
		\draw [style=morphism-edge] (3.center) to (7.center);
		\draw [style=morphism-edge] (13.center) to (11.center);
		\draw [style=morphism-edge, in=270, out=0] (14.center) to (2.center);
		\draw [style=morphism-edge, in=270, out=180] (14.center) to (15.center);
		\draw [style=morphism-edge] (16.center) to (15.center);
		\draw [style=morphism-edge] (10.center) to (6.center);
		\draw [style=morphism-edge] (19.center) to (22.center);
		\draw [style=morphism-edge] (26.center) to (25.center);
		\draw [style=morphism-edge] (24.center) to (21.center);
		\draw [style=morphism-edge] (27.center) to (20.center);
	\end{pgfonlayer}
      \end{tikzpicture}
      \),
    and hence $\mathbb{R}\mathbb{S}(\mathfrak{J})\subseteq\mathfrak{J}$.

    The inclusion $\mathfrak{J} \subseteq \mathbb{R}\mathbb{S}(\mathfrak{J})$ follows from the fact that
    \[
    \begin{tikzpicture}[tikzfig]
	\begin{pgfonlayer}{nodelayer}
		\node [style=box] (1) at (2.25, -0.75) {$\hspace{0.75em}\hspace{0.75em}$};
		\node [style=none] (2) at (2, 0.25) {};
		\node [style=none] (5) at (2, -0.5) {};
		\node [style=none] (6) at (2.5, -0.5) {};
		\node [style=none] (10) at (2.5, 0.25) {};
		\node [style=none] (11) at (2, -1) {};
		\node [style=none] (13) at (2, -2.25) {};
		\node [style=none] (14) at (1.5, 1) {};
		\node [style=none] (15) at (1, 0.25) {};
		\node [style=box] (17) at (-2.5, 0) {$\hspace{0.75em}\hspace{0.75em}$};
		\node [style=none] (19) at (-2.75, -0.25) {};
		\node [style=none] (20) at (-2.75, 0.25) {};
		\node [style=none] (21) at (-2.25, 0.25) {};
		\node [style=none] (22) at (-2.75, -2) {};
		\node [style=none] (24) at (-2.25, 1.75) {};
		\node [style=none] (25) at (-2.25, -0.25) {};
		\node [style=none] (26) at (-2.25, -2) {};
		\node [style=none] (27) at (-2.75, 1.75) {};
		\node [style=none] (29) at (-1, 0) {\scriptsize$=$};
		\node [style=none] (31) at (1, -1) {};
		\node [style=none] (32) at (0.5, -1.75) {};
		\node [style=none] (33) at (0, -1) {};
		\node [style=none] (34) at (0, 1.75) {};
		\node [style=none] (35) at (2.5, -1) {};
		\node [style=blackdot] (36) at (2.5, -1.75) {};
		\node [style=none] (37) at (3.5, 0.25) {};
		\node [style=none] (38) at (3, 1) {};
		\node [style=none] (39) at (3.5, -2.25) {};
		\node [style=none] (40) at (3, 1.75) {};
	\end{pgfonlayer}
	\begin{pgfonlayer}{edgelayer}
		\draw [style=morphism-edge] (13.center) to (11.center);
		\draw [style=morphism-edge, in=-270, out=0] (14.center) to (2.center);
		\draw [style=morphism-edge, in=-270, out=-180] (14.center) to (15.center);
		\draw [style=morphism-edge] (10.center) to (6.center);
		\draw [style=morphism-edge] (19.center) to (22.center);
		\draw [style=morphism-edge] (26.center) to (25.center);
		\draw [style=morphism-edge] (24.center) to (21.center);
		\draw [style=morphism-edge] (27.center) to (20.center);
		\draw [style=morphism-edge, in=270, out=0] (32.center) to (31.center);
		\draw [style=morphism-edge, in=270, out=180] (32.center) to (33.center);
		\draw [style=morphism-edge] (33.center) to (34.center);
		\draw [style=morphism-edge] (31.center) to (15.center);
		\draw [style=morphism-edge] (2.center) to (5.center);
		\draw [style=morphism-edge] (36) to (35.center);
		\draw [style=morphism-edge] (39.center) to (37.center);
		\draw [style=morphism-edge, in=180, out=90] (10.center) to (38.center);
		\draw [style=morphism-edge, in=90, out=0] (38.center) to (37.center);
		\draw [style=morphism-edge] (40.center) to (38.center);
	\end{pgfonlayer}
    \end{tikzpicture}
    \]
\end{proof}

\begin{proposition}
    For $\mathtt{I}$ a representable ideal in $\mathcal{C}(\blank,A)$, we have $\mathbb{SR}(\mathtt{I}) = \mathtt{I}$.
\end{proposition}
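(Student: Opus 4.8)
The plan is to make both sides of the asserted identity completely explicit and then identify them through the rigidity adjunction $\ld{Q}\otimes(\blank)\dashv Q\otimes(\blank)$ and the projectivity of the objects of $\cp$. Recall first that a representable ideal $\mathtt{I}$ in $\mathcal{C}(\blank,A)$ is, up to the Yoneda lemma, of the form $\mathcal{C}(\blank,I)$ for a uniquely determined ideal object $I<A$; hence $\mathtt{I}(X)=\mathcal{C}(X,I)\subseteq\mathcal{C}(X,A)$ for every $X\in\mathcal{C}$, and $\mathbb{SR}(\mathtt{I})=\mathtt{I}$ amounts to showing $\mathbb{SR}(\mathtt{I})(P)=\mathcal{C}(P,I)$ for all $P\in\cp$. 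To unwind the left-hand side, fix the epimorphism $q\colon Q\twoheadrightarrow\mathbb{1}$ from \autoref{SDefined} with $Q\in\cp$. By \autoref{RDefined}, $\mathbb{R}(\mathtt{I})(P,Q)=\{\,\mathbb{R}(f)\mid f\in\mathcal{C}(\ld{Q}\otimes P,I)\,\}$ with $\mathbb{R}(f)=((Q\otimes f)\circ(\coev_{Q}\otimes P))^{A}$, so that $\mathbb{R}(f)^{1}=(Q\otimes f)\circ(\coev_{Q}\otimes P)$, and \autoref{SDefined} gives $\mathbb{SR}(\mathtt{I})(P)=\{\,(q\otimes A)\circ\mathbb{R}(f)^{1}\mid f\in\mathcal{C}(\ld{Q}\otimes P,I)\,\}$.

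The next step is to recognise $\mathbb{R}(f)^{1}$: it is exactly the adjunct $\widehat{f}\colon P\to Q\otimes I$ of $f\colon\ld{Q}\otimes P\to I$ under the adjunction $\ld{Q}\otimes(\blank)\dashv Q\otimes(\blank)$, whose unit is $\coev_{Q}\otimes(\blank)$; and since $Q\otimes(\blank)$ is exact this adjunction restricts to a bijection $\mathcal{C}(\ld{Q}\otimes P,I)\xrightarrow{\ \sim\ }\mathcal{C}(P,Q\otimes I)$. Moreover, by naturality of $q\otimes(\blank)$ along $I\hookrightarrow A$ one has $(q\otimes A)\circ\mathbb{R}(f)^{1}=(q\otimes I)\circ\widehat{f}$ as morphisms into $I$. (In string diagrams this is the move $(q\otimes A)\circ(Q\otimes f)\circ(\coev_{Q}\otimes P)=f\circ(\widetilde{q}\otimes P)$, sliding the cap $q$ down the coevaluation, where $\widetilde{q}=(q\otimes\ld{Q})\circ\coev_{Q}\colon\mathbb{1}\to\ld{Q}$; in particular this reproves that $\mathbb{S}$ is independent of the choice of $q$.) Combining, and letting $f$ range over $\mathcal{C}(\ld{Q}\otimes P,I)$ — equivalently letting $\widehat{f}$ range over all of $\mathcal{C}(P,Q\otimes I)$ — we obtain
\[
\mathbb{SR}(\mathtt{I})(P)=\{\,(q\otimes I)\circ h\mid h\in\mathcal{C}(P,Q\otimes I)\,\}\subseteq\mathcal{C}(P,I).
\]

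Finally, one concludes by a projectivity argument. Since $q$ is an epimorphism and $\otimes$ is exact, $q\otimes I\colon Q\otimes I\to I$ is an epimorphism; since $P\in\cp$ is projective, postcomposition with $q\otimes I$ yields a \emph{surjection} $\mathcal{C}(P,Q\otimes I)\twoheadrightarrow\mathcal{C}(P,I)$. Hence $\mathbb{SR}(\mathtt{I})(P)=\mathcal{C}(P,I)=\mathtt{I}(P)$ for every $P\in\cp$, i.e.\ $\mathbb{SR}(\mathtt{I})=\mathtt{I}$. I expect the only delicate part to be the diagrammatic bookkeeping in the middle paragraph — identifying $\mathbb{R}(f)^{1}$ with the adjunct of $f$ and $(q\otimes A)\circ\mathbb{R}(f)^{1}$ with postcomposition by the epimorphism $q\otimes I$ — since the two remaining ingredients (the adjunction bijection and ``projective $+$ epimorphism $\Rightarrow$ surjection on hom-spaces'') are entirely standard; note that, in contrast to the proof of $\mathbb{R}\mathbb{S}=\mathrm{id}$, this direction uses none of the $\cp$-stable ideal axioms.
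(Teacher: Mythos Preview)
Your proof is correct and, while it uses the same rigidity ingredients as the paper, it is organised differently. The paper argues the two inclusions separately and purely in terms of the subpresheaf axioms: for $\mathbb{SR}(\mathtt{I})\subseteq\mathtt{I}$ it unwinds to $g\circ(\pi\otimes\ld{Q}\otimes P)\circ(\coev_Q\otimes P)$ with $g\in\mathtt{I}(\ld{Q}\otimes P)$ and invokes closure under precomposition; for $\mathtt{I}\subseteq\mathbb{SR}(\mathtt{I})$ it rewrites a given $g\in\mathtt{I}(P)$ via a zigzag and specialises the epimorphism onto $\mathbb{1}$ to an evaluation map $\ev_P$, using that evaluation is epic in a finite tensor category. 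By contrast, you immediately pass to the representing ideal object $I$, identify $\mathbb{R}(f)^{1}$ with the adjunct $\widehat{f}\in\mathcal{C}(P,Q\otimes I)$, and obtain $\mathbb{SR}(\mathtt{I})(P)$ as the image of $\mathcal{C}(P,q\otimes I)$, which equals $\mathcal{C}(P,I)$ in one stroke by projectivity of $P$ and exactness of the tensor product. Your route makes the role of projectivity explicit and avoids the case-specific choice of $\ev_P$; the paper's route has the advantage of never naming the representing object, so it reads uniformly with the companion proof of $\mathbb{RS}=\mathrm{id}$.
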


\begin{proof}
    One obtains that $\mathbb{SR}(\mathtt{I}) \subseteq \mathtt{I}$ by the following calculation,
    together with the fact that
    \(\scalebox{0.6}{\begin{tikzpicture}[tikzfig]
	\begin{pgfonlayer}{nodelayer}
		\node [style=box] (17) at (0.25, 0) {$\hspace{0.25em}g\hspace{0.25em}$};
		\node [style=none] (19) at (0.5, -0.25) {};
		\node [style=none] (22) at (0.5, -2) {};
		\node [style=none] (25) at (0, -0.25) {};
		\node [style=none] (26) at (-0.5, -1) {};
		\node [style=blackdot] (27) at (-1, -0.25) {};
		\node [style=none] (30) at (0.25, 1.5) {};
		\node [style=box] (31) at (-5, 0) {$\hspace{0.25em}g\hspace{0.25em}$};
		\node [style=none] (32) at (-4.75, -0.25) {};
		\node [style=none] (33) at (-4.75, -2) {};
		\node [style=none] (34) at (-5.25, -0.25) {};
		\node [style=none] (35) at (-5.25, -2) {};
		\node [style=none] (37) at (-5, 1.5) {};
		\node [style=none] (38) at (-2.6, 0) {$\in\mathtt{I}\implies$};
		\node [style=none] (39) at (1.7, 0) {$\in\mathtt{I}$};
	\end{pgfonlayer}
	\begin{pgfonlayer}{edgelayer}
		\draw [style=morphism-edge] (19.center) to (22.center);
		\draw [style=morphism-edge, in=0, out=-90] (25.center) to (26.center);
		\draw [style=morphism-edge, in=270, out=180] (26.center) to (27);
		\draw [style=morphism-edge] (17) to (30.center);
		\draw [style=morphism-edge] (32.center) to (33.center);
		\draw [style=morphism-edge, in=90, out=-90] (34.center) to (35.center);
		\draw [style=morphism-edge] (31) to (37.center);
	\end{pgfonlayer}
      \end{tikzpicture}%
    }\),
    as $\mathtt{I}$ is a subpresheaf of $\cat{C}(\blank, A)$:
    \begin{align*}
        &\mathbb{SR}(\mathtt{I})(P) =
        \{\, (\pi \otimes A) \circ f^1 \mid f \in \mathbb{R}(\mathtt{I})(P,Q), \pi\colon Q \to \mathbb{1} \,\}\\
        &= \left\{ \scalebox{0.8}{\begin{tikzpicture}[tikzfig]
	\begin{pgfonlayer}{nodelayer}
		\node [style=box] (17) at (0, 0) {$\hspace{0.75em}f\hspace{0.75em}$};
		\node [style=none] (19) at (-0.25, -0.25) {};
		\node [style=none] (20) at (-0.25, 0.25) {};
		\node [style=none] (21) at (0.25, 0.25) {};
		\node [style=none] (22) at (-0.25, -2) {};
		\node [style=none] (24) at (0.25, 1.75) {};
		\node [style=none] (25) at (0.25, -0.25) {};
		\node [style=blackdot] (26) at (0.25, -1) {};
		\node [style=blackdot] (27) at (-0.25, 1) {};
	\end{pgfonlayer}
	\begin{pgfonlayer}{edgelayer}
		\draw [style=morphism-edge] (19.center) to (22.center);
		\draw [style=morphism-edge] (26) to (25.center);
		\draw [style=morphism-edge] (24.center) to (21.center);
		\draw [style=morphism-edge] (27) to (20.center);
	\end{pgfonlayer}
      \end{tikzpicture}%
          }
    \;\Bigg|\;
    f=\scalebox{0.8}{\begin{tikzpicture}[tikzfig]
	\begin{pgfonlayer}{nodelayer}
		\node [style=box] (17) at (0, 0) {$\hspace{0.25em}g\hspace{0.25em}$};
		\node [style=none] (19) at (0.25, -0.25) {};
		\node [style=none] (22) at (0.25, -2) {};
		\node [style=none] (24) at (0.5, 1) {};
		\node [style=none] (25) at (-0.25, -0.25) {};
		\node [style=none] (26) at (-0.75, -1) {};
		\node [style=none] (27) at (-1.25, -0.25) {};
		\node [style=none] (28) at (-1.25, 1.75) {};
		\node [style=none] (29) at (1, 0) {};
		\node [style=none] (30) at (0.5, 1.75) {};
		\node [style=none] (31) at (1, -2) {};
	\end{pgfonlayer}
	\begin{pgfonlayer}{edgelayer}
		\draw [style=morphism-edge] (19.center) to (22.center);
		\draw [style=morphism-edge, in=0, out=-90] (25.center) to (26.center);
		\draw [style=morphism-edge, in=270, out=180] (26.center) to (27.center);
		\draw [style=morphism-edge] (28.center) to (27.center);
		\draw [style=morphism-edge, in=180, out=90] (17) to (24.center);
		\draw [style=morphism-edge, in=360, out=90] (29.center) to (24.center);
		\draw [style=morphism-edge] (31.center) to (29.center);
		\draw [style=morphism-edge] (30.center) to (24.center);
	\end{pgfonlayer}
      \end{tikzpicture}%
          }
    \,,\ g \in \mathtt{I}(\ld{Q} \otimes P)
    \right\}
    = \left\{ \scalebox{0.8}{\begin{tikzpicture}[tikzfig]
	\begin{pgfonlayer}{nodelayer}
		\node [style=box] (17) at (0, 0) {$\hspace{0.25em}g\hspace{0.25em}$};
		\node [style=none] (19) at (0.25, -0.25) {};
		\node [style=none] (22) at (0.25, -2) {};
		\node [style=none] (24) at (0.5, 1) {};
		\node [style=none] (25) at (-0.25, -0.25) {};
		\node [style=none] (26) at (-0.75, -1) {};
		\node [style=none] (27) at (-1.25, -0.25) {};
		\node [style=blackdot] (28) at (-1.25, 1) {};
		\node [style=none] (29) at (1, 0) {};
		\node [style=none] (30) at (0.5, 1.75) {};
		\node [style=blackdot] (31) at (1, -1.25) {};
	\end{pgfonlayer}
	\begin{pgfonlayer}{edgelayer}
		\draw [style=morphism-edge] (19.center) to (22.center);
		\draw [style=morphism-edge, in=0, out=-90] (25.center) to (26.center);
		\draw [style=morphism-edge, in=270, out=180] (26.center) to (27.center);
		\draw [style=morphism-edge] (28) to (27.center);
		\draw [style=morphism-edge, in=180, out=90] (17) to (24.center);
		\draw [style=morphism-edge, in=360, out=90] (29.center) to (24.center);
		\draw [style=morphism-edge] (31) to (29.center);
		\draw [style=morphism-edge] (30.center) to (24.center);
	\end{pgfonlayer}
      \end{tikzpicture}%
          }
    \;\Bigg|\;
    g \in \mathtt{I}(\ld{Q} \otimes P)
    \right\}\\
    &= \left\{ \scalebox{0.8}{\begin{tikzpicture}[tikzfig]
	\begin{pgfonlayer}{nodelayer}
		\node [style=box] (17) at (0, 0) {$\hspace{0.25em}g\hspace{0.25em}$};
		\node [style=none] (19) at (0.25, -0.25) {};
		\node [style=none] (22) at (0.25, -2) {};
		\node [style=none] (25) at (-0.25, -0.25) {};
		\node [style=none] (26) at (-0.75, -1) {};
		\node [style=blackdot] (27) at (-1.25, -0.25) {};
		\node [style=none] (30) at (0, 1.5) {};
	\end{pgfonlayer}
	\begin{pgfonlayer}{edgelayer}
		\draw [style=morphism-edge] (19.center) to (22.center);
		\draw [style=morphism-edge, in=0, out=-90] (25.center) to (26.center);
		\draw [style=morphism-edge, in=270, out=180] (26.center) to (27);
		\draw [style=morphism-edge] (17) to (30.center);
	\end{pgfonlayer}
      \end{tikzpicture}%
      }
    =
    g \circ (\pi \otimes \ld{Q} \otimes P) \circ (\coev_{{Q}} \otimes P)
    \right\}.
    \end{align*}

    For the other direction, suppose that $g\colon P \to A \in \mathtt{I}$.
    Using the above characterisation of $\mathbb{SR}(\mathtt{I})$,
    we have that
    \[
    \begin{tikzpicture}[tikzfig]
	\begin{pgfonlayer}{nodelayer}
		\node [style=box] (31) at (-2, -0.25) {\scriptsize$\hspace{0.25em}g\hspace{0.25em}$};
		\node [style=none] (32) at (-2, -0.5) {};
		\node [style=none] (33) at (-2, -2.25) {};
		\node [style=none] (37) at (-2, 1.75) {};
		\node [style=none] (38) at (-0.9, -0.25) {\scriptsize$=$};
		\node [style=box] (39) at (1.75, 0.25) {\scriptsize$\hspace{0.25em}g\hspace{0.25em}$};
		\node [style=none] (40) at (1.75, 0) {};
		\node [style=none] (41) at (1.25, -0.75) {};
		\node [style=none] (42) at (1.75, 1.75) {};
		\node [style=none] (43) at (3.5, -0.5) {};
		\node [style=none] (44) at (2.5, -0.5) {};
		\node [style=none] (45) at (3, 0.5) {};
		\node [style=none] (46) at (0.75, 0) {};
		\node [style=none] (47) at (-0.25, 0.75) {};
		\node [style=none] (48) at (0.75, 0.75) {};
		\node [style=none] (49) at (0.25, 1.5) {};
		\node [style=none] (50) at (1.25, -1.5) {};
		\node [style=none] (51) at (3.5, -2.25) {};
	\end{pgfonlayer}
	\begin{pgfonlayer}{edgelayer}
		\draw [style=morphism-edge] (32.center) to (33.center);
		\draw [style=morphism-edge] (31) to (37.center);
		\draw [style=morphism-edge, in=0, out=-90] (40.center) to (41.center);
		\draw [style=morphism-edge] (39) to (42.center);
		\draw [style=morphism-edge, in=180, out=90] (44.center) to (45.center);
		\draw [style=morphism-edge, in=360, out=90] (43.center) to (45.center);
		\draw [style=morphism-edge, in=180, out=-90] (46.center) to (41.center);
		\draw [style=morphism-edge] (46.center) to (48.center);
		\draw [style=morphism-edge, in=90, out=-180] (49.center) to (47.center);
		\draw [style=morphism-edge, in=90, out=0] (49.center) to (48.center);
		\draw [style=morphism-edge, in=270, out=0] (50.center) to (44.center);
		\draw [style=morphism-edge, in=270, out=180] (50.center) to (47.center);
		\draw [style=morphism-edge] (51.center) to (43.center);
	\end{pgfonlayer}
    \end{tikzpicture}
    \]
    Since $\ev_{{P}}$ is an epimorphism
    and
    $g \otimes \ev_{{P}} \in \mathtt{I}$, as $\mathtt{I}$ is a subpresheaf of $\cat{C}(\blank, A)$,
    we have that $g \in \mathbb{SR}(\mathtt{I})$,
    hence $\mathtt{I} \subseteq \mathbb{SR}(\mathtt{I})$.
\end{proof}

We should additionally observe that the ideal correspondence we establish is a lattice isomorphism. Given representable ideals $\mathtt{I,J}$ in $\mathcal{C}(\blank,A)$, we write $\mathtt{I} \subseteq \mathtt{J}$ if $\mathtt{I}(P) \subseteq \mathtt{J}(P)$ for all $P \in \cp$. By \autoref{lexinj}, this is equivalent to $\mathtt{I}(V) \subseteq \mathtt{J}(V)$ for all $V \in \mathcal{C}$, which, since limits in presheaf categories are pointwise, is equivalent to $\mathtt{I}$ being a subobject of $\mathtt{J}$ in $\presh$. Finally, since the Yoneda embedding $\yo$ is fully faithful, this is equivalent to $I$ being a subobject of $J$, where $I,J$ are the objects of $\mathcal{C}$ representing $\mathtt{I,J}$.

The following is an immediate consequence of \autoref{RDefined} and \autoref{SDefined}:

\begin{lemma}
    Given representable ideals $\mathtt{I,J}$ in $\mathcal{C}(\blank,A)$ such that $\mathtt{I} \subseteq \mathtt{J}$, we have $\mathbb{R}(\mathtt{I}) \subseteq \mathbb{R}(\mathtt{J})$. Similarly, given $\cp$-stable ideals $\mathfrak{I,J}$ in $\pfa$ such that $\mathfrak{I} \subseteq \mathfrak{J}$, we have $\mathbb{S}(\mathfrak{I}) \subseteq \mathbb{S}(\mathfrak{J})$.
\end{lemma}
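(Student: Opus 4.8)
The plan is to read both claims off directly from the definitions of $\mathbb{R}$ and $\mathbb{S}$, exploiting that each of these assignments is simply the passage to the image of a fixed $\bk$-linear map whose domain is the given collection of subspaces, and that passing to images is order-preserving.

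For the first claim, I would recall from \autoref{RDefined} that for $P,Q\in\cp$ the subspace $\mathbb{R}(\mathtt{I})(P,Q)\subseteq\homa(P\triangleright A,Q\triangleright A)$ is precisely the image of the $\bk$-linear map
\[
 \mathtt{I}(\ld{Q}\otimes P)\longrightarrow\homa(P\triangleright A,Q\triangleright A),\qquad f\longmapsto {\bigl((Q\otimes f)\circ(\coev_{{Q}}\otimes P)\bigr)}^{A},
\]
which is well defined since $(-)^{A}$ and the bracketed composite are linear in $f$. If $\mathtt{I}\subseteq\mathtt{J}$ then $\mathtt{I}(\ld{Q}\otimes P)\subseteq\mathtt{J}(\ld{Q}\otimes P)$, so the image of this map on the smaller domain is contained in its image on the larger one, i.e.\ $\mathbb{R}(\mathtt{I})(P,Q)\subseteq\mathbb{R}(\mathtt{J})(P,Q)$. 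As this holds for all $P,Q\in\cp$, we conclude $\mathbb{R}(\mathtt{I})\subseteq\mathbb{R}(\mathtt{J})$.

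For the second claim, I would fix an epimorphism $q\colon Q\twoheadrightarrow\unit$ with $Q\in\cp$ as in \autoref{def:Smap} (the choice being irrelevant by the independence lemma). Then $\mathbb{S}(\mathfrak{I})(P)\subseteq\mathcal{C}(P,A)$ is the image of the $\bk$-linear map $g\mapsto(q\otimes A)\circ g^{1}$ with domain $\mathfrak{I}(P,Q)\subseteq\homa(P\triangleright A,Q\triangleright A)$, by \autoref{SDefined}. If $\mathfrak{I}\subseteq\mathfrak{J}$ then $\mathfrak{I}(P,Q)\subseteq\mathfrak{J}(P,Q)$, whence $\mathbb{S}(\mathfrak{I})(P)\subseteq\mathbb{S}(\mathfrak{J})(P)$ for all $P\in\cp$, i.e.\ $\mathbb{S}(\mathfrak{I})\subseteq\mathbb{S}(\mathfrak{J})$.

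There is no genuine difficulty here: the one point worth making explicit is that the sets in \autoref{RDefined} and \autoref{SDefined} are images of linear maps (hence subspaces), so an inclusion of domains gives an inclusion of these subspaces — which is exactly why the statement is an immediate consequence of those two definitions.
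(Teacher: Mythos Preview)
Your argument is correct and matches the paper's approach: the paper simply records the lemma as an immediate consequence of \autoref{RDefined} and \autoref{SDefined}, and you have spelled out precisely why, namely that each assignment is the image of a fixed $\bk$-linear map on the relevant subspace, so inclusions of domains yield inclusions of images.
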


Since there is a lattice isomorphism between ideals in $A$ and representable ideals in $\mathcal{C}(\blank,A)$, we find the following:
\begin{theorem}\label{RSlattice}
 The maps $\mathbb{R}$ and $\mathbb{S}$ induce mutually inverse lattice isomorphisms between the lattice of ideals in $A$ and the lattice of $\,\cp$-stable ideals in $\pfa$.
\end{theorem}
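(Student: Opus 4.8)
The plan is to assemble the three correspondences established above into one chain and then upgrade it to a lattice isomorphism. First I would invoke the bijection of \autoref{mix1main} (equivalently, its Day-convolution incarnation from \autoref{sec:2mix}), which identifies ideals $I$ in $A$ with mixed subfunctors of ${\cC(\blank,A)}_{|\cp}$ via $I \mapsto \cC(\blank,I)$; by \autoref{idealtomixed} and the lemma following it, these mixed subfunctors are exactly the restrictions to $\cp$ of representable ideals in the Day-convolution algebra $\cC(\blank,A)$. Next, the two propositions immediately preceding the theorem --- $\mathbb{RS}(\mathfrak{J}) = \mathfrak{J}$ for $\cp$-stable ideals $\mathfrak{J}$ in $\pfa$, and $\mathbb{SR}(\mathtt{I}) = \mathtt{I}$ for representable ideals $\mathtt{I}$ --- say precisely that $\mathbb{R}$ and $\mathbb{S}$ are mutually inverse bijections between mixed subfunctors of ${\cC(\blank,A)}_{|\cp}$ and $\cp$-stable ideals in $\pfa$. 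Composing the two bijections gives a bijection between ideals in $A$ and $\cp$-stable ideals in $\pfa$: explicitly $I \mapsto \mathbb{R}(\cC(\blank,I))$ in one direction, and in the other $\mathfrak{J}$ is sent to the ideal of $A$ representing the representable ideal $\mathbb{S}(\mathfrak{J})$. After identifying an ideal $I<A$ with the mixed subfunctor $\cC(\blank,I)$, these are exactly the maps $\mathbb{R}$ and $\mathbb{S}$.

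It then remains to check that this bijection is an isomorphism of lattices. Since the poset of ideals in $A$, the poset of mixed subfunctors, and the poset of $\cp$-stable ideals in $\pfa$ are each complete lattices under inclusion --- with meets given by intersection of the defining subspaces --- it suffices to verify that the bijection and its inverse are monotone, as an order isomorphism of complete lattices automatically preserves all meets and joins. For the first leg this is the discussion preceding the theorem: $I \subseteq J$ if and only if $\cC(\blank,I) \subseteq \cC(\blank,J)$, because limits in $\presh$ are pointwise and $\yo$ is fully faithful, so $I \mapsto \cC(\blank,I)$ is an order isomorphism onto the poset of mixed subfunctors. For the second leg, the lemma immediately preceding the theorem records that both $\mathbb{R}$ and $\mathbb{S}$ are monotone, and since they are mutually inverse, each is an order isomorphism. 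A composite of order isomorphisms of complete lattices is again one, which completes the argument.

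I do not expect a genuine obstacle here, since all the substantive work was carried out in the cited propositions; the only point needing a little care is to keep the identifications ``ideal in $A$'' $\leftrightarrow$ ``representable ideal in $\cC(\blank,A)$'' $\leftrightarrow$ ``mixed subfunctor of ${\cC(\blank,A)}_{|\cp}$'' fixed throughout, so that the monotonicity coming from the paragraph above (phrased via subobjects in $\presh$) and the monotonicity of $\mathbb{R},\mathbb{S}$ (phrased via mixed subfunctors) really concern the same partial orders --- which is precisely what the results of \autoref{mixidealcorr} guarantee.
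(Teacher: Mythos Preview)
Your proposal is correct and follows essentially the same approach as the paper: the theorem is stated as an immediate consequence of composing the bijection between ideals in $A$ and mixed subfunctors (\autoref{mix1main}, or equivalently the representable-ideal correspondence of \autoref{sec:2mix}) with the mutually inverse bijections $\mathbb{R},\mathbb{S}$ established in the two preceding propositions, upgraded to a lattice isomorphism via the monotonicity lemma and the paragraph on ordering of representable ideals just before the theorem.
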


\section{Nilpotent ideals}\label{sec:nilpotent}

\subsection{Products of ideals}
\begin{definition}\label{def:productideal}
   Let $\mathcal{D}$ be an abelian monoidal category whose tensor product is right exact in both variables. Let $(B,\mu_{B},\eta_{B})$ be an algebra object in $\mathcal{D}$ and let $I,J$ be ideals in $B$, with respective inclusion morphisms $\iota_{I}, \iota_{J}$ into $B$. We define the product ideal $IJ$ by
   \[
   IJ = \on{Im}(I \otimes J \xrightarrow{\iota_{I} \otimes \iota_{J}} B  \otimes B \xrightarrow{\mu_{B}} B).
   \]
   It is not difficult to verify that $IJ$ indeed defines an ideal.
\end{definition}

  In the case $\mathcal{D} = \presh$, for ideals $\euler{I,J}$ in $\mathcal{C}(\blank,A)$, the ideal $\euler{IJ}$ is given as the image of the map
  \[
  \euler{I} \oast \euler{J} \rightarrow \mathcal{C}(\blank,A) \oast \mathcal{C}(\blank,A) \cong \mathcal{C}(\blank,A\otimes A) \xrightarrow{\mathcal{C}(\blank,\mu)} \mathcal{C}(\blank,A).
  \]
 As such, following the description of Day convolution in \autoref{DayMultiply}, we find that
 \[
 \euler{IJ}(U) = \setj{\mu \circ (g \otimes h) \circ \varphi \mid g \in \euler{I}(V), f \in \euler{J}(V') \text{ and } \varphi \in \mathcal{C}(U,V\otimes V')}.
 \]
 However, there is a technical subtlety here. Let $I,J$ be ideals in $A$.
 Then in general the presheaves $\yo(I)\yo(J)$ and $\yo(IJ)$ need not be the same. Indeed, since $\yo$ does generally not preserve epimorphisms, we find the following:
 \[
 \begin{aligned}
 \yo(I)\yo(J)
   & = \mathcal{C}(\blank,I)\mathcal{C}(\blank,J)
     = \on{Im}(\mu_{\mathcal{C}(\blank,A)} \circ (\iota_{\mathcal{C}(\blank,I)}\oast \iota_{\mathcal{C}(\blank,J)})) \\
   & \cong \on{Coker}\on{Ker}(\yo(\mu_{A} \circ (\iota_{I} \otimes \iota_{J})))
   \cong \on{Coker}(\yo(\on{Ker}((\mu_{A} \circ (\iota_{I} \otimes \iota_{J}))))) \\
   & \not\cong \yo(\on{Coker}\on{Ker}(\mu_{A} \circ (\iota_{I} \otimes \iota_{J})))
      \cong \yo(\on{Im}(\mu_{A} \circ (\iota_{I} \otimes \iota_{J})))
      = \yo(IJ).
 \end{aligned}
 \]
 The first isomorphism above uses the identification $\on{Im}(f) = \on{Coker}(\on{Ker}(f)) = \on{Ker}(\on{Coker}(f))$ which can be made in an abelian category.
 However, the two presheaves agree on projectives:
 \begin{lemma}\label{lem:IJdescription}
     Let $I,J$ be ideals in $A$. Then ${\mathcal{C}(\blank,IJ)}_{|\cp}$ is given by
 \[
 \mathcal{C}(P,IJ) = \setj{\mu \circ (g \otimes h) \circ \varphi \mid g \in \mathcal{C}(Q,I), f \in \mathcal{C}(Q',J) \text{ and } \varphi \in \mathcal{C}(P,Q\otimes Q')}.
 \]
 \end{lemma}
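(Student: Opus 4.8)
The plan is to prove the two inclusions of the claimed equality directly, using that the restriction to $\cp$ lets us replace the arbitrary objects appearing in the general Day-convolution description of $\euler{IJ}$ by projective ones; this replacement is exactly what fails in general (and accounts for the discrepancy $\yo(I)\yo(J)\not\cong\yo(IJ)$ recorded above), and it works here because projective objects lift along epimorphisms.

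Write the morphism $\mu_{A}\circ(\iota_{I}\otimes\iota_{J})\colon I\otimes J\to A$ defining $IJ$ in \autoref{def:productideal} as its epi--mono factorisation $I\otimes J\xrightarrow{e}IJ\xrightarrow{\iota_{IJ}}A$, and recall that, since $\yo$ is left exact, $\mathcal{C}(\blank,IJ)$ is the subpresheaf of $\mathcal{C}(\blank,A)$ cut out by the monomorphism $\iota_{IJ}$. For the inclusion $\supseteq$, given $g\in\mathcal{C}(Q,I)$, $h\in\mathcal{C}(Q',J)$ and $\varphi\in\mathcal{C}(P,Q\otimes Q')$, the composite written $\mu\circ(g\otimes h)\circ\varphi$, which under the harmless abuse of notation denotes $\mu_{A}\circ(\iota_{I}\otimes\iota_{J})\circ(g\otimes h)\circ\varphi$, equals $\iota_{IJ}\circ\bigl(e\circ(g\otimes h)\circ\varphi\bigr)$, hence lies in the image of $\mathcal{C}(P,IJ)\hookrightarrow\mathcal{C}(P,A)$, i.e.\ in $\mathcal{C}(P,IJ)$.

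For the inclusion $\subseteq$, let $\alpha\in\mathcal{C}(P,IJ)$. Since $\mathcal{C}$ has enough projectives, choose $Q,Q'\in\cp$ together with epimorphisms $g\colon Q\twoheadrightarrow I$ and $h\colon Q'\twoheadrightarrow J$. As the tensor product of $\mathcal{C}$ is exact, $g\otimes h\colon Q\otimes Q'\twoheadrightarrow I\otimes J$ is an epimorphism, and so is its composite $e\circ(g\otimes h)\colon Q\otimes Q'\twoheadrightarrow IJ$. Because $P$ is projective, $\alpha$ lifts along $e\circ(g\otimes h)$ to some $\varphi\in\mathcal{C}(P,Q\otimes Q')$; postcomposing with $\iota_{IJ}$ and using $\iota_{IJ}\circ e=\mu_{A}\circ(\iota_{I}\otimes\iota_{J})$ gives $\alpha=\mu_{A}\circ(\iota_{I}\otimes\iota_{J})\circ(g\otimes h)\circ\varphi=\mu\circ(g\otimes h)\circ\varphi$, which is of the required form. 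This establishes the displayed description.

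There is no serious obstacle here: the only points requiring care are the bookkeeping of the identification $\mathcal{C}(\blank,IJ)\subseteq\mathcal{C}(\blank,A)$ coming from left exactness of $\yo$, the notational conflation of $\mu$ with $\mu_{A}\circ(\iota_{I}\otimes\iota_{J})$, and the use of exactness of $\otimes$ to ensure that $g\otimes h$ is epic — after which the lifting property of the projective object $P$ does all the work. Alternatively, one could deduce the statement from the general Day formula for $\euler{IJ}(U)$ together with \autoref{KerlerLyubashenko} and \autoref{lexinj}, but the direct argument above is shorter.
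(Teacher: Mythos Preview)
Your proof is correct. It differs from the paper's argument, which is a one-line appeal to the fact (cited as \cite[Theorem~4.27]{St2}) that the composite $\mathcal{C}\xrightarrow{\yo}\presh\xrightarrow{(\blank)_{|\cp}}\preshcp$ is a monoidal equivalence, whence ${\yo(IJ)}_{|\cp}\cong{\yo(I)}_{|\cp}\,{\yo(J)}_{|\cp}$ and the Day-convolution description follows. Your argument is more elementary and self-contained: you bypass the monoidal-equivalence machinery entirely and instead prove the two inclusions by hand, the key step being the lift of $\alpha\in\mathcal{C}(P,IJ)$ along the epimorphism $Q\otimes Q'\twoheadrightarrow IJ$ obtained from projective covers of $I$ and $J$. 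This buys independence from the external reference and makes transparent exactly where projectivity of $P$ enters; the paper's route, by contrast, situates the lemma as an instance of a general structural fact and is correspondingly shorter once that fact is in hand. Your closing remark about the alternative via \autoref{KerlerLyubashenko} and \autoref{lexinj} is in the same spirit as the paper's approach, though the paper packages it through the cited monoidal equivalence rather than those two lemmas directly.
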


\begin{proof}
   By~\cite[Theorem~4.27]{St2}, the composite functor $\mathcal{C} \xrightarrow{\yo} \presh \xrightarrow{{(\blank)}_{|\cp}} \preshcp$ is a monoidal equivalence, and so
   ${\yo(IJ)}_{|\cp} \cong {\yo(I)}_{|\cp}{\yo(J)}_{|\cp}$, and the latter is obtained by Day convolution, analogously to \autoref{DayMultiply}.
\end{proof}

Continuing the correspondence between ideals in $A$ and mixed subfunctors of $\mathcal{C}(\blank,A)$, we find:
\begin{definition}\label{multiplymixedsubfunctors}
    Let $\mathtt{I,J}$ be mixed subfunctors of $\mathcal{C}(\blank,A)$. We define the mixed subfunctor $\mathtt{IJ}$ by
    \[
    \mathtt{IJ}(P) = \setj{\mu \circ (g \otimes h) \circ \varphi \mid g \in \mathtt{I}(Q), f \in \mathtt{J}(Q') \text{ and } \varphi \in \mathcal{C}(P,Q\otimes Q')}
    \]
\end{definition}

The following is the special case of \autoref{def:productideal}, where $\mathcal{D} = \mathbf{Prof}(\mathcal{A,A})$ is the category of pro-functors and $A = \mathcal{A}(\blank,\blank)$:
\begin{definition}
    Let $\csym{I,J}$ be a pair of ideals in a $\Bbbk$-linear category $\mathcal{A}$. We define the ideal $\csym{IJ}$ by
    \[
    \csym{IJ}(X,Y) = \on{Span}(\setj{g \circ f \mid f \in \csym{J}(X,X'), g \in \csym{I}(X',Y) \text{ and } X' \in \mathcal{A}}),
    \]
    for $X,Y \in \mathcal{A}$.
\end{definition}

While we give a proof of \autoref{productstableideals} below, we remark that it also is a special case of \autoref{def:productideal}, specialised to the monoidal category of \emph{Tambara modules on $\mathcal{C}$} (see~\cite[Proposition~10.16]{St1}):
\begin{lemma}\label{productstableideals}
    If $\csym{I,J}$ are ideals in a module category $\mathcal{M}$ over a monoidal category $\mathcal{D}$, and both $\csym{I}$ and $\csym{J}$ are $\mathcal{D}$-stable, then $\csym{IJ}$ is $\mathcal{D}$-stable.
\end{lemma}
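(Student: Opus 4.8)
The plan is to verify $\mathcal{D}$-stability directly, working on the generators of the product ideal. Recall that $\csym{IJ}(X,Y)$ is the $\Bbbk$-span of the morphisms $g\circ f$ with $X'\in\mathcal{M}$, $f\in\csym{J}(X,X')$ and $g\in\csym{I}(X',Y)$; we take for granted (as in \autoref{def:productideal}) that $\csym{IJ}$ is an ideal, so that $\mathcal{D}$-stability is the only property left to check. Accordingly, I would fix an object $D\in\mathcal{D}$ together with a generator $g\circ f$ as above, and show that $D\triangleright(g\circ f)$ lies in $\csym{IJ}(D\triangleright X,D\triangleright Y)$.

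The key observation is that fixing the first argument of the module-action bifunctor $\mathcal{D}\times\mathcal{M}\to\mathcal{M}$ yields a $\Bbbk$-linear functor $D\triangleright(\blank)\colon\mathcal{M}\to\mathcal{M}$. Functoriality then gives
\[
 D\triangleright(g\circ f)=(D\triangleright g)\circ(D\triangleright f),
\]
the right-hand composite being legitimate since $D\triangleright f$ has codomain $D\triangleright X'$ and $D\triangleright g$ has domain $D\triangleright X'$. Applying $\mathcal{D}$-stability of each factor separately, I get $D\triangleright f\in\csym{J}(D\triangleright X,D\triangleright X')$ because $\csym{J}$ is $\mathcal{D}$-stable, and $D\triangleright g\in\csym{I}(D\triangleright X',D\triangleright Y)$ because $\csym{I}$ is $\mathcal{D}$-stable. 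Hence, by the definition of the product, $(D\triangleright g)\circ(D\triangleright f)$ is a generator of $\csym{IJ}(D\triangleright X,D\triangleright Y)$.

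It remains to pass from generators to arbitrary elements: since $D\triangleright(\blank)$ is $\Bbbk$-linear on hom-spaces, it sends a $\Bbbk$-linear combination of generators of $\csym{IJ}(X,Y)$ to the corresponding combination of elements of $\csym{IJ}(D\triangleright X,D\triangleright Y)$, which again lies in that subspace. This finishes the verification. I do not expect a genuine obstacle here — the argument is formal — the only things requiring minor care being the bookkeeping of domains and codomains after applying $D\triangleright(\blank)$ and the (definitional) facts that the $\mathcal{D}$-action on $\mathcal{M}$ is functorial on morphisms and $\Bbbk$-bilinear. As already remarked, one could instead deduce the statement from \autoref{def:productideal} applied to the monoidal category of Tambara modules on $\mathcal{C}$, but the direct argument above is shorter and more transparent.
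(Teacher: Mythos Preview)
Your proof is correct and follows essentially the same approach as the paper: both verify $\mathcal{D}$-stability directly on generators via functoriality $D\triangleright(g\circ f)=(D\triangleright g)\circ(D\triangleright f)$, apply the $\mathcal{D}$-stability of $\csym{I}$ and $\csym{J}$ to each factor, and extend by $\Bbbk$-linearity. The paper just compresses the generator step and the linearity step into a single displayed line.
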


\begin{proof}
  Let $\sum_{i} \lambda_{i} g_{i} \circ f_{i} \in \csym{IJ}$, where
  $f_{1},\ldots,f_{n} \in \csym{J}, g_{1},\ldots,g_{n} \in \csym{I}$ and $\lambda_{1},\ldots,\lambda_{n} \in \Bbbk$. Further, let $V \in \mathcal{D}$.
  Then $V \triangleright \sum_{i} \lambda_{i} g_{i} \circ f_{i} = \sum_{i} \lambda_{i} (V \triangleright g_{i}) \circ (V \triangleright f_{i}) \in \csym{IJ}$, since $V \triangleright g_{i} \in \csym{I}$ and $V \triangleright f_{i} \in \csym{J}$ by $\mathcal{D}$-stability of $\csym{I}$ and $\csym{J}$ respectively.
\end{proof}

\begin{proposition}\label{Prop:Sprod}
 Let $\mathfrak{I,J}$ be a pair of $\cp$-stable ideals in $\pfa$. Then $\mathbb{S}(\mathfrak{I})\mathbb{S}(\mathfrak{J}) = \mathbb{S}(\mathfrak{IJ})$.
\end{proposition}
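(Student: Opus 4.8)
Both sides are mixed subfunctors (by \autoref{multiplymixedsubfunctors} and \autoref{productstableideals}), so it suffices to prove the two inclusions pointwise at a fixed $P\in\cp$, working from the explicit formulas in \autoref{def:Smap} and \autoref{multiplymixedsubfunctors}; recall here that $\mathfrak{I}\mathfrak{J}(P,Q)=\on{Span}\{h\circ g\mid g\in\mathfrak{J}(P,P''),\ h\in\mathfrak{I}(P'',Q)\}$. First I would isolate one bookkeeping identity: for composable morphisms $g\in\homa(P\triangleright A,P''\triangleright A)$ and $h\in\homa(P''\triangleright A,Q\triangleright A)$ of $\pfa$ and any $q\colon Q\to\mathbb{1}$,
\[
 (q\otimes A)\circ(h\circ g)^{1}\;=\;\mu_{A}\circ\bigl(((q\otimes A)\circ h^{1})\otimes A\bigr)\circ g^{1},
\]
which follows from $(h\circ g)^{1}=h\circ g^{1}$ together with $(q\otimes A)\circ h=\bigl((q\otimes A)\circ h^{1}\bigr)^{A}=\mu_{A}\circ\bigl(((q\otimes A)\circ h^{1})\otimes A\bigr)$, the middle equality because $(q\otimes A)\circ h$ is a right $A$-module morphism into the regular module $A$. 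Its point is that $(q\otimes A)\circ h^{1}$ is, by definition, an element of $\mathbb{S}(\mathfrak{I})(P'')$ whenever $h\in\mathfrak{I}(P'',Q)$, so the identity converts composition in $\pfa$ into the $\mu_{A}$-product of $\mathbb{S}$-elements; a string-diagram rendering in the style of the earlier figures makes it immediate.

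For $\mathbb{S}(\mathfrak{IJ})\subseteq\mathbb{S}(\mathfrak{I})\mathbb{S}(\mathfrak{J})$, I would take a generator $(q\otimes A)\circ(h\circ g)^{1}$ of $\mathbb{S}(\mathfrak{IJ})(P)$ with $g\in\mathfrak{J}(P,P'')$, $h\in\mathfrak{I}(P'',Q)$ and $q\colon Q\twoheadrightarrow\mathbb{1}$, and rewrite it via the identity as $\mu_{A}\circ(a\otimes A)\circ g^{1}$ with $a\defeq(q\otimes A)\circ h^{1}\in\mathbb{S}(\mathfrak{I})(P'')$. Then I would invoke the already-proven equality $\mathbb{R}\mathbb{S}(\mathfrak{J})=\mathfrak{J}$ to write $g=\mathbb{R}(f)$ for some $f\in\mathbb{S}(\mathfrak{J})(\ld{P''}\otimes P)$, i.e.\ $g^{1}=(P''\otimes f)\circ(\coev_{P''}\otimes P)$. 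Substituting and collapsing $(a\otimes A)\circ(P''\otimes f)=a\otimes f$ by bifunctoriality of $\otimes$ gives $\mu_{A}\circ(a\otimes f)\circ(\coev_{P''}\otimes P)$; since $P''$ and $\ld{P''}\otimes P$ both lie in $\cp$, this is precisely a generator of $\mathbb{S}(\mathfrak{I})\mathbb{S}(\mathfrak{J})(P)$ in the sense of \autoref{multiplymixedsubfunctors}.

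For the reverse inclusion I would start from a generator $\mu_{A}\circ(a\otimes b)\circ\varphi$ of $\mathbb{S}(\mathfrak{I})\mathbb{S}(\mathfrak{J})(P)$, with $a=(q_{1}\otimes A)\circ\alpha^{1}\in\mathbb{S}(\mathfrak{I})(R)$ ($\alpha\in\mathfrak{I}(R,S_{1})$, $q_{1}\colon S_{1}\twoheadrightarrow\mathbb{1}$), $b=(q_{2}\otimes A)\circ\beta^{1}\in\mathbb{S}(\mathfrak{J})(R')$ ($\beta\in\mathfrak{J}(R',S_{2})$, $q_{2}\colon S_{2}\twoheadrightarrow\mathbb{1}$) and $\varphi\colon P\to R\otimes R'$. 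Set $g\defeq\bigl((R\otimes q_{2})\otimes A\bigr)\circ(R\triangleright\beta)\circ(\varphi\otimes A)$: here $\varphi\otimes A$ and $(R\otimes q_{2})\otimes A$ are morphisms of $\pfa$ and the middle term $R\triangleright\beta$ lies in $\mathfrak{J}$ by $\cp$-stability, so $g\in\mathfrak{J}(P,R)$; a short unit/bifunctoriality computation gives $g^{1}=(R\otimes b)\circ\varphi$, whence $\mu_{A}\circ(a\otimes b)\circ\varphi=\mu_{A}\circ(a\otimes A)\circ g^{1}$. Applying the bookkeeping identity with $h=\alpha$ rewrites the right-hand side as $(q_{1}\otimes A)\circ(\alpha\circ g)^{1}$, which lies in $\mathbb{S}(\mathfrak{IJ})(P)$ because $\alpha\circ g\in\mathfrak{IJ}(P,S_{1})$ and $q_{1}$ is epic (using independence of $\mathbb{S}$ from the chosen epimorphism). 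Passing to linear spans on both sides then finishes the proof.

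The genuinely load-bearing step is coping with the asymmetry of $\mathbb{S}$, which only remembers the composite of a $\pfa$-morphism with $q\otimes A$ down into $A$: the first inclusion really needs the prior $\mathbb{R}\mathbb{S}=\id$ to recover a preimage $g$ of an $\mathbb{S}(\mathfrak{J})$-element — a naive factorization of $g^{1}$ through a projective cover of $A$ would not land inside $\mathbb{S}(\mathfrak{J})$, and one cannot work with $\mathbb{1}$ directly as $\mathbb{1}\notin\cp$ in general — while the second inclusion uses $\cp$-stability of $\mathfrak{J}$ to manufacture such a $g$. Everything else is routine manipulation of $(\blank)^{1}$, $(\blank)^{A}$ and the interchange law.
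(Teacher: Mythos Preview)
Your proof is correct and follows the same strategy as the paper's: both inclusions are reduced to the interplay between $(\blank)^1$, $(\blank)^A$, and the module map property of morphisms in $\pfa$, with the ``$\supseteq$'' direction (in the paper's labelling) needing a coevaluation to split a single $\pfa$-morphism into a tensor product of two $\mathbb{S}$-elements.

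The only real difference is packaging. You isolate the bookkeeping identity $(q\otimes A)\circ(h\circ g)^{1}=\mu_{A}\circ\bigl(((q\otimes A)\circ h^{1})\otimes A\bigr)\circ g^{1}$ explicitly and then, for $\mathbb{S}(\mathfrak{IJ})\subseteq\mathbb{S}(\mathfrak{I})\mathbb{S}(\mathfrak{J})$, invoke the already-established $\mathbb{R}\mathbb{S}=\id$ to produce $f\in\mathbb{S}(\mathfrak{J})(\ld{P''}\otimes P)$ with $g^{1}=(P''\otimes f)\circ(\coev_{P''}\otimes P)$. The paper instead redoes the coevaluation insertion by hand in a string diagram. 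Since $\mathbb{R}$ is \emph{defined} via $\coev$, these are the same manipulation; your route is slightly cleaner in that it reuses prior work rather than repeating it, while the paper's string-diagram proof is self-contained and makes the $\cp$-stability step (promoting $f$ to $\ld{Q}\triangleright f$ or equivalently your $R\triangleright\beta$) visually explicit.
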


\begin{proof}
We first show the ``$\subseteq$'' direction. Let $\varphi \in \cp(P,Q\otimes Q'),g \in \mathfrak{I}(Q, Q'')$ and let $f \in \mathfrak{J}(Q', Q''')$, with epimorphisms $Q'' \xrightarrow{q} \mathbb{1}$ and $Q'''\xrightarrow{q'}\mathbb{1}$.
Let $\mathbb{S}(g) \defeq (q \otimes A) \circ g \circ (Q \otimes \eta)$ and similarly let $\mathbb{S}(f) \defeq (q' \otimes A) \circ f \circ (Q' \otimes A)$.
Following \autoref{multiplymixedsubfunctors}, a general element of $\mathbb{S}(\mathfrak{I})\mathbb{S}(\mathfrak{J})$ is of the form $\mu \circ \mathbb{S}(g) \otimes \mathbb{S}(f) \circ \varphi$.
   The ``$\subseteq$'' direction follows by
   \[\begin{tikzpicture}[tikzfig]
	\begin{pgfonlayer}{nodelayer}
		\node [style=none] (0) at (-5, -3.5) {};
		\node [style=box] (1) at (-5, -1.5) {$\hspace{1em}\hspace{1em}$};
		\node [style=none] (3) at (-5, -1.75) {};
		\node [style=box] (5) at (-6, 0.5) {$\hspace{0.5em}g\hspace{0.5em}$};
		\node [style=none] (6) at (-6.5, 0.25) {};
		\node [style=none] (8) at (-5.5, -1.25) {};
		\node [style=none] (10) at (-6.5, 0.75) {};
		\node [style=none] (11) at (-5.5, 0.75) {};
		\node [style=blackdot] (12) at (-6.5, 1.5) {};
		\node [style=none] (13) at (-5.5, 1.5) {};
		\node [style=box] (14) at (-4, 0.5) {$\hspace{0.5em}f\hspace{0.5em}$};
		\node [style=none] (15) at (-4.5, 0.25) {};
		\node [style=none] (16) at (-4.5, -1.25) {};
		\node [style=none] (17) at (-4.5, 0.75) {};
		\node [style=none] (18) at (-3.5, 0.75) {};
		\node [style=blackdot] (19) at (-4.5, 1.5) {};
		\node [style=none] (20) at (-3.5, 1.5) {};
		\node [style=none] (21) at (-4.5, 2.5) {};
		\node [style=none] (22) at (-4.5, 3) {};
		\node [style=none] (23) at (-5.5, 0.25) {};
		\node [style=blackdot] (24) at (-5.5, -0.5) {};
		\node [style=none] (25) at (-3.5, 0.25) {};
		\node [style=blackdot] (26) at (-3.5, -0.5) {};
		\node [style=none] (27) at (-2, 0) {$=$};
		\node [style=none] (59) at (-0.5, -3.5) {};
		\node [style=box] (60) at (-0.5, -1.5) {$\hspace{1em}\hspace{1em}$};
		\node [style=none] (61) at (-0.5, -1.75) {};
		\node [style=box] (62) at (-0.5, 2.75) {$\hspace{0.5em}g\hspace{0.5em}$};
		\node [style=none] (63) at (-1, 2.5) {};
		\node [style=none] (64) at (-1, -1.25) {};
		\node [style=none] (65) at (-1, 3) {};
		\node [style=none] (66) at (0, 3) {};
		\node [style=blackdot] (67) at (-1, 3.75) {};
		\node [style=box] (68) at (0.5, 0.5) {$\hspace{0.5em}f\hspace{0.5em}$};
		\node [style=none] (69) at (0, 0.25) {};
		\node [style=none] (70) at (0, -1.25) {};
		\node [style=none] (71) at (0, 0.75) {};
		\node [style=none] (72) at (1, 0.75) {};
		\node [style=blackdot] (73) at (0, 1.5) {};
		\node [style=none] (74) at (0, 2.5) {};
		\node [style=none] (75) at (1, 0.25) {};
		\node [style=blackdot] (76) at (1, -0.5) {};
		\node [style=none] (78) at (0, 4.25) {};
		\node [style=none] (79) at (2.25, 0) {$=$};
		\node [style=none] (80) at (4.5, -3.5) {};
		\node [style=box] (81) at (4.5, -1.5) {$\hspace{1em}\hspace{1em}$};
		\node [style=none] (82) at (4.5, -1.75) {};
		\node [style=box] (83) at (4.5, 3.25) {$\hspace{0.5em}g\hspace{0.5em}$};
		\node [style=none] (84) at (4, 3) {};
		\node [style=none] (85) at (4, -1.25) {};
		\node [style=none] (86) at (4, 3.5) {};
		\node [style=none] (87) at (5, 3.5) {};
		\node [style=blackdot] (88) at (4, 4.25) {};
		\node [style=box] (89) at (5.5, 0.25) {$\hspace{0.5em}f\hspace{0.5em}$};
		\node [style=none] (90) at (5, 0) {};
		\node [style=none] (91) at (5, -1.25) {};
		\node [style=none] (92) at (5, 0.5) {};
		\node [style=none] (93) at (6, 0.5) {};
		\node [style=blackdot] (94) at (5, 1.5) {};
		\node [style=none] (95) at (5, 3) {};
		\node [style=none] (96) at (6, 0) {};
		\node [style=blackdot] (97) at (6, -3) {};
		\node [style=none] (98) at (5, 4.75) {};
		\node [style=none] (99) at (3.5, 4) {};
		\node [style=none] (100) at (5.5, 4) {};
		\node [style=none] (101) at (3.5, 2.5) {};
		\node [style=none] (102) at (5.5, 2.5) {};
		\node [style=none] (103) at (3.25, -2.25) {};
		\node [style=none] (104) at (6.5, -2.25) {};
		\node [style=none] (105) at (3.25, 2) {};
		\node [style=none] (106) at (6.5, 2) {};
	\end{pgfonlayer}
	\begin{pgfonlayer}{edgelayer}
		\draw [style=morphism-edge] (0.center) to (3.center);
		\draw [style=morphism-edge, in=270, out=90] (8.center) to (6.center);
		\draw [style=morphism-edge] (12) to (10.center);
		\draw [style=morphism-edge] (13.center) to (11.center);
		\draw [style=morphism-edge, in=-90, out=90] (16.center) to (15.center);
		\draw [style=morphism-edge] (19) to (17.center);
		\draw [style=morphism-edge] (20.center) to (18.center);
		\draw [style=morphism-edge, in=180, out=90] (13.center) to (21.center);
		\draw [style=morphism-edge, in=90, out=0] (21.center) to (20.center);
		\draw [style=morphism-edge] (21.center) to (22.center);
		\draw [style=morphism-edge] (26) to (25.center);
		\draw [style=morphism-edge] (24) to (23.center);
		\draw [style=morphism-edge] (59.center) to (61.center);
		\draw [style=morphism-edge, in=270, out=90] (64.center) to (63.center);
		\draw [style=morphism-edge] (67) to (65.center);
		\draw [style=morphism-edge, in=-90, out=90] (70.center) to (69.center);
		\draw [style=morphism-edge] (73) to (71.center);
		\draw [style=morphism-edge] (76) to (75.center);
		\draw [style=morphism-edge] (78.center) to (66.center);
		\draw [style=morphism-edge, in=270, out=90] (72.center) to (74.center);
		\draw [style=morphism-edge] (80.center) to (82.center);
		\draw [style=morphism-edge, in=270, out=90] (85.center) to (84.center);
		\draw [style=morphism-edge] (88) to (86.center);
		\draw [style=morphism-edge, in=-90, out=90] (91.center) to (90.center);
		\draw [style=morphism-edge] (94) to (92.center);
		\draw [style=morphism-edge] (97) to (96.center);
		\draw [style=morphism-edge] (98.center) to (87.center);
		\draw [style=morphism-edge, in=270, out=90] (93.center) to (95.center);
		\draw [style=ddd] (102.center) to (100.center);
		\draw [style=ddd] (99.center) to (100.center);
		\draw [style=ddd] (99.center) to (101.center);
		\draw [style=ddd] (102.center) to (101.center);
		\draw [style=ddd] (106.center) to (104.center);
		\draw [style=ddd] (103.center) to (104.center);
		\draw [style=ddd] (103.center) to (105.center);
		\draw [style=ddd] (105.center) to (106.center);
	\end{pgfonlayer}
      \end{tikzpicture}
    \]
    where the unlabelled solid box represents the morphism $\varphi$, the morphism enclosed by the upper dotted box
    lies in $\mathfrak{I}$, and the morphism enclosed by the lower dotted box lies in $\mathfrak{J}$ since $\mathfrak{J}$ is a $\cp$-stable ideal. Thus, the boxes indicate how to realise the right-hand side as a composite of a morphism in $\mathfrak{I}$ with a morphism in $\mathfrak{J}$, and hence an element of $\mathbb{S}(\mathfrak{IJ})$. The first equality holds since $g$ is a right $A$-module morphism.

    We now show the  ``$\supseteq$'' direction. If we assume $Q = Q'''$, and remove the assumption that there is an epimorphism $q'\colon Q''' \to \mathbb{1}$, we can write a general element of $\mathbb{S}(\mathfrak{IJ})$ as
    \[
    \mathbb{S}(g \circ f) \defeq (q \otimes A) \circ (g \circ f) \circ (Q' \otimes \eta).
    \]
    The ``$\supseteq$'' direction follows by
   \[\begin{tikzpicture}[tikzfig]
	\begin{pgfonlayer}{nodelayer}
		\node [style=none] (0) at (-2.5, 0) {};
		\node [style=box] (1) at (-2, 2) {$\hspace{0.5em}f\hspace{0.5em}$};
		\node [style=blackdot] (2) at (-1.5, 0.75) {};
		\node [style=none] (3) at (-2.5, 1.75) {};
		\node [style=none] (4) at (-1.5, 1.75) {};
		\node [style=box] (5) at (-2, 4.5) {$\hspace{0.5em}g\hspace{0.5em}$};
		\node [style=none] (6) at (-2.5, 4.25) {};
		\node [style=none] (7) at (-1.5, 4.25) {};
		\node [style=none] (8) at (-2.5, 2.25) {};
		\node [style=none] (9) at (-1.5, 2.25) {};
		\node [style=none] (10) at (-2.5, 4.75) {};
		\node [style=none] (11) at (-1.5, 4.75) {};
		\node [style=blackdot] (12) at (-2.5, 5.75) {};
		\node [style=none] (13) at (-1.5, 6.75) {};
		\node [style=none] (14) at (-0.25, 3.25) {$=$};
		\node [style=none] (15) at (0.75, 0) {};
		\node [style=box] (16) at (1.25, 2) {$\hspace{0.5em}f\hspace{0.5em}$};
		\node [style=blackdot] (17) at (1.75, 0.75) {};
		\node [style=none] (18) at (0.75, 1.75) {};
		\node [style=none] (19) at (1.75, 1.75) {};
		\node [style=box] (20) at (1.25, 4.5) {$\hspace{0.5em}g\hspace{0.5em}$};
		\node [style=none] (21) at (0.75, 4.25) {};
		\node [style=none] (22) at (1.75, 4.25) {};
		\node [style=none] (23) at (0.75, 2.25) {};
		\node [style=blackdot] (24) at (1.75, 3.5) {};
		\node [style=none] (25) at (0.75, 4.75) {};
		\node [style=none] (26) at (1.75, 4.75) {};
		\node [style=blackdot] (27) at (0.75, 5.75) {};
		\node [style=none] (28) at (2.25, 5.5) {};
		\node [style=none] (29) at (1.75, 2.25) {};
		\node [style=none] (30) at (2.75, 4.25) {};
		\node [style=none] (31) at (2.75, 4.75) {};
		\node [style=none] (32) at (2.25, 6.75) {};
		\node [style=none] (33) at (3.75, 3.25) {$=$};
		\node [style=none] (34) at (6.75, 0) {};
		\node [style=box] (35) at (7.25, 2) {$\hspace{0.5em}f\hspace{0.5em}$};
		\node [style=blackdot] (36) at (7.75, 0.75) {};
		\node [style=none] (37) at (6.75, 1.75) {};
		\node [style=none] (38) at (7.75, 1.75) {};
		\node [style=box] (39) at (5.25, 5) {$\hspace{0.5em}g\hspace{0.5em}$};
		\node [style=none] (40) at (4.75, 4.75) {};
		\node [style=none] (41) at (5.75, 4.75) {};
		\node [style=none] (42) at (6.75, 2.25) {};
		\node [style=blackdot] (43) at (5.75, 4) {};
		\node [style=none] (44) at (4.75, 5.25) {};
		\node [style=none] (45) at (5.75, 5.25) {};
		\node [style=blackdot] (46) at (4.75, 6) {};
		\node [style=none] (47) at (6.5, 6) {};
		\node [style=none] (48) at (7.75, 2.25) {};
		\node [style=none] (49) at (7.25, 4.25) {};
		\node [style=none] (50) at (7.25, 5.25) {};
		\node [style=none] (51) at (6.5, 6.75) {};
		\node [style=none] (52) at (6.75, 3) {};
		\node [style=none] (53) at (6.25, 3.75) {};
		\node [style=none] (54) at (5.75, 3) {};
		\node [style=none] (55) at (5.75, 3) {};
		\node [style=none] (56) at (5.75, 3) {};
		\node [style=none] (57) at (5.25, 2.25) {};
		\node [style=none] (58) at (4.75, 3) {};
		\node [style=none] (59) at (4.75, 4.75) {};
		\node [style=none] (60) at (13.5, 0) {};
		\node [style=box] (61) at (14, 3) {$\hspace{0.5em}f\hspace{0.5em}$};
		\node [style=blackdot] (62) at (14.5, 1.75) {};
		\node [style=none] (63) at (13.5, 2.75) {};
		\node [style=none] (64) at (14.5, 2.75) {};
		\node [style=box] (65) at (10.75, 4) {$\hspace{0.5em}g\hspace{0.5em}$};
		\node [style=none] (66) at (10.25, 3.75) {};
		\node [style=none] (67) at (11.25, 3.75) {};
		\node [style=none] (68) at (13.5, 3.25) {};
		\node [style=blackdot] (69) at (11.25, 3) {};
		\node [style=none] (70) at (10.25, 4.25) {};
		\node [style=none] (71) at (11.25, 5.25) {};
		\node [style=blackdot] (72) at (10.25, 5) {};
		\node [style=none] (73) at (12, 6.25) {};
		\node [style=none] (74) at (14.5, 3.25) {};
		\node [style=none] (76) at (12.75, 5.25) {};
		\node [style=none] (77) at (12, 6.75) {};
		\node [style=none] (78) at (13.5, 3.5) {};
		\node [style=none] (79) at (13, 4.25) {};
		\node [style=none] (80) at (11.25, 1) {};
		\node [style=none] (81) at (11.25, 1) {};
		\node [style=none] (82) at (11.25, 1) {};
		\node [style=none] (83) at (10.75, 0) {};
		\node [style=none] (84) at (10.25, 1) {};
		\node [style=none] (85) at (10.25, 3.75) {};
		\node [style=none] (86) at (12.5, 3.5) {};
		\node [style=none] (87) at (9.75, 5.25) {};
		\node [style=none] (88) at (11.75, 5.25) {};
		\node [style=none] (89) at (9.75, 2.75) {};
		\node [style=none] (90) at (11.75, 2.75) {};
		\node [style=none] (91) at (12.25, 1.5) {};
		\node [style=none] (92) at (15, 1.5) {};
		\node [style=none] (93) at (11.25, 4.25) {};
		\node [style=none] (94) at (12.25, 4.5) {};
		\node [style=none] (95) at (15, 4.5) {};
		\node [style=none] (96) at (9.75, 1) {};
		\node [style=none] (97) at (14.75, 1) {};
		\node [style=none] (98) at (14.75, -0.25) {};
		\node [style=none] (99) at (9.75, -0.25) {};
		\node [style=none] (100) at (8.75, 3.25) {$=$};
	\end{pgfonlayer}
	\begin{pgfonlayer}{edgelayer}
		\draw [style=morphism-edge] (0.center) to (3.center);
		\draw [style=morphism-edge] (2) to (4.center);
		\draw [style=morphism-edge] (9.center) to (7.center);
		\draw [style=morphism-edge] (8.center) to (6.center);
		\draw [style=morphism-edge] (12) to (10.center);
		\draw [style=morphism-edge] (13.center) to (11.center);
		\draw [style=morphism-edge] (15.center) to (18.center);
		\draw [style=morphism-edge] (17) to (19.center);
		\draw [style=morphism-edge] (24) to (22.center);
		\draw [style=morphism-edge] (23.center) to (21.center);
		\draw [style=morphism-edge] (27) to (25.center);
		\draw [style=morphism-edge, in=90, out=-180] (28.center) to (26.center);
		\draw [style=morphism-edge, in=270, out=90] (29.center) to (30.center);
		\draw [style=morphism-edge] (31.center) to (30.center);
		\draw [style=morphism-edge, in=360, out=90] (31.center) to (28.center);
		\draw [style=morphism-edge] (28.center) to (32.center);
		\draw [style=morphism-edge] (34.center) to (37.center);
		\draw [style=morphism-edge] (36) to (38.center);
		\draw [style=morphism-edge] (43) to (41.center);
		\draw [style=morphism-edge] (46) to (44.center);
		\draw [style=morphism-edge, in=90, out=-180] (47.center) to (45.center);
		\draw [style=morphism-edge, in=270, out=90] (48.center) to (49.center);
		\draw [style=morphism-edge] (50.center) to (49.center);
		\draw [style=morphism-edge, in=360, out=90] (50.center) to (47.center);
		\draw [style=morphism-edge] (47.center) to (51.center);
		\draw [style=morphism-edge] (42.center) to (52.center);
		\draw [style=morphism-edge, in=360, out=90] (52.center) to (53.center);
		\draw [style=morphism-edge, in=90, out=-180] (53.center) to (54.center);
		\draw [style=morphism-edge] (54.center) to (55.center);
		\draw [style=morphism-edge] (55.center) to (56.center);
		\draw [style=morphism-edge, in=0, out=-90] (56.center) to (57.center);
		\draw [style=morphism-edge, in=270, out=180] (57.center) to (58.center);
		\draw [style=morphism-edge] (58.center) to (59.center);
		\draw [style=morphism-edge] (60.center) to (63.center);
		\draw [style=morphism-edge] (62) to (64.center);
		\draw [style=morphism-edge] (69) to (67.center);
		\draw [style=morphism-edge] (72) to (70.center);
		\draw [style=morphism-edge, in=90, out=-180] (73.center) to (71.center);
		\draw [style=morphism-edge, in=360, out=90] (76.center) to (73.center);
		\draw [style=morphism-edge] (73.center) to (77.center);
		\draw [style=morphism-edge] (68.center) to (78.center);
		\draw [style=morphism-edge, in=360, out=90] (78.center) to (79.center);
		\draw [style=morphism-edge] (80.center) to (81.center);
		\draw [style=morphism-edge] (81.center) to (82.center);
		\draw [style=morphism-edge, in=0, out=-90] (82.center) to (83.center);
		\draw [style=morphism-edge, in=270, out=180] (83.center) to (84.center);
		\draw [style=morphism-edge] (84.center) to (85.center);
		\draw [style=morphism-edge, in=180, out=90] (86.center) to (79.center);
		\draw [style=morphism-edge, in=90, out=-90] (86.center) to (82.center);
		\draw [style=ddd] (90.center) to (88.center);
		\draw [style=ddd] (88.center) to (87.center);
		\draw [style=ddd] (87.center) to (89.center);
		\draw [style=ddd] (89.center) to (90.center);
		\draw [style=ddd] (92.center) to (91.center);
		\draw [style=morphism-edge, in=270, out=90] (93.center) to (71.center);
		\draw [style=ddd] (95.center) to (92.center);
		\draw [style=ddd] (91.center) to (94.center);
		\draw [style=ddd] (94.center) to (95.center);
		\draw [style=morphism-edge, in=90, out=-90] (76.center) to (74.center);
		\draw [style=ddd] (99.center) to (96.center);
		\draw [style=ddd] (96.center) to (97.center);
		\draw [style=ddd] (97.center) to (98.center);
		\draw [style=ddd] (98.center) to (99.center);
	\end{pgfonlayer}
      \end{tikzpicture}
    \]
    where the upper morphisms enclosed by dotted boxes lie in $\mathbb{S}(\mathfrak{I})$ and $\mathbb{S}(\mathfrak{J})$, and the morphism in the lower dotted box plays the role of $\varphi$ in the description of $\mathbb{S}(\mathfrak{I})\mathbb{S}(\mathfrak{J})$. Thus, left-hand side above presents a general element of $\mathbb{S}(\mathfrak{IJ})$, and right-hand side illustrates how to realise it as an element of $\mathbb{S}(\mathfrak{I})\mathbb{S}(\mathfrak{J})$, proving the result.
\end{proof}

\begin{definition}
    An ideal $I$ in $A$ is said to be \emph{nilpotent} if there is $k$ such that $I^{k} = 0$.

    Similarly, a $\cp$-stable ideal $\mathfrak{J}$ in $\pfa$ is said to be nilpotent if $\mathfrak{J}^{k} = 0$ for some $k$.
\end{definition}

\begin{corollary}
 An ideal $I$ of $A$ is nilpotent if and only if so is the $\cp$-stable ideal $\mathfrak{I}$ in $\pfa$ corresponding to $I$ under the isomorphism of \autoref{RSlattice}.
\end{corollary}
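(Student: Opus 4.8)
The plan is to upgrade the lattice isomorphism of \autoref{RSlattice} to an isomorphism of (non-unital) multiplicative monoids, and then deduce the statement about nilpotency formally. Concretely, I would factor the correspondence $I \leftrightarrow \mathfrak{I}$ through mixed subfunctors: by \autoref{mix1main} an ideal $I$ in $A$ corresponds to the mixed subfunctor $\mathtt{I} = {\mathcal{C}(\blank,I)}_{|\cp}$, and by $\mathbb{R}$ this mixed subfunctor corresponds to the $\cp$-stable ideal $\mathbb{R}(\mathtt{I}) = \mathfrak{I}$ in $\pfa$, with inverse given by $\mathbb{S}$. I would then check multiplicativity one level at a time.

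First, the assignment $I \mapsto {\mathcal{C}(\blank,I)}_{|\cp}$ is multiplicative: comparing the formula for $\mathcal{C}(P,IJ)$ in \autoref{lem:IJdescription} with the definition of the product of mixed subfunctors in \autoref{multiplymixedsubfunctors}, one sees that the mixed subfunctor attached to $IJ$ is exactly $\mathtt{I}\mathtt{J}$, where $\mathtt{I},\mathtt{J}$ are the mixed subfunctors attached to $I,J$. Second, $\mathbb{S}$ is multiplicative by \autoref{Prop:Sprod}, i.e.\ $\mathbb{S}(\mathfrak{I})\mathbb{S}(\mathfrak{J}) = \mathbb{S}(\mathfrak{IJ})$; since $\mathbb{R}$ and $\mathbb{S}$ are mutually inverse bijections (\autoref{RSlattice}), this is equivalent to $\mathbb{R}(\mathtt{I}\mathtt{J}) = \mathbb{R}(\mathtt{I})\mathbb{R}(\mathtt{J})$ for all mixed subfunctors $\mathtt{I},\mathtt{J}$. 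Composing the two levels, the bijection $I \leftrightarrow \mathfrak{I}$ satisfies $IJ \leftrightarrow \mathfrak{I}\mathfrak{J}$; in particular, by an obvious induction, $I^{k} \leftrightarrow \mathfrak{I}^{k}$ for every $k \geq 1$.

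Finally, the lattice isomorphism of \autoref{RSlattice} preserves least elements, so the zero ideal of $A$ corresponds to the zero $\cp$-stable ideal of $\pfa$ (this can also be read off directly from \autoref{SDefined}). Hence $I^{k} = 0$ if and only if $\mathfrak{I}^{k} = 0$, which is precisely the claim. I do not expect a genuine obstacle here; the only point requiring care is bookkeeping, namely making sure the three identifications (ideals in $A$, mixed subfunctors, $\cp$-stable ideals in $\pfa$) are composed consistently so that "the $\cp$-stable ideal corresponding to $I$" really is $\mathbb{R}({\mathcal{C}(\blank,I)}_{|\cp})$, after which \autoref{Prop:Sprod} does all the work.
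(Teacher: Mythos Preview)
Your proposal is correct and follows essentially the same approach as the paper: the paper's proof is the one-liner ``this follows directly from $\mathbb{S}(\mathfrak{I}^{k}) = {\mathbb{S}(\mathfrak{I})}^{k}$ by \autoref{Prop:Sprod}'', and you have simply unpacked the implicit steps, namely the multiplicativity of $I \mapsto {\mathcal{C}(\blank,I)}_{|\cp}$ via \autoref{lem:IJdescription} and \autoref{multiplymixedsubfunctors}, and the preservation of the zero ideal.
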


\begin{proof}
    This follows directly from $\mathbb{S}(\mathfrak{I}^{k}) = {\mathbb{S}(\mathfrak{I})}^{k}$ by \autoref{Prop:Sprod}.
\end{proof}

\subsection{The \texorpdfstring{$\mathcal{C}$}{C}-module radical and exact algebras}

Recall that for an additive $\Bbbk$-linear category $\mathcal{A}$, its \emph{Jacobson radical} $\on{Rad}_{\mathcal{A}}$ (see e.g.\ \cite[Appendix~A.3]{ASS},\ \cite[Section~2]{Kr}) is the unique two-sided ideal $\csym{J}$ in $\mathcal{A}$ such that for any $X \in \mathcal{A}$, the ideal $\csym{J}(X,X) \subseteq \on{End}_{\mathcal{A}}(X)$ is the Jacobson radical. In the case when $\mathcal{A}$ is $\on{Hom}$-finite, it is the greatest ideal in $\mathcal{A}$ which does not contain any non-zero idempotents.

\begin{definition}\label{def:cmoduleradical}
 We define the \emph{$\mathcal{C}$-module radical $\radca$ in $\pfa$} by setting, for $P,P' \in \cp$:
 \begin{equation}\label{eqmoduleradical}
 \radca(P,P')
 \defeq
 \left\{
   f \in \homa (P\triangleright A,P' \triangleright A)
 \ \Bigg|\ %
   \begin{aligned}
     &Q\triangleright f \in \on{Rad}_{\pfa}(Q\otimes P\triangleright A,Q\otimes P'\triangleright A) \\
     &\text{for all } Q \in \cp
   \end{aligned}
 \right\}.
 \end{equation}
 We denote the ideal in $A$ representing $\mathbb{S}(\radca)$ by $\iradc(A)$, and refer to it as the $\mathcal{C}$-module radical of $A$.
\end{definition}

\begin{lemma}
  The $\mathcal{C}$-module radical is a  $\cp$-stable ideal in $\pfa$.
\end{lemma}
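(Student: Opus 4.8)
The plan is to check, straight from the defining formula \eqref{eqmoduleradical}, the three conditions of \autoref{def:cpstableideal}, each of which reduces formally to the corresponding property of the Jacobson radical $\on{Rad}_{\pfa}$ recalled just above, combined with the facts that the action $f\mapsto Q\triangleright f$ is $\Bbbk$-linear, functorial in $f$, and compatible with iterated module actions. First I would note that each $\radca(P,P')$ is a $\Bbbk$-subspace of $\homa(P\triangleright A, P'\triangleright A)$, being the intersection over $Q\in\cp$ of the preimages of the subspaces $\on{Rad}_{\pfa}(Q\otimes P\triangleright A, Q\otimes P'\triangleright A)$ under the linear maps $f\mapsto Q\triangleright f$.

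For the ideal property, given $f\in\radca(P,P')$ (or $g\in\radca(P',P'')$) and an arbitrary morphism $g$ (resp.\ $f$) of $\pfa$, I would use $Q\triangleright(g\circ f)=(Q\triangleright g)\circ(Q\triangleright f)$ together with the fact that $\on{Rad}_{\pfa}$ is a two-sided ideal in $\pfa$ to conclude $Q\triangleright(g\circ f)\in\on{Rad}_{\pfa}$ for every $Q\in\cp$, and hence $g\circ f\in\radca$. For $\cp$-stability, given $f\in\radca(P,P')$ and $R\in\cp$, I would show $Q\triangleright(R\triangleright f)\in\on{Rad}_{\pfa}$ for all $Q\in\cp$: the associativity constraint of the $\mathcal{C}$-module category $\modca$ identifies $Q\triangleright(R\triangleright f)$, up to conjugation by an isomorphism of $\pfa$, with $(Q\otimes R)\triangleright f$; since the Jacobson radical of a category is stable under conjugation by isomorphisms, it is enough that $(Q\otimes R)\triangleright f\in\on{Rad}_{\pfa}$, and this is exactly the instance of \eqref{eqmoduleradical} for $f$ at the projective object $Q\otimes R\in\cp$ (it is projective because $\otimes$ is exact, so tensoring a projective with anything stays projective).

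I do not expect any real obstacle here. The only point demanding a little care is the bookkeeping of the associativity constraint of $\modca$ in the $\cp$-stability step, together with the standard remark that conjugating a morphism by an isomorphism cannot move it out of the Jacobson radical; if instead one works with a strict monoidal model of $\mathcal{C}$, as is done in \autoref{sec:mixedsubfunctorscpideals}, then $Q\triangleright(R\triangleright f)=(Q\otimes R)\triangleright f$ on the nose and the whole argument becomes a one-line formality.
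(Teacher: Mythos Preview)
Your proposal is correct and follows essentially the same approach as the paper's proof: both verify the ideal property via $Q\triangleright(g\circ f)=(Q\triangleright g)\circ(Q\triangleright f)$ together with $\on{Rad}_{\pfa}$ being an ideal, and both verify $\cp$-stability via $Q\triangleright(R\triangleright f)=(Q\otimes R)\triangleright f$ with $Q\otimes R\in\cp$. You are slightly more explicit about the linear-subspace condition and about handling the associativity constraint when $\mathcal{C}$ is not assumed strict, but this is only extra care, not a different argument.
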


\begin{proof}
   If $g \in \radca$ and $f,h$ are morphisms of $\pfa$ such that $h \circ g \circ f$ is defined, then the equality $P \triangleright (g \circ f) = (P\triangleright g) \circ (P \triangleright f)$ implies that $g \circ f \in \radca$ and similarly for $h \circ g$.
   Thus $\radca$ is an ideal in $\pfa$. To see that it is $\cp$-stable, note that for a fixed $Q\in \cp$, the equality $P\triangleright (Q \triangleright f) = (P \otimes Q)\triangleright f$ implies that $P \triangleright (Q \triangleright f) \in \on{Rad}_{\pfa}$ for all $P$, so $Q \triangleright f \in \radca$.
\end{proof}

\begin{lemma}\label{radcarad}
 The $\mathcal{C}$-module radical is contained in the $\Bbbk$-linear radical. In other words, for all $P,P' \in \cp$, we have $\radca(P,P') \subseteq \on{Rad}_{\pfa}(P, P')$.
\end{lemma}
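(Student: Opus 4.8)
The plan is to reduce the containment to a splitting argument in $\modca$. Since $\cC$ has enough projectives, I would fix a projective object $Q \in \cp$ together with an epimorphism $q \colon Q \twoheadrightarrow \unit$ (for instance a projective cover of $\unit$). Although $\unit$ itself need not be projective, the object $Q \triangleright (P \triangleright A) = (Q \otimes P) \triangleright A$ obtained by acting with $Q$ \emph{does} belong to $\pfa$, since $\cp$ is closed under $\otimes$, and the structure morphism $q \triangleright (P \triangleright A)$ will turn out to be a split epimorphism; this is what lets us ``cancel'' the extra tensor factor.

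The first step is to record the naturality of the assignment $M \mapsto q \triangleright M$ on $\modca$: for any $f \colon M \to N$ one has $f \circ (q \triangleright M) = (q \triangleright N) \circ (Q \triangleright f)$, both sides being $q \otimes f$ by bifunctoriality of $\otimes$ (using strictness $\unit \triangleright M = M$). Applying this with $f \in \radca(P,P') \subseteq \homa(P \triangleright A, P' \triangleright A)$ gives
\[
f \circ \bigl(q \triangleright (P \triangleright A)\bigr) = \bigl(q \triangleright (P' \triangleright A)\bigr) \circ (Q \triangleright f).
\]
By definition of $\radca$ we have $Q \triangleright f \in \on{Rad}_{\pfa}$, and since the Jacobson radical of a $\Bbbk$-linear category is a two-sided ideal, the right-hand side, and hence $f \circ (q \triangleright (P \triangleright A))$, lies in $\on{Rad}_{\pfa}$.

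The second step is to cancel the factor $q \triangleright (P \triangleright A) = (q \otimes P) \otimes A$. This is an epimorphism in $\cC$, because the tensor product is exact, hence an epimorphism in $\modca$; its target $P \triangleright A$ is projective in $\modca$, being a free $A$-module on a projective object (cf.\ \eqref{indres}), so the epimorphism splits: there is $s \in \homa(P \triangleright A, (Q \otimes P) \triangleright A)$ with $(q \triangleright (P \triangleright A)) \circ s = \on{id}_{P \triangleright A}$. As $Q \otimes P \in \cp$, the morphism $s$ is a morphism of $\pfa$, so
\[
f = f \circ \bigl(q \triangleright (P \triangleright A)\bigr) \circ s \in \on{Rad}_{\pfa}(P,P'),
\]
again because $\on{Rad}_{\pfa}$ is an ideal.

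I do not expect a genuine obstacle here; the only points that need a little care are that $\cp$ is closed under $\otimes$, so that the intermediate object $(Q \otimes P) \triangleright A$ really lies in $\pfa$ and not merely in $\projca$, and that the chosen epimorphism onto $\unit$ can be taken with projective source --- which is precisely where the quantifier ``for all $Q \in \cp$'' in \autoref{def:cmoduleradical} is consumed. Everything else reduces to the standard facts that surjections onto projective objects split and that the Jacobson radical is a two-sided ideal.
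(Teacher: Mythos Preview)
Your argument is correct. Both your proof and the paper's share the same core idea---factor $f$ through $Q\triangleright f$ for a suitable $Q\in\cp$ via morphisms of $\pfa$---but realise it differently. The paper takes $Q=P\otimes\rd{P}$ and uses the zig-zag identity
\[
f \;=\; (\ev_{\rd{P}}\otimes P'\otimes A)\circ\bigl((P\otimes\rd{P})\triangleright f\bigr)\circ(P\otimes\coev_{\rd{P}}\otimes A),
\]
so rigidity supplies the section for free and no appeal to projectivity in $\modca$ is needed. Your version instead picks any $Q\twoheadrightarrow\unit$ with $Q\in\cp$ and splits the resulting epimorphism onto $P\triangleright A$ using that free modules on projectives are projective. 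Your approach has the mild advantage of not invoking duals (so it would go through in settings without rigidity, provided one still has exactness of the action and enough projectives), while the paper's avoids the small detour through projectivity of $P\triangleright A$ in $\modca$. Either way the proof is a one-liner.
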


\begin{proof}
 It suffices to observe that for $f \in \radca(P,P')$, we have
 \[
 f = (\ev_{\rd{P}} \otimes P' \otimes A) \circ (P \otimes \rd{P} \otimes f) \circ P \otimes \coev_{\rd{P}} \otimes A,
 \]
 and by definition of $\radca$ we have $P \otimes \rd{P} \otimes f \in \on{Rad}_{\pfa}$, and so the right-hand side lies in $\on{Rad}_{\pfa}$ and thus so does $f$.
\end{proof}

\begin{proposition}\label{prop:greatestnilpotent}
  The $\mathcal{C}$-module radical $\on{Rad}_{A}^{\mathcal{C}}$ is the greatest nilpotent $\cp$-stable ideal in $\pfa$.
  In particular, $\iradc(A)$ is the greatest nilpotent ideal object in $A$.
\end{proposition}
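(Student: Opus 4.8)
The plan is to derive everything from two ingredients that are already in place: the inclusion $\radca \subseteq \on{Rad}_{\pfa}$ of \autoref{radcarad}, and the nilpotency of the ordinary Jacobson radical $\on{Rad}_{\pfa}$. For the latter I would note that $\modca$ is a finite abelian category, hence equivalent to the category of finite-dimensional modules over a finite-dimensional algebra; therefore $\projca$, and with it its full subcategory $\pfa$, has only finitely many indecomposable objects up to isomorphism, so $\on{Rad}_{\pfa}$ is nilpotent, say $\on{Rad}_{\pfa}^{n} = 0$. Since the product of ideals is monotone in each argument (\autoref{def:productideal}), we get $\radca^{n} \subseteq \on{Rad}_{\pfa}^{n} = 0$, so $\radca$ is a nilpotent $\cp$-stable ideal in $\pfa$.

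Next I would prove maximality. Let $\mathfrak{J}$ be an arbitrary nilpotent $\cp$-stable ideal in $\pfa$, with $\mathfrak{J}^{m} = 0$. If some $\mathfrak{J}(P,P)$ contained a nonzero idempotent $e$, then $e = e^{m} \in \mathfrak{J}^{m}(P,P) = 0$, a contradiction; so $\mathfrak{J}$ contains no nonzero idempotents, and the characterisation of the Jacobson radical of a $\on{Hom}$-finite category recalled before \autoref{def:cmoduleradical} gives $\mathfrak{J} \subseteq \on{Rad}_{\pfa}$. Now take $f \in \mathfrak{J}(P,P')$ and $Q \in \cp$; by $\cp$-stability of $\mathfrak{J}$ we have $Q \triangleright f \in \mathfrak{J}(Q \otimes P, Q \otimes P') \subseteq \on{Rad}_{\pfa}$, which is exactly the membership condition of \autoref{eqmoduleradical}. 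Hence $f \in \radca$, so $\mathfrak{J} \subseteq \radca$, and $\radca$ is the greatest nilpotent $\cp$-stable ideal in $\pfa$.

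Finally, for the ``in particular'' statement I would transport the result along \autoref{RSlattice}: the maps $\mathbb{R}, \mathbb{S}$ are mutually inverse lattice isomorphisms between the ideals in $A$ and the $\cp$-stable ideals in $\pfa$, and by \autoref{Prop:Sprod} (as recorded in the corollary following it) they carry nilpotent ideals to nilpotent ideals. Consequently the greatest nilpotent $\cp$-stable ideal $\radca$ corresponds to the greatest nilpotent ideal object in $A$, namely the ideal $\iradc(A)$ representing $\mathbb{S}(\radca)$ from \autoref{def:cmoduleradical}.

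I do not anticipate a genuine obstacle here; the one point that needs a word of justification is the nilpotency of $\on{Rad}_{\pfa}$, i.e.\ the fact that $\pfa$ has only finitely many indecomposable objects up to isomorphism, which is immediate from the finiteness of the abelian category $\modca$.
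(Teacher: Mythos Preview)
Your proposal is correct and follows essentially the same route as the paper: the maximality argument (nilpotent $\Rightarrow$ contained in $\on{Rad}_{\pfa}$, then $\cp$-stability forces containment in $\radca$) is identical. You are in fact more explicit than the paper, which proves only the maximality step and leaves both the nilpotency of $\radca$ itself (via \autoref{radcarad} and the nilpotency of $\on{Rad}_{\pfa}$) and the transport to $\iradc(A)$ along \autoref{RSlattice} and \autoref{Prop:Sprod} to the reader.
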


\begin{proof}
 Assume that $\mathfrak{J}$ is a nilpotent $\cp$-stable ideal in $\pfa$. Since $\mathfrak{J}$ is nilpotent, we have an inclusion $\mathfrak{J} \subseteq \on{Rad}_{\pfa}$ of ideals in $\pfa$.
 Then for any $f \in \mathfrak{J}$ and any $Q \in \cp$, since $\mathfrak{J}$ is $\cp$-stable, we must have $Q \triangleright f \in \mathfrak{J}$ and hence also $Q \triangleright f \in \on{Rad}_{\pfa}$. Thus $f \in \radca$, and since $f$ was arbitrary, we also have $\mathfrak{J} \subseteq \radca$.
\end{proof}

Recall that we denote by ${\left(\radca\right)}^{\mathsf{c}}$ the unique extension of $\radca$ to an ideal of $\projca$, following \autoref{lem:CauchyCorr}.
\begin{proposition}\label{laststep}
Let $X \in \modca$ and let $Q \in \cp$. Let $R_{1} \xrightarrow{r_{1}} R_{0} \xtwoheadrightarrow{r_{0}} Q \triangleright X$ be exact, where $r_{0}$ is a projective cover of $Q \triangleright X$.
 Then $r_{1} \in {\left(\radca\right)}^{\mathsf{c}}$.
\end{proposition}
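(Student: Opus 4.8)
The plan is to reduce the statement, via the defining characterisation of ${(\radca)}^{\mathsf c}$, to the purely algebra‑object fact that $\ker(r_0)$ already lies inside the submodule $R_0\cdot\iradc(A)$, which in turn I would obtain by passing to the exact quotient algebra $A/\iradc(A)$.

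First I would record the description of ${(\radca)}^{\mathsf c}$ to be used. Repeating verbatim the argument that $\radca$ is a $\cp$-stable ideal of $\pfa$, one checks that $\mathfrak{K}(R,R')\defeq\setj{\,f\in\homa(R,R')\mid Q'\triangleright f\in\on{Rad}_{\projca}\text{ for all }Q'\in\cp\,}$ defines a $\cp$-stable ideal of $\projca$; since a morphism of $\pfa$ lies in $\on{Rad}_{\pfa}$ exactly when it lies in $\on{Rad}_{\projca}$, the restriction $\mathfrak{K}_{|\pfa}$ is $\radca$, so by the uniqueness part of \autoref{lem:CauchyCorr} we have $\mathfrak{K}={(\radca)}^{\mathsf c}$. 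Hence it is enough to show $Q'\triangleright r_1\in\on{Rad}_{\projca}$ for every $Q'\in\cp$. Now $\projca$ is stable under the $\cp$-action, so $Q'\triangleright R_0,Q'\triangleright R_1\in\projca$; exactness of $\otimes$ gives $\on{im}(Q'\triangleright r_1)=Q'\triangleright\on{im}(r_1)=Q'\triangleright\ker(r_0)$; and a morphism between projective objects lies in the Jacobson radical precisely when its image is contained in the radical of its codomain. So the goal becomes $Q'\triangleright\ker(r_0)\subseteq\on{Rad}_{\modca}(Q'\triangleright R_0)$ for all $Q'\in\cp$.

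The heart of the argument is the claim $\ker(r_0)\subseteq R_0\cdot\iradc(A)$. Consider the right exact $\mathcal C$-linear functor $-\otimes_A(A/\iradc(A))\colon\modca\to\overline{\modca}$, where $\overline{\modca}\defeq\modc{A/\iradc(A)}$, sending $M$ to $\overline{M}\defeq M/M\iradc(A)$. Since $\iradc(A)$ is nilpotent, $M\iradc(A)\subseteq\on{Rad}_{\modca}(M)$ for every $M$ (each simple quotient of $M$ is annihilated by $\iradc(A)$), and therefore $\on{top}_{\overline{\modca}}(\overline{M})\cong\on{top}_{\modca}(M)$ canonically and compatibly with morphisms. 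Applying the functor to the projective cover $r_0\colon R_0\twoheadrightarrow Q\triangleright X$ yields an epimorphism $\overline{r_0}\colon\overline{R_0}\twoheadrightarrow\overline{Q\triangleright X}$; here $\overline{R_0}$ is again projective (a summand of a free $A/\iradc(A)$-module), $\overline{Q\triangleright X}=Q\triangleright\overline{X}$ because $Q\otimes-$ is exact, and $\overline{r_0}$ induces an isomorphism on tops, so $\overline{r_0}$ is a projective cover in $\overline{\modca}$. But $A/\iradc(A)$ has no non-zero nilpotent ideals — a nilpotent ideal of $A/\iradc(A)$ lifts to a nilpotent ideal of $A$, hence into $\iradc(A)$ — so $A/\iradc(A)$ is exact by \autoref{thm:semisimplecharacterization}; thus $\overline{\modca}$ is an exact $\mathcal C$-module category and, since $Q\in\cp$, the object $Q\triangleright\overline{X}$ is projective. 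A projective cover onto a projective object is an isomorphism, so $\overline{r_0}$ is an isomorphism; by right exactness $\ker(\overline{r_0})$ is the image of $\ker(r_0)$ in $\overline{R_0}$, whence $\ker(r_0)\subseteq\ker(R_0\to\overline{R_0})=R_0\cdot\iradc(A)$, as claimed. Finally, for $Q'\in\cp$, exactness of $Q'\otimes-$ gives $\on{im}(Q'\triangleright r_1)=Q'\triangleright\ker(r_0)\subseteq Q'\triangleright(R_0\iradc(A))=(Q'\triangleright R_0)\cdot\iradc(A)\subseteq\on{Rad}_{\modca}(Q'\triangleright R_0)$, the last inclusion again because $\iradc(A)$ is nilpotent; hence $Q'\triangleright r_1\in\on{Rad}_{\projca}$, and so $r_1\in{(\radca)}^{\mathsf c}$. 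The step that needs the most care is the identification of ${(\radca)}^{\mathsf c}$ together with the fact that passage to $A/\iradc(A)$ preserves projective covers; the hypothesis $Q\in\cp$ enters precisely in making $Q\triangleright\overline{X}$ projective in the exact module category $\overline{\modca}$, and without it the statement fails already for $A=\unit$.
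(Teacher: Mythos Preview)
Your argument contains a fatal circularity. In the heart of the proof you invoke \autoref{thm:semisimplecharacterization} to conclude that $A/\iradc(A)$ is exact from the fact that $\iradc(A/\iradc(A))=0$. However, in the paper's logical structure the implication $\iradc(B)=0\Rightarrow B$ exact in \autoref{thm:semisimplecharacterization} is proved precisely by appealing to \autoref{laststep}: if the extended $\mathcal{C}$-module radical vanishes, then the tail $r_1$ of any minimal presentation of $Q\triangleright X$ must be zero, hence $Q\triangleright X$ is projective. So your proof of \autoref{laststep} for $A$ reduces to exactness of $B=A/\iradc(A)$, which in turn needs \autoref{laststep} for $B$; since $B/\iradc(B)=B$, the reduction does not terminate. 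Put differently, the statement ``the algebra $A/\iradc(A)$ is exact'' is essentially the main theorem of the paper (an algebra without nilpotent ideals is exact), and you are assuming it to prove one of its key ingredients.

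The paper's proof avoids this entirely: it never mentions $\iradc(A)$ or the quotient $A/\iradc(A)$, but instead uses a Perron--Frobenius argument on the split Grothendieck semiring (via~\cite[Lemmas~4.11, 4.13, 4.16]{St2}) to show directly that $Q'\triangleright r_0$ remains a projective cover for every $Q'\in\cp$, whence $Q'\triangleright r_1\in\on{Rad}_{\projca}$. This argument is self-contained and precedes any statement about exactness. Your identification of ${(\radca)}^{\mathsf c}$ and the reduction to showing $Q'\triangleright r_1\in\on{Rad}_{\projca}$ are correct and match the paper's approach; the gap is only in how you establish this last fact. If you want to salvage your strategy, you would need an independent proof that $A/\iradc(A)$ is exact that does not rely on \autoref{laststep} or on \autoref{thm:semisimplecharacterization}.
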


\begin{proof}
    We want to first apply~\cite[Lemma~4.16]{St2}. Using the notation of~\cite{St2}, we set $\csym{S} \defeq \cp$ and $\mathbf{M} \defeq \projca$. In order to apply the result, we first observe that the assumption about transitivity of $\mathbf{M}$ made therein is used only to conclude that $\setj{P \triangleright X \mid P \in \csym{S}} \neq \setj{0}$ for $X \in \mathbf{M}$ non-zero. This latter claim is clearly true in our case, so the claim of~\cite[Lemma~4.16]{St2} applies and we may use~\cite[Lemma~4.11]{St2} to conclude that for any $Q' \in \cp$, the morphism $Q' \triangleright r_{0}$
    is a projective cover of $Q' \triangleright (Q\triangleright X)$.
    In particular, by~\cite[Lemma~4.13]{St2},
    we have that $Q' \triangleright r_{1} \in \on{Rad}_{\projca}$
    for all $Q' \in \cp$, establishing the result.
\end{proof}

\begin{remark}\label{idempotentexplain}
 In our case,~\cite[Lemma~4.16]{St2} produces an idempotent $\mathbf{e}$ in the semiring $\mathbb{R} \otimes_{\mathbb{Z}} {[\cp]}_{\oplus}$, where ${[\cp]}_{\oplus}$ denotes the split Grothendieck semiring.
 The space $\mathbb{R} \otimes_{\mathbb{Z}} {[\projca]}_{\oplus}$ becomes a semiring module with a $\mathbb{Z}_{\geq 0}$-basis given by $\on{Indec}(\projca)$, the indecomposable objects of $\projca$.
 For~\cite[Lemma~4.11]{St2}, it is crucial that for a non-zero non-negative linear combination $v$ in $\on{Indec}(\projca)$, the element $\mathbf{e} \triangleright v$ again is non-zero. However, since $\cp$ has finitely many indecomposables, $\mathbf{e}$ by construction is a positive linear combination in $\on{Indec}(\cp)$, hence the condition is satisfied if and only if $\setj{P \triangleright R \mid P \in \cp} \neq \setj{0}$ for all non-zero $R \in \projca$.
\end{remark}

\begin{remark}\label{infiniteexplain}
 By replacing $[\cp^{\on{op}},\Vecc]$ with its full subcategory of presheaves which vanish on all but finitely many isomorphism classes of indecomposable projectives, our approach is also applicable to tensor categories which are not finite but have enough projectives such as those coming from Lie superalgebras and infinite fusion categories, with one exception: \autoref{laststep} requires finiteness.
  Moreover, while the use of the Perron--Frobenius theorem in obtaining the idempotent $\mathbf{e}$ of \autoref{idempotentexplain} could perhaps in some cases be replaced by suitable generalisations to infinite-dimensional spaces, it is easy to find tensor categories whose Grothendieck semirings of projectives do not admit idempotents.

 For example, for $e \in {[\mathfrak{sl}_{2}(\mathbb{C})\text{-}\mathrm{mod}]}_{\oplus} \cong \mathbb{Z}\{\, V_{i} \mid i = 1,2,\ldots \,\}$, letting
 \[
 m(e) = \max\,\{\, n \mid \text{coefficient of }V_{n} \text{ in }e\text{ is non-zero}\,\},
 \]
  we find $m(e^{2}) = 2m(e)-1$, so for $e$ idempotent we must have $m = 1$, showing that the only idempotent is the unit element.
\end{remark}

\begin{theorem}\label{thm:semisimplecharacterization}
 The algebra $A$ is exact if and only if $\,\iradc(A) = 0$.
\end{theorem}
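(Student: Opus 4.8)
The plan is to pass through \autoref{RSlattice}. Since $\mathbb{R},\mathbb{S}$ are mutually inverse lattice isomorphisms between ideals in $A$ and $\cp$-stable ideals in $\pfa$, and $\iradc(A)$ is the ideal representing $\mathbb{S}(\radca)$, we have $\iradc(A)=0$ if and only if $\radca=0$, and by \autoref{lem:CauchyCorr} this is in turn equivalent to ${(\radca)}^{\mathsf c}=0$ in $\projca$. So it suffices to prove that $\radca=0$ if and only if $A$ is exact, and for this I use that $A$ is exact precisely when $Q\triangleright M$ is projective in $\modca$ for all $Q\in\cp$ and $M\in\modca$ (the defining property of exactness of the $\mathcal{C}$-module category $\modca$, see \cite{EGNO}).

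For ``$\radca=0\Rightarrow A$ exact'', fix $Q\in\cp$ and $M\in\modca$ and choose an exact sequence $R_1\xrightarrow{r_1}R_0\xrightarrow{r_0}Q\triangleright M$ with $r_0$ a projective cover (available since $\modca$ has enough projectives and admits projective covers). By \autoref{laststep}, $r_1\in{(\radca)}^{\mathsf c}=0$, so $\on{Ker}(r_0)=\on{Im}(r_1)=0$; as $r_0$ is also an epimorphism it is an isomorphism, whence $Q\triangleright M\cong R_0$ is projective. Thus $\modca$ is exact, i.e.\ $A$ is exact.

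For the converse, assume $A$ is exact, fix $P,P'\in\cp$ and $f\in\radca(P,P')$, and set $J\defeq\on{Im}(f)\subseteq P'\triangleright A$, so $f$ factors as $P\triangleright A\xrightarrow{\pi}J\xrightarrow{\iota}P'\triangleright A$ with $\pi$ an epimorphism and $\iota$ the inclusion $J=\on{Ker}(P'\triangleright A\to\on{Coker}f)\hookrightarrow P'\triangleright A$. Fix $Q\in\cp$. Applying the exact functor $Q\triangleright\blank$ to $0\to J\xrightarrow{\iota}P'\triangleright A\to\on{Coker}f\to 0$ produces a short exact sequence whose right-hand term $Q\triangleright\on{Coker}f$ is projective since $A$ is exact; hence it splits, so $Q\triangleright J$ is projective and $Q\triangleright\iota$ is a split monomorphism. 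Likewise $Q\triangleright\pi$ is an epimorphism onto the projective $Q\triangleright J$, hence a split epimorphism; choosing a section $s$ of it gives $Q\triangleright\iota=(Q\triangleright\iota)(Q\triangleright\pi)\,s=(Q\triangleright f)\circ s$. Now $Q\triangleright f\in\on{Rad}_{\pfa}(\cdot,\cdot)\subseteq\on{Rad}_{\projca}(\cdot,\cdot)$ by the definition of $\radca$, and since $\on{Rad}_{\projca}$ is a two-sided ideal, $Q\triangleright\iota=(Q\triangleright f)\circ s\in\on{Rad}_{\projca}$. Composing the split monomorphism $Q\triangleright\iota$ with a retraction then places $\on{id}_{Q\triangleright J}$ in $\on{Rad}_{\projca}(Q\triangleright J,Q\triangleright J)$, the Jacobson radical of $\End(Q\triangleright J)$, forcing $Q\triangleright J=0$. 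As this holds for every $Q\in\cp$ and $\mathcal{C}$ has a projective generator $\maP$ fitting into an epimorphism $\maP^{\oplus n}\twoheadrightarrow\unit$, right exactness of $\otimes$ yields an epimorphism $0=(\maP\otimes J)^{\oplus n}\twoheadrightarrow\unit\otimes J=J$; hence $J=0$ and so $f=0$. Therefore $\radca(P,P')=0$ for all $P,P'$, and $\iradc(A)=0$.

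The only step that is not formal bookkeeping is this last direction: the temptation is to try to show $\projca$ (or $\pfa$) is semisimple when $A$ is exact, which is false in general, so instead one uses exactness purely to split the short exact sequences obtained by acting with $Q\in\cp$. That splitting is precisely what converts the radical morphism $Q\triangleright f$ into a split monomorphism out of $Q\triangleright J$, which therefore vanishes. The remaining ingredients are standard: exactness of $\triangleright$, compatibility of the categorical Jacobson radical under the full embedding $\pfa\hookrightarrow\projca$, and the fact that tensoring with a projective generator of $\mathcal{C}$ detects nonzero objects.
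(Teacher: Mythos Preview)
Your proof is correct and follows essentially the same approach as the paper. The forward direction is identical; in the converse you spell out in more detail the argument that the paper compresses into one line (``$P\triangleright f$ is either zero, or not radical''), by explicitly factoring $f$ through its image, using exactness to make $Q\triangleright J$ projective, and then deriving $\on{id}_{Q\triangleright J}\in\on{Rad}_{\projca}$ from the radical hypothesis on $Q\triangleright f$.
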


\begin{proof}
 If $\iradc(A) = 0$, then also $\radca = 0$ and hence also ${\left(\radca\right)}^{\mathsf{c}} = 0$. By \autoref{laststep}, for $Q \in \cp$ and $X \in \modca$, the projective cover of $Q \triangleright X$ must then be zero, and so $Q \triangleright X$ must be projective, which establishes the exactness of $A$.

 Conversely, if $A$ is exact, then for a morphism $f\colon R\to R'$ of $\projca$ and $P \in \cp$, we find that $P \triangleright \on{Coker}(f)$ is projective, and hence $P\triangleright R' \twoheadrightarrow P \triangleright \on{Coker}(f)$ is split.
 Thus, $P \triangleright f$ is either zero, or not radical. Since the former implies $f = 0$, and the latter implies $f \not\in \radca$, we find $\radca = 0$.
\end{proof}

\section{Main result}\label{sec:mainresult}

We are now able to state our main result, answering~\cite[Conjecture~B.6]{EOf} affirmatively.

\begin{theorem}\label{thm:mainresult}
 Let $A$ be an algebra object in a finite tensor category $\mathcal{C}$. The following are equivalent:
\begin{enumerate}[label=(\roman*)]
 \item\label{Aexact} $A$ is exact.
 \item\label{RadA0} $\iradc(A) = 0$.
 \item\label{NoNilpotent} $A$ has no non-zero nilpotent ideals.
 \item\label{Semisimple} $A$ is a finite direct product of simple algebras.
\end{enumerate}
\end{theorem}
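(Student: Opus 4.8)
The plan is to establish the cycle $\ref{Aexact} \Rightarrow \ref{Semisimple} \Rightarrow \ref{NoNilpotent} \Rightarrow \ref{RadA0} \Rightarrow \ref{Aexact}$, with almost all of the content already in place. For $\ref{RadA0} \Rightarrow \ref{Aexact}$ I would simply quote \autoref{thm:semisimplecharacterization}, which in fact gives the equivalence $\ref{Aexact} \Leftrightarrow \ref{RadA0}$. For $\ref{NoNilpotent} \Rightarrow \ref{RadA0}$ I would use \autoref{prop:greatestnilpotent}: since $\iradc(A)$ is in particular a nilpotent ideal object in $A$ (it is the greatest one), if $A$ has no non-zero nilpotent ideal then $\iradc(A) = 0$; the converse is equally immediate from maximality, so $\ref{RadA0} \Leftrightarrow \ref{NoNilpotent}$ as well. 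For $\ref{Aexact} \Rightarrow \ref{Semisimple}$ I would invoke the previously known direction, established module-theoretically in \autoref{prop:splitalg}.

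This leaves $\ref{Semisimple} \Rightarrow \ref{NoNilpotent}$, which I will prove directly; it is the elementary step that, combined with the earlier sections, yields~\cite[Conjecture~B.6]{EOf}. Write $A = \prod_{i=1}^{n} A_{i}$ with each $A_{i}$ simple and let $\pi_{i}\colon A \to A_{i}$ be the (surjective, unital) projection algebra morphism. Given a nilpotent ideal $J \subseteq A$, say $J^{k} = 0$, with inclusion $\iota_{J}$, set $J_{i} \defeq \on{Im}(\pi_{i} \circ \iota_{J}) \subseteq A_{i}$, factoring $\pi_{i} \circ \iota_{J} = \iota_{J_{i}} \circ p_{i}$ with $p_{i}$ epic. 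Since $\pi_{i}$ is a surjective algebra morphism, $J_{i}$ is an ideal of $A_{i}$; and since $\pi_{i}^{\otimes k}$, hence $p_{i}^{\otimes k}$, is epic — using right-exactness of $\otimes$ in $\mathcal{C}$ — the relation $J^{k} = 0$ forces $J_{i}^{k} = 0$. But $A_{i}$ is unital and non-zero, so $\mu_{A_{i}}$ is split epic and $A_{i}^{m} = A_{i} \neq 0$ for all $m$; thus $A_{i}$ is not nilpotent, and simplicity of $A_{i}$ gives $J_{i} = 0$. Hence every component $\pi_{i} \circ \iota_{J}$ vanishes, so $\iota_{J} = 0$ by the universal property of the product, and therefore $J = 0$ since $\iota_{J}$ is monic.

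I do not expect a serious obstacle within this proof: all the weight rests on \autoref{thm:semisimplecharacterization} (which in turn builds on \autoref{RSlattice}, \autoref{prop:greatestnilpotent} and \autoref{laststep}) and on \autoref{prop:splitalg}, while the steps above are formal. The only point demanding a little care is the verification that $J_{i}$ is an ideal of $A_{i}$ and that nilpotency descends along the surjections $\pi_{i}$ — both consequences of $\pi_{i}$ being an algebra morphism together with right-exactness of $\otimes$ — and, once we pass to a strict model, that the componentwise algebra structure on $A = \prod_{i} A_{i}$ makes the projections $\pi_{i}$ behave as used.
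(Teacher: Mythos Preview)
Your proposal is correct and follows essentially the same route as the paper: you invoke \autoref{thm:semisimplecharacterization}, \autoref{prop:greatestnilpotent}, and \autoref{prop:splitalg} for exactly the same implications, and then supply the elementary step $\ref{Semisimple}\Rightarrow\ref{NoNilpotent}$. The only minor difference is that the paper argues this last step by noting that the product decomposition $A=\prod_i A_i$ is one of $A$-$A$-bimodules, so ideals and their products split componentwise, whereas you push $J$ forward along the projections $\pi_i$; both arguments are equivalent and equally straightforward.
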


\begin{proof}
   The equivalence of~\ref{Aexact} and~\ref{RadA0} is given in \autoref{thm:semisimplecharacterization}. The equivalence of~\ref{RadA0} and~\ref{NoNilpotent} follows immediately from \autoref{prop:greatestnilpotent}. That~\ref{Aexact} implies~\ref{Semisimple} is well known, however, for completeness, we include a self-contained proof below in \autoref{prop:splitalg}.
   To conclude the proof, we will show that~\ref{Semisimple} implies~\ref{NoNilpotent}.

   Let $A = A_{1} \times A_{2}\times \cdots \times A_{n}$ be a finite direct product of simple algebras. The above direct product decomposition is also a decomposition of $A$-$A$-bimodules, and so ideals of $A$ are of the form $I = I_{1} \times\cdots \times I_{n}$, where $I_{k}$ is an ideal in $A_{k}$. Further, it is easy to verify that $$(I_{1} \times\cdots \times I_{n})(J_{1} \times\cdots \times J_{n}) = I_{1}J_{1} \times \cdots \times I_{n}J_{n}.$$ Thus, $I$ is nilpotent if and only if $I_{k}$ is nilpotent, for $k=1,\ldots,n$. But, by the simplicity assumption, for all $k$, the only nilpotent ideal in $A_{k}$ is $0$. Thus the only nilpotent ideal in $A$ is $0 \times \cdots \times 0 = 0$.
\end{proof}

The rest of this section consists of a self-contained proof of the implication~\ref{Aexact} $\implies$~\ref{Semisimple} in \autoref{thm:mainresult}.

\begin{lemma}\label{idealsarethick}
 Let $A$ be an exact algebra object. Let $\csym{J}$ be an ideal in $\projca$, let $R,R' \in \projca$ be indecomposable. If $\csym{J}(R,R') \neq 0$, then $\on{id}_{R}, \on{id}_{R'} \in \csym{J}$.
\end{lemma}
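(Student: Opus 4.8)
The plan is to exploit exactness of $A$ through the characterisation proven above: $A$ exact means that $Q \triangleright X$ is projective for every $X \in \modca$ and $Q \in \cp$, and — more importantly for us — that $\radca = 0$, i.e.\ the $\cC$-module radical in $\pfa$ vanishes. By \autoref{radcarad} this does \emph{not} immediately say $\on{Rad}_{\pfa} = 0$; rather, what exactness buys us (see the proof of \autoref{thm:semisimplecharacterization}) is that for any morphism $f$ of $\projca$ and any $P \in \cp$, the map $P \triangleright f$ is either zero or not radical. I would first upgrade this to the statement we actually need: if $R, R' \in \projca$ are indecomposable and $f \in \homa(R, R')$ is non-zero, then $f$ is not in $\on{Rad}_{\projca}$. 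Indeed, taking $P = \unit$ in the dichotomy, $f$ non-zero forces $f \notin \on{Rad}_{\projca}$, since $\unit \triangleright f = f$.

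**Extracting idempotents.** With that in hand, let $\csym{J}$ be an ideal in $\projca$ and suppose $0 \neq g \in \csym{J}(R, R')$ with $R, R'$ indecomposable. Since $g \notin \on{Rad}_{\projca}$, and $\on{Rad}_{\projca}(R, R')$ is by definition the set of $h$ such that $\on{id}_{R'} - h' h$ is invertible for all $h' \in \homa(R', R)$ (the standard characterisation of the radical of an additive $\on{Hom}$-finite category, cf.\ the references cited before \autoref{def:cmoduleradical}), there exists $h' \in \homa(R', R)$ such that $\on{id}_{R} - h' g$ is \emph{not} invertible in the local ring $\on{End}_{\modca}(R)$ — wait, I need this the other way: $g \notin \on{Rad}$ means there is $h'$ with $h' g \notin \on{Rad}_{\projca}(R,R)$, hence $h' g$ is a non-nilpotent element of the local algebra $\on{End}_{\modca}(R)$ (local because $R$ is indecomposable and $\modca$ is $\on{Hom}$-finite), so $h' g$ is invertible. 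Then $u \defeq (h'g)^{-1} h' \in \homa(R', R)$ satisfies $u g = \on{id}_{R}$, so $g u \in \on{End}_{\modca}(R')$ is an idempotent; being non-zero in a local ring it must equal $\on{id}_{R'}$. Thus $g$ is a split mono with retraction $u$ and the induced idempotent $gu = \on{id}_{R'}$, giving $\on{id}_{R'} = g u \in \csym{J}$ (as $\csym{J}$ is a two-sided ideal containing $g$) and $\on{id}_{R} = u g \in \csym{J}$ likewise.

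**The main obstacle.** The crux is establishing that, under exactness, a non-zero morphism between indecomposable objects of $\projca$ cannot be radical — everything else is the routine lifting-idempotents argument in a $\on{Hom}$-finite additive category. This requires the dichotomy "$P \triangleright f$ is zero or non-radical," which is exactly what was proven inside \autoref{thm:semisimplecharacterization} via \autoref{laststep}: if $P \triangleright f$ were a non-zero radical morphism, then $P \triangleright \on{Coker}(f)$ would be projective, $P \triangleright R' \twoheadrightarrow P \triangleright \on{Coker}(f)$ would split, and $P \triangleright f$ would fail to be radical — contradiction. So I would either cite that argument or reproduce the one-line version with $P = \unit$. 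One should double-check that $\projca$ is indeed $\on{Hom}$-finite and Krull--Schmidt: this follows since $\modca$ is the category of $A$-modules in a finite tensor category, hence has finite-dimensional $\on{Hom}$-spaces and is abelian with objects of finite length, so idempotents split and indecomposables have local endomorphism rings. Granting these standard facts, the lemma follows.

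\begin{proof}
 Since $\modca$ is $\on{Hom}$-finite and every object has finite length, $\projca$ is a Krull--Schmidt category: idempotents split and the endomorphism ring of an indecomposable object is local. Let $\csym{J}$ be an ideal in $\projca$ and suppose $0 \neq g \in \csym{J}(R,R')$ with $R,R' \in \projca$ indecomposable.

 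We first claim $g \notin \on{Rad}_{\projca}(R,R')$. Indeed, by the proof of \autoref{thm:semisimplecharacterization}, exactness of $A$ implies $\radca = 0$; but more directly, for any morphism $f$ of $\projca$, the object $\on{Coker}(f)$ lies in $\modca$, so $\on{Coker}(f) = \unit \triangleright \on{Coker}(f)$ is projective, whence $R' \twoheadrightarrow \on{Coker}(f)$ splits and $f$ is either zero or not radical. Applying this to $f = g \neq 0$ gives $g \notin \on{Rad}_{\projca}(R,R')$.

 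By the standard characterisation of the radical of a $\on{Hom}$-finite additive category, $g \notin \on{Rad}_{\projca}(R,R')$ means there is $h \in \homa(R',R)$ with $h \circ g \notin \on{Rad}_{\projca}(R,R)$. Since $\on{End}_{\modca}(R)$ is local with maximal ideal $\on{Rad}_{\projca}(R,R)$, the element $h \circ g$ is invertible; set $u \defeq (h\circ g)^{-1} \circ h \in \homa(R',R)$, so that $u \circ g = \on{id}_{R}$. Then $g \circ u \in \on{End}_{\modca}(R')$ is a non-zero idempotent, and as $\on{End}_{\modca}(R')$ is local this forces $g \circ u = \on{id}_{R'}$.

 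Finally, since $\csym{J}$ is a two-sided ideal containing $g$, we get $\on{id}_{R} = u \circ g \in \csym{J}$ and $\on{id}_{R'} = g \circ u \in \csym{J}$, as desired.
\end{proof}
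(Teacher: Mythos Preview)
Your proof has a genuine gap at the central step. You assert that ``$\on{Coker}(f) = \unit \triangleright \on{Coker}(f)$ is projective'' by exactness of $A$, but exactness only guarantees that $P \triangleright X$ is projective for $P \in \cp$, and $\unit \notin \cp$ unless $\cC$ is semisimple. The dichotomy you quote from the proof of \autoref{thm:semisimplecharacterization} (``$P \triangleright f$ is zero or non-radical'') is stated and proved there only for $P \in \cp$; instantiating $P = \unit$ is precisely what is not permitted. Concretely, take $\cC = \on{Rep}_{\Bbbk}(\mathbb{Z}/p)$ with $\on{char}\,\Bbbk = p$ and $A = \unit$: then $A$ is exact, $\projca = \cp$ has a single indecomposable $P = \Bbbk[\mathbb{Z}/p]$, and $\on{End}(P) \cong \Bbbk[y]/(y^{p})$ carries the nonzero radical endomorphism ``multiply by $y$''. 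So your claim ``$g \neq 0 \Rightarrow g \notin \on{Rad}_{\projca}$'' is false, and the subsequent lifting-of-idempotents argument never gets off the ground. In effect your argument would force $\projca$ (hence $\modca$) to be semisimple whenever $A$ is exact, which is far too strong.

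The paper's proof avoids exactly this trap by genuinely using the $\cC$-action rather than trying to reduce to $P = \unit$: it post-composes $f$ with $\coev_{Q} \triangleright R'$ so that the target becomes $(Q \otimes \ld{Q}) \triangleright R'$ with $Q \otimes \ld{Q} \in \cp$, and only then invokes exactness to obtain the required splitting. The passage through a projective object of $\cC$ is the missing idea in your attempt; without it there is no way to convert ``$A$ exact'' into a statement about the single morphism $f$ itself.
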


\begin{proof}
   Assume $\csym{J}(R,R') \neq 0$, and let $f \in \csym{J}(R,R')$ be non-zero.
   Let $Q\in \cp$ be such that there is a split monomorphism $\iota\colon R \hookrightarrow Q \triangleright A$. Let $\pi$ be a retraction for $\iota$. Consider the following commutative diagram:
\[\begin{tikzcd}[column sep=large]
	R \\
	{Q\triangleright A} & {R'} \\
	{(Q \otimes \ld{Q} \otimes Q) \triangleright A} & {Q \otimes \ld{Q} \triangleright R'}
	\arrow["\iota", from=1-1, to=2-1]
	\arrow["f", from=1-1, to=2-2]
	\arrow["{f \circ \pi}", from=2-1, to=2-2]
	\arrow["{(\coev_{{Q}} \otimes Q) \triangleright A}"', from=2-1, to=3-1]
	\arrow["{\coev_{{Q}} \triangleright R'}", from=2-2, to=3-2]
	\arrow["{(Q\otimes \ld{Q}) \triangleright (f\circ \pi)}"', from=3-1, to=3-2]
\end{tikzcd}\]
 Since $\coev_{{Q}} \otimes Q$ is a split mono,
 and the bottom arrow is also split by $Q$ being projective and $A$ being exact,
 the outer path of the above diagram defines a split morphism.
 On the other hand, since $\coev_{{Q}} \triangleright R'$ is monic, and $f$ is non-zero, the composite $(\coev_{{Q}}\triangleright R') \circ f$ is a non-zero split morphism with an indecomposable domain, and thus a split monomorphism. Since $f \in \csym{J}$, we obtain $(\coev_{{Q}}\triangleright R') \circ f \in \csym{J}$ and thus $\on{id}_{R} = \rho \circ (\coev_{{Q}}\triangleright R') \circ f$, for a retraction $\rho$, also lies in $\csym{J}$. Similarly one shows that $\on{id}_{R'}$ lies in $\csym{J}$.
\end{proof}

\begin{corollary}\label{cor:RR'}
  Let $A$ be an exact algebra object. Let $\csym{J}$ be an ideal in $\projca$ and $f \in \csym{J}(R,R')$. For any indecomposable $R,R' \in \projca$, we have
  \[
  \csym{J}(R,R') =
   \begin{cases}
  \homa(R,R')\text{ if }\on{id}_{R}\text{ or }\on{id}_{R'} \text{ lies in }\csym{J} \\
  0 \text{ if }\on{id}_{R}\text{ or }\on{id}_{R'} \text{ doesn't lie in }\csym{J}.
   \end{cases}
  \]
 In particular, $\homa(R,R') = 0$ if exactly one of $\on{id}_{R'}, \on{id}_{R}$ lies in $\csym{J}$.
\end{corollary}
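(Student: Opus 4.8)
The plan is to derive everything from \autoref{idealsarethick} together with the elementary fact that a two-sided ideal absorbs composition on either side. I would first dispose of the case in which at least one of $\on{id}_{R}$, $\on{id}_{R'}$ lies in $\csym{J}$: if $\on{id}_{R} \in \csym{J}$, then every $g \in \homa(R,R')$ can be written $g = g \circ \on{id}_{R}$, which lies in $\csym{J}(R,R')$; symmetrically, $g = \on{id}_{R'} \circ g$ handles the case $\on{id}_{R'} \in \csym{J}$. Either way $\homa(R,R') \subseteq \csym{J}(R,R')$, so the two coincide. This step is purely formal and uses neither exactness of $A$ nor indecomposability of $R, R'$; the hypothesis ``$f \in \csym{J}(R,R')$'' is likewise not needed and serves only to fix notation.

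For the complementary case — neither $\on{id}_{R}$ nor $\on{id}_{R'}$ in $\csym{J}$ — I would simply invoke the contrapositive of \autoref{idealsarethick}: a non-zero element of $\csym{J}(R,R')$ would force $\on{id}_{R} \in \csym{J}$, contrary to hypothesis, so $\csym{J}(R,R') = 0$. Combining the two cases gives the displayed description of $\csym{J}(R,R')$. For the ``in particular'' clause, suppose exactly one identity, say $\on{id}_{R}$, lies in $\csym{J}$ (the other sub-case is symmetric); if $\homa(R,R')$ were non-zero, a non-zero $g$ therein would give $g = g\circ\on{id}_{R} \in \csym{J}(R,R')$, whence \autoref{idealsarethick} would force $\on{id}_{R'} \in \csym{J}$, a contradiction — so $\homa(R,R') = 0$.

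I do not expect a genuine obstacle: the substantive input, \autoref{idealsarethick}, is already in hand, and what remains is bookkeeping — aligning the case split in the statement with the dichotomy provided by \autoref{idealsarethick} and keeping track of which identity is assumed to lie in $\csym{J}$ in each branch. The only mildly delicate point is that the ``in particular'' is not a formal consequence of the two displayed cases alone, since the first case (one identity in $\csym{J}$) only yields $\csym{J}(R,R') = \homa(R,R')$; one genuinely has to feed the resulting non-vanishing back into \autoref{idealsarethick} in order to contradict the assumption that the other identity lies outside $\csym{J}$.
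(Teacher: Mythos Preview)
Your proposal is correct and follows essentially the same approach as the paper: the first case is the trivial ideal-absorption argument, and the second case (together with the ``in particular'' clause) is the contrapositive of \autoref{idealsarethick}. The only cosmetic difference is that the paper reads the two displayed cases as overlapping (``at least one in'' versus ``at least one not in''), so that the ``in particular'' clause drops out immediately from the overlap, whereas you partition into disjoint cases and then re-invoke \autoref{idealsarethick} for the final clause; both routes are equally valid.
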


\begin{proof}
 The first case is trivial and holds for any ideal. The latter follows immediately from \autoref{idealsarethick}.
\end{proof}

We now give a module-theoretic proof of a well-known result about exact algebra objects:

\begin{proposition}\label{prop:splitalg}
  An exact algebra object is a finite direct product of simple exact algebras.
\end{proposition}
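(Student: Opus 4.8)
The plan is to build the product decomposition from a block decomposition of the abelian category $\projca$ and then transport it to $A$ through the lattice isomorphism of \autoref{RSlattice}. Since $\modca$ is a finite abelian category, $\on{Indec}(\projca)$ is a finite set; I partition it into the equivalence classes $\mathcal{B}_1,\dots,\mathcal{B}_n$ of the equivalence relation generated by declaring $R$ and $R'$ related whenever $\homa(R,R')\neq 0$, or $\homa(R',R)\neq 0$, or one of $R,R'$ is a direct summand of $Q\triangleright(\text{the other})$ for some $Q\in\cp$. For each $i$ let $\projca^{(i)}$ be the full subcategory of $\projca$ on the objects all of whose indecomposable summands lie in $\mathcal{B}_i$, so that every object of $\projca$ splits uniquely as a direct sum across the $\projca^{(i)}$ and $\homa$ vanishes between objects of distinct $\projca^{(i)}$. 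Let $\csym{E}_i$ denote the ideal of $\projca$ consisting of the morphisms that factor through an object of $\projca^{(i)}$.

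First I would check that the $\csym{E}_i$ are $\cp$-stable ideals of $\projca$ (this uses that the defining relation is closed under $Q\triangleright(-)$, so $\projca^{(i)}$ is stable under $Q\triangleright(-)$), that they are idempotent, pairwise orthogonal in the strong sense $\csym{E}_i\csym{E}_j=0$ for $i\neq j$ (because $\homa$ vanishes between the blocks), and that $\sum_i\csym{E}_i$ is the unit ideal while $\csym{E}_i\cap\sum_{j\neq i}\csym{E}_j=0$; all of this is a direct computation once one decomposes $\on{id}_R$, for $R\in\projca$, according to the block decomposition $R=\bigoplus_i R^{(i)}$. Restricting along \autoref{lem:CauchyCorr} to $\pfa$ and applying \autoref{RSlattice} (together with the multiplicativity of \autoref{Prop:Sprod}, which matches products of ideals on the two sides), these relations turn into a complete family of orthogonal central idempotents of $A$, equivalently a product decomposition $A\cong A_1\times\cdots\times A_n$ in which $A_i$ corresponds to the ideal $\csym{E}_i$; each $A_i$ is moreover exact because $\modw{A_i}\simeq\modca^{(i)}$ is a $\cp$-stable (hence $\cC$-stable) block summand of the exact module category $\modca$, so $Q\triangleright M$ remains projective for $M\in\modw{A_i}$.

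It then remains to prove that each $A_i$ is simple, and here is where exactness enters through \autoref{idealsarethick}--\autoref{cor:RR'}. Let $\csym{J}$ be a nonzero $\cp$-stable ideal of $\projca^{(i)}$ (equivalently, by \autoref{RSlattice}, a nonzero ideal object of $A_i$). By \autoref{idealsarethick} it contains $\on{id}_R$ for some indecomposable $R\in\mathcal{B}_i$, so the subset $S=\{R'\in\mathcal{B}_i : \on{id}_{R'}\in\csym{J}\}$ is nonempty. It is closed under passing to summands of $Q\triangleright(-)$, since $Q\triangleright\on{id}_R=\on{id}_{Q\triangleright R}\in\csym{J}$ by $\cp$-stability and summands of an object whose identity lies in $\csym{J}$ have their identities in $\csym{J}$; and it is closed under the $\homa$-relation by \autoref{cor:RR'}. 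Hence $S$ is a union of the equivalence classes defining the partition, so being a nonempty subset of the single class $\mathcal{B}_i$ it equals $\mathcal{B}_i$; thus $\on{id}_{R'}\in\csym{J}$ for every indecomposable $R'$ of $\projca^{(i)}$ and $\csym{J}$ is the unit ideal. Therefore $A_i$ has no nontrivial ideal objects, i.e.\ it is simple.

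The main obstacle I anticipate is the middle step: verifying that the purely categorical block decomposition of $\projca$ is ``central'' in exactly the sense required to yield an honest product of algebra objects under \autoref{RSlattice} --- in particular that the strong orthogonality $\csym{E}_i\csym{E}_j=0$ and the completeness $\sum_i\csym{E}_i=\top$ transport to $I_iI_j=0$ and $\sum_i I_i=A$ --- and carefully identifying the correspondence between a complete family of orthogonal central idempotents of $A$ and a finite product decomposition $A\cong\prod_i A_i$; by contrast the first and last steps are essentially bookkeeping built on \autoref{idealsarethick} and \autoref{cor:RR'}.
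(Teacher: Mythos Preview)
Your approach is correct and genuinely different from the paper's. The paper proceeds \emph{inductively}: given any ideal $J\lhd A$ it uses \autoref{idealsarethick} and \autoref{cor:RR'} to partition $\on{Indec}(\projca)$ into $\mathcal{P}=\{R:\on{id}_R\in\csym{J}\}$ and its complement, then passes to a composition-factor argument on $\on{Irr}(\modca)$ and invokes \cite[Proposition~7.6.7]{EGNO} to split $\modca$ as a $\cC$-module category, hence $A\cong A_1\times A_2$; iterating terminates by finiteness of $\on{Irr}(\modca)$. You instead fix the full block decomposition of $\projca$ up front and transport it to $A$ in one shot via \autoref{RSlattice} and \autoref{Prop:Sprod}, then prove simplicity of each factor by showing the set $S$ is saturated for your generating relation. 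Your route trades the external reference \cite[Proposition~7.6.7]{EGNO} for heavier use of the paper's own lattice machinery; the paper's route has the pleasant side effect of showing that \emph{every} ideal of an exact algebra already induces a splitting of the module category.

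Two small points to tighten in your write-up. First, in the simplicity argument you only explicitly check closure of $S$ under ``$R'$ is a summand of $Q\triangleright R$'', but your generating relation is symmetric, so you also need: if $R\in S$ and $R$ is a summand of $Q\triangleright R'$, then $R'\in S$. This follows because the adjoint of a split mono $R\hookrightarrow Q\triangleright R'$ gives a nonzero map $\ld{Q}\triangleright R\to R'$, and $\on{id}_{\ld{Q}\triangleright R}\in\csym{J}$ by $\cp$-stability, so some indecomposable summand maps nonzero to $R'$ inside $\csym{J}$, whence $\on{id}_{R'}\in\csym{J}$ by \autoref{idealsarethick}. Second, your phrase ``complete family of orthogonal central idempotents of $A$'' is informal in a tensor category without elements; what you actually obtain from \autoref{RSlattice} and \autoref{Prop:Sprod} is a bimodule decomposition $A=\bigoplus_i I_i$ with $I_iI_j=0$ for $i\neq j$, and it is worth spelling out (it is routine) that this forces $\mu$ to restrict to each $I_i\otimes I_i\to I_i$ and $\eta$ to split as $\sum\eta_i$, yielding the product $A\cong\prod_i I_i$ of algebra objects.
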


\begin{proof}
 Let $A$ be an exact algebra object. Let $J$ be an ideal in $A$ and $\csym{J}$ be the corresponding ideal in $\projca$. Let $\mathcal{P} = \setj{R \mid R \in \projca \text{ is indecomposable and } \on{id}_{R} \in \csym{J}}$, and similarly let $\overline{\mathcal{P}} = \setj{R \mid R \in \projca \text{ is indecomposable and } \on{id}_{R} \not\in \csym{J}}$. Then $\on{add}\mathcal{P}$ and $\on{add}\overline{\mathcal{P}}$ both define $\mathcal{C}$-module subcategories of $\projca$, such that $\homa(R,R') = 0$ if $R \in \mathcal{P}, R' \in \overline{\mathcal{P}}$ or vice-versa, by \autoref{cor:RR'}.
 Let $R,R'$ be indecomposable objects in $\projca$, and set $L \defeq \topp(R), L' \defeq \topp(R')$. We have
 \begin{align*}
   \dim\homa(Q \triangleright R, R')
    \geq \dim\homa(Q \triangleright R, L')
     = \dim\homa(R, \rd{Q} \triangleright L')
    = [\rd{Q} \triangleright L' : L].
 \end{align*}
 Thus, for $R \in \mathcal{P}$ and $R' \in \overline{\mathcal{P}}$,
 there is no $Q \in \cp$ such that $L'$ is a subquotient of $Q \triangleright L$ and vice-versa. This implies the same statement for $Q \in \mathcal{C}$, and thus partitions $\on{Irr}(\modca)$ into two equivalence classes with respect to the relation of~\cite[Lemma~7.6.6]{EGNO}, which yields a splitting of $\mathcal{C}$-module categories $\modca = {(\modca)}_{1} \oplus {(\modca)}_{2}$ by~\cite[Proposition~7.6.7]{EGNO}, and in turn also a splitting $\modca = \modw{A_{1}}\oplus \modw{A_{2}}$.
 This yields a decomposition $A \cong A_{1}\times A_{2}$.
 Since $\on{Irr}(\modca)$ is a finite set, the result follows.
\end{proof}

\begin{remark}
   In the setting of the proof of \autoref{prop:splitalg} we may in fact describe the splitting $A = A_{1} \times A_{2}$ explicitly. Let $\mathfrak{I}$ be the ideal in $\pfa$ determined by $\on{Hom}_{\on{add}\overline{\mathcal{P}}}(-,-)$, let $\mathtt{I}$ be the corresponding mixed subfunctor of ${\mathcal{C}(-,A)}_{\cp}$, and $I$ the corresponding ideal in $A$. Similarly, let $\mathtt{J}$ be the mixed subfunctor corresponding to $J$. There is an isomorphism $\mathtt{I} \simeq {\mathcal{C}(-,A)}_{\cp}/\mathtt{J}$, which yields an isomorphism $I \simeq A/J$ of $A$-$A$-bimodules, and thus we may set $A_{1} = A/I$ and $A_{2} = A/J$ as the algebra splitting.
\end{remark}

If the base field $\bk$ is perfect, then the tensor product of two simple finite-dimensional $\bk$-algebras is always semisimple, in other words, the tensor product of two exact algebras in $\Vecc$ remains exact. This does not extend to general finite symmetric tensor categories:

\begin{example}
    Let $\bk$ be perfect and $G$ an infinitesimal group scheme over $\bk$. Then the coordinate ring $\bk[G]$ is a simple (commutative) algebra in $\cC=\mathrm{Rep} G$ for the left regular $G$-action, it is in fact $\cF(\mathrm{Rep} G)$ in the notation of \autoref{SecSymm}. However, the tensor product $\bk[G]\otimes \bk[G]$ is not an exact algebra. For a concrete example, consider the affine group scheme $\mu_2$ of second roots of unity, which is infinitesimal if and only if $\mathrm{char}(\bk)=2$. Now $\cC$ is the category of $\mathbb{Z}/2$-graded vector spaces and we consider the simple algebra $A=\bk[x]/x^2$ with $x$ in odd degree. Then $A\otimes A$ is a direct product of simple algebras ($A\times A$) if and only if $\mathrm{char}(\bk)\not=2$.
\end{example}

\section{The radical of a module object and the maximal semisimple quotient}\label{sec:radmod2}

The following definition is analogous to \autoref{def:productideal}
\begin{definition}\label{def:MI}
   Let $\mathcal{D}$ be an abelian monoidal category whose tensor product is right exact in both variables. Let $(B,\mu_{B},\nabla_{B})$ be an algebra object in $\mathcal{D}$, let $(M,\nabla_M) \in \on{mod}_{\mathcal{D}}(B)$, and let $I$ be an ideal in $B$, with inclusion morphism $\iota_{I}$ into $B$. We define the module $MI$ by
   \[
   MI = \on{Im}(M \otimes I \xrightarrow{M \otimes \iota_{J}} M  \otimes B \xrightarrow{\nabla_{M}} M).
   \]
\end{definition}

Analogously to \autoref{lem:IJdescription}, for $M \in \modca$ and $I$ an ideal in $A$, we obtain a description of ${\mathcal{C}(-,MI)}_{|\cp}$ by Day convolution in $[\cp^{\on{op}}, \Vecc]$:

\begin{lemma}\label{lem:MIdescription}
     Let $M \in \modca$ and let $I$ be an ideal in $A$. Then ${\mathcal{C}(\blank,MI)}_{|\cp}$ is given by
 \[
 \mathcal{C}(P,MI) = \setj{\nabla_{M} \circ (g \otimes h) \circ \varphi \mid g \in \mathcal{C}(Q,M), h \in \mathcal{C}(Q',I) \text{ and } \varphi \in \mathcal{C}(P,Q\otimes Q')}.
 \]
\end{lemma}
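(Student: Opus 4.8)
The plan is to imitate the proof of \autoref{lem:IJdescription} line by line, with the product of two ideals replaced by the action of a module on an ideal. The single external ingredient is the monoidal equivalence of~\cite[Theorem~4.27]{St2}, which states that the composite
\[
\mathcal{C} \xrightarrow{\yo} \presh \xrightarrow{{(\blank)}_{|\cp}} \preshcp
\]
is a monoidal equivalence, where $\preshcp$ carries Day convolution (defined since $\cp$ is closed under $\otimes$); write $\Phi$ for this functor. Being an equivalence of abelian categories, $\Phi$ is exact, hence preserves images; being monoidal, it sends algebra objects, module objects and ideals to algebra objects, module objects and ideals, and it is compatible with the image construction used to define products in \autoref{def:MI} and \autoref{def:productideal}.

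Consequently $\Phi(M)={\mathcal{C}(\blank,M)}_{|\cp}$ is a right $\Phi(A)={\mathcal{C}(\blank,A)}_{|\cp}$-module, and $\Phi(I)={\mathcal{C}(\blank,I)}_{|\cp}$ is an ideal of $\Phi(A)$, in particular a right $\Phi(A)$-submodule. Instantiating \autoref{def:MI} at $\mathcal{D}=\preshcp$ produces the module
\[
\Phi(M)\Phi(I)=\on{Im}\big(\Phi(M)\oast\Phi(I)\to\Phi(M)\oast\Phi(A)\to\Phi(M)\big),
\]
and since $\Phi$ preserves images and intertwines $\otimes$ with $\oast$, it carries the subobject $MI\subseteq M$ defined in \autoref{def:MI} (now instantiated at $\mathcal{D}=\mathcal{C}$, $B=A$) to $\Phi(M)\Phi(I)$. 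Thus ${\mathcal{C}(\blank,MI)}_{|\cp}\cong\Phi(M)\Phi(I)$ as subpresheaves of ${\mathcal{C}(\blank,M)}_{|\cp}$.

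It then remains to evaluate $\Phi(M)\Phi(I)$ at a projective $P$. Exactly as in the computation following \autoref{def:productideal}, which rests on the explicit description of the Day-convolution multiplication in \autoref{DayMultiply}, the image of the relevant coend map at $P$ is the span of the morphisms $\nabla_{M}\circ(g\otimes h)\circ\varphi$ with $Q,Q'\in\cp$, $g\in\mathcal{C}(Q,M)$, $h\in\mathcal{C}(Q',I)$ and $\varphi\in\mathcal{C}(P,Q\otimes Q')$. This span is already closed under addition: a finite sum of such morphisms has the same shape after replacing $Q,Q'$ by the biproducts $\bigoplus_i Q_i$ and $\bigoplus_i Q'_i$ and assembling the data into single morphisms. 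Hence the span coincides with the set in the statement, giving the asserted description of $\mathcal{C}(P,MI)$.

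The only point needing a little care — and the closest thing to an obstacle — is the compatibility of $\Phi$ with the inclusion $M\otimes I\hookrightarrow M\otimes A$ used in \autoref{def:MI}: a priori $MI$ is merely a subobject of the underlying object of $M$ in $\mathcal{C}$, but since $\Phi$ is exact and faithful this is harmless, and the argument is otherwise entirely parallel to that of \autoref{lem:IJdescription}.
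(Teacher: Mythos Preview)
Your proposal is correct and follows precisely the approach the paper intends: the paper does not give a separate proof of this lemma but simply states that it is obtained ``analogously to \autoref{lem:IJdescription}'', and your argument is a faithful expansion of that analogy, invoking the same key input \cite[Theorem~4.27]{St2} and the same Day-convolution computation. Your additional remark that the displayed set is already closed under sums (via biproducts in $\cp$) is a nice clarification that the paper leaves implicit.
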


The proof technique for \autoref{prop:DayRadicalModule} is analogous to that for \autoref{Prop:Sprod}.
\begin{proposition}\label{prop:DayRadicalModule}
    We have
    \begin{align*}
      (M\iradc(A))(P)
      &= \left\{\nabla \circ (g \otimes ((q \otimes A)\circ h^{1})) \circ \varphi
      \ \Bigg|\ %
      \begin{aligned}
        &g \in \mathcal{C}(Q,M),\, q \colon Q''\twoheadrightarrow \mathbb{1}, \\
        &h \in \radca(Q',Q''),\, \varphi \in \mathcal{C}(P,Q\otimes Q')
      \end{aligned}
      \right\}\\
      &= \{\,{(g \circ h)}^{1} \mid g \in \homa(Q \otimes A, M), h \in \radca(P,Q)\,\}.
    \end{align*}
\end{proposition}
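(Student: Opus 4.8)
The plan is to establish the two displayed descriptions of $(M\iradc(A))(P)$ and show they coincide, mirroring the structure of the proof of \autoref{Prop:Sprod}. First I would apply \autoref{lem:MIdescription} together with the definition of $\iradc(A)$ as the ideal in $A$ representing $\mathbb{S}(\radca)$. Expanding this, a general element of $\mathcal{C}(P,MI)$ with $I = \iradc(A)$ has the shape $\nabla_{M} \circ (g \otimes h') \circ \varphi$, where $g \in \mathcal{C}(Q,M)$, $\varphi \in \mathcal{C}(P,Q\otimes Q')$, and $h' \in \mathcal{C}(Q',I)$. By the description of $\mathbb{S}(\radca)$ in \autoref{def:Smap} (via \autoref{SDefined}), and using that $\mathbb{S}(\radca)$ is the restriction to $\cp$ of $\mathcal{C}(\blank,I)$, the morphism $h'$ can be written as $(q\otimes A)\circ h^{1}$ for $h \in \radca(Q',Q'')$ and some epimorphism $q\colon Q'' \twoheadrightarrow \mathbb{1}$. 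Substituting this yields exactly the first displayed description.

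For the second equality, I would pass between the ``two-step'' form $\nabla \circ (g \otimes ((q\otimes A)\circ h^{1})) \circ \varphi$ and the ``composite'' form ${(g\circ h)}^{1}$ using the $\pfa$-structure. The key manipulation is absorbing $\varphi$, the copairing with $\eta$, and $q$ into a single right $A$-module morphism, so that $g$ in the second description is allowed to range over all of $\homa(Q\triangleright A, M)$ rather than just maps induced from $\mathcal{C}(Q,M)$. The ``$\supseteq$'' inclusion is the routine direction: given $g \in \homa(Q\triangleright A,M)$ and $h \in \radca(P,Q)$, one checks that ${(g\circ h)}^{1} = g \circ h \circ (P\otimes\eta)$ can be rewritten, using the adjunction isomorphisms \eqref{indres} and the fact that $\radca$ is a $\cp$-stable ideal closed under pre- and post-composition in $\pfa$, into the first form; here $h^{1} = h\circ(Q'\otimes\eta)$ and one takes $q = \id_{\mathbb{1}}$ after possibly enlarging $Q''$. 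The ``$\subseteq$'' inclusion requires, given data $g, q, h, \varphi$ for the first form, producing $\tilde g \in \homa(\tilde Q \triangleright A, M)$ and $\tilde h \in \radca(P,\tilde Q)$ with ${(\tilde g\circ \tilde h)}^{1}$ equal to the given element; this is done by the same string-diagrammatic bookkeeping as in \autoref{Prop:Sprod}, folding $\varphi$ and the unit maps into $\tilde h$ (using $\cp$-stability of $\radca$ to keep $\tilde h$ in the radical) and $q$ into $\tilde g$, and using that $g$ is a right $A$-module map to slide the multiplication past.

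I expect the main obstacle to be the careful verification that the various reassociations genuinely stay inside $\radca$ — that is, that the $\cp$-stability and ideal axioms of \autoref{def:cpstableideal} are invoked at the right places when we move $\varphi$ (a morphism of $\cC$, not a priori of $\pfa$) through the picture. As in the proof of \autoref{Prop:Sprod}, one realises $\varphi$ and the coevaluation-type maps as morphisms of $\pfa$ only after composing with suitable units, and the fact that $\radca \subseteq \on{Rad}_{\pfa}$ (\autoref{radcarad}) plays no role here — only the ideal and $\cp$-stability structure is used. Everything else is a diagrammatic computation essentially identical in spirit to the ones already carried out for $\mathbb{S}$ and $\mathbb{R}$, so I would present it compactly, perhaps with a single string diagram for each inclusion rather than a full symbolic derivation.
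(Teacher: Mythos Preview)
Your proposal is correct and follows essentially the same approach as the paper: the first equality is obtained exactly as you say by combining \autoref{lem:MIdescription} with \autoref{def:Smap}, and the second equality is verified by two string-diagrammatic inclusions analogous to those in \autoref{Prop:Sprod}. One small slip: in the ``$\supseteq$'' direction you cannot take $q=\id_{\mathbb{1}}$, since $\mathbb{1}$ is generally not in $\cp$; instead one introduces a coevaluation/evaluation pair (as in the paper's diagram) to manufacture a genuine $Q''\in\cp$ together with an epimorphism $q\colon Q''\twoheadrightarrow\mathbb{1}$, and $\cp$-stability of $\radca$ then keeps the resulting $\tilde h$ in the radical.
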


\begin{proof}
    The first equality follows immediately by combining \autoref{def:Smap} with \autoref{lem:MIdescription}.
    The "$\subseteq$" direction of the second equality follows by the left diagram below,
    and the "$\supseteq$" direction is a consequence of the right one.
    \[\begin{tikzpicture}[tikzfig]
	\begin{pgfonlayer}{nodelayer}
		\node [style=none] (0) at (-2, -2.75) {};
		\node [style=none] (1) at (-2, 0.5) {};
		\node [style=box] (2) at (-1.75, 0.75) {\scriptsize$\hspace{0.3em}h\hspace{0.3em}$};
		\node [style=none] (3) at (-1.5, 0.5) {};
		\node [style=none] (4) at (-2, 1) {};
		\node [style=none] (5) at (-1.5, 1) {};
		\node [style=none] (6) at (-2.5, 1.75) {};
		\node [style=none] (7) at (-3, 1) {};
		\node [style=none] (8) at (-3, -1.25) {};
		\node [style=blackdot] (10) at (-1.5, -0.25) {};
		\node [style=none] (11) at (-4, -2) {};
		\node [style=none] (12) at (-5, -1.25) {};
		\node [style=box] (13) at (-4.75, 0.75) {\scriptsize$\hspace{0.3em}g\hspace{0.3em}$};
		\node [style=none] (14) at (-5, 0.5) {};
		\node [style=none] (15) at (-4.5, 0.5) {};
		\node [style=blackdot] (16) at (-4.5, -0.25) {};
		\node [style=none] (17) at (-4.75, 2.75) {};
		\node [style=none] (18) at (-4.75, 3.25) {};
		\node [style=none] (19) at (-1.5, -1.5) {};
		\node [style=none] (20) at (-5.5, -1.5) {};
		\node [style=none] (21) at (-5.5, -2.25) {};
		\node [style=none] (22) at (-1.5, -2.25) {};
		\node [style=none] (23) at (-4, 1.5) {};
		\node [style=none] (24) at (-5.5, 1.5) {};
		\node [style=none] (25) at (-5.5, -0.5) {};
		\node [style=none] (26) at (-4, -0.5) {};
		\node [style=none] (27) at (-0.75, 2) {};
		\node [style=none] (28) at (-3.25, 2) {};
		\node [style=none] (29) at (-3.25, -0.5) {};
		\node [style=none] (30) at (-0.75, -0.5) {};
		\node [style=none] (31) at (-6.25, 0.25) {\scriptsize$=$};
		\node [style=none] (32) at (-8.5, -2.75) {};
		\node [style=blackdot] (33) at (-7, -1.5) {};
		\node [style=box] (34) at (-7.75, -0.5) {\scriptsize$\hspace{1.1em}h\hspace{1.1em}$};
		\node [style=none] (35) at (-8.5, -0.75) {};
		\node [style=none] (36) at (-7, -0.75) {};
		\node [style=none] (37) at (-8.5, -0.25) {};
		\node [style=none] (38) at (-7, -0.25) {};
		\node [style=none] (39) at (-8.5, 1.5) {};
		\node [style=none] (40) at (-8, 1.5) {};
		\node [style=blackdot] (41) at (-8, 0.75) {};
		\node [style=none] (42) at (-8.25, 2.75) {};
		\node [style=none] (43) at (-8.25, 3.25) {};
		\node [style=box] (44) at (-8.25, 1.75) {\scriptsize$\hspace{0.3em}g\hspace{0.3em}$};
		\node [style=none] (45) at (-9.5, 0.25) {\scriptsize$=$};
		\node [style=none] (46) at (-11, -2.75) {};
		\node [style=blackdot] (47) at (-10.5, -1.5) {};
		\node [style=box] (48) at (-10.75, -0.5) {\scriptsize$\hspace{0.3em}h\hspace{0.3em}$};
		\node [style=none] (49) at (-11, -0.75) {};
		\node [style=none] (50) at (-10.5, -0.75) {};
		\node [style=none] (51) at (-11, -0.25) {};
		\node [style=none] (52) at (-10.5, -0.25) {};
		\node [style=none] (53) at (-11, 1.5) {};
		\node [style=none] (54) at (-10.5, 1.5) {};
		\node [style=none] (56) at (-10.75, 3) {};
		\node [style=none] (57) at (-10.75, 3.25) {};
		\node [style=box] (58) at (-10.75, 1.75) {\scriptsize$\hspace{0.3em}g\hspace{0.3em}$};
		\node [style=none] (59) at (-23.25, -3.25) {};
		\node [style=box] (60) at (-23.25, -2.25) {\scriptsize$\hspace{0.5em}\varphi\hspace{0.5em}$};
		\node [style=box] (61) at (-24.5, 0) {\scriptsize$\hspace{0.3em}g\hspace{0.3em}$};
		\node [style=box] (62) at (-21.75, 0) {\scriptsize$\hspace{0.5em}h\hspace{0.5em}$};
		\node [style=none] (63) at (-23.5, -2) {};
		\node [style=none] (64) at (-23, -2) {};
		\node [style=none] (65) at (-22, -0.25) {};
		\node [style=none] (66) at (-21.5, -0.25) {};
		\node [style=none] (67) at (-22, 0.25) {};
		\node [style=none] (68) at (-21.5, 0.25) {};
		\node [style=blackdot] (69) at (-21.5, -1.25) {};
		\node [style=blackdot] (70) at (-22, 1) {};
		\node [style=none] (71) at (-24.5, 3.75) {};
		\node [style=none] (72) at (-20.25, 0) {\scriptsize$=$};
		\node [style=none] (73) at (-18, -3.25) {};
		\node [style=box] (74) at (-18, -2) {\scriptsize$\hspace{0.2em}\varphi\hspace{0.2em}$};
		\node [style=box] (75) at (-18.75, 1.75) {\scriptsize$\hspace{0.3em}g\hspace{0.3em}$};
		\node [style=box] (76) at (-17.5, -0.75) {\scriptsize$\hspace{0.2em}h\hspace{0.2em}$};
		\node [style=none] (77) at (-18.25, -1.75) {};
		\node [style=none] (78) at (-17.75, -1.75) {};
		\node [style=none] (79) at (-17.75, -1) {};
		\node [style=none] (80) at (-17.25, -1) {};
		\node [style=none] (81) at (-17.75, -0.5) {};
		\node [style=none] (82) at (-17.25, -0.5) {};
		\node [style=blackdot] (83) at (-17.25, -3) {};
		\node [style=blackdot] (84) at (-17.75, 0.25) {};
		\node [style=none] (85) at (-18.75, 3) {};
		\node [style=none] (86) at (-24.5, 2.5) {};
		\node [style=none] (87) at (-18.75, 3.75) {};
		\node [style=none] (88) at (-19, -2.75) {};
		\node [style=none] (89) at (-16.75, -2.75) {};
		\node [style=none] (90) at (-19, 0.5) {};
		\node [style=none] (91) at (-16.75, 0.5) {};
		\node [style=none] (92) at (-19.5, 3.25) {};
		\node [style=none] (93) at (-16.75, 3.25) {};
		\node [style=none] (94) at (-16.75, 1) {};
		\node [style=none] (95) at (-19.5, 1) {};
	\end{pgfonlayer}
	\begin{pgfonlayer}{edgelayer}
		\draw [style=morphism-edge] (10) to (3.center);
		\draw [style=morphism-edge] (1.center) to (0.center);
		\draw [style=morphism-edge, in=360, out=90] (4.center) to (6.center);
		\draw [style=morphism-edge, in=90, out=-180] (6.center) to (7.center);
		\draw [style=morphism-edge, in=90, out=-90] (7.center) to (8.center);
		\draw [style=morphism-edge, in=180, out=-90] (12.center) to (11.center);
		\draw [style=morphism-edge, in=270, out=0] (11.center) to (8.center);
		\draw [style=morphism-edge] (16) to (15.center);
		\draw [style=morphism-edge] (12.center) to (14.center);
		\draw [style=morphism-edge] (13) to (17.center);
		\draw [style=morphism-edge, in=90, out=0] (17.center) to (5.center);
		\draw [style=morphism-edge] (17.center) to (18.center);
		\draw [style=ddd] (19.center) to (20.center);
		\draw [style=ddd] (20.center) to (21.center);
		\draw [style=ddd] (21.center) to (22.center);
		\draw [style=ddd] (22.center) to (19.center);
		\draw [style=ddd] (23.center) to (24.center);
		\draw [style=ddd] (24.center) to (25.center);
		\draw [style=ddd] (25.center) to (26.center);
		\draw [style=ddd] (26.center) to (23.center);
		\draw [style=ddd] (27.center) to (28.center);
		\draw [style=ddd] (28.center) to (29.center);
		\draw [style=ddd] (29.center) to (30.center);
		\draw [style=ddd] (30.center) to (27.center);
		\draw [style=morphism-edge] (33) to (36.center);
		\draw [style=morphism-edge] (32.center) to (35.center);
		\draw [style=morphism-edge] (37.center) to (39.center);
		\draw [style=morphism-edge] (40.center) to (41);
		\draw [style=morphism-edge, in=360, out=90] (38.center) to (42.center);
		\draw [style=morphism-edge] (44) to (42.center);
		\draw [style=morphism-edge] (42.center) to (43.center);
		\draw [style=morphism-edge] (47) to (50.center);
		\draw [style=morphism-edge] (46.center) to (49.center);
		\draw [style=morphism-edge] (51.center) to (53.center);
		\draw [style=morphism-edge] (58) to (56.center);
		\draw [style=morphism-edge] (56.center) to (57.center);
		\draw [style=morphism-edge] (52.center) to (54.center);
		\draw [style=morphism-edge] (59.center) to (60);
		\draw [style=morphism-edge, in=270, out=90] (63.center) to (61);
		\draw [style=morphism-edge, in=270, out=90] (64.center) to (65.center);
		\draw [style=morphism-edge] (67.center) to (70);
		\draw [style=morphism-edge] (66.center) to (69);
		\draw [style=morphism-edge] (73.center) to (74);
		\draw [style=morphism-edge, in=270, out=90] (77.center) to (75);
		\draw [style=morphism-edge, in=270, out=90] (78.center) to (79.center);
		\draw [style=morphism-edge] (81.center) to (84);
		\draw [style=morphism-edge] (80.center) to (83);
		\draw [style=morphism-edge] (86.center) to (61);
		\draw [style=morphism-edge, in=360, out=90] (68.center) to (86.center);
		\draw [style=morphism-edge] (86.center) to (71.center);
		\draw [style=morphism-edge] (75) to (85.center);
		\draw [style=morphism-edge, in=90, out=0] (85.center) to (82.center);
		\draw [style=morphism-edge] (85.center) to (87.center);
		\draw [style=ddd] (91.center) to (89.center);
		\draw [style=ddd] (89.center) to (88.center);
		\draw [style=ddd] (88.center) to (90.center);
		\draw [style=ddd] (90.center) to (91.center);
		\draw [style=ddd, in=270, out=90] (95.center) to (92.center);
		\draw [style=ddd] (92.center) to (93.center);
		\draw [style=ddd] (93.center) to (94.center);
		\draw [style=ddd] (94.center) to (95.center);
	\end{pgfonlayer}
    \end{tikzpicture}
    \]
\end{proof}

It is easy to verify the following claims:

\begin{proposition}\label{prop:MI0}
    Let $I$ be an ideal in $A$. Restriction $\modw{A/I} \to \modca$ along the algebra morphism $A \twoheadrightarrow A/I$ realises $\modw{A/I}$ as a full $\mathcal{C}$-module subcategory of $\modca$ which is closed under subquotients. An $A$-module object $M$ lies in the image of this functor if and only if $MI = 0$.
\end{proposition}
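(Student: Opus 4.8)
The plan is to treat the four assertions in turn, deducing the two structural claims from the final equivalence ``$M$ lies in the essential image of restriction $\iff MI = 0$''. Write $\pi\colon A \twoheadrightarrow A/I$ for the quotient algebra morphism; by definition of the quotient algebra $A/I = \on{Coker}(\iota_I\colon I \to A)$, with $\pi$ the canonical epimorphism and $\pi\circ\iota_I = 0$. Since the tensor product in $\mathcal{C}$ is exact, $M \otimes \pi$ is an epimorphism for every $M \in \mathcal{C}$, and moreover $M \otimes \pi\colon M \otimes A \to M \otimes (A/I)$ is a cokernel of $M \otimes \iota_I$.

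\textbf{Fullness and the $\mathcal{C}$-module structure.} First I would check that restriction along $\pi$ is fully faithful. Given $A/I$-modules $M,N$ with actions $\overline{\nabla}_M, \overline{\nabla}_N$ and an $A$-linear map $f\colon M \to N$ for the restricted actions $\nabla_M = \overline{\nabla}_M\circ(M\otimes\pi)$, $\nabla_N = \overline{\nabla}_N\circ(N\otimes\pi)$, one cancels the epimorphism $M\otimes\pi$ from the right of the $A$-linearity square to obtain $A/I$-linearity of $f$. Next, restriction is a strict $\mathcal{C}$-module functor $\modw{A/I} \to \modca$: both $\triangleright$-actions are given by $V \otimes (\blank)$ with the module action tensored on by $V$, and restricting $V \otimes \overline{\nabla}_M$ along $\pi$ returns $V \otimes \nabla_M$. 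Hence the essential image is a full $\mathcal{C}$-module subcategory of $\modca$.

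\textbf{The characterisation.} This is the main step. If $M$ is the restriction of an $A/I$-module with action $\overline{\nabla}_M$, then $\nabla_M\circ(M\otimes\iota_I) = \overline{\nabla}_M\circ\big(M\otimes(\pi\circ\iota_I)\big) = 0$, so the morphism whose image defines $MI$ in \autoref{def:MI} vanishes, and $MI = 0$. Conversely, assume $MI = 0$. Then $\nabla_M\circ(M\otimes\iota_I)\colon M\otimes I \to M$ has zero image, hence is the zero morphism, so the universal property of the cokernel $M\otimes\pi$ yields a unique $\overline{\nabla}_M\colon M\otimes(A/I) \to M$ with $\overline{\nabla}_M\circ(M\otimes\pi) = \nabla_M$. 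It remains to verify that $(M,\overline{\nabla}_M)$ obeys the unit and associativity axioms of an $A/I$-module; both follow by precomposing the desired identities with the epimorphisms $M\otimes\pi$ and $M\otimes\pi\otimes\pi$ respectively, using that $\pi$ is an algebra morphism ($\mu_{A/I}\circ(\pi\otimes\pi) = \pi\circ\mu_A$, $\eta_{A/I} = \pi\circ\eta_A$) and the $A$-module axioms for $(M,\nabla_M)$. By construction $(M,\overline{\nabla}_M)$ restricts to $(M,\nabla_M)$, so $M$ lies in the essential image.

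\textbf{Closure under subquotients.} With the characterisation in hand this is immediate. If $M$ is in the essential image and $N \hookrightarrow M$ is a subobject in $\modca$, then $N\otimes I \to N \hookrightarrow M$ equals $N\otimes I \hookrightarrow M\otimes I \to M\otimes A \xrightarrow{\nabla_M} M$, which is zero because it factors through $MI = 0$; since $N \hookrightarrow M$ is monic, $N\otimes I \to N$ is zero, so $NI = 0$ and $N$ lies in the essential image. Dually, for a quotient $M \twoheadrightarrow M'$ in $\modca$, the composite $M\otimes I \to M \to M'$ factors through $MI = 0$ and $M\otimes I \twoheadrightarrow M'\otimes I$ is epic, so $M'\otimes I \to M'$ is zero and $M'I = 0$. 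As every subquotient is a quotient of a subobject, the essential image is closed under subquotients. I foresee no genuine obstacle; the one point requiring care is the verification of the $A/I$-module axioms for $(M,\overline{\nabla}_M)$ — the standard descent of a module structure along an algebra quotient — which is the step I would write out in full.
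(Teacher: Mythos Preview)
Your argument is correct and complete. The paper itself omits the proof entirely, prefacing this proposition (together with the next one) only with ``It is easy to verify the following claims''; your write-up is precisely the standard verification the authors had in mind, so there is nothing to compare.
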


\begin{proposition}\label{prop:mijmij}
    Let $I,J$ be ideals in $A$ and let $M \in \modca$. Then $(MI)J = M(IJ)$.
\end{proposition}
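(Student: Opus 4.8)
The plan is to sidestep \autoref{lem:IJdescription} and \autoref{lem:MIdescription} altogether and instead identify both $(MI)J$ and $M(IJ)$ directly as the image of one and the same morphism $M\otimes I\otimes J\to M$. First I would reduce to the case where $\mathcal{C}$ is strict (the identity $(MI)J=M(IJ)$ is preserved by monoidal equivalence, since such an equivalence preserves $\otimes$, images, the algebra structure on $A$ and the module structure on $M$), so associators may be suppressed. I would also record the two standard facts I will use: in an abelian category $\on{Im}(g\circ f)=\on{Im}(g)$ whenever $f$ is an epimorphism, and in a finite tensor category $\otimes$ is exact, so both $M\otimes(\blank)$ and $(\blank)\otimes J$ preserve epimorphisms.

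For $(MI)J$: write $e\colon M\otimes I\twoheadrightarrow MI$ for the canonical epimorphism coming from the image factorization in \autoref{def:MI}; then $e\otimes J\colon M\otimes I\otimes J\to MI\otimes J$ is again epic. Since $MI$ is a submodule of $M$ (as used around \autoref{prop:MI0}), one has $\iota_{MI}\circ\nabla_{MI}=\nabla_{M}\circ(\iota_{MI}\otimes A)$, and unwinding the factorizations shows that the composite $M\otimes I\otimes J\xrightarrow{e\otimes J}MI\otimes J\to MI\hookrightarrow M$ is precisely $\nabla_{M}\circ(\nabla_{M}\otimes A)\circ(M\otimes\iota_{I}\otimes\iota_{J})$. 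Hence, as a subobject of $M$, $(MI)J=\on{Im}\bigl(\nabla_{M}\circ(\nabla_{M}\otimes A)\circ(M\otimes\iota_{I}\otimes\iota_{J})\bigr)$. Symmetrically, the canonical epimorphism $q\colon I\otimes J\twoheadrightarrow IJ$ from \autoref{def:productideal} yields an epimorphism $M\otimes q\colon M\otimes I\otimes J\twoheadrightarrow M\otimes IJ$, and since $\iota_{IJ}\circ q=\mu\circ(\iota_{I}\otimes\iota_{J})$, one obtains $M(IJ)=\on{Im}\bigl(\nabla_{M}\circ(M\otimes\mu)\circ(M\otimes\iota_{I}\otimes\iota_{J})\bigr)$ as a subobject of $M$.

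The proof then finishes by observing that the two morphisms whose images we have computed are literally equal: $\nabla_{M}\circ(\nabla_{M}\otimes A)=\nabla_{M}\circ(M\otimes\mu)$ is the associativity axiom for the $A$-module $M$. I do not expect a genuine obstacle here; the only mildly delicate point is the bookkeeping in the second step — keeping straight which image factorization is being unwound and checking the compatibility $\iota_{MI}\circ\nabla_{MI}=\nabla_{M}\circ(\iota_{MI}\otimes A)$ — and this is routine. Should one prefer the style of \autoref{Prop:Sprod}, one could instead verify the identity after applying $\mathcal{C}(P,\blank)$ for $P\in\cp$, computing $\mathcal{C}(P,(MI)J)$ by \autoref{lem:MIdescription} twice and $\mathcal{C}(P,M(IJ))$ by \autoref{lem:MIdescription} together with \autoref{lem:IJdescription}, the two descriptions of general elements matching after reassociation and using that $\cp$ is closed under $\otimes$; but the image argument above is shorter and avoids any diagram chasing.
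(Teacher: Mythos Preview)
Your argument is correct. The paper does not give a proof of this proposition at all (it is introduced by ``It is easy to verify the following claims''), and your direct image computation---reducing both sides to the image of $M\otimes I\otimes J\to M$ via exactness of $\otimes$, the fact that precomposing by an epimorphism preserves images, and the module associativity $\nabla_{M}\circ(\nabla_{M}\otimes A)=\nabla_{M}\circ(M\otimes\mu)$---is precisely the natural verification the authors are leaving to the reader.
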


While \autoref{lem:Qradical} below is a direct consequence of \autoref{prop:MI0}, we present it separately with proof, because of its crucial role in the proof of \autoref{thm:semisimplequotient}.

\begin{lemma}\label{lem:Qradical}
 For $Q'' \in \cp$ and $M \in \modca$, if $\,(Q''\triangleright M)I \neq 0$, then $MI \neq 0$.
\end{lemma}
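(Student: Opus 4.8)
The plan is to establish the contrapositive: if $MI=0$, then $(Q''\triangleright M)I=0$. The key observation is that, by \autoref{def:MI} applied with $B=A$, for any $N\in\modca$ the submodule $NI$ is the image of the \emph{single} morphism $\psi_{N}\defeq\nabla_{N}\circ(N\otimes\iota_{I})\colon N\otimes I\to N$, and in an abelian category the image of a morphism vanishes precisely when the morphism itself does. So $NI=0$ is equivalent to $\psi_{N}=0$.

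First I would unwind $(Q''\triangleright M)I$ using the definition of the $\mathcal{C}$-action on $\modca$, namely $Q''\triangleright M=(Q''\otimes M,\,Q''\otimes\nabla_{M})$. Writing out $\psi_{Q''\triangleright M}=(Q''\otimes\nabla_{M})\circ\bigl((Q''\otimes M)\otimes\iota_{I}\bigr)$ and using the associativity identification $(Q''\otimes M)\otimes I\cong Q''\otimes(M\otimes I)$ (immediate if we take $\mathcal{C}$ strict, as elsewhere in the paper), one sees $\psi_{Q''\triangleright M}=Q''\otimes\psi_{M}$. Hence $(Q''\triangleright M)I=\on{Im}(Q''\otimes\psi_{M})$, and since the tensor product of $\mathcal{C}$ is exact, $\on{Im}(Q''\otimes\psi_{M})=Q''\otimes\on{Im}(\psi_{M})=Q''\triangleright(MI)$.

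The conclusion is then immediate: if $MI=0$ then $\psi_{M}=0$, so $Q''\otimes\psi_{M}=0$ and $(Q''\triangleright M)I=\on{Im}(0)=0$; the contrapositive of this is exactly the claim. I would additionally remark on the slicker route via \autoref{prop:MI0}: $MI=0$ says that $M$ lies in the full $\mathcal{C}$-module subcategory $\modw{A/I}\subseteq\modca$, and since that subcategory is stable under the $\mathcal{C}$-action we get $Q''\triangleright M\in\modw{A/I}$, i.e.\ $(Q''\triangleright M)I=0$.

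I do not expect a real obstacle here; the lemma is essentially formal. The only point requiring a line of care is the bookkeeping of the associativity/unitality identifications that turn the structure morphism of $Q''\triangleright M$ into $Q''\otimes\psi_{M}$, together with invoking exactness of $Q''\otimes(\blank)$ so that taking the image commutes with the functor — both of which are already available from the preliminaries.
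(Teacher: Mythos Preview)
Your argument is correct and, in fact, more elementary than the proof the paper gives. The paper works directly rather than by contrapositive: it invokes the description of $\mathcal{C}(P,NI)$ on projectives from \autoref{lem:MIdescription}, takes a non-zero element of $((Q''\triangleright M)I)(P)$, and passes to its mate under the $Q''\dashv\ld{Q''}$ adjunction to exhibit a non-zero element of $(MI)(\ld{Q''}\otimes P)$. Your route avoids the presheaf description entirely by recognising $\psi_{Q''\triangleright M}=Q''\otimes\psi_{M}$, after which the contrapositive is immediate (exactness of $Q''\otimes\blank$ is not even needed for that final step, only functoriality). The paper itself concedes, just before stating the lemma, that it is a direct consequence of \autoref{prop:MI0} --- precisely your second route --- and presents the explicit string-diagram argument only for emphasis; so your approach is in line with what the authors acknowledge, while their written proof stays within the element-level machinery used elsewhere in the section.
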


\begin{proof}
 Let $g \in \mathcal{C}(Q,Q''\triangleright M)$, $h \in \mathcal{C}(Q',I)$, and $\varphi \in \mathcal{C}(P,Q\otimes Q')$.
 Consider the following diagram,
 with a non-zero element $\nabla_{Q'' \triangleright M} \circ (g \otimes h) \circ \varphi \in ((Q''\triangleright M)I)(P)$
 as its left side:
 \[\begin{tikzpicture}[tikzfig]
	\begin{pgfonlayer}{nodelayer}
		\node [style=none] (0) at (-2.5, -3.25) {};
		\node [style=box] (1) at (-2.5, -2) {\scriptsize$\hspace{0.5em}\varphi\hspace{0.5em}$};
		\node [style=box] (2) at (-3.75, 0) {\scriptsize$\hspace{0.3em}g\hspace{0.3em}$};
		\node [style=box] (3) at (-1.25, 0) {\scriptsize$\hspace{0.5em}h\hspace{0.5em}$};
		\node [style=none] (4) at (-2.75, -1.75) {};
		\node [style=none] (5) at (-2.25, -1.75) {};
		\node [style=none] (6) at (-1.25, -0.25) {};
		\node [style=none] (9) at (-1.25, 0.25) {};
		\node [style=none] (12) at (-3.5, 3.75) {};
		\node [style=none] (13) at (-3.5, 2.5) {};
		\node [style=none] (14) at (1.25, 0) {$\neq 0 \implies$};
		\node [style=none] (15) at (-4, 0.25) {};
		\node [style=none] (16) at (-3.5, 0.25) {};
		\node [style=none] (17) at (-4, 3.75) {};
		\node [style=none] (18) at (-3, 3.5) {\tiny$M$};
		\node [style=none] (19) at (-4.5, 3.5) {\tiny$Q''$};
		\node [style=none] (20) at (5.5, -3.25) {};
		\node [style=box] (21) at (5.5, -2) {\scriptsize$\hspace{0.5em}\varphi\hspace{0.5em}$};
		\node [style=box] (22) at (4.25, 0) {\scriptsize$\hspace{0.3em}g\hspace{0.3em}$};
		\node [style=box] (23) at (6.75, 0) {\scriptsize$\hspace{0.5em}h\hspace{0.5em}$};
		\node [style=none] (24) at (5.25, -1.75) {};
		\node [style=none] (25) at (5.75, -1.75) {};
		\node [style=none] (26) at (6.75, -0.25) {};
		\node [style=none] (27) at (6.75, 0.25) {};
		\node [style=none] (28) at (4.5, 3.75) {};
		\node [style=none] (29) at (4.5, 2.5) {};
		\node [style=none] (30) at (4, 0.25) {};
		\node [style=none] (31) at (4.5, 0.25) {};
		\node [style=none] (32) at (3.5, 1) {};
		\node [style=none] (33) at (5, 3.5) {\tiny$M$};
		\node [style=none] (34) at (3, 0.25) {};
		\node [style=none] (35) at (3, -3.25) {};
		\node [style=none] (36) at (8.75, 0) {$\neq 0$};
		\node [style=none] (37) at (2.5, -2.75) {};
		\node [style=none] (38) at (6.5, -2.75) {};
		\node [style=none] (39) at (6.5, -1.25) {};
		\node [style=none] (40) at (2.5, -1.25) {};
		\node [style=none] (41) at (5.75, 0.75) {};
		\node [style=none] (42) at (7.75, 0.75) {};
		\node [style=none] (43) at (5.75, -0.75) {};
		\node [style=none] (44) at (7.75, -0.75) {};
		\node [style=none] (45) at (2.75, -0.75) {};
		\node [style=none] (46) at (5.25, -0.75) {};
		\node [style=none] (47) at (5.25, 1.25) {};
		\node [style=none] (48) at (2.75, 1.25) {};
	\end{pgfonlayer}
	\begin{pgfonlayer}{edgelayer}
		\draw [style=morphism-edge] (0.center) to (1);
		\draw [style=morphism-edge, in=270, out=90] (4.center) to (2);
		\draw [style=morphism-edge, in=270, out=90] (5.center) to (6.center);
		\draw [style=morphism-edge, in=360, out=90] (9.center) to (13.center);
		\draw [style=morphism-edge] (13.center) to (12.center);
		\draw [style=morphism-edge] (16.center) to (13.center);
		\draw [style=morphism-edge] (17.center) to (15.center);
		\draw [style=morphism-edge] (20.center) to (21);
		\draw [style=morphism-edge, in=270, out=90] (24.center) to (22);
		\draw [style=morphism-edge, in=270, out=90] (25.center) to (26.center);
		\draw [style=morphism-edge, in=360, out=90] (27.center) to (29.center);
		\draw [style=morphism-edge] (29.center) to (28.center);
		\draw [style=morphism-edge] (31.center) to (29.center);
		\draw [style=morphism-edge, in=90, out=0] (32.center) to (30.center);
		\draw [style=morphism-edge] (35.center) to (34.center);
		\draw [style=morphism-edge, in=180, out=90] (34.center) to (32.center);
		\draw [style=ddd] (40.center) to (39.center);
		\draw [style=ddd] (39.center) to (38.center);
		\draw [style=ddd] (38.center) to (37.center);
		\draw [style=ddd] (40.center) to (37.center);
		\draw [style=ddd] (44.center) to (42.center);
		\draw [style=ddd] (42.center) to (41.center);
		\draw [style=ddd] (41.center) to (43.center);
		\draw [style=ddd] (43.center) to (44.center);
		\draw [style=ddd] (48.center) to (47.center);
		\draw [style=ddd] (47.center) to (46.center);
		\draw [style=ddd] (46.center) to (45.center);
		\draw [style=ddd] (45.center) to (48.center);
	\end{pgfonlayer}
    \end{tikzpicture}
    \]
    The boxes on the right side indicate that the mate of the given non-zero element of $((Q''\triangleright M)I)(P)$ defines a non-zero element in $(MI)(\ld{Q''} \otimes P)$.
\end{proof}

\begin{lemma}\label{lem:projlifts}
 Let $\mathcal{A}$ be an abelian category, let $P_{1}^{M} \xrightarrow{p_{1}^{N}} P_{0}^{N} \xtwoheadrightarrow{p_0^N} N \rightarrow 0$ be an exact sequence in $\mathcal{A}$ and $M \xtwoheadrightarrow{g} N$ an epimorphism in $\mathcal{A}$.

 The epimorphism $g$ is split if and only if there is a lift $\widehat{g}\from P_{0}^{N} \rightarrow M$ of $g$ such that $\widehat{g} \circ p_{1}^{N} = 0$.
\end{lemma}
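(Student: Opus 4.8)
The plan is a short diagram chase in the abelian category $\mathcal{A}$; notably, projectivity of $P_0^N$ will not be needed anywhere, so the statement can be read as a purely formal fact about the exact row and the epimorphism $g$.

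For the ``only if'' direction I would argue as follows. Assume $g$ is split and choose a section $s\colon N \to M$ with $g \circ s = \on{id}_N$. Set $\widehat{g} \defeq s \circ p_0^N$. Then $g \circ \widehat{g} = (g \circ s) \circ p_0^N = p_0^N$, so $\widehat{g}$ is a lift of $g$ along $p_0^N$; moreover, exactness of the given sequence gives $p_0^N \circ p_1^N = 0$, whence $\widehat{g} \circ p_1^N = s \circ (p_0^N \circ p_1^N) = 0$, as required.

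For the ``if'' direction, suppose we are given a lift $\widehat{g}\colon P_0^N \to M$ of $g$, i.e.\ $g \circ \widehat{g} = p_0^N$, satisfying $\widehat{g} \circ p_1^N = 0$. The key step is to factor $\widehat{g}$ through $p_0^N$: since $\mathcal{A}$ is abelian and $p_0^N$ is epic, $p_0^N$ is the cokernel of the inclusion $\on{Ker}(p_0^N) \hookrightarrow P_0^N$, and exactness of the given sequence at $P_0^N$ identifies $\on{Ker}(p_0^N)$ with $\on{Im}(p_1^N)$; thus $\widehat{g} \circ p_1^N = 0$ forces $\widehat{g}$ to factor uniquely as $\widehat{g} = s \circ p_0^N$ for some $s\colon N \to M$. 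Then $(g \circ s) \circ p_0^N = g \circ \widehat{g} = p_0^N = \on{id}_N \circ p_0^N$, and cancelling the epimorphism $p_0^N$ on the right gives $g \circ s = \on{id}_N$. Hence $s$ is a section of $g$, so $g$ is split.

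I expect the only point that needs a moment's care — the ``main obstacle'', such as it is — to be the factorization $\widehat{g} = s \circ p_0^N$ in the second direction: this rests on the standard fact that in an abelian category an epimorphism is the cokernel of its kernel, together with the identification $\on{Ker}(p_0^N) = \on{Im}(p_1^N)$ coming from exactness of the row. Everything else is a routine manipulation of compositions.
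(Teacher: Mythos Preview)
Your proof is correct and follows essentially the same approach as the paper's: the ``if'' direction is identical (factor $\widehat{g}$ through the cokernel $p_0^N$ and cancel the epimorphism), and your ``only if'' direction, setting $\widehat{g} = s \circ p_0^N$ for a chosen section $s$, is simply the explicit version of what the paper abbreviates as ``can be easily obtained from horseshoe lemma.''
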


\begin{proof}
  If $g$ is split, then a lift $\widehat{g}$ as specified can be easily obtained from horseshoe lemma. If there is a left $\widehat{g}$ such that $\widehat{g} \circ p_{1}^{N}$, then, since $N$ is the cokernel of $p_{1}^{N}$, there is a unique map $h\from N \rightarrow M$, such that $h \circ p_{0}^{N} = \widehat{g}$. We then find $g \circ h \circ p_{0}^{N} = g \circ \widehat{g} = p_{0}^{N}$, and so $g \circ h = \on{id}_{N}$, since $p_{0}^{N}$ is an epimorphism.
\end{proof}

\begin{lemma}\label{lem:mradneq0}
 For $M \in \modca$ non-zero, we have  $M\iradc(A) \neq M$.
\end{lemma}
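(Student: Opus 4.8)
The plan is to obtain this as a Nakayama-type consequence of the fact, already proved in \autoref{prop:greatestnilpotent}, that $\iradc(A)$ is nilpotent. So I would argue by contradiction: suppose $M \in \modca$ is non-zero but $M\iradc(A) = M$.

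First I would record the three inputs. By \autoref{prop:greatestnilpotent}, $\iradc(A)$ is a nilpotent ideal object in $A$, so there is $k \geq 1$ with $\iradc(A)^{k} = 0$. By \autoref{prop:mijmij}, for ideals $I,J$ in $A$ and $N \in \modca$ one has $(NI)J = N(IJ)$; iterating this with $I = J = \iradc(A)$ gives $M\iradc(A)^{j+1} = (M\iradc(A)^{j})\iradc(A)$ for every $j$. Finally, for the zero ideal and any $N \in \modca$ one has $N\cdot 0 = \on{Im}(N\otimes 0 \to N\otimes A \to N) = 0$.

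The conclusion is then immediate: from $M\iradc(A) = M$ together with the associativity identity above, a one-line induction yields $M = M\iradc(A) = M\iradc(A)^{2} = \cdots = M\iradc(A)^{k} = M\cdot 0 = 0$, contradicting $M \neq 0$. Hence $M\iradc(A) \neq M$.

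I do not expect a genuine obstacle here: the substantive work — namely that $\iradc(A)$ is nilpotent, which ultimately rests on \autoref{laststep} and the Perron--Frobenius idempotent — has already been carried out, and the present statement is a formal corollary. (If one preferred a self-contained argument in the classical Nakayama style, one could instead pick an epimorphism $\pi\colon P_{0}\triangleright A \twoheadrightarrow M$ with $P_{0}\in\cp$; if $M\iradc(A) = M$ then $\pi^{1}$ lies in $(M\iradc(A))(P_{0})$, so the second description in \autoref{prop:DayRadicalModule} gives a factorisation $\pi = g\circ h$ with $h \in \radca \subseteq \on{Rad}_{\pfa}$ by \autoref{radcarad}; since a radical morphism between projectives has image in the radical of its target and $g$ is then forced to be epic, one gets $M = \on{im}(\pi) \subseteq \on{rad}(M)$, whence $M = 0$ because $M$ has finite length. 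The only slightly delicate point of that route is the standard identification of radical morphisms in $\pfa$ with those whose image lands in the radical of the target module, which follows from \autoref{lem:CauchyCorr} and the description of $\projca$ as the projectives over a finite-dimensional algebra.)
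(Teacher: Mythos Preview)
Your main argument is correct and is essentially identical to the paper's own proof: both use nilpotency of $\iradc(A)$ from \autoref{prop:greatestnilpotent}, iterate \autoref{prop:mijmij} to get $M\iradc(A)^{k}=M$ from the hypothesis $M\iradc(A)=M$, and conclude $M=0$ since $\iradc(A)^{k}=0$. The parenthetical Nakayama-style alternative is extra and not needed.
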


\begin{proof}
 Let $k$ be such that ${\iradc(A)}^{k} = 0$. Thus, $M{\iradc(A)}^{k} = 0$. On the other hand, by \autoref{prop:mijmij}, if $M\iradc(A) = M$, then $M{\iradc(A)}^{k} = M$, which is a contradiction.
\end{proof}

\begin{theorem}\label{thm:semisimplequotient}
  The category $\modw{A/\iradc(A)}$, viewed as a full subcategory of $\modca$, consists of the subquotients of objects of the form $Q \triangleright L$, for $Q \in \cp$ and $L$ a semisimple object in $\modca$.
\end{theorem}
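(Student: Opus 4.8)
The plan is to identify both classes of objects with the essential image $\mathcal{N}$ of $\modw{A/\iradc(A)}$ inside $\modca$. By \autoref{prop:MI0}, $\mathcal{N}$ is a full $\mathcal{C}$-module subcategory closed under subquotients, and $M\in\mathcal{N}$ if and only if $M\iradc(A)=0$. First I would record that $A/\iradc(A)$ is an \emph{exact} algebra: by \autoref{prop:greatestnilpotent}, $\iradc(A/\iradc(A))$ is the greatest nilpotent ideal object of $A/\iradc(A)$; any such ideal has the form $J/\iradc(A)$ with $J$ an ideal of $A$ satisfying $J^{m}\subseteq\iradc(A)$ for some $m$, hence $J$ is nilpotent (as $\iradc(A)$ is), so $J\subseteq\iradc(A)$ by maximality and $J/\iradc(A)=0$. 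Thus $\iradc(A/\iradc(A))=0$, and \autoref{thm:semisimplecharacterization} gives exactness of $A/\iradc(A)$; equivalently, $\mathcal{N}$ is an exact $\mathcal{C}$-module category, so $Q\triangleright X$ is projective in $\mathcal{N}$ for all $Q\in\cp$ and $X\in\mathcal{N}$.

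For the inclusion ``$\supseteq$'', since $\mathcal{N}$ is closed under subquotients it suffices to show $Q\triangleright L\in\mathcal{N}$, i.e.\ $(Q\triangleright L)\iradc(A)=0$, for $Q\in\cp$ and $L$ semisimple in $\modca$. By the contrapositive of \autoref{lem:Qradical} this reduces to $L\iradc(A)=0$. Writing $L=\bigoplus_{i}S_{i}$ with each $S_{i}$ simple, one has $L\iradc(A)=\bigoplus_{i}S_{i}\iradc(A)$, and each $S_{i}\iradc(A)$ is an $A$-submodule of the simple module $S_{i}$ (as $\iradc(A)$ is a right ideal), which is proper by \autoref{lem:mradneq0}, hence zero.

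For the inclusion ``$\subseteq$'', take $M\in\mathcal{N}$, put $T\defeq\topp(M)$ — a semisimple object of $\mathcal{N}$, with canonical epimorphism $\pi\colon M\twoheadrightarrow T$ — and let $p_{0}\colon R_{0}\twoheadrightarrow M$ be a projective cover in $\mathcal{N}$ (which exists, as $\mathcal{N}$ is a finite abelian category); then $\ker(\pi\circ p_{0})$ is a superfluous subobject of $R_{0}$. Choose an epimorphism $\varepsilon_{0}\colon Q_{0}\twoheadrightarrow\unit$ with $Q_{0}\in\cp$. By the first step, $Q_{0}\triangleright T$ is projective in $\mathcal{N}$, and $\varepsilon\defeq\varepsilon_{0}\triangleright T\colon Q_{0}\triangleright T\twoheadrightarrow\unit\triangleright T=T$ is epic because the tensor product of $\mathcal{C}$ is exact. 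Using projectivity of $Q_{0}\triangleright T$, lift $\varepsilon$ along $\pi\circ p_{0}$ to a morphism $h\colon Q_{0}\triangleright T\to R_{0}$ with $\pi\circ p_{0}\circ h=\varepsilon$; then $\on{Im}(h)+\ker(\pi\circ p_{0})=R_{0}$, so $\on{Im}(h)=R_{0}$ by Nakayama, and $h$ is a split epimorphism. Hence $R_{0}$ is a direct summand of $Q_{0}\triangleright T$, so $M\cong R_{0}/\ker(p_{0})$ is a subquotient of $Q_{0}\triangleright T$. Finally, since $\mathcal{N}\hookrightarrow\modca$ is full and closed under subquotients, each simple summand of $T$ is simple in $\modca$, so $T$ is a semisimple object of $\modca$, completing the argument.

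The step I expect to be the main obstacle is the first one — verifying that $A/\iradc(A)$ is exact (so that $Q\triangleright X$ is projective in $\mathcal{N}$), since everything downstream rests on it — together with the Nakayama-type argument realising the projective cover $R_{0}$ as a direct summand of $Q_{0}\triangleright\topp(M)$. The remaining manipulations (closure of $\mathcal{N}$ under subquotients, behaviour of $\triangleright$ on projectives, and tracking simples along $\mathcal{N}\hookrightarrow\modca$) are formal consequences of results already established.
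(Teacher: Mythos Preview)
Your proof is correct, but it takes a genuinely different route from the paper's.

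The paper does \emph{not} first establish exactness of $A/\iradc(A)$; that fact is proved only \emph{after} \autoref{thm:semisimplequotient} (via \autoref{productlift} and \autoref{nilpotentlifts}), and the proof of the theorem itself works directly with the radical description. For the ``$\subseteq$'' direction, the paper takes a short exact sequence $0\to N\to M\xrightarrow{g} L\to 0$ with $L$ semisimple, and shows that $Q\triangleright g$ is split: if not, a lift $\widehat{Q\triangleright g}$ composed with the minimal presentation $r_1$ of $Q\triangleright L$ (which lies in $\radca$ by \autoref{laststep}) is nonzero, and \autoref{prop:DayRadicalModule} converts this into a nonzero element of $(Q\triangleright M)\iradc(A)$, whence $M\iradc(A)\neq 0$ by \autoref{lem:Qradical}, a contradiction. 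Iterating over a semisimple filtration gives the explicit decomposition $Q\triangleright M\cong\bigoplus_{L}(Q\triangleright L)^{[M:L]}$, and $M$ is its quotient via $q\triangleright M$.

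Your approach instead front-loads the exactness of $A/\iradc(A)$ (your argument is essentially the content of \autoref{nilpotentlifts}, just placed earlier), and then the ``$\subseteq$'' direction becomes a formal Nakayama argument in the exact module category $\mathcal{N}$: the projective cover $R_0$ of $M$ in $\mathcal{N}$ is a summand of $Q_0\triangleright\topp(M)$. This is cleaner and more conceptual, and it makes transparent that the theorem is really a corollary of the exactness of $A/\iradc(A)$. The paper's approach, by contrast, avoids the forward reference and yields the stronger intermediate statement that $Q\triangleright M$ itself decomposes as a direct sum of $Q\triangleright L$'s, not merely that $M$ is a subquotient of one such object.
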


\begin{proof}
    First, by \autoref{lem:mradneq0}, for any simple object $L$ of $\modca$, we have $L\iradc(A) = 0$. Thus the semisimple objects in $\modca$ lie in $\modw{A/\iradc(A)}$, and so do the objects of the form $Q \triangleright L$ for $L$ semisimple, as well as their subquotients.

    It remains to show that any object of $\modw{A/\iradc(A)}$ is of that form.
    Fix an epimorphism $Q \xtwoheadrightarrow{q} \mathbb{1}$ with $Q \in \cp$, let $M \in \modw{A/\iradc(A)}$, and let
    \[
    0 \to N \xrightarrow{f} M \xrightarrow{g} L \to 0
    \]
    be a short exact sequence with $L$ non-zero semisimple. We now show that the epimorphism $Q \triangleright g$ is split.
    If $Q \triangleright L$ is projective, then the epimorphism $Q\triangleright g$ is split.
    Otherwise, using \autoref{laststep}, the minimal projective presentation $r_{1}^{N}\from R_{1}^{Q\triangleright L} \rightarrow R_{0}^{Q\triangleright L}$ is non-zero and lies in ${\left(\radca\right)}^{\mathsf{c}}$. Choose splittings
$\begin{tikzcd}[ampersand replacement=\&]
	{R_{1}^{Q \triangleright L}} \& {Q^{1}\triangleright A}
	\arrow["{\iota_{1}}", shift left, hook, from=1-1, to=1-2]
	\arrow["{\pi_{1}}", shift left, two heads, from=1-2, to=1-1]
\end{tikzcd}$
   and
$\begin{tikzcd}[ampersand replacement=\&]
	{R_{0}^{Q \triangleright L}} \& {Q^{0}\triangleright A,}
	\arrow["{\iota_{0}}", shift left, hook, from=1-1, to=1-2]
	\arrow["{\pi_{0}}", shift left, two heads, from=1-2, to=1-1]
\end{tikzcd}$\!\! with $Q_0, Q_1 \in \cp$.
Using the projectivity of $R_{0}^{Q\triangleright L}$, let $R_{0}^{Q\triangleright L} \xrightarrow{\widehat{Q\triangleright g}} Q\triangleright M$ be a lift of $Q \triangleright g$. Assume that $Q\triangleright g$ is not split. Then by \autoref{lem:projlifts}, we have $\widehat{Q \triangleright g} \circ r_{1}^{N}\neq 0$. By \autoref{prop:DayRadicalModule}, the morphism ${(\widehat{Q\triangleright g} \circ r_{1} \circ \pi_{1})}^{1}$ then defines a non-zero element of $((Q\triangleright M)\iradc(A))(Q^{1})$. Thus, $((Q\triangleright M)\iradc(A))\neq 0$ and, by \autoref{lem:Qradical}, $M\iradc(A) \neq 0$, which contradicts our assumption. Thus, $Q\triangleright g$ is split.

    By induction on a semisimple filtration for $M$, we obtain $Q \triangleright M = \bigoplus_{L \in \on{Irr}(\modca)} {(Q\triangleright L)}^{\oplus [M:L]}$. Finally, $M$ is the image of $Q \triangleright M$ via the epimorphism $q \triangleright M$.
\end{proof}

Finally, we show that $A/\iradc(A)$ is exact, and hence it can be viewed as the appropriate analogue of the \emph{maximal semisimple quotient} $B/\on{Rad}(B)$ for a finite-dimensional $\Bbbk$-algebra $B$.

\begin{definition}\label{defliftideal}
 Let $I$ be an ideal in $A$, and let $J$ be an ideal in $A/I$. We denote by $J_{I}$ the lift of $J$ to an ideal of $A$, obtained by the indicated pullback in the category of $A$-$A$-bimodules:
\[\begin{tikzcd}
	A & {A/I} \\
	{J_{I}} & J
	\arrow["\pi", two heads, from=1-1, to=1-2]
	\arrow[dashed, hook, from=2-1, to=1-1]
	\arrow["\lrcorner"{anchor=center, pos=0.125, rotate=90}, draw=none, from=2-1, to=1-2]
	\arrow[dashed, two heads, from=2-1, to=2-2]
	\arrow[hook, from=2-2, to=1-2]
\end{tikzcd}\]
\end{definition}

Directly from \autoref{defliftideal}, and the preservation of monomorphisms under pullbacks, we deduce the following property:

\begin{lemma}\label{productlift}
 For ideals $J,J'$ in $A/I$, there is an inclusion $J_{I}J'_{I} \hookrightarrow {(JJ')}_{I}$.
\end{lemma}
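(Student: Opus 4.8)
The plan is to unwind the construction of $J_{I}$ as a pullback and combine it with the fact that $\pi\colon A\twoheadrightarrow A/I$ is a morphism of algebras. Denote by $\iota_{J_{I}}\colon J_{I}\hookrightarrow A$ and $p_{J}\colon J_{I}\to J$ the two legs of the pullback cone of \autoref{defliftideal}, so that $\iota_{J}\circ p_{J}=\pi\circ\iota_{J_{I}}$; by the preservation of monomorphisms under pullbacks, $\iota_{J_{I}}$ is a monomorphism (and $p_{J}$ is an epimorphism). Attach the analogous data $\iota_{J'_{I}},p_{J'}$ to $J'_{I}$ and $\iota_{(JJ')_{I}}\colon(JJ')_{I}\hookrightarrow A$, $p_{JJ'}$ to $(JJ')_{I}$, the inclusion $\iota_{(JJ')_{I}}$ again being monic. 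All these pullbacks may be formed in $\mathcal{C}$, since the forgetful functor from $A$-$A$-bimodules preserves limits.

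Next I would form the rectangle
\[
\begin{tikzcd}[column sep=large]
J_{I}\otimes J'_{I} \arrow[r, "\iota_{J_{I}}\otimes\iota_{J'_{I}}"] \arrow[d, "p_{J}\otimes p_{J'}"'] & A\otimes A \arrow[r, "\mu_{A}"] \arrow[d, "\pi\otimes\pi"] & A \arrow[d, "\pi"] \\
J\otimes J' \arrow[r, "\iota_{J}\otimes\iota_{J'}"'] & (A/I)\otimes(A/I) \arrow[r, "\mu_{A/I}"'] & A/I .
\end{tikzcd}
\]
Its left square commutes, being the tensor product of the defining squares of $J_{I}$ and $J'_{I}$, and its right square commutes because $\pi$ is an algebra morphism. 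By \autoref{def:productideal}, the bottom composite $\mu_{A/I}\circ(\iota_{J}\otimes\iota_{J'})$ factors as an epimorphism onto $JJ'$ followed by $\iota_{JJ'}$, so the outer composite $\pi\circ\mu_{A}\circ(\iota_{J_{I}}\otimes\iota_{J'_{I}})$ likewise factors through the monomorphism $\iota_{JJ'}$; write $\psi\colon J_{I}\otimes J'_{I}\to JJ'$ for the resulting map (concretely, $\psi$ is that epimorphism precomposed with $p_{J}\otimes p_{J'}$).

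The last step is a pullback chase. The pair $\bigl(\mu_{A}\circ(\iota_{J_{I}}\otimes\iota_{J'_{I}}),\,\psi\bigr)$ forms a cone over the cospan $A\xrightarrow{\ \pi\ }A/I\xleftarrow{\ \iota_{JJ'}\ }JJ'$, so the universal property of $(JJ')_{I}=A\times_{A/I}JJ'$ produces a morphism $J_{I}\otimes J'_{I}\to(JJ')_{I}$ through which $\mu_{A}\circ(\iota_{J_{I}}\otimes\iota_{J'_{I}})$ factors after composing with $\iota_{(JJ')_{I}}$. Hence $\mu_{A}\circ(\iota_{J_{I}}\otimes\iota_{J'_{I}})$ factors through the subobject $(JJ')_{I}\hookrightarrow A$; taking images and using \autoref{def:productideal} again, $J_{I}J'_{I}=\on{Im}\bigl(\mu_{A}\circ(\iota_{J_{I}}\otimes\iota_{J'_{I}})\bigr)$ lies inside $(JJ')_{I}$ as subobjects of $A$. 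Both sides are sub-bimodules of $A$, so this is the asserted inclusion of ideals.

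I do not expect a genuine obstacle: once the pullback square and the algebra-morphism property of $\pi$ are in hand, the argument is a formal diagram chase. The points worth stating with care are the two soft categorical facts used above — stability of monomorphisms under pullback in an abelian category, and the fact that the relevant pullback of $A$-$A$-bimodules is computed on underlying objects in $\mathcal{C}$. In contrast with the subtlety flagged around \autoref{lem:IJdescription}, here nothing passes through presheaves, so the non-exactness of the Yoneda embedding plays no role.
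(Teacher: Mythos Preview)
Your proof is correct and follows essentially the same route as the paper: the paper's text merely says the lemma follows ``directly from \autoref{defliftideal}, and the preservation of monomorphisms under pullbacks,'' and its (commented-out) detailed proof likewise builds the commutative rectangle from the algebra-morphism property of $\pi$ and then invokes the universal property of the pullback $(JJ')_{I}$ to produce a monomorphism $\kappa\colon J_{I}J'_{I}\hookrightarrow (JJ')_{I}$. The only cosmetic difference is that the paper first passes to images to obtain an induced map $\widehat{\pi\otimes\pi}\colon J_{I}J'_{I}\to JJ'$ and then applies the pullback universal property, whereas you apply the universal property at the level of $J_{I}\otimes J'_{I}$ and take images afterwards; the two orderings are equivalent.
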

\begin{comment}
\begin{proof}
 Consider the following diagram:
\[\begin{tikzcd}
	A && {A/I} \\
	& {(JJ')_{I}} \\
	{J_{I}J'_{I}} && {JJ'} \\
	{A \otimes A} && {A/I \otimes A/I} \\
	{J_{I}\otimes J'_{I}} && {J \otimes J'}
	\arrow["\pi", two heads, from=1-1, to=1-3]
	\arrow["{\iota_{(JJ')_{I}}}"'{pos=0.4}, hook', from=2-2, to=1-1]
	\arrow["\lrcorner"{anchor=center, pos=0.125, rotate=90}, draw=none, from=2-2, to=1-3]
	\arrow[two heads, from=2-2, to=3-3]
	\arrow["{\iota_{J_{I}J_{I}'}}", hook, from=3-1, to=1-1]
	\arrow["\kappa", dashed, from=3-1, to=2-2]
	\arrow["{\widehat{\pi \otimes \pi}}"', dashed, from=3-1, to=3-3]
	\arrow["{\iota_{JJ'}}"', hook, from=3-3, to=1-3]
	\arrow[two heads, from=4-1, to=3-1]
	\arrow["{\pi \otimes \pi}"{description}, two heads, from=4-1, to=4-3]
	\arrow[two heads, from=4-3, to=3-3]
	\arrow[hook, from=5-1, to=4-1]
	\arrow[from=5-1, to=5-3]
	\arrow[from=5-3, to=4-3]
\end{tikzcd}\]
where $\widehat{\pi \otimes \pi}$ is an induced morphism on images, and $\iota_{JJ'} \circ \widehat{\pi \otimes \pi} = \pi \circ \iota_{J_{I}J'_{I}}$ since the outer face of the above diagram commutes. This defines $\kappa$ via the universal property of the pullback, where in particular $\iota_{(JJ')_{I}}\circ \kappa = \iota_{J_{I}J'_{I}}$, which implies that $\kappa$ is monic.
\end{proof}
\end{comment}

\begin{corollary}\label{nilpotentlifts}
    If $I$ and $J$ are nilpotent, then $J_{I}$ is nilpotent.
\end{corollary}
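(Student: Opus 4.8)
The plan is to reduce everything to \autoref{productlift} together with the elementary properties of the product of ideals from \autoref{def:productideal}. Fix $k$ with $J^{k}=0$ in $A/I$ and $m$ with $I^{m}=0$ in $A$; I will show that $(J_{I})^{km}=0$, which gives nilpotence of $J_{I}$.

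First I would prove, by induction on $n\geq 1$, that there is an inclusion $(J_{I})^{n}\hookrightarrow (J^{n})_{I}$ of ideals in $A$. For $n=1$ this is an equality. For the inductive step, write $(J_{I})^{n}=(J_{I})^{n-1}J_{I}$ using associativity of the product ideal; monotonicity of the product in its first argument together with the inductive hypothesis $(J_{I})^{n-1}\hookrightarrow (J^{n-1})_{I}$ gives $(J_{I})^{n-1}J_{I}\hookrightarrow (J^{n-1})_{I}J_{I}$; and \autoref{productlift}, applied to the ideals $J^{n-1}$ and $J$ of $A/I$, gives $(J^{n-1})_{I}J_{I}\hookrightarrow (J^{n-1}J)_{I}=(J^{n})_{I}$. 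All three inclusions are compatible with the inclusions of these ideals into $A$, so they compose.

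Second, I would identify $(J^{k})_{I}=(0)_{I}$. By \autoref{defliftideal} this is the pullback of the zero sub-bimodule $0\hookrightarrow A/I$ along $\pi\colon A\twoheadrightarrow A/I$, which is exactly $\ker\pi=I$. Combining with the previous step, $(J_{I})^{k}\hookrightarrow I$ as sub-bimodules of $A$. Applying monotonicity and associativity of the product of ideals once more, $(J_{I})^{km}=\bigl((J_{I})^{k}\bigr)^{m}\hookrightarrow I^{m}=0$, so $(J_{I})^{km}=0$, as required.

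The only non-formal ingredients are the standard facts that the product of ideals defined in \autoref{def:productideal} is associative and monotone in each variable, and that forming $\on{Im}$ is compatible with these operations; these hold by the same arguments as for ideals in an ordinary ring, using that $\otimes$ is right exact in each variable. This bookkeeping is the sole mild obstacle; the rest is a routine diagram chase, and one could alternatively package the argument by noting that the lift $(-)_{I}$ is a lax monoidal, order-preserving operation on the lattices of ideals.
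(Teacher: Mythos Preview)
Your proof is correct and follows essentially the same route as the paper: iterate \autoref{productlift} to get $(J_I)^k\hookrightarrow (J^k)_I=(0)_I=I$, then use monotonicity of the ideal product together with $I^m=0$ to conclude $(J_I)^{km}=0$. Your write-up is more explicit about the induction and the identification $(0)_I=I$, but the argument is the same.
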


\begin{proof}
    Let $n$ be such that $J^{n} = 0$. Iterating \autoref{productlift}, we find a monomorphism ${(J_{I})}^{n} \hookrightarrow {(J^{n})}_{I} = I$. The result follows by observing that, generally, for $I' \hookrightarrow I$, we have $JI' \hookrightarrow JI$. Alternatively, note that $I \hookrightarrow \iradc(A)$ and so $J_{I}^{n} \hookrightarrow \iradc(A)$, so $J_{I}^{n}$ is nilpotent and thus so is $J_{I}$.
\end{proof}

\begin{proposition}
 The algebra $A/\iradc(A)$ has no nilpotent ideals.
\end{proposition}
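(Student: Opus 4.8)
The plan is to derive the claim directly from the maximality of $\iradc(A)$ among nilpotent ideals established in \autoref{prop:greatestnilpotent}, together with the ideal-lifting construction of \autoref{defliftideal} and its compatibility with nilpotence recorded in \autoref{nilpotentlifts}. Write $I = \iradc(A)$, let $\pi\colon A \twoheadrightarrow A/I$ be the quotient morphism, and suppose $J$ is a nilpotent ideal in $A/I$; the goal is to show $J = 0$.

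First I would pass to the lift $J_{I}$, the pullback of $J \hookrightarrow A/I$ along $\pi$ in the category of $A$-$A$-bimodules, which by \autoref{defliftideal} is an ideal in $A$. Since $I = \iradc(A)$ is itself nilpotent and $J$ is nilpotent by hypothesis, \autoref{nilpotentlifts} shows that $J_{I}$ is a nilpotent ideal in $A$. Invoking \autoref{prop:greatestnilpotent}, which identifies $\iradc(A)$ as the greatest nilpotent ideal object in $A$, one obtains an inclusion $J_{I} \hookrightarrow I$.

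It then remains to read off $J = 0$ from the pullback square. By the definition of $J_{I}$, the composite $J_{I} \hookrightarrow A \xrightarrow{\pi} A/I$ agrees with the composite $J_{I} \twoheadrightarrow J \hookrightarrow A/I$. The former factors through $I = \ker \pi$ by the inclusion $J_{I} \hookrightarrow I$, hence is the zero morphism; since $J_{I} \twoheadrightarrow J$ is an epimorphism --- being the pullback of the epimorphism $\pi$ along a monomorphism --- and $J \hookrightarrow A/I$ is a monomorphism, it follows that $J = 0$, as desired. None of the steps is genuinely difficult: all the real content is already packaged in \autoref{prop:greatestnilpotent} and \autoref{nilpotentlifts}, and the only point deserving a little care is this final bookkeeping with the pullback, extracting the vanishing of $J$ from $J_{I} \subseteq I$.
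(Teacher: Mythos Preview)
Your proof is correct and matches the paper's argument essentially step for step: lift $J$ to $J_{I}$, use \autoref{nilpotentlifts} to see $J_{I}$ is nilpotent, invoke maximality of $\iradc(A)$ to get $J_{I}\subseteq I=\ker\pi$, and then deduce $J=0$ from the pullback square using that the pulled-back map $J_{I}\to J$ is epic. The only cosmetic difference is that the paper leaves the appeal to \autoref{prop:greatestnilpotent} implicit in the phrase ``and so the composite is zero,'' whereas you spell it out.
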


\begin{proof}
    Let $J$ be a nilpotent ideal in $A/\iradc(A)$. Then, by \autoref{nilpotentlifts}, $J_{\iradc(A)}$ is nilpotent, and so the composite $J_{\iradc(A)} \xhookrightarrow{\iota_{\iradc(A)}} A \xtwoheadrightarrow{\pi} A/\iradc(A)$ is zero. Since epimorphisms in an abelian category are preserved under pullbacks, the morphism $\widetilde{\pi}$ in the following pullback square is epic:
\[\begin{tikzcd}
	A & {A/\iradc(A)} \\
	{J_{\iradc(A)}} & J
	\arrow["\pi", two heads, from=1-1, to=1-2]
	\arrow["{\iota_{\iradc(A)}}", hook, from=2-1, to=1-1]
	\arrow["{\widetilde{\pi}}", two heads, from=2-1, to=2-2]
	\arrow["{\iota_{J}}"', hook, from=2-2, to=1-2]
\end{tikzcd}\]
Thus $0 = \pi \circ \iota_{\iradc(A)} = \iota_{J} \circ \widetilde{\pi}$ implies $\iota_{J} = 0$, which in turns implies $J = 0$.
\end{proof}

\appendix

\section{\texorpdfstring{$\mathcal{C}$}{C}-projective objects and Skryabin's theorem}\label{sec:Skryabin}

Given a $\mathcal{C}$-module category $\mathcal{M}$ and an object $X \in \mathcal{M}$, we denote the \emph{internal $\on{Hom}$ functor} associated to $X$, which by definition is the right adjoint of $\blank\triangleright X\colon \mathcal{C} \to \mathcal{M}$, by $\hom{X,\blank}$. See~\cite[Section~7.9]{EGNO} for details on internal $\on{Hom}$, but note that we use a notation different from $\underline{\on{Hom}}(\blank,\blank)$ used therein.

\begin{definition}
    We say that an object $X$ of a $\mathcal{C}$-module category $\mathcal{M}$ is \emph{$\mathcal{C}$-projective} if $P \triangleright X$ is projective in $\mathcal{M}$ for any $P \in \cp$. We say that $X$ is $\mathcal{C}$-non-projective if it is not $\mathcal{C}$-projective.
\end{definition}

The following is implied by~\cite[Theorem~7.10.1]{EGNO}, as well as~\cite[Proposition~4.3]{SZ}:
\begin{proposition}
 Let $\mathcal{M}$ be a finite $\mathcal{C}$-module category such that $\blank \triangleright \blank$ is right exact in both variables. Then $X \in \mathcal{M}$ is $\mathcal{C}$-projective if and only if $\,\hom{X,\blank}$ is exact.
\end{proposition}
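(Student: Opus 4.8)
The plan is to deduce everything formally from the defining adjunction $\blank\triangleright X \dashv \hom{X,\blank}$, so no string-diagram computation is needed. The first observation I would record is that $\hom{X,\blank}$, being a right adjoint, preserves all limits, hence is in particular left exact; therefore the entire content of the proposition is the equivalence of $\mathcal{C}$-projectivity of $X$ with \emph{right} exactness of $\hom{X,\blank}$, and once right exactness is established, exactness follows.

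For the ``only if'' direction I would assume $\hom{X,\blank}$ is exact and fix $P\in\cp$. Since $P$ is projective, $\on{Hom}_{\mathcal{C}}(P,\blank)$ is exact, so the composite $\on{Hom}_{\mathcal{C}}(P,\hom{X,\blank})$ is exact. By the adjunction this composite is naturally isomorphic to $\on{Hom}_{\mathcal{M}}(P\triangleright X,\blank)$, which is therefore exact, i.e.\ $P\triangleright X$ is projective in $\mathcal{M}$. As $P\in\cp$ was arbitrary, $X$ is $\mathcal{C}$-projective.

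For the ``if'' direction I would assume $X$ is $\mathcal{C}$-projective and show $\hom{X,\blank}$ is right exact. Let $M\twoheadrightarrow N$ be an epimorphism in $\mathcal{M}$; I must show $\hom{X,M}\to\hom{X,N}$ is an epimorphism in $\mathcal{C}$. Since $\mathcal{C}$ is a finite tensor category it has a projective generator $\maP$ (indeed $\cp$ is a generating family of projectives), so a morphism of $\mathcal{C}$ is epic precisely when $\on{Hom}_{\mathcal{C}}(P,\blank)$ sends it to a surjection for every $P\in\cp$. By the adjunction, $\on{Hom}_{\mathcal{C}}(P,\hom{X,M})\to\on{Hom}_{\mathcal{C}}(P,\hom{X,N})$ identifies with $\on{Hom}_{\mathcal{M}}(P\triangleright X,M)\to\on{Hom}_{\mathcal{M}}(P\triangleright X,N)$, which is surjective because $P\triangleright X$ is projective in $\mathcal{M}$ and $M\twoheadrightarrow N$ is epic. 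Hence $\hom{X,M}\to\hom{X,N}$ is epic, and combined with the left exactness noted above, $\hom{X,\blank}$ is exact.

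The only points requiring any care are the existence of the right adjoint $\hom{X,\blank}$ and the detection of epimorphisms in $\mathcal{C}$ by projectives; both are exactly where the finiteness hypotheses (and right exactness of $\blank\triangleright\blank$) enter, and neither is a genuine obstacle — the internal $\on{Hom}$ exists by the general theory for finite module categories, cf.\ \cite[Section~7.9]{EGNO}. There is in fact essentially nothing hard here: the statement can alternatively simply be read off from \cite[Theorem~7.10.1]{EGNO} or \cite[Proposition~4.3]{SZ}, and the argument above is just the explicit adjunction-level repackaging of those results.
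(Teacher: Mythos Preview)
Your argument is correct and is precisely the natural adjunction argument; the paper itself gives no proof at all but simply records that the statement is implied by \cite[Theorem~7.10.1]{EGNO} and \cite[Proposition~4.3]{SZ}, which you also cite. One small slip: your ``only if'' and ``if'' labels are swapped relative to the statement as written (``$X$ is $\mathcal{C}$-projective if and only if $\hom{X,\blank}$ is exact''), though the mathematics under each heading is fine.
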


Our main result, Theorem~\ref{thm:mainresult}, is closely related to a result of Skryabin,~\cite[Theorem~3.5]{Sk}, which shows in particular that for a finite-dimensional Hopf algebra $H$ and an $H$-simple (co)module algebra $A$, every Hopf $A$-module is projective as an $A$-module.

Reformulating this into the language of tensor categories, the result implies that if the finite tensor category $\mathcal{C}$ admits a fibre functor $\omega\colon \mathcal{C} \to \Vecc$, then for a simple algebra object $A$ and $M \in \modca$, the module $\omega(M) \in \on{mod}\omega(A)$ is projective. On the other hand, Theorem~\ref{thm:mainresult} shows in particular that, without the requirement of a fibre functor, every $M \in \modca$ is $\mathcal{C}$-projective. The following observation connects the two results:

\begin{lemma}
  Assume that $\mathcal{C}$ admits a fibre functor $\omega\colon \mathcal{C} \to \Vecc$. Then $M \in \modca$ is $\mathcal{C}$-projective if and only if $\,\omega(M)$ is projective in $\on{mod}\omega(A)$.
\end{lemma}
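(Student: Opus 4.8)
The plan is to pass to the Hopf-algebraic model furnished by the fibre functor and then run two essentially dual arguments. By Tannakian reconstruction, $\omega$ realises $\mathcal{C}$ as $H\text{-}\mathrm{mod}_{\mathrm{fd}}$ for a finite-dimensional Hopf algebra $H$, with $\omega$ identified with the forgetful functor; then $A$ is an $H$-module algebra, $\modca$ is identified with the category of modules over the smash product $A\#H$, the algebra $\omega(A)$ is just $A$, and $\omega\colon\modca\to\on{mod}\,\omega(A)$ becomes restriction along $A\hookrightarrow A\#H$. Two structural facts will be used throughout. First, $\omega$ is additive and strong monoidal, so $\omega(V\triangleright A)=\omega(V)\otimes_{\bk}\omega(A)$ is a \emph{free} $\omega(A)$-module; since every object of $\projca$ is a direct summand of one of the form $P\triangleright A$ with $P\in\cp$, the functor $\omega$ sends projective objects of $\modca$ to projective $\omega(A)$-modules. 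Second, $A\#H$ is free as a left $A$-module of rank $\dim_{\bk}H$, so the induction functor $\widetilde\omega=\blank\otimes_{A}(A\#H)$, which is left adjoint to the exact functor $\omega$, preserves projectivity.

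For the ``only if'' direction, assume $M$ is $\mathcal{C}$-projective and fix a non-zero $P\in\cp$, for instance a projective cover of $\unit$. By hypothesis $P\triangleright M$ is projective in $\modca$, hence $\omega(P\triangleright M)\cong\omega(P)\otimes_{\bk}\omega(M)\cong\omega(M)^{\oplus n}$, with $n=\dim_{\bk}\omega(P)\geq 1$ by faithfulness of $\omega$, is a projective $\omega(A)$-module; since a direct summand of a projective module is projective, $\omega(M)$ is projective.

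For the ``if'' direction, assume $\omega(M)$ is projective in $\on{mod}\,\omega(A)$. I first reduce the claim ``$P\triangleright M$ is projective for every $P\in\cp$'' to projectivity of the single object $H_{\mathrm{reg}}\triangleright M$, where $H_{\mathrm{reg}}$ denotes $H$ with its left regular action: any $P\in\cp$ is a direct summand of $H_{\mathrm{reg}}^{\oplus k}$ in $\mathcal{C}$, whence $P\triangleright M$ is a direct summand of $(H_{\mathrm{reg}}\triangleright M)^{\oplus k}$ in $\modca$. Next, the Hopf-module isomorphism $x\otimes m\mapsto x_{(1)}\otimes x_{(2)}m$, whose inverse is built from the antipode, identifies $H_{\mathrm{reg}}\triangleright M$ with the induced module $\widetilde\omega(\omega(M))=\omega(M)\otimes_{A}(A\#H)$ in $\modca$; this is a projection-formula-type statement $\widetilde\omega\circ\omega\cong H_{\mathrm{reg}}\triangleright(\blank)$. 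Since $\omega(M)$ is projective over $\omega(A)$ and $\widetilde\omega$ preserves projectives, $H_{\mathrm{reg}}\triangleright M$ is projective in $\modca$, and therefore so is every $P\triangleright M$; that is, $M$ is $\mathcal{C}$-projective.

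The main obstacle is the projection-formula isomorphism $H_{\mathrm{reg}}\triangleright M\cong\widetilde\omega(\omega(M))$: although the underlying linear isomorphism and its $H$-linearity are classical, one must verify that it also intertwines the two right $A$-module structures — the one on $H_{\mathrm{reg}}\triangleright M$ coming from the action on the $M$-tensorand, and the one on $\omega(M)\otimes_{A}(A\#H)$ coming from multiplication in $A\#H$ — and this is precisely the point where the $H$-module-algebra axiom and the antipode interact. Keeping track of the left/right conventions when passing through the smash product also requires some care, but the remaining verifications are routine.
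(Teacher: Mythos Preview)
Your proof is correct and follows essentially the same route as the paper's: both reduce to the Hopf-algebraic model via Tannakian reconstruction, use the smash product $A\#H$, and hinge on the projection-formula isomorphism $H_{\mathrm{reg}}\triangleright M\cong \widetilde\omega(\omega(M))$ (written in the paper, for left modules, as $(A\#H)\otimes_{\omega(A)}\omega(M)\cong M\otimes H$) together with freeness of $A\#H$ over $A$. The only cosmetic differences are that the paper switches to left $A$-modules for notational convenience and runs the ``only if'' direction with the specific choice $P=H$, whereas you use an arbitrary $P\in\cp$; your explicit formula $x\otimes m\mapsto x_{(1)}\otimes x_{(2)}m$ would need minor adjustment to match your right-module conventions, but you already flag this and it does not affect the argument.
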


\begin{proof}For notational convenience, we write the proof for the category of left $A$-modules in $\cC$, which is a right $\cC$-module category.

The fibre functor allows us to interpret $\cC$ as the category of finite dimensional left modules over a finite dimensional Hopf algebra $H$, see~\cite[Chapter 5]{EGNO}.
Since $\omega(A)$ is a $\bk$-algebra equipped with a left Hopf action of $H$, we can define the smash product (also called crossed product) $A\sharp H$. This is a $\bk$-algebra with $\omega(A)\otimes H$ as underlying vector space and multiplication given by
$$(a\otimes h)(a'\otimes h')\;=\; a(h_1a')\otimes h_2h'.$$
It then follows that the category of left $A$-modules in $\cC$ is equivalent with the category of finite dimensional left $A\sharp H$-modules.

We can interpret $\omega(A)$ as a subalgebra of $A\sharp H$ and, for every $A$-module $M$ in $\cC$, we have an isomorphism
$$(A\sharp H)\otimes_{\omega(A)}\omega(M)\;\to\; M\otimes H,\quad a\otimes h\otimes v\mapsto a(h_1v)\otimes h_2$$
of $A\sharp H$-modules.

Now, by definition $M$ is $\cC$-projective if and only if the target in this isomorphism is projective. So if $\omega(M)$ is projective, it follows that $M$ is $\cC$-projective.
For the converse, assume that $M$ is $\cC$-projective. Then $\omega(M\otimes H)$ is projective as an $\omega(A)$-module, since $A\sharp H$ is projective as an $\omega(A)$-module. But $\omega(M)$ is a direct summand of $\omega(M\otimes H)$, so also projective.
\end{proof}

\section{Incompressible finite symmetric tensor categories}\label{SecSymm}

In this section, we assume that $\bk$ is an \emph{algebraically closed} field and that $\cC$ is a finite \emph{symmetric} tensor category. A finite symmetric tensor category $\cD$ is \emph{incompressible} if every symmetric tensor functor from $\cD$ to another symmetric tensor category is fully faithful.

Via Tannakian reconstruction, the structure theory of symmetric tensor categories can be reduced to the classification and study of incompressible symmetric tensor categories, see~\cite{CEO}. All currently known incompressible categories are introduced in~\cite{BE, BEO, AbEnv}.

\begin{proposition}\label{prop:maxsimp}
    There exists a (unique) simple commutative algebra $\cF(\cC)$ in $\cC$ that contains all other simple commutative algebras.
\end{proposition}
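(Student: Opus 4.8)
The plan is to use the machinery developed in the body of the paper, especially \autoref{thm:mainresult}, together with the fact that in a finite symmetric tensor category the internal endomorphism algebra of the unit object of any module category behaves well. First I would note that the candidate for $\cF(\cC)$ should be built from the maximal exact (equivalently, semisimple in the sense of having no nilpotent ideals) commutative algebra. The strategy is: (1) show that the collection of simple commutative algebras in $\cC$ is, up to isomorphism, finite and closed under a suitable ``join'' operation, so that a maximal one exists; (2) show this maximal one is unique and contains all others.

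For step (1), recall that any commutative algebra $B$ in $\cC$ that is simple is in particular exact by \autoref{thm:mainresult}, so $B$ has no nonzero nilpotent ideals. If $B_1, B_2$ are simple commutative algebras, consider $B_1 \otimes B_2$, which is again commutative (using the symmetry) but in general not simple. However, $B_1 \otimes B_2$ is a finite direct product of simple algebras precisely when it is exact; when it is not exact one passes instead to the quotient $B_1 \otimes B_2 / \iradc(B_1 \otimes B_2)$. The key point is that in the symmetric setting the commutative algebra structure descends to this quotient, and $B_1, B_2$ both embed as subalgebras of (a block of) $(B_1\otimes B_2)/\iradc(B_1\otimes B_2)$, whose blocks are simple commutative. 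Thus any two simple commutative algebras are dominated by a common one. Combined with a finiteness statement — the Frobenius--Perron dimension (or length) of a simple commutative algebra in $\cC$ is bounded, e.g.\ by $\FPdim(\cC)$ or via the fact that such an algebra corresponds to an étale-like extension and $\cC$ has finitely many simple objects — the directed poset of simple commutative algebras has a maximum. I would denote this maximum $\cF(\cC)$.

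For step (2), uniqueness is immediate once we know that the partial order ``$B \le B'$ iff $B$ is (isomorphic to) a subalgebra of $B'$'' is directed with the finiteness bound above: a maximal element of a finite directed poset is the greatest element, hence unique. To finish, I would check that $\cF(\cC)$ genuinely \emph{contains} every simple commutative $B$, not merely that it is $\le$-comparable to finitely many at a time: given arbitrary $B$, the join $\cF(\cC)$ and $B$ have a common dominating simple commutative algebra $B'$, but by maximality $B' \cong \cF(\cC)$, so $B \hookrightarrow \cF(\cC)$.

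The main obstacle I expect is step (1), specifically verifying that $(B_1\otimes B_2)/\iradc(B_1 \otimes B_2)$ is again commutative and that its blocks are commutative simple algebras containing $B_1$ and $B_2$ — this requires that $\iradc$ of a commutative algebra is stable under the symmetry and that the quotient map is a morphism of commutative algebras, which should follow from \autoref{prop:DayRadicalModule} and the naturality of the radical, but needs care. A secondary subtlety is the finiteness bound ensuring the directed poset actually attains a maximum rather than merely being filtered; one likely cites that a simple commutative algebra $B$ in $\cC$ is exact, hence $\modc{B}$ is again a finite tensor category, and the number of such (up to equivalence, hence up to the relevant notion of isomorphism of $B$) is controlled by the finitely many module categories over $\cC$, or more directly by $\FPdim(B)\le\FPdim(\cC)$.
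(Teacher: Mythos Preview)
Your outline is correct and matches the paper's proof: directedness via a simple quotient of $B_1\otimes B_2$, then termination of chains via Frobenius--Perron dimension (the paper phrases the latter on the module-category side, citing \cite[Proposition~6.3.4]{EGNO}, but your bound $\FPdim(B)\le\FPdim(\cC)$ is the dual statement and works equally well once you note that strict inclusions strictly increase length). Your flagged ``main obstacle'' is not one: a quotient of a commutative algebra is automatically commutative, and the composite $B_i\to B_1\otimes B_2\to C$ into any simple quotient $C$ is a unital algebra morphism out of a simple algebra, hence injective---so the paper simply picks such a $C$ directly, bypassing the radical machinery entirely.
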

\begin{proof}
    If $A$ and $B$ are simple commutative algebras, then they are included in a simple quotient algebra of $A\otimes B$. To conclude the proof we thus only need to show that any chain
    $$A_1\subset A_2\subset A_3\subset \cdots$$
    of simple commutative algebras in $\cC$ must stabilise. By \autoref{thm:mainresult}, the algebras are exact, which implies that their module categories are tensor categories and we obtain a chain of surjective (every object in the target is a subquotient of one in the essential image) symmetric tensor functors
    $$\cC\to\mathrm{mod}_{\cC}A_1\to\mathrm{mod}_{\cC}A_2\to \mathrm{mod}_{\cC}A_3\to\cdots,$$
    given by extension of scalars. By~\cite[Proposition~6.3.4]{EGNO}, the Frobenius--Perron dimension of the categories must decrease along the chain and all functors $\mathrm{mod}_{\cC}A_i\to\mathrm{mod}_{\cC}A_{i+1}$ must be equivalences, for $i\gg 1$. Since the right adjoints of the functors are given by restriction along $A_i\to A_{i+1}$ it follows that the latter is an isomorphism.
    \end{proof}

\begin{example}\label{ex:G}\hfill
\begin{enumerate}
    \item Let $G$ be a finite group and $\cC=\mathrm{Rep}_{\bk} G$, then $\cF(\cC)=\cF(G)$ is the algebra of $\bk$-valued functions on $G$.
    \item More generally, if $\cC$ is the category of modules over a cocommutative finite dimensional Hopf algebra $H$, then $\cF(\cC)=H^\ast$.
    \item Let $G$ be a finite group with a central element $z$ of order $2$. Then for $\cC=\mathrm{Rep}(G,z)$ as in~\cite[Examples~0.4]{Del}, we have $\cF(\cC)=\cF(G/\langle z\rangle)$.
\end{enumerate}

\end{example}

The following improves upon~\cite[Theorem~A]{CEO} for finite tensor categories.
    \begin{proposition}\label{prop:uniqueincomp}
        There is, up to equivalence, a unique incompressible symmetric tensor category to which $\cC$ admits a surjective tensor functor. This category can be realised as $\mathrm{mod}_{\cC}\cF(\cC)$.
    \end{proposition}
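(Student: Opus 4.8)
<br>

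The plan is to leverage \autoref{prop:maxsimp} and \autoref{thm:mainresult} together with the criterion for incompressibility. Recall that \autoref{thm:mainresult} guarantees that $\cF(\cC)$, being simple, is exact, so $\mathrm{mod}_{\cC}\cF(\cC)$ is a finite symmetric tensor category, and extension of scalars $\cC \to \mathrm{mod}_{\cC}\cF(\cC)$ is a surjective symmetric tensor functor. First I would establish that $\mathrm{mod}_{\cC}\cF(\cC)$ is incompressible. Suppose $F\from \mathrm{mod}_{\cC}\cF(\cC) \to \cD$ is a symmetric tensor functor which is not fully faithful; by \cite[Theorem~A]{CEO} (or the structure theory recalled above), $F$ factors, up to the relevant notion, through extension of scalars along some non-trivial simple commutative algebra $B$ in $\mathrm{mod}_{\cC}\cF(\cC)$. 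Such a $B$ restricts to a simple commutative algebra $B'$ in $\cC$ equipped with an algebra map $\cF(\cC) \to B'$; since $\cF(\cC)$ is simple, this map is injective, so $\cF(\cC) \subseteq B'$. But $\cF(\cC)$ was maximal among simple commutative algebras, forcing $\cF(\cC) = B'$, i.e. $B = \unit$ in $\mathrm{mod}_{\cC}\cF(\cC)$, contradicting non-triviality. Hence $\mathrm{mod}_{\cC}\cF(\cC)$ is incompressible.

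For uniqueness, suppose $\cC$ admits a surjective symmetric tensor functor $G\from \cC \to \cE$ with $\cE$ incompressible. By \cite[Theorem~A]{CEO} (applied now with $\cC$ in the role of the source and using that $\cC$ is finite), every such $\cE$ is of the form $\mathrm{mod}_{\cC}A$ for some simple commutative algebra $A$ in $\cC$, with $G$ the extension-of-scalars functor. Extension of scalars along the inclusion $A \hookrightarrow \cF(\cC)$ gives a surjective symmetric tensor functor $\mathrm{mod}_{\cC}A \to \mathrm{mod}_{\cC}\cF(\cC)$, i.e. a surjective symmetric tensor functor $\cE \to \mathrm{mod}_{\cC}\cF(\cC)$; since $\cE$ is incompressible, this functor is fully faithful, hence an equivalence. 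Thus $\cE \simeq \mathrm{mod}_{\cC}\cF(\cC)$.

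The main obstacle is justifying the reduction ``every incompressible quotient of $\cC$ is of the form $\mathrm{mod}_{\cC}A$ for a simple commutative $A$'', and the dual statement that a non-fully-faithful symmetric tensor functor out of a finite symmetric tensor category factors through extension of scalars along a non-trivial simple commutative algebra. In the finite setting this is exactly the content of \cite[Theorem~A]{CEO} combined with the fact — now available to us via \autoref{thm:mainresult} — that simple commutative algebras are exact, so their module categories really are finite symmetric tensor categories and the extension-of-scalars functors are genuine surjective tensor functors; one must also check that the image of such a functor, being closed under subquotients, captures surjectivity correctly. Once that dictionary is in place, the argument above is essentially a maximality-plus-incompressibility squeeze, and the remaining steps (restriction of a simple commutative algebra along a surjective tensor functor is again simple commutative; an injective algebra map with simple source is an inclusion) are routine.
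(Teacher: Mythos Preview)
Your proposal is correct and follows essentially the same strategy as the paper's proof: invoke the dictionary between surjective symmetric tensor functors out of a finite symmetric tensor category and extension of scalars along exact commutative algebras, then use \autoref{prop:maxsimp} for maximality and \autoref{thm:mainresult} to guarantee exactness. The paper compresses this into two sentences, while you unpack both directions (incompressibility of $\mathrm{mod}_{\cC}\cF(\cC)$ and uniqueness) explicitly; your argument that a simple commutative algebra $B$ in $\mathrm{mod}_{\cC}\cF(\cC)$ remains simple when viewed in $\cC$ is exactly the bridge the paper leaves implicit. One minor correction: the correspondence you need is \cite[\S 6.3]{CEO}, not \cite[Theorem~A]{CEO}---the latter is precisely the statement that \autoref{prop:uniqueincomp} sharpens, so citing it as input is circular in appearance even if not in substance.
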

\begin{proof}
    It is well-known that surjective symmetric tensor functors out of finite tensor categories correspond precisely to extension of scalars along commutative exact algebras, see~\cite[\S~6.3]{CEO}. The result thus follows from \autoref{prop:maxsimp} and \autoref{thm:mainresult}.
\end{proof}

\begin{example}
    The following conditions are equivalent on a finite symmetric tensor category $\cC$:
    \begin{enumerate}
        \item $\cC$ is incompressible;
        \item $\unit=\cF(\cC)$;
        \item the only simple commutative algebra in $\cC$ is $\unit$.
    \end{enumerate}
\end{example}

We conclude this appendix with a conjectural partial description of $\cF(\cC)$. If the characteristic of $\bk$ is zero, then \autoref{ex:G} deals with all cases, by~\cite[Corollaire~0.7]{Del}. Although non-essential, we thus focus on $\mathrm{char}(\bk)=p>0$.

Let $F\colon\cC\to \cD$ be a surjective tensor functor to an incompressible symmetric tensor category. By \autoref{prop:uniqueincomp} we find, for every $X\in\cC$
$$\dim_{\bk}\cC(X,\cF(\cC))\;=\;\dim_{\bk}\cD(FX,\unit_{\cD}). $$
A natural property of an incompressible category is that it be \emph{maximally nilpotent}, as defined in~\cite{ComAlg}. For instance, if~\cite[Conjecture~3.2.3]{ComAlg} is true (as verified for $p=2$ in~\cite[Theorem~9.2.1]{CEO}) and~\cite[Conjecture~1.4]{BEO} is also true, then every incompressible category is maximally nilpotent. Under this assumption on $\cD$,~\cite[Theorem~7.2.2 and Remark~7.1.2]{CEO} show that
$$\dim_{\bk}\cD(FX,\unit_{\cD})\;=\;\limsup_{n\to\infty} \frac{\log (\FPdim \Sym^{\le n}FX)}{\log(n)},$$
where $\Sym^{\le n}$ stands for the direct sum of the first $n$
 symmetric powers of an object. Since Frobenius--Perron dimension of objects is preserved by tensor functors, see~\cite[Proposition~4.5.7]{EGNO}, we thus find a conjectural description of $\cF(\cC)$:

 \begin{conjecture}\label{conj:GK}
     For every $X\in\cC$,
     $$\dim_{\bk}\cC(X,\cF(\cC))\;=\;\limsup_{n\to\infty} \frac{\log (\FPdim \Sym^{\le n}X)}{\log(n)}. $$
 \end{conjecture}

 \begin{example}
If $\cC=\mathrm{Rep}_{\bk}G$ as in \autoref{ex:G}(1), then both sides in \autoref{conj:GK} give $\dim_{\bk}X$. If $\cC=\mathrm{Rep}_{\bk}(G,z)$ as in \autoref{ex:G}(3), then both sides in \autoref{conj:GK} give the dimension of the even subspace of the super vector space $X$.
 \end{example}

\end{document}